\numberwithin{equation}{section}
\newtheorem{proposition}[equation]{Proposition}
\newtheorem{theorem}[equation]{Theorem}
\newtheorem{corollary}[equation]{Corollary}
\newtheorem{lemma}[equation]{Lemma}
\newtheorem*{claim}{Claim}
\newenvironment{cproof}{\begin{proof}[Proof of the
        claim]}{\end{proof}}
\newtheorem{theoremintro}{Theorem}
\theoremstyle{definition}
\newtheorem{definition}[equation]{Definition}
\newtheorem{notation}[equation]{Notation}
\newtheorem{remark}[equation]{Remark}
\newtheorem*{remark*}{Remark}
\newtheorem{example}[equation]{Example}
\def\defin#1{\textbf{#1}} 
\DeclareMathOperator{\dom}{\mathrm{dom}}
\DeclareMathOperator{\rng}{\mathrm{rng}}
\newcommand{\N}{\mathbb N}
\newcommand{\Z}{\mathbb Z}
\newcommand{\inv}{^{-1}}
\newcommand{\la}{\left\langle}
\newcommand{\ra}{\right\rangle}
\newcommand{\abs}[1]{\left\lvert #1\right\rvert}
\DeclareMathOperator{\ord}{ord}
\newcommand{\Gc}{\mathcal{G}}
\newcommand{\Hc}{\mathcal{H}}
\newcommand{\Jc}{\mathcal{J}}
\newcommand{\Nc}{\mathcal{N}}
\newcommand{\Cc}{\mathcal{C}}
\newcommand{\Vc}{\mathcal{V}}
\newcommand{\PK}{\mathcal{K}}
\newcommand{\QQ}{\mathcal{Q}}
\def\id{\mathrm{id}}
\newcommand\BSo{\mathrm{BS}}
\newcommand\BSe{\mathbf{BS}}
\newcommand\Phe{\mathrm{Ph}}
\newcommand\PHE{\mathbf{Ph}}
\newcommand\primes{\mathcal P}
\newcommand\degout{\deg_{\mathrm{out}}}
\newcommand\degin{\deg_{\mathrm{in}}}
\newcommand{\Stab}{\mathrm{Stab}}
\newcommand{\Sub}{\mathrm{Sub}}
\newcommand{\Sch}{\mathbf{Sch}}
\newcommand\Schreier{\Sch}
\newcommand\Cayley{\mathbf{Cay}}
\newcommand\Tree{\mathcal{T}}
\def\FF{\mathbf F}
\newcommand{\acts}{\curvearrowright}
\newcommand{\action}{\acts}
\newcommand{\reacts}{\curvearrowleft}
\newcommand{\source}{\mathtt{s}}
\newcommand{\target}{\mathtt{t}}
\newcommand{\bs}{\backslash}
\newcommand\rk{\mathrm{rk}}
\setlist{nosep}
\newcommand{\maxclo}{\mathcal{MC}}
\newcommand\RKCB{\rk_{\textrm CB}}
\author{
	Alessandro Carderi, Damien Gaboriau, \\
	François Le Maître and Yves Stalder
}
\title{On the space of subgroups of Baumslag-Solitar groups I: perfect kernel and phenotype}
\date{}
\begin{document}

	\maketitle	
	\begin{abstract}
		Given a Baumslag-Solitar group, we study its space of subgroups from a topological and dynamical perspective. We first determine its perfect kernel (the largest closed subset without isolated points).  We then bring to light a natural partition of the space of subgroups into one closed subset and countably many open subsets that are invariant under the action by conjugation. One of our main results is that the restriction of the action to each piece is topologically transitive.
		This partition is described by an arithmetically defined function, that we call the phenotype,
		with values in the positive integers or infinity.
		We eventually study the closure of each open piece and also the closure of their union.
		We moreover identify in each phenotype a (the) maximal compact invariant subspace.
	\end{abstract}

	{
		\small	
		\noindent\textbf{{Keywords:}} Baumslag-Solitar groups; space of subgroups; perfect kernel; topologically transitive actions; Bass-Serre theory.
	}
	
	\smallskip
	
	{
		\small	
		\noindent\textbf{{MSC-classification:}}	
		20E06; 20E08; 20F65; 37B05.
	}
	\tableofcontents

	\section{Introduction and presentation of the results}
	
	The Baumslag-Solitar group of non-zero integer parameters $m$ and $n$ is defined by the presentation 
	\begin{equation}\label{presentation BS(m,n)}
		\BSo(m,n)\coloneqq \la b,t\vert tb^{m}t^{-1}=b^{n}\ra.
	\end{equation}
	These one-relator two-generators groups were defined by Baumslag and Solitar \cite{baumslagTwogeneratorOnerelatorNonHopfian1962} to provide examples of groups with surprising properties, depending on the arithmetic properties of the parameters. 
	
	It results from the work of Baumslag and Solitar and of Meskin \cite{meskinNonresiduallyFiniteOnerelator1972} that the group $\BSo(m,n)$ is 
	\begin{itemize}
		\item residually finite if and only if $\abs{m}=1$ or $\abs{n}=1$ or $\abs{m}=\abs{n}$;
		\item Hopfian if and only if it is residually finite or $m$ and $n$ have the same set of prime divisors.
	\end{itemize}
	
	The group $\BSo(m,n)$ is amenable if and only if $\abs{m}=1$ or $\abs{n}=1$, and in this case, it is metabelian.	
	All Baumslag-Solitar groups however share weak forms of amenability: they are inner-amenable \cite{stalderMoyennabiliteInterieureExtensions2006} and a-T-menable \cite{galNewATmenableHNNextensions2003}.
	
	Over the years and despite the simplicity of their presentation, these groups have served as a standard source of examples and counter-examples, sometimes to published results (!). 
	They have been considered from countless different perspectives in group theory and beyond.

	Various aspects concerning the subgroups of the $\BSo(m,n)$ have been considered such as the growth functions of their number of subgroups of finite index with various properties, or such as a description of the kind of fundamental group of graphs of groups that can be embedded as subgroups in some $\BSo(m,n)$; see for instance \cite{gelmanSubgroupGrowthBaumslag2005,dudkinSubgroupsBaumslagSolitar2009,levittQuotientsSubgroupsBaumslag2015}.
	
	In this article, we consider global aspects of the space $\Sub(\BSo(m,n))$ of subgroups of the $\BSo(m,n)$ and of the topological dynamics generated by the natural action by conjugation.

	\subsection{The perfect kernel}
	Let $\Gamma$ be a countable group. We denote by $\Sub(\Gamma)$  the space of subgroups of $\Gamma$. If one identifies each subgroup with its indicator function, one can view the space $\Sub(\Gamma)$ as a closed subset of $\{0,1\}^\Gamma$. Thus $\Sub(\Gamma)$ is a compact, metrizable space by giving it the restriction of the product topology. See Section~\ref{sec: space subgroups} for the generalities about $\Sub(\Gamma)$.
	
	By the Cantor–Bendixson theorem, $\Sub(\Gamma)$ admits a unique decomposition as a disjoint union of a perfect set, called the \defin{perfect kernel} $\PK(\Gamma)$ of $\Gamma$, and of a countable open subset. 
	It is a challenging problem to determine the perfect kernel of a given countable group. 
	
	When $\Gamma$ is finitely generated, the finite index subgroups are isolated in $\Sub(\Gamma)$.
	It is thus relevant to introduce the subspace 
	$\Sub_{[\infty]}(\Gamma)$ consisting of all infinite index subgroups of $\Gamma$.
	It is a closed subspace of $\Sub(\Gamma)$ exactly when $\Gamma$ is finitely generated (see Remark~\ref{rmk: infinite index closed iff fg}).
	
	Our first main result completely describes the perfect kernel of the various Baumslag-Solitar groups.
	When $\abs{m} = \abs{n}$, the subgroup generated by $b^m$ is normal; let us denote by $\pi$ the corresponding quotient homomorphism 
	\begin{equation*}
		\BSo(m,n)\overset{\pi}{\to} \BSo(m,n)/\la b^m\ra.
	\end{equation*}
	We also denote by $\pi$ the map it induces between the spaces of subgroups of  $\BSo(m,n)$ and $\BSo(m,n)/\la b^m\ra$.
	\begin{theoremintro}[Perfect kernel of $\BSo(m,n)$, Theorem \ref{thm: explicit description K(BS(m,n))}]
		\label{thintro-PK(BS)}
		Let $m,n\in\Z\smallsetminus\{0\}$, 
		\begin{enumerate}
			\item \label{it: thintro m or n =1} if $\abs{m}=1$ or $\abs{n}=1$, then $\PK(\BSo(m,n))$ is empty;
			\item \label{it: thintro m,n large} if ${\abs{m}},\abs{n}>1$ , then 
			\begin{enumerate}
				\item  if $\abs{m} \neq \abs{n}$, then 
				$\PK(\BSo(m,n))  = \Sub_{[\infty]}(\BSo(m,n))$;		
				\item if $\abs{m} = \abs{n}$, then
				$\PK(\BSo(m,n)) = \pi\inv\big( \Sub_{[\infty]}(\BSo(m,n)/ \la b^m \ra) \big)$.
			\end{enumerate}		
		\end{enumerate}
	\end{theoremintro}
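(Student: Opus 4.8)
The plan is to treat the three cases separately. Throughout, $\Gamma\coloneqq\BSo(m,n)$ is finitely generated, so $\Sub_{[\infty]}(\Gamma)$ is closed and finite-index subgroups are isolated in $\Sub(\Gamma)$. I use three standard facts about a compact metrizable space $X$: an isolated point of $X$ lies outside the perfect kernel; the perfect kernel is the largest perfect (closed, without isolated points) subset, so a closed subset with no isolated point is contained in it; and a subset of $X$ all of whose points are isolated in $X$ is open. In particular $\PK(\Gamma)\subseteq\Sub_{[\infty]}(\Gamma)$ always, the set $\Fc$ of finite-index subgroups is open and countable (a finitely generated group has finitely many subgroups of each finite index), and to prove an inclusion $C\subseteq\PK(\Gamma)$ for $C$ closed it suffices to exhibit, near every $H\in C$, pairwise distinct elements of $C$ converging to $H$.

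\emph{Case $\abs m=1$ or $\abs n=1$.} Say $\abs m=1$, so $\Gamma\cong\mathbb Z[1/n]\rtimes_{n}\mathbb Z$; I claim $\Sub(\Gamma)$ is countable, whence $\PK(\Gamma)=\varnothing$. Under the quotient $\Gamma\to\Gamma/\mathbb Z[1/n]\cong\mathbb Z$ a subgroup $H$ has image $d\mathbb Z$ for some $d\geq0$. If $d=0$ then $H\leq\mathbb Z[1/n]$, and $\mathbb Z[1/n]$ is a subgroup of $\mathbb Q$ in which only the finitely many primes dividing $n$ are inverted, so it has only countably many subgroups. If $d\neq0$ then $H$ is generated by $H\cap\mathbb Z[1/n]$ together with a single further element, so it belongs to a countable family. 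The case $\abs n=1$ is symmetric.

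\emph{Case $\abs m,\abs n>1$ and $\abs m\neq\abs n$.} By the reductions it remains to show $\Sub_{[\infty]}(\Gamma)$ has no isolated point. Given an infinite-index $H$, I work with the action of $\Gamma$ on its Bass--Serre tree $T$ for the decomposition $\Gamma=\langle b\rangle\ast_{b^{m}=b^{n}}$ and with the quotient graph of groups of $H\acts T$. Since all vertex stabilisers of $\Gamma$ are infinite cyclic, the index formula $[\Gamma:H]=\sum_{\bar v\in V(H\bs T)}[\Gamma_{v}:H_{v}]$ (with $v$ a lift of $\bar v$ and $H_{v}=H\cap\Gamma_{v}$) shows that $[\Gamma:H]=\infty$ forces either $H\bs T$ to be infinite, or $H\bs T$ to be finite but with some vertex $v$ having trivial stabiliser $H\cap\Gamma_{v}=1$. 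In the first case one perturbs $H$ by modifying an end of $H\bs T$ far from a fixed basepoint (truncating or re-grafting a ray), keeping the quotient infinite; in the second case one enlarges $H$ by a deep elliptic or hyperbolic element supported away from the $H$-orbit of $v$, so the resulting group still has trivial stabiliser at $v$ and hence still infinite index. In either case the new subgroup agrees with $H$ on a large ball, so it converges to $H$, and it is distinct from $H$. Degenerate subgroups are handled by hand (for instance $H$ cyclic is approximated by enlargements such as $\langle b,\,t^{k}bt^{-k}\rangle$, and subgroups acting freely on $T$ by pruning their Schreier-type graphs). The main obstacle is exactly this point: that no infinite-index subgroup is rigid and that the perturbation retains infinite index, uniformly over the varied shapes of $H\bs T$; this is where $\abs m\neq\abs n$ enters, forcing the non-cocompactness of such perturbations.

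\emph{Case $\abs m=\abs n>1$.} Here $N\coloneqq\langle b^{m}\rangle$ is normal and $Q\coloneqq\Gamma/N\cong(\mathbb Z/\abs m)\ast\mathbb Z$ is virtually free; let $\pi_{*}\colon\Sub(\Gamma)\to\Sub(Q)$ be induced by $\pi$. The set $\{H:\pi(H)\text{ has finite index in }Q\}$ is open, being $\pi_{*}\inv$ of the open set of finite-index subgroups of $Q$; and it is countable, because over each of the countably many finite-index $\overline H\leq Q$ the fibre is contained in the union of the finite-index subgroups of $\pi\inv(\overline H)$ (countably many) and the complements of $N\cong\mathbb Z$ in $\pi\inv(\overline H)$, which are at most countably many, being in bijection — if non-empty — with the countable group $Z^{1}(\overline H,\mathbb Z)$ (as $\overline H$ is finitely generated). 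Hence this countable open set misses $\PK(\Gamma)$, so $\PK(\Gamma)\subseteq\pi\inv(\Sub_{[\infty]}(Q))$. Conversely $\pi\inv(\Sub_{[\infty]}(Q))$ is closed (preimage of a closed set under the continuous $\pi_{*}$), so it suffices to show it has no isolated point: given $H$ with $\overline H=\pi(H)$ of infinite index, the Bass--Serre perturbation of the previous case, applied to the tree on which $Q$ acts (with finite or infinite-cyclic vertex stabilisers), enlarges $\overline H$ by a deep element $\overline g$ with $\langle\overline H,\overline g\rangle$ still of infinite index; lifting $\overline g$ to $g\in\Gamma$, which is automatically deep since $\pi$ does not increase word length, one sets $H_{k}=\langle H,g\rangle$, so that $H_{k}\neq H$, $H_{k}\to H$, and $\pi(H_{k})=\langle\overline H,\overline g\rangle$ has infinite index, i.e.\ $H_{k}\in\pi\inv(\Sub_{[\infty]}(Q))$. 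Combining the two inclusions gives the stated equality.
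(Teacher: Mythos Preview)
Your treatment of the case $\abs m=1$ or $\abs n=1$ is fine and matches the paper. The countability half of the case $\abs m=\abs n$ is also essentially correct, if a bit roundabout. The gap is in the heart of the argument: showing, for $\abs m,\abs n\geq 2$, that every $H$ with infinite Bass--Serre quotient is a non-trivial limit of such subgroups. You correctly identify that one should ``perturb $H\bs T$ far from the basepoint'', but you do not carry this out, and indeed you flag it yourself as ``the main obstacle''. The difficulty is real: modifying the quotient graph of groups $H\bs T$ yields an abstract group, not a priori a subgroup of $\Gamma$; even if you realise it inside $\Gamma$, you must check that the resulting subgroup agrees with $H$ on a prescribed ball of the Cayley graph (closeness in $\Sub(\Gamma)$ is about elements, not about the shape of $H\bs T$), that it differs from $H$, and that it still has infinite Bass--Serre quotient. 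Your phrases ``truncating or re-grafting a ray'', ``enlarges $H$ by a deep elliptic or hyperbolic element supported away from the $H$-orbit of $v$'' do not address any of these points. The same gap reappears in the $\abs m=\abs n$ case, where your perturbation of $\overline H$ in $Q$ is deferred to ``the previous case'', and where the sentence ``one sets $H_k=\langle H,g\rangle$'' produces a single subgroup rather than a sequence.

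The paper fills this gap with an explicit mechanism. Given $H$ with infinite Bass--Serre graph, one restricts the action $H\bs\Gamma\reacts\Gamma$ to the union of $b$-orbits meeting the ball of radius $R+1$ in the Schreier graph; this is a \emph{pre-action} whose Bass--Serre graph is finite and hence non-saturated. One then extends this finite $(m,n)$-graph to two distinct saturated $(m,n)$-graphs (one a forest-saturation, the other with an extra circuit grafted on), and invokes an extension lemma (Proposition~\ref{Extending pre-action onto a (m,n)-graph}) stating that any pre-action extends to a genuine action with prescribed Bass--Serre graph. The two resulting actions have non-isomorphic (infinite) Bass--Serre graphs, hence give distinct points of the right set, and by construction their $R$-balls in the Schreier graph coincide with that of $H$. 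This machinery is exactly what is needed to make your sketch rigorous; without it, the proposal is a plausible plan but not a proof.
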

	
	The fact that $\Sub(\BSo(m,n))$ is countable when $\abs{m}=1$ or $\abs{n}=1$ (Item~\ref{it: thintro m or n =1}),  i.e.\ for the Baumslag-Solitar groups that are metabelian, was already observed by Becker, Lubotzky, and Thom \cite[Corollary 8.4]{beckerStabilityInvariantRandom2019}.
	Fortuitously or not, it turns out that $\PK(\BSo(m,n))  = \Sub_{[\infty]}(\BSo(m,n))$ exactly when $\BSo(m,n)$ is not residually finite.
	
	There is a general correspondence between the transitive pointed $\Gamma$-actions and the subgroups of $\Gamma$. It sends an action $\alpha$ to the stabilizer of its base point. This $\Gamma$-equivariant map is a bijection when one considers the actions up to pointed isomorphisms (see Section~\ref{sec: space subgroups}). 
	Item \ref{it: thintro m,n large} of Theorem~\ref{thintro-PK(BS)} has a unified reformulation in this setting: 
	\begin{itemize}
		\item[\textit{2'}.] \textit{if ${\abs{m}},\abs{n}>1$, then 
			$\PK(\BSo(m,n))$ is the space of subgroups $\Lambda$ such that the right $\BSo(m,n)$-action on $\Lambda\backslash\BSo(m,n)$ has infinitely many $\la b\ra$-orbits.}
	\end{itemize}   
	Note that this exactly means that the quotient of the $\Lambda$-action on the standard Bass-Serre tree (see Section~\ref{sect: Bass-Serre theory}) of $\BSo(m,n)$ is infinite. 
	
	Let us now give some more context for Theorem \ref{thintro-PK(BS)}.
	By Brouwer's characterization of Cantor spaces, the space $\PK(\Gamma)$ is either empty or a Cantor space.
	It is empty exactly when $\Sub(\Gamma)$ is countable. 
	This happens for example for groups all whose subgroups are finitely generated, also known as Noetherian groups.
	For instance all finitely generated nilpotent groups and more generally all polycyclic groups have a countable space of subgroups.
	
	On the opposite side, for the free group with a countably infinite number of generators, no subgroup is isolated, thus  $\PK(\FF_\infty)=\Sub(\FF_\infty)$ (see \cite[Proposition 2.1]{carderiDenseTotipotentFree2020}).
	
	There are some classical groups for which we know that $\PK(\Gamma)= \Sub_{[\infty]}(\Gamma)$.
	This is the case for the free groups $\FF_n$ (for $1<n<\infty$), see for instance \cite[Proposition 2.1]{carderiDenseTotipotentFree2020}. This is also the case for the groups with infinitely many ends, for the fundamental groups of the closed surfaces of genus~$\geq 2$, and for the finitely generated LERF groups with non-zero first $\ell^2$-Betti number (see \cite{azuelosSubgroupSpacesMaximal2023}). 
	Recall that a group $\Gamma$ is LERF when its set of finite index subgroups is dense in $\Sub(\Gamma)$ (see for instance \cite[Theorem 3.1]{glasnerIsolatedSubgroupsGeneric2016}).
	
	Bowen, Grigorchuk and Kravchenko established that the perfect kernel of the lamplighter group $(\Z/p\Z)^n \wr \Z= (\oplus_\Z (\Z/p\Z)^n) \rtimes \Z$ (for a prime number $p$) is exactly the space $\Sub(\oplus_\Z (\Z/p\Z)^n)$ of subgroups of the normal subgroup \cite[Theorem 1.1]{bowenInvariantRandomSubgroups2015a}. 
	Skipper and Wesolek uncovered the perfect kernel for a class of branch groups containing the Grigorchuk group and the Gupta–Sidki 3 group \cite{skipperCantorBendixsonRank2020}.

	The perfect kernel can be obtained by successively, and transfinitely, removing the isolated points,
	thus obtaining for every ordinal \(\alpha\) the \(\alpha\)-th Cantor-Bendixson derivative 
	$\Sub(\Gamma)^{(\alpha)} \coloneqq \Sub(\Gamma)^{(\beta)}\smallsetminus \{$isolated points of $\Sub(\Gamma)^{(\beta)}\}$ if $\alpha=\beta+1$, and
    $\Sub(\Gamma)^{(\alpha)} \coloneqq \bigcap_{\beta<\alpha} \Sub(\Gamma)^{(\beta)}$ if $\alpha$ is a limit ordinal.
	The \defin{Cantor–Bendixson rank} $\RKCB(\Gamma)$ of $\Gamma$ is the first ordinal $\zeta$ for which 
	the derived space $\Sub(\Gamma)^{(\zeta)}$ has no more isolated points, 
	and is thus equal to the perfect kernel 
	(see for instance \cite[Section 6.C]{kechrisClassicalDescriptiveSet1995} for details).
	When ${\abs m},\abs n>1$ and $\abs m\neq\abs n$, then Theorem \ref{thintro-PK(BS)} implies that $\RKCB(\BSo(m,n))=1$. 
	The determination of the Cantor-Bendixson ranks $\RKCB(\BSo(m,n))$ for the other cases is postponed to the sequel \cite{carderiSpaceSubgroupsBaumslagSolitar2023}.

	\subsection{Dynamical partition of the perfect kernel}
	
	The compact space of subgroups $\Sub(\Gamma)$ is equipped with the continuous action of $\Gamma$ 
	by conjugation: $ \Lambda\cdot\gamma \coloneqq \gamma\inv\Lambda\gamma$. 
	The perfect kernel is $\Gamma$-invariant.
	This action is of course not minimal in general, even when restricted to the perfect kernel: the latter may contain normal subgroups and these subgroups are fixed points! 
	However, the first three named authors observed a particularly nice feature in the case of the free group $\FF_n$ (for $1<n<\infty$):
	the action  $\PK(\FF_n)\curvearrowleft\FF_n$ is topologically transitive
	(which means that the space admits a dense G$_\delta$ subset of points whose individual orbits are dense). These $\FF_n$-actions are called totipotent, see \cite{carderiDenseTotipotentFree2020}.
	
	To our surprise, we uncovered a dramatically different and very rich situation for the Baumslag-Solitar groups. 
	
	\begin{theoremintro}\label{thintro: infinite partition of PK(BS(m,n))}
		Whenever ${\abs m},\abs n\neq 1$, the perfect kernel $\PK(\BSo(m,n))$ admits a countably infinite partition into $\BSo(m,n)$-invariant and topologically transitive subspaces.
		For the induced topology on $\PK(\BSo(m,n))$), one of the subspaces is closed; all the other ones are open.
	\end{theoremintro}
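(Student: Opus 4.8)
The plan is to organise everything around a \emph{phenotype} function $\Phe\colon\PK(\BSo(m,n))\to\N_{>0}\cup\{\infty\}$, built from the action of $\BSo(m,n)$ on its standard Bass--Serre tree $\Tree$, in which vertex stabilisers are the conjugates of $\la b\ra$ and edge stabilisers are the conjugates of $\la b^{m}\ra$, glued through $tb^{m}t\inv=b^{n}$. For $\Lambda\in\PK(\BSo(m,n))$ and a vertex $v$, put $\Lambda_v\coloneqq\Lambda\cap\Stab(v)$, so that $\Lambda_v$ is trivial or of the form $g\la b^{\ell_v}\ra g\inv$. Since every edge stabiliser has finite index in each of its two incident vertex stabilisers, $\Lambda_v\neq\{1\}$ forces $\Lambda_w\neq\{1\}$ for every neighbour $w$; by connectedness of $\Tree$, either all $\Lambda_v$ are trivial, in which case I set $\Phe(\Lambda)\coloneqq\infty$, or none is. In the latter case the relation $tb^{m}t\inv=b^{n}$ gives $\ell_v/\gcd(\ell_v,\abs m)=\ell_w/\gcd(\ell_w,\abs n)$ for adjacent $v,w$ (up to the orientation of the edge), so the largest divisor of $\ell_v$ coprime to every prime divisor of $mn$ does not depend on $v$, and I let $\Phe(\Lambda)\in\N_{>0}$ be this common value. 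Conjugating $\Lambda$ conjugates all the $\Lambda_v$, so $\Phe$ is constant on conjugacy orbits, and the fibres $\PK^{\omega}\coloneqq\Phe\inv(\omega)$ form a $\BSo(m,n)$-invariant partition of $\PK(\BSo(m,n))$; it is countably infinite because every prime not dividing $mn$ occurs as a finite phenotype. (When $\abs m=\abs n$, run the same argument through the quotient $\pi$ of Theorem~\ref{thintro-PK(BS)}.)

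For the topology, the dichotomy above shows that a subgroup in $\PK(\BSo(m,n))$ meets some vertex stabiliser nontrivially if and only if it meets $\Stab(v_0)=\la b\ra$ nontrivially; hence $\PK^{\infty}=\{\Lambda\in\PK(\BSo(m,n)):\la b\ra\cap\Lambda=\{1\}\}$ is relatively closed. For a finite phenotype $\omega$, the same dichotomy gives $\Lambda\in\PK^{\omega}$ exactly when $\la b\ra\cap\Lambda=\la b^{\ell}\ra$ for some $\ell$ whose largest divisor coprime to $mn$ equals $\omega$; and for each fixed $\ell$ the set $\{\Lambda:\la b\ra\cap\Lambda=\la b^{\ell}\ra\}=\{b^{\ell}\in\Lambda\}\cap\bigcap_{p\mid\ell}\{b^{\ell/p}\notin\Lambda\}$ is clopen in $\Sub(\BSo(m,n))$, so $\PK^{\omega}$ is a union of relatively clopen sets, hence relatively open. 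Non-emptiness of each piece follows from the construction below; in particular $\PK^{\infty}$ contains infinite-index free subgroups acting freely on $\Tree$.

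The heart of the proof is the topological transitivity of $\BSo(m,n)\acts\PK^{\omega}$. I would verify the Birkhoff criterion: for non-empty relatively open $U,V\subseteq\PK^{\omega}$, produce $\gamma\in\BSo(m,n)$ with $\gamma U\gamma\inv\cap V\neq\emptyset$; since $\PK^{\omega}$ is Polish, the Baire category theorem then yields a dense $G_{\delta}$ of points with dense orbit. Choose $\Lambda_0\in U$ and $\Lambda_1\in V$, and shrink $U,V$ to basic clopen sets prescribed by finite membership sets $A_0,A_1$ and finite non-membership sets $B_0,B_1$. I would then build a single subgroup $\Lambda$ as the fundamental group of a graph of groups over $\BSo(m,n)$ (equivalently, as the $\Tree$-stabiliser of a base vertex) out of three regions: a large finite truncation of the $\Lambda_0$-action on $\Tree$ near $v_0$ --- this carries $A_0$ and, being a substructure of $\Lambda_0$, misses $B_0$; a $\gamma$-translate of a large finite truncation of the $\Lambda_1$-action, where $\gamma$ is a hyperbolic element whose translation length is large enough to place this region far from the first in $\Tree$; and an infinite tail linking everything up, with all remaining vertex groups declared to be conjugates of $\la b^{\omega}\ra$ (trivial when $\omega=\infty$), which pins $\Phe(\Lambda)=\omega$. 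After folding, $\Lambda\in U$, while $\gamma\inv A_1\gamma\subseteq\Lambda$ and $\gamma\inv B_1\gamma\cap\Lambda=\emptyset$, that is $\gamma\Lambda\gamma\inv\in V$; and $\Lambda\bs\Tree$ is infinite, so $\Lambda\in\PK(\BSo(m,n))$ by the reformulation following Theorem~\ref{thintro-PK(BS)}. One may instead package this as the construction of one subgroup of phenotype $\omega$ with dense conjugacy orbit, in the spirit of the totipotent subgroups of $\FF_n$ from \cite{CGLM}.

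The hard part is this construction, where two points demand care. First, a ``locality of membership'' principle for folded graphs of groups over $\BSo(m,n)$: an element of bounded length lying in $\Lambda$ must already be visible inside a bounded region of the structure. This is what prevents the elements of $B_0$ and $B_1$ from reappearing once the three regions are glued and folded, and what makes the faraway placement of the second region genuinely decouple the two sets of constraints; it rests on normal forms in the HNN/graph-of-groups structure. Second, the arithmetic of the filler: in the finite case one must check that declaring the filler vertex groups to be $\la b^{\omega}\ra$ is compatible with the edge groups produced by the $\mathrm{lcm}$ conditions and does not enlarge the phenotype, while in the case $\omega=\infty$ one must show that folding never creates a conjugate of a nontrivial power of $b$ --- this last point is the most delicate. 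Together with the first two paragraphs, this identifies $\PK^{\infty}$ as the unique closed piece and the countably many $\PK^{\omega}$ with $\omega$ finite as the open ones, which is the asserted dichotomy.
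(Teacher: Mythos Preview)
Your overall architecture mirrors the paper's: define a phenotype, partition the perfect kernel by its fibres, show the finite-phenotype fibres are open and the infinite one is closed, then prove topological transitivity on each fibre by a merging construction. The construction you sketch for transitivity --- truncate two actions, glue the pieces via a connecting region, pad with an infinite tail --- is precisely the paper's ``merging machine'' (Theorem~\ref{thm: merging of pre-actions} together with the connecting Theorem~\ref{thm: connecting same phenotype}), though the paper formalises it through $(m,n)$-graphs, which makes the arithmetic of the filler explicit and sidesteps the folding issues you flag.

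There is, however, a genuine gap in your definition of the phenotype. You take $\Phe(\Lambda)$ to be the largest divisor of $\ell_v$ coprime to $mn$. This quantity is indeed constant along the Bass--Serre graph, but it is \emph{not} the complete invariant: for a prime $p$ with $\abs{m}_p=\abs{n}_p>0$, the transfer equation $\ell_v/\gcd(\ell_v,m)=\ell_w/\gcd(\ell_w,n)$ also forces $\abs{\ell_v}_p$ to be constant whenever $\abs{\ell_v}_p>\abs{m}_p$, so this extra $p$-adic information must be recorded as well. The paper's phenotype is $\Phe_{m,n}(k)=\prod_{p\in\primes_{m,n}(k)}p^{\abs{k}_p}$ where $\primes_{m,n}(k)=\{p:\abs{m}_p=\abs{n}_p\text{ and }\abs{k}_p>\abs{m}_p\}$; your formula agrees with this only when no prime divides $m$ and $n$ to the same positive power (for instance when $\gcd(m,n)=1$). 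Concretely, for $m=6$, $n=10$ the labels $1$ and $4$ share your phenotype (both have coprime-to-$60$ part equal to $1$) but cannot coexist in a connected $(m,n)$-graph: from a vertex of label $4$ every neighbour has $2$-adic valuation exactly $2$, so label $1$ is unreachable. With your coarser partition, a fibre such as your $\PK^{1}$ is then a disjoint union of several nonempty open invariant pieces and is therefore \emph{not} topologically transitive. Once you replace your invariant by the finer arithmetic one, the rest of your outline lines up with the paper's proof.
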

	
	Theorem \ref{thintro: infinite partition of PK(BS(m,n))} follows from Proposition \ref{prop: phenotype partition} and Theorem \ref{thm: gdelta dense orbits}. In a further work \cite{CGLMS-HT}, we show that topological transitivity can be upgraded to high topological transitivity.
	
	From now on in this introduction, we stick to the case $\abs{m}\neq 1$ and $\abs{n}\neq1$.
	In order to describe the partition in Theorem \ref{thintro: infinite partition of PK(BS(m,n))}, we introduce a new invariant: the \defin{phenotype}. 
	
	The relation $tb^mb\inv=b^n$ imposes some arithmetic conditions between the  cardinalities of the $b$-orbit of a point $x$ and the $b$-orbit of $xt$. 
	For instance,  the $b$-orbit of $x$ is infinite if and only if the $b$-orbit of $xt$ is infinite.
	
	In Definition~\ref{df: phenotype natural number}, we introduce a function  $\Phe_{m,n}\colon\Z_{\geq 1}\cup\{\infty\}\to \Z_{\geq 1}\cup\{\infty\}$ called the $(m,n)$-phenotype, with the following property, which directly follows from Proposition \ref{prop:phenotype is well defined for conn graph}, Theorem~\ref{thm: merging of (m,n)-graphs} and Proposition \ref{prop: realization of BS graph}:
	
	\begin{theoremintro}\label{thintro: char phenotype}
		Whenever ${\abs m},\abs n\neq 1$, there is a transitive $\BSo(m,n)$-action with two $b$-orbits of cardinal $k$ and $\ell$ respectively if and only if $\Phe_{m,n}(k)=\Phe_{m,n}(\ell)$.
	\end{theoremintro}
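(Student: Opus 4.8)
The plan is to translate everything into the combinatorial language of $(m,n)$-graphs attached to transitive $\BSo(m,n)$-actions: collapse each $\la b\ra$-orbit of the Schreier graph to one vertex, label that vertex by the cardinality of the orbit, and keep the $t$-edges — equivalently, take the quotient of the standard Bass-Serre tree of $\BSo(m,n)$ by the point stabilizer. Under this dictionary a transitive action corresponds to a \emph{connected} $(m,n)$-graph, the local axioms of an $(m,n)$-graph are exactly the translation of the relation $tb^mt\inv=b^n$, and the multiset of vertex labels is precisely the multiset of cardinalities of $\la b\ra$-orbits. So the statement to prove becomes: there is a connected $(m,n)$-graph with two vertices carrying labels $k$ and $\ell$ if and only if $\Phe_{m,n}(k)=\Phe_{m,n}(\ell)$. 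The first step is to set this correspondence up carefully, in particular the fact that a vertex's label equals the size of the corresponding $\la b\ra$-orbit.

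The ``only if'' direction is then immediate: such an action yields a connected $(m,n)$-graph with two vertices labelled $k$ and $\ell$, so by Proposition~\ref{prop:phenotype is well defined for conn graph} its phenotype is well defined and common to all vertices; since the phenotype of a vertex labelled $j$ is $\Phe_{m,n}(j)$ by Definition~\ref{df: phenotype natural number}, we get $\Phe_{m,n}(k)=\Phe_{m,n}(\ell)$. (No hypothesis on the number of orbits is really used here: any connected $(m,n)$-graph meeting the labels $k$ and $\ell$ forces the equality.)

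For ``if'', assume $\Phe_{m,n}(k)=\Phe_{m,n}(\ell)$. For each $j\in\{k,\ell\}$ I would exhibit a connected $(m,n)$-graph $\Gc_j$ containing a vertex labelled $j$ — for instance the $(m,n)$-graph of the transitive action of $\BSo(m,n)$ on $\la b^{j}\ra\backslash\BSo(m,n)$ (on $\BSo(m,n)$ itself when $j=\infty$), whose base coset has a $\la b\ra$-orbit of cardinal exactly $j$; by Proposition~\ref{prop:phenotype is well defined for conn graph}, $\Gc_j$ has phenotype $\Phe_{m,n}(j)$, so $\Gc_k$ and $\Gc_\ell$ share a phenotype. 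Theorem~\ref{thm: merging of (m,n)-graphs} then merges them into a connected $(m,n)$-graph still carrying vertices labelled $k$ and $\ell$, and Proposition~\ref{prop: realization of BS graph} realizes it as the $(m,n)$-graph of a transitive $\BSo(m,n)$-action, which therefore has $\la b\ra$-orbits of cardinals $k$ and $\ell$.

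The delicate point — and the main obstacle — is to arrange this so that the action has \emph{exactly} two $\la b\ra$-orbits, equivalently so that the connected $(m,n)$-graph produced has exactly the two prescribed vertices rather than dragging along all the auxiliary vertices of $\Gc_k$ and $\Gc_\ell$. This is precisely what the merging mechanism of Theorem~\ref{thm: merging of (m,n)-graphs} is built to control: I would apply it to \emph{minimal} building blocks — a vertex labelled $j$ joined by the forced $t$-edges to a small standard $(m,n)$-graph of phenotype $\Phe_{m,n}(j)$ (ultimately to a vertex labelled $\Phe_{m,n}(j)$ itself) — and check that the local end-count and edge-label constraints can be balanced on a graph with only the two vertices $k$ and $\ell$. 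Everything else (the action/graph dictionary, the identification of orbit cardinals with labels, and the verification that the building blocks really carry a $\la b\ra$-orbit of the prescribed length) is routine bookkeeping once the definitions of the earlier sections are unwound.
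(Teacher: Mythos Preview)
Your overall strategy is exactly the paper's: the ``only if'' direction is Proposition~\ref{prop:phenotype is well defined for conn graph}, and the ``if'' direction is Theorem~\ref{thm: merging of (m,n)-graphs} followed by Proposition~\ref{prop: realization of BS graph}. However, two concrete points in your execution need correction.

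First, and most importantly, you have misread the statement. ``A transitive action with two $b$-orbits of cardinal $k$ and $\ell$'' means an action \emph{containing} two $b$-orbits of those cardinals, not an action with \emph{exactly} two $b$-orbits. The latter is in general impossible: a saturated $(m,n)$-graph on two vertices labelled $k$ and $\ell$ must have $\gcd(k,n)+\gcd(\ell,n)=\gcd(k,m)+\gcd(\ell,m)$ (total out-degree equals total in-degree), and for instance with $(m,n)=(2,3)$, $k=1$, $\ell=2$ (both phenotype $1$) this fails. So the ``delicate point'' you single out as the main obstacle is simply not there; the merging machine is allowed to produce extra vertices, and that is fine.

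Second, your proposed building blocks $\Gc_j=\BSe(\la b^j\ra\backslash\BSo(m,n)\curvearrowleft\BSo(m,n))$ are \emph{saturated}, since they come from genuine actions; but Theorem~\ref{thm: merging of (m,n)-graphs} explicitly requires its two inputs to be non-saturated. The fix is immediate: take instead the single-vertex $(m,n)$-graphs consisting of one vertex labelled $k$ (resp.\ $\ell$) and no edges. These are trivially connected $(m,n)$-graphs, non-saturated (since $\abs m,\abs n\geq 2$ force $\gcd(L,m),\gcd(L,n)\geq 1>0$), and have phenotype $\Phe_{m,n}(k)$ (resp.\ $\Phe_{m,n}(\ell)$). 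Then Theorem~\ref{thm: merging of (m,n)-graphs} applies directly and gives a connected saturated $(m,n)$-graph containing vertices labelled $k$ and $\ell$; Proposition~\ref{prop: realization of BS graph} upgrades it to a transitive action with $b$-orbits of cardinals $k$ and $\ell$.
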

	
	If for instance $m$ and $n$ are coprime, 
	the phenotype $\Phe_{m,n}(k)$ of any natural number $k\in \Z_{\geq 1}$ is obtained as $k$ expunged of all its prime divisors that appear in either $m$ or $n$. The general form is more complicated, 
	see Definition~\ref{df: phenotype natural number} and Example~\ref{ex: phenotype for non coprime},
	but it follows readily from Definition \ref{df: phenotype natural number} that $\Phe_{m,n}(q) = q$ for every $q\geq 1$ that is coprime with $m$ and $n$.
	Hence, the set of possible $(m,n)$-phenotypes
	\begin{equation*}
		\QQ_{m,n}\coloneqq \{\Phe_{m,n}(k)\colon k\in  \Z_{\geq 1}\}\cup\{\infty\}.
	\end{equation*}
	is always infinite.
	
	Theorem \ref{thintro: char phenotype} allows us to define the \defin{phenotype} of a transitive $\BSo(m,n)$-action as the common $(m,n)$-phenotype of the cardinalities of its  $b$-orbits.
	Then, we define, the \defin{phenotype} $\PHE(\Lambda)$ of a subgroup  $\Lambda\in \Sub(\BSo(m,n))$ as the phenotype of the (right) $\BSo(m,n)$-action
	 on the homogeneous space $\Lambda\bs \BSo(m,n)$.
	
	Notice that the $\BSo(m,n)$-actions on $\Lambda\bs \BSo(m,n)$ and $(g\inv \Lambda g)\bs \BSo(m,n)$ are isomorphic (both are transitive with some point stabilizer equal to $\Lambda$),
	so they have the same phenotype.
	Hence, the partition
	\begin{equation}\label{eqintro: partition Sub}
		\Sub(\BSo(m,n))=\bigsqcup_{q\in \QQ_{m,n}} \PHE\inv(q)
	\end{equation}
	is invariant under the $\BSo(m,n)$-action (recall this is the action by conjugation).
	Let us mention from Proposition \ref{prop: phenotype partition} that
	\begin{itemize}
		\item for each finite $q\in \QQ_{m,n}$, the piece $\PHE\inv(q)$ is open;
		it is also closed if and only if $\abs{m} = \abs{n}$;
		\item the piece $\PHE\inv(\infty)$ is closed but not open.
	\end{itemize}
	In particular, the function $\PHE\colon\Sub(\BSo(m,n))\to \Z_{\geq 1}\cup\{+\infty\}$ is Borel. It is continuous if and only if $\abs{m}=\abs{n}$.
	
	It now follows from Theorem \ref{thm: gdelta dense orbits} that
	the restriction of the partition \eqref{eqintro: partition Sub} to the perfect kernel
	\begin{equation}\label{eqintro: partition PK}
		\PK(\BSo(m,n))=\bigsqcup_{q\in \QQ_{m,n}} \PK_q(\BSo(m,n)),
	\end{equation}
	where $\PK_q(\BSo(m,n))\coloneqq \PK(\BSo(m,n))\cap \PHE^{-1}(q)$,
	satisfies all the conclusions of Theorem \ref{thintro: infinite partition of PK(BS(m,n))}.
	The pieces $\PK_q(\BSo(m,n))$ are indeed non-empty, see Remark \ref{rem: non-empty pieces}.

	\subsection{Approximations by subgroups of other phenotypes}

	We still stick to the case $\abs{m}\neq 1$ and $\abs{n}\neq1$.
	Since the only non-open piece in partition \eqref{eqintro: partition Sub} is $\PHE\inv(\infty)$,
	the subgroups of infinite phenotype are the only ones which can be approximated in $\Sub(\BSo(m,n))$ by subgroups of other (that is, finite) phenotypes. 
	
	The set of limits of subgroups of finite phenotype depends on whether we fix the phenotype or we let it vary. 
	About approximations by subgroups with a constant phenotype, we have the following result (see Proposition \ref{prop: phenotype partition} and Theorem \ref{thm: closure of phen q}).
	
	\begin{theoremintro}\label{thmintro: limit fixed phenotype}
		Assume ${\abs{m}}, \abs{n} \neq 1$ and let us fix a finite $(m,n)$-phenotype $q$.
		\begin{enumerate}
			\item If $\abs{m} = \abs{n}$, then $\PHE\inv(q)$ is closed, hence no infinite phenotype subgroup can be approximated by subgroups of phenotype $q$.
			\item If $\abs{m} \neq \abs{n}$, then an infinite phenotype subgroup $\Lambda$ can be approximated by subgroups of phenotype $q$ if and only if $\Lambda \leq \la\!\la b \ra\!\ra$, where $\la\!\la b \ra\!\ra$ is the normal subgroup generated by $b$.
		\end{enumerate}
	\end{theoremintro}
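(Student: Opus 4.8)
The plan is to handle the two items by different mechanisms, working throughout with the dictionary between a subgroup $\Lambda\le\BSo(m,n)$, the transitive action on $\Lambda\bs\BSo(m,n)$, and its pointed connected $(m,n)$-graph (Schreier graph) $X_\Lambda$; recall that convergence in $\Sub(\BSo(m,n))$ is local convergence of pointed graphs, so $\Lambda_i\to\Lambda$ exactly when for every $r$ the $r$-balls around the basepoints of $X_{\Lambda_i}$ and $X_\Lambda$ eventually coincide. I will use two facts freely: first, in any $(m,n)$-graph the $b$-orbits are either all finite or all infinite, the latter being exactly $\PHE=\infty$, so a subgroup of finite phenotype $q$ is one all of whose $b$-orbits are finite cycles with common value $\Phe_{m,n}(\cdot)=q$ (this underlies Proposition~\ref{prop:phenotype is well defined for conn graph}); second, $\langle\!\langle b\rangle\!\rangle=\ker\varphi$ for $\varphi\colon\BSo(m,n)\to\Z$, $b\mapsto 0$, $t\mapsto 1$, so $\Lambda\le\langle\!\langle b\rangle\!\rangle$ iff $X_\Lambda$ carries a $\Z$-valued ``level'' function, constant on $b$-orbits and increasing by $1$ along each $t$-edge, equivalently iff every loop in $X_\Lambda$ has zero net $t$-displacement.

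For item (1), the point I would isolate is that when $|m|=|n|=d$ the fibre $\Phe_{m,n}^{-1}(q)$ is \emph{finite} for every finite $q$: a $t$-edge between $b$-cycles of lengths $k$ and $\ell$ forces $\ell\mid\mathrm{lcm}(k,d)$ and $k\mid\mathrm{lcm}(\ell,d)$ (since $tb^dt\inv=b^{\pm d}$ makes $b^d$ act $t$-equivariantly up to sign), and iterating this bounds all $p$-adic valuations in a $t$-connected family of cycle lengths; by Theorem~\ref{thintro: char phenotype} such a family is precisely $\Phe_{m,n}^{-1}(q)$, so it is finite --- alternatively this is immediate from Definition~\ref{df: phenotype natural number}. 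Setting $N:=\max\Phe_{m,n}^{-1}(q)$, a subgroup has phenotype $q$ iff its basepoint $b$-orbit has some length $k_0\le N$ with $\Phe_{m,n}(k_0)=q$, i.e.\ iff $b^{k_0}\in\Lambda$ and $b^j\notin\Lambda$ for $0<j<k_0$; this depends only on $\Lambda\cap\{b,\dots,b^N\}$, so $\PHE^{-1}(q)$ is a finite union of clopen subsets of $\Sub(\BSo(m,n))$, hence clopen. Being closed and disjoint from $\PHE^{-1}(\infty)$, it contains no infinite-phenotype subgroup in its closure.

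For the ``if'' direction of item (2) I would approximate by completing balls. Given $\Lambda\le\langle\!\langle b\rangle\!\rangle$ with $\PHE(\Lambda)=\infty$, fix $r$; the $r$-ball $B$ of $X_\Lambda$ is a finite partial $(m,n)$-graph carrying a level function, each of whose $b$-components is a finite arc (no $b$-cycle, as all $b$-orbits of $X_\Lambda$ are infinite). Since $|m|\neq|n|$ the fibre $\Phe_{m,n}^{-1}(q)$ is infinite, hence unbounded, so I pick $k\in\Phe_{m,n}^{-1}(q)$ longer than every $b$-arc of $B$; then, by the merging and realization results (Theorem~\ref{thm: merging of (m,n)-graphs} and Proposition~\ref{prop: realization of BS graph}), I close each $b$-arc of $B$ into a $b$-cycle of length $k$ with fresh vertices and fill in the remaining $t$-edges consistently, e.g.\ by merging with copies of a fixed transitive $(m,n)$-graph all of whose $b$-cycles have length $k$ (one exists by Theorem~\ref{thintro: char phenotype}). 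Keeping the new material outside the $r$-ball, the subgroup $\Lambda_r$ realizing the resulting pointed graph satisfies $\PHE(\Lambda_r)=\Phe_{m,n}(k)=q$ and agrees with $\Lambda$ on the $r$-ball, so $\Lambda_r\to\Lambda$ and $\Lambda\in\overline{\PHE^{-1}(q)}$.

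The ``only if'' direction of item (2) is where I expect the real work; it is the content of Theorem~\ref{thm: closure of phen q}, and I would prove its contrapositive. Suppose $\PHE(\Lambda)=\infty$ but $\Lambda\not\le\langle\!\langle b\rangle\!\rangle$, and assume for contradiction $\Lambda_i\to\Lambda$ with $\PHE(\Lambda_i)=q$. Pick $\lambda\in\Lambda$ with $\varphi(\lambda)=s\neq 0$; it is a loop of net $t$-displacement $s$ at the basepoint of $X_\Lambda$, lying in some ball $B_\rho$, so $\lambda\in\Lambda_i$ once $i$ is large, and then $\lambda$ is hyperbolic on the Bass--Serre tree $T$. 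Since $\lambda\in\Lambda_i$, the vertices $v$ and $\lambda v$ of $T$ lie in the same $\Lambda_i$-orbit, so the $b$-cycle length $c_v$ attached to $v$ in $X_{\Lambda_i}$ is $\tau$-periodic along the axis $A_\lambda$ ($\tau$ the translation length of $\lambda$); moreover a fundamental period of $A_\lambda$ sits within distance $O(|\lambda|)$ of the basepoint, so --- as $X_{\Lambda_i}$ agrees with $X_\Lambda$ on ever-larger balls while $X_\Lambda$ has no $b$-cycle near the basepoint --- every $c_v$ on $A_\lambda$ tends to $\infty$ with $i$. On the other hand, for each prime $p$ with $v_p(m)\neq v_p(n)$ the rules for how $v_p(c_v)$ evolves across a level-increasing $t$-edge are asymmetric from those across a level-decreasing one (across an increasing one, $v_p(c_v)\mapsto v_p(c_v)-|v_p(m)-v_p(n)|$ as soon as $v_p(c_v)$ is large), so a $t$-periodic sequence of cycle lengths whose period goes net ``up'' must see $v_p(c_v)$ return to a value $\le\min(v_p(m),v_p(n))$ within each period; together with the bound on $v_p(c_v)$ at the remaining primes forced by $c_v\in\Phe_{m,n}^{-1}(q)$, this caps some $c_v$ on $A_\lambda$ by a quantity depending only on $m,n,q$, contradicting $c_v\to\infty$. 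The main obstacle is exactly this last step --- making the estimate uniform in $i$ and pinning down a single vertex of $A_\lambda$ at which \emph{all} relevant primes are simultaneously small (the pure case $\lambda=t^s$ is transparent: every vertex of the axis then has small valuation at every relevant prime, but mixed up/down steps along the axis need more care) --- and it is precisely where $|m|=|n|$ would furnish no descent, matching item (1). Combining the two directions and quoting the open/closed assertions of Proposition~\ref{prop: phenotype partition} completes the proof.
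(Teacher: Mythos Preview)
Your treatment of item~(1) coincides with the paper's (Proposition~\ref{prop: phenotype partition} together with Lemma~\ref{lem: preimage phenotypes}).

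The genuine gap is in your ``if'' direction of item~(2). Closing every $b$-arc of the ball $B$ to a cycle of a \emph{single} length $k$ does not produce an $(m,n)$-graph as soon as $B$ contains a $t$-edge: if two arcs become $k$-cycles joined by such an edge, relation~\eqref{eq:transfert} forces $k/\gcd(k,n)=k/\gcd(k,m)$, i.e.\ $\gcd(k,m)=\gcd(k,n)$. But the integers $k$ with $\Phe_{m,n}(k)=q$ and $\gcd(k,m)=\gcd(k,n)$ form a \emph{finite} set (for each prime $p$ with $\abs m_p\neq\abs n_p$ one is forced to $\abs k_p\le\min(\abs m_p,\abs n_p)$, and the phenotype constraint bounds the remaining primes), so $k$ cannot exceed all $b$-arcs of $B$ once $r$ is large. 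The merging theorem you cite does not rescue this: it takes as input two honest $(m,n)$-graphs of the same phenotype, and your closed-up ball is not one. Note too that your argument never uses the hypothesis $\Lambda\le\la\!\la b\ra\!\ra$. A repaired graph approach would assign \emph{varying} labels --- pick a large $L(v_0)\in\Phe_{m,n}^{-1}(q)$ at the basepoint vertex and propagate along edges via~\eqref{eq: transfer for p} --- and then consistency around circuits of the Bass--Serre graph is precisely where $\Lambda\le\la\!\la b\ra\!\ra$ enters (every circuit has net $t$-height $0$). The paper avoids all of this with an algebraic argument: for finitely generated $\Lambda=\la S\ra\le\la\!\la b\ra\!\ra$ it produces integers $N_j\to\infty$ with $\Phe_{m,n}(N_j)=q$ such that each $\gamma\in S$, having $t$-height $0$, satisfies $\gamma b^{N_j}\gamma^{-1}=b^{\pm N_j}$ (Lemma~\ref{lem: commutation}); hence $\Lambda$ normalizes $\la b^{N_j}\ra$, and $\la\Lambda,b^{N_j}\ra\cong\Lambda\ltimes\la b^{N_j}\ra\to\Lambda$ with phenotype $q$. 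These approximants do not have constant $b$-orbit length.

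For the ``only if'' direction your axis argument is sound, and the obstacle you flag is not real: you do not need one vertex of $A_\lambda$ where all relevant $p$-valuations are simultaneously small. It suffices to bound $\abs{L(v_0)}_p$ for each prime $p$ with $\abs m_p\neq\abs n_p$ separately (some vertex on a period has $\abs{L}_p\le\max(\abs m_p,\abs n_p)$ by your own periodicity-vs-drift contradiction, and $v_0$ lies at bounded combinatorial distance from it), while the phenotype handles the remaining primes. The paper's proof is more direct: given $\gamma\notin\la\!\la b\ra\!\ra$, Lemma~\ref{lem: small power b in lambda} (again via Lemma~\ref{lem: commutation}) exhibits an integer $R=R(q,\gamma)$ with $b^R\in\Lambda$ for every $\Lambda\ni\gamma$ of phenotype $q$, so the clopen set $\{\Lambda:\gamma\in\Lambda,\ b^R\notin\Lambda\}$ separates any infinite-phenotype $\Delta\ni\gamma$ from $\PHE^{-1}(q)$.
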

	It is remarkable that the set $\overline{\PHE\inv(q)}\cap \PHE\inv(\infty)$ is independent of $q$ in the previous result.
	
	Allowing the finite phenotype to vary yields new limit points.
	Our result is the following (see Proposition~\ref{prop: limit finite phen m=n} and Corollary~\ref{cor:accum finite phen in infinite phen has empty interior}).
	
	\begin{theoremintro}\label{thmintro: limit varying phenotype}
		Assume ${\abs{m}}, \abs{n} \neq 1$.
		\begin{enumerate}
			\item If  $\abs m=\abs n$ then every infinite phenotype subgroup is a limit of finite (and varying) phenotype subgroups.
			\item On the contrary,
			if $\abs m\neq \abs n$, then the set of subgroups in $\PHE^{-1}(\infty)$ which are limits of finite (and varying) phenotypes subgroups has empty interior in $\PHE^{-1}(\infty)$. 
		\end{enumerate}
	\end{theoremintro}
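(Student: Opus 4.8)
I work with the Schreier-graph dictionary: a subgroup $\Lambda$ is encoded by the rooted $(m,n)$-graph $\Lambda\bs\BSo(m,n)$, convergence in $\Sub(\BSo(m,n))$ is convergence of rooted balls, and by Theorem~\ref{thintro: char phenotype} together with Proposition~\ref{prop: phenotype partition} one has $\PHE(\Lambda)=\infty$ exactly when every $b$-orbit of $\Lambda\bs\BSo(m,n)$ is infinite, while $\PHE(\Lambda)<\infty$ when these orbits are finite. I also use two elementary features of an $(m,n)$-graph: a $t$-edge from a vertex with $b$-orbit of size $c$ to a vertex with $b$-orbit of size $c'$ forces $c/\gcd(c,\abs n)=c'/\gcd(c',\abs m)$; and a vertex with $b$-orbit of size $c$ has, in its Bass–Serre quotient $\Lambda\bs\Tree$, exactly $\gcd(c,\abs n)$ outgoing and $\gcd(c,\abs m)$ incoming $t$-edges, this count being maximal (equal to $\abs n$, resp.\ $\abs m$) precisely when $\mathrm{lcm}(\abs m,\abs n)\mid c$, and automatically maximal when the $b$-orbit is infinite.

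\textbf{Part (1): $\abs m=\abs n$.} The plan is, given $\Lambda$ with $\PHE(\Lambda)=\infty$ and $N\in\N$, to complete the radius-$N$ ball $G_N$ of $\Lambda\bs\BSo(m,n)$ into a \emph{finite} transitive $(m,n)$-graph whose $b$-orbits are all cycles of one fixed length $L_N$, a large multiple of $\abs m$ (of $2\abs m$ in the twisted case $m=-n$), using Theorem~\ref{thm: merging of (m,n)-graphs} and Proposition~\ref{prop: realization of BS graph}. The point special to $\abs m=\abs n$ is that the relation then reads $tb^{m}t^{-1}=b^{\pm m}$, so it constrains only the $b^{m}$-action; since $b^{m}$ acts on a length-$L_N$ cycle with the same orbit structure ($\abs m$ orbits of size $L_N/\abs m$) regardless of how the cycle arose, the $t$-edges dangling at the boundary of $G_N$ can always be extended consistently. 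The resulting subgroup $\Lambda_N$ agrees with $\Lambda$ on $B(1,N)$, hence $\Lambda_N\to\Lambda$, and $\PHE(\Lambda_N)=\Phe_{m,n}(L_N)$ is finite (it tends to infinity with $N$, which is allowed since we let the phenotype vary). What needs care is that this closing-up respects $G_N$ and keeps the graph connected — this is exactly what the merging statement delivers.

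\textbf{Part (2): $\abs m\ne\abs n$, reduction.} Since $\QQ_{m,n}$ consists of all finite phenotypes together with $\infty$ and every subgroup has a phenotype, $\bigsqcup_{q\in\QQ_{m,n},\,q<\infty}\PHE^{-1}(q)=\Sub(\BSo(m,n))\setminus\PHE^{-1}(\infty)$; hence the set of infinite-phenotype subgroups that are limits of finite-phenotype subgroups is precisely the topological boundary $\partial\,\PHE^{-1}(\infty)$, and the assertion is equivalent to ``$\mathrm{int}\,\PHE^{-1}(\infty)$ is dense in $\PHE^{-1}(\infty)$''. Now $\PHE^{-1}(\infty)\subseteq\Sub_{[\infty]}(\BSo(m,n))=\PK(\BSo(m,n))$ by Theorem~\ref{thintro-PK(BS)} (case $\abs m\ne\abs n$), so $\PHE^{-1}(\infty)$ is the closed piece $\PK_\infty(\BSo(m,n))$ of the partition \eqref{eqintro: partition PK}, on which $\BSo(m,n)$ acts topologically transitively by Theorem~\ref{thintro: infinite partition of PK(BS(m,n))}. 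As $\mathrm{int}\,\PHE^{-1}(\infty)$ is open and invariant under conjugation, a topologically transitive action makes it dense as soon as it is non-empty. So it suffices to exhibit \emph{one} subgroup $\Lambda$ with $\PHE(\Lambda)=\infty$ and a radius $M$ such that every subgroup agreeing with $\Lambda$ on $B(1,M)$ still has infinite phenotype.

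\textbf{Part (2): the rigid subgroup, and the main obstacle.} I would produce such a $\Lambda$ with Proposition~\ref{prop: realization of BS graph}: take the $(m,n)$-graph in which (i) the $t$-edge at the root is a loop — equivalently $t\in\Lambda$ while $\Lambda\cap\la b\ra=\{1\}$ — which in $\Lambda\bs\Tree$ is an edge from the root vertex $v_0$ to itself; (ii) all $b$-orbits are infinite, so $\PHE(\Lambda)=\infty$; (iii) the outgoing and incoming $t$-neighbours of $v_0$ are ``decorated'', each carrying a loop of a distinct prime length, so that they become pairwise non-isomorphic already inside some ball $B(1,M)$. Then for any $\Lambda'$ with $\Lambda'\cap B(1,M)=\Lambda\cap B(1,M)$ the same three features appear in $\Lambda'\bs\BSo(m,n)$; if $\PHE(\Lambda')$ were finite, the root $b$-orbit would be a cycle of finite length $c$, feature (iii) would force $\gcd(c,\abs m)=\abs m$ and $\gcd(c,\abs n)=\abs n$, while the self-loop (i) forces, via the edge relation, $\gcd(c,\abs m)=\gcd(c,\abs n)$; together these give $\abs m=\abs n$, a contradiction, so $\PHE(\Lambda')=\infty$. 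Hence $\Lambda\in\mathrm{int}\,\PHE^{-1}(\infty)$, which finishes Part (2). The hard part is precisely this construction: the decoration must \emph{simultaneously} (a) force the full local valence of $v_0$ — without it, the $\gcd$'s can conspire so that the loop is length-balanced and no contradiction arises, which is exactly what rescues the subgroups contained in $\la\!\la b \ra\!\ra$, cf.\ Theorem~\ref{thmintro: limit fixed phenotype}; (b) be realizable by a genuine connected infinite-phenotype $(m,n)$-graph; and (c) be \emph{rigid}, i.e.\ the forced features must survive in every subgroup sharing the ball $B(1,M)$, in particular must not be destroyed by a long finite $b$-orbit of $\Lambda'$ wrapping around. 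Making (c) precise — pinning down which finite sub-$(m,n)$-graphs force infinite phenotype — is where the combinatorics of $(m,n)$-graphs and the merging result of Theorem~\ref{thm: merging of (m,n)-graphs} will do the work.
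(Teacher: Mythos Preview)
Your reduction in Part~(2) --- that by topological transitivity of the action on $\PK_\infty(\BSo(m,n))$ it suffices to exhibit a single point in the interior of $\PHE^{-1}(\infty)$ --- is exactly the paper's route (Corollary~\ref{cor: dense conj class} and Corollary~\ref{cor:accum finite phen in infinite phen has empty interior}). The divergence, and the gap, is in how you produce that point.

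Your mechanism is to force, from a finite Schreier ball, that the root vertex $v_0$ in the Bass--Serre graph of any nearby $\Lambda'$ has full out-degree $\abs n$ and full in-degree $\abs m$; combined with the self-loop (which gives $\gcd(c,\abs m)=\gcd(c,\abs n)$) this would yield $\abs m=\abs n$. But forcing full degree from a finite ball is precisely what cannot be done: two outgoing edges at $v_0$ collapse in $\Lambda'$ when the $\la b\ra$-orbits of $\Lambda' b^jt$ and $\Lambda' b^{j'}t$ coincide, i.e.\ when $b^{j}tb^{l}t^{-1}b^{-j'}\in\Lambda'$ for \emph{some} $l\in\Z$. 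No finite ball controls all such $l$, and your ``decorations of distinct prime length'' do not change this --- they are properties of the targets, whereas the collapse happens at the source orbit. Since the self-loop already forces $\gcd(c,\abs m)=\gcd(c,\abs n)\le\gcd(m,n)<\max(\abs m,\abs n)$, the degree will in fact \emph{never} be full for finite $c$, so your intended contradiction cannot be reached this way.

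The paper uses the opposite mechanism: instead of forcing \emph{many} edges at $v_0$, it forces \emph{all} outgoing edges to be loops. Taking $\Lambda=\la t, btb^{-1},\dots,b^{k-1}tb^{-(k-1)}\ra$ with $k=\gcd(m,n)$, any nearby $\Lambda'$ contains these $k$ elements; since the out-degree $d=\gcd(c,\abs n)$ divides $k$, each of the $d$ outgoing edges is a loop, hence (as $\degin v_0=\degout v_0=d$) the Bass--Serre graph of $\Lambda'$ is a single vertex (Lemma~\ref{lemma: k loops finite index}). Finite phenotype then gives finite index. The punch line is global, not local: non-residual finiteness of $\BSo(m,n)$ (valid exactly when $\abs m\neq\abs n$ and both $\ge 2$) provides a fixed nontrivial $g$ in every finite-index subgroup; but $\Lambda$ is free on the displayed generators and, being finitely generated with infinite Bass--Serre graph, has conjugates tending to $\{\id\}$ (Proposition~\ref{fg infinite phen subgps are not confined}), so $g\notin\Lambda$. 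Choosing the ball large enough to witness $g\notin\Lambda$ yields the open neighbourhood.

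For Part~(1), your plan is workable but roundabout. The paper's argument is a two-line algebraic observation: when $\abs m=\abs n$ the relation reads $tb^nt^{-1}=b^{\pm n}$, so $\la b^{jn}\ra$ is normalised by every $\Lambda$; since $\Lambda\cap\la b\ra=\{1\}$, one has $\la \Lambda, b^{jn}\ra\cong \Lambda\ltimes\la b^{jn}\ra$, these have finite phenotype, and they converge to $\Lambda$ as $j\to\infty$ (Proposition~\ref{prop: limit finite phen m=n}). No Bass--Serre graph surgery is needed.
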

	
	Therefore, in the case $\abs m=\abs n$, all subgroups of infinite phenotype are limits of subgroups of finite phenotype, but none of them is a limit of subgroups of fixed finite phenotype.
	
	The case $\abs m\neq \abs n$ is more complex. We do not have a nice description of the limit set from the above theorem. We can show however that this limit set is strictly larger than 
	its fixed phenotype counterpart, see Proposition~\ref{prop: varying phenotypes vs fixed easy} and Theorem~\ref{thm: varying phenotypes vs fixed}.

	\subsection{Closures of orbits in finite phenotype}
	
	We still stick to the case $\abs{m}\neq 1$, $\abs{n}\neq1$, and assume moreover $\abs{m} \neq \abs{n}$.
	The previous subsection shows that for any finite phenotype $q$, we have
	\[ 
	\PHE\inv(q) \subsetneq \overline{\PHE\inv(q)} \subsetneq \PHE\inv(q) \cup \PHE\inv(\infty).
	\]
	Theorem \ref{thintro: infinite partition of PK(BS(m,n))} further yields that $\PHE\inv(q)$ 
    contains dense orbits.
    For such an orbit $\mathcal O$, one has $\overline{\mathcal O} = \overline{\PHE\inv(q)}$,
    thus $\overline{\mathcal O}$ intersects  $\PHE\inv(\infty)$.
	In fact, Theorem~\ref{thmintro: limit fixed phenotype} completely described $\overline{\mathcal O}$. 
	We now turn our attention to the orbits whose closure is contained in $\PHE\inv(q)$. 
	Quite remarkably, they form a compact set.

	\begin{theoremintro}[see Theorem~\ref{thm:maximalinvariantclosed}]\label{thmintro:maximalinvariantclosed}
		Suppose ${\abs m},\abs n\neq 1$ and $\abs m\neq\abs n$.
		For every finite phenotype $q$, there is a positive integer $s=s(q,m,n)$ such that the  subset 
		\[
		\maxclo_q\coloneqq \PHE^{-1}(q)\cap \left\lbrace\Lambda\in\Sub(\BSo(m,n))\colon \Lambda\geq\langle\!\langle b^s\rangle\!\rangle\right\rbrace
		\]
		is compact and contains all the invariant compact subsets of $\PHE^{-1}(q)$.
	\end{theoremintro}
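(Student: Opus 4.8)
I would first fix the integer $s$ and dispatch the soft parts. For each prime $p$ set $N_p:=\max\bigl(v_p(q),\ \min(v_p(m),v_p(n))\bigr)$, which vanishes for all but finitely many $p$, and take $s:=\prod_p p^{N_p}$. Invariance of $\maxclo_q$ is immediate: $\bnorm{s}$ is normal, so $\Lambda\geq\bnorm{s}$ is a conjugation-invariant condition, and conjugate subgroups have the same phenotype. For compactness --- equivalently closedness, as $\Sub(\BSo(m,n))$ is compact --- the point is that $\PHE$ is locally constant on the closed set $F:=\{\Lambda:\Lambda\geq\bnorm{s}\}$: if $\Lambda\in F$ then $gb^sg\inv\in\Lambda$ for every $g$, so every $b$-orbit of $\Lambda\bs\BSo(m,n)$ is finite of size dividing $s$; in particular the orbit of the trivial coset has size the least $c\leq s$ with $b^c\in\Lambda$, and by Proposition \ref{prop:phenotype is well defined for conn graph} this gives $\PHE(\Lambda)=\Phe_{m,n}(c)$. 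Since $c$ depends only on $\Lambda\cap\{b,b^2,\dots,b^s\}$, it, and hence $\PHE$, is locally constant on $F$, so $\maxclo_q=F\cap\PHE\inv(q)$ is relatively clopen in $F$, hence closed.

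The heart of the matter is to show every $\BSo(m,n)$-invariant compact $K\subseteq\PHE\inv(q)$ is contained in $\maxclo_q$ --- since $\maxclo_q$ is itself such a set, this exhibits it as the largest one. Fix $K$ and $\Lambda\in K$; it suffices to prove that every $b$-orbit of $\Lambda\bs\BSo(m,n)$ has size dividing $s$, for then $gb^sg\inv\in\Lambda$ for all $g$, whence $\bnorm{s}\leq\Lambda$. Step one is to see that the $b$-orbit sizes of $\Lambda$ are bounded. If not, choose for each $j$ an element $g_j$ with the $b$-orbit of $\Lambda g_j$ of size $\geq j$; then $g_j\inv\Lambda g_j\in K$, and the $b$-orbit of its trivial coset has size $c_j\geq j$, i.e. $b^c\notin g_j\inv\Lambda g_j$ for $0<c<c_j$. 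Passing to a convergent subsequence $g_j\inv\Lambda g_j\to\Lambda_\infty\in K$ (using compactness, and that $K$ is closed), every fixed $b^c$ is eventually outside the group, so $b^c\notin\Lambda_\infty$ for all $c\geq1$; hence the $b$-orbit of the trivial coset of $\Lambda_\infty$ is infinite. But in an $(m,n)$-graph a vertex with infinite $b$-orbit is adjacent only to vertices with infinite $b$-orbit, so by connectedness all $b$-orbits of $\Lambda_\infty$ are infinite, i.e. $\PHE(\Lambda_\infty)=\infty$, contradicting $\Lambda_\infty\in K\subseteq\PHE\inv(q)$.

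The hard part is then to upgrade ``bounded $b$-orbits, all of phenotype $q$'' to ``all $b$-orbit sizes divide $s$''. I would work in the $(m,n)$-graph $\mathcal G=\Lambda\bs T$ of $\Lambda$ acting on the Bass--Serre tree $T$ (Section \ref{sect: Bass-Serre theory}): each vertex $v$ carries the size $k_v$ of the corresponding $b$-orbit, with $\Phe_{m,n}(k_v)=q$; each edge is labelled $m$ at one end and $n$ at the other, with $k_v/\gcd(k_v,m)=k_w/\gcd(k_w,n)$ when $v,w$ are its $m$- and $n$-labelled ends; and $v$ is the $m$-labelled end of $\gcd(m,k_v)\geq1$ edges. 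Fix a prime $p$, write $\mu=v_p(m)$, $\nu=v_p(n)$, $e_v=v_p(k_v)$; the relation reads $e_v-\min(e_v,\mu)=e_w-\min(e_w,\nu)$. If $\mu<\nu$ and $e_v>\mu$ somewhere, then $e_w=e_v+(\nu-\mu)>e_v>\mu$, and iterating along $m$-labelled edges --- one of which issues from every vertex --- builds a walk in $\mathcal G$ along which $e$, hence $k_\bullet$, strictly increases; this visits infinitely many vertices, contradicting boundedness. So $e_v\leq\mu=\min(\mu,\nu)\leq N_p$ for all $v$ when $\mu<\nu$, and symmetrically when $\mu>\nu$. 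When $\mu=\nu$, the relation forces $e_w=e_v$ whenever $e_v>\mu$, so either $e_\bullet\leq\mu\leq N_p$ throughout, or $e_\bullet$ is a constant value $>\mu$ on the connected graph $\mathcal G$, which --- being invariant under the reductions defining $\Phe_{m,n}$ (Definition \ref{df: phenotype natural number}) --- equals $v_p(\Phe_{m,n}(k_v))=v_p(q)\leq N_p$. In all cases $e_v\leq N_p$, hence $k_v\mid\prod_p p^{N_p}=s$, as wanted.

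The obstacles I expect to grapple with are concentrated in this last step: extracting the compatibility relation together with its orientation data from the HNN presentation of $\BSo(m,n)$ (routine Bass--Serre theory, but the conventions must be pinned down); checking, via Definition \ref{df: phenotype natural number} together with the merging and realization results (Theorem \ref{thm: merging of (m,n)-graphs}, Proposition \ref{prop: realization of BS graph}), that the part of a phenotype that can be ``reduced away'' is exactly the part supported on the primes $p$ with $v_p(m)\neq v_p(n)$ --- so that $v_p(q)=0$ for those $p$ and the exponents $N_p$ above are the correct ones; and the minor point that $\mathcal G$ need not be locally a tree, which is harmless here since strict monotonicity of $e$ along the walk already forbids it from revisiting a vertex. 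One can also check in passing that $\maxclo_q$ is non-empty, since $\bnorm{s}$ itself satisfies $\bnorm{s}\cap\langle b\rangle=\langle b^s\rangle$, hence has all $b$-orbits of size $s$ and phenotype $q$.
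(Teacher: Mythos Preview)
Your proof is correct and follows essentially the same approach as the paper's Theorem~\ref{thm:maximalinvariantclosed}: your integer $s$ coincides with the paper's $s(q,m,n)$ from \eqref{eq: def of integer s(q,m,n)}, your closedness argument is a repackaging of theirs, and your Step~2 (bounded labels of phenotype $q$ must divide $s$) is precisely the contrapositive of their Item~(2), proved via the same edge-by-edge $p$-adic analysis (their Lemma~\ref{lem:geod ray to infty} and Lemma~\ref{lem: infinite paths}). Two minor remarks: the merging and realization results are not needed to see $v_p(q)=0$ when $v_p(m)\neq v_p(n)$, as this is immediate from Definition~\ref{df: phenotype natural number}; and your passing claim that $\bnorm{s}\cap\langle b\rangle=\langle b^s\rangle$ is not automatic and is exactly the content of the paper's Proposition~\ref{prop: quotients by b to the k} (one must check $r(s)=s$).
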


	In particular every normal subgroup of phenotype $q$, and hence every finite index subgroup, contains $\la\!\la b^s\ra\!\ra$. Moreover, $\maxclo_q\cap \PK_q(\BSo(m,n))$ has empty interior in $\PK_q(\BSo(m,n))$ (Theorem~\ref{thm:maximalinvariantclosed}-\ref{item: maxclo is meager}).
	
	When $\gcd(m,n)=1$, the above theorem takes an easier form:
	$s=q$ and $\maxclo_q\cap \PK(\BSo(m,n))=\{\langle\!\langle b^q\rangle\!\rangle\}$. In particular, $\langle\!\langle b^q\rangle\!\rangle$ is the unique normal subgroup of phenotype $q$ and  infinite index, see Theorem~\ref{thm:maximalinvariantclosed}-\ref{item: gcp m n 1 maxclo small}. On the other hand, if $\gcd(m, n)\neq 1$, then the perfect kernel contains continuum many normal subgroups of phenotype $q$, see Theorem \ref{th: normal subgroups in finite phenotype}.
	
	\subsection{An example: the case of \texorpdfstring{$\BSo(2,3)$}{BS(2,3)}}
	
	Let us specialize our theorems to the case of $\BSo(2,3)$.
	An illustrative picture is given in Figure \ref{fig:flower}.

	\begin{figure}[ht]
		\includegraphics{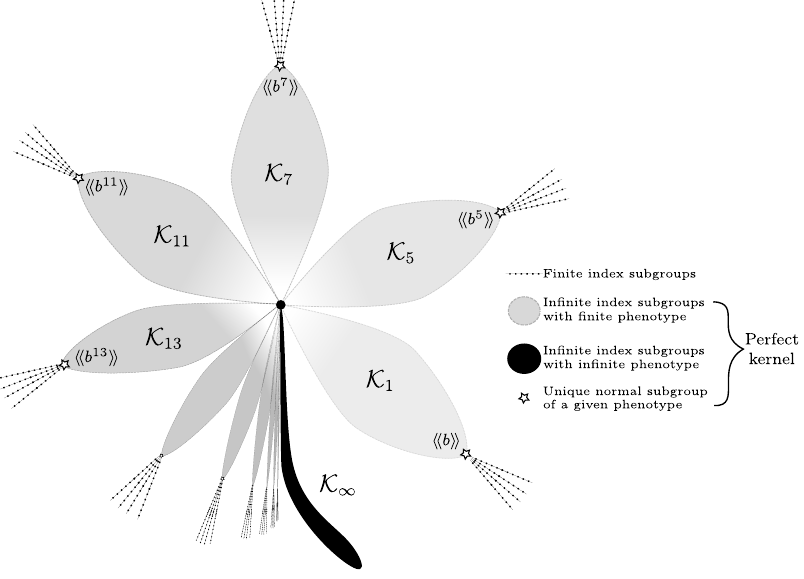}
		\caption{The space of subgroups of $\BSo(2,3)$}\label{fig:flower}
	\end{figure}

	Since $2\neq 3$, Theorem \ref{thintro-PK(BS)} tells us that $\PK(\BSo(2,3))=\Sub_{[\infty]}(\BSo(2,3))$.
	In this case the phenotype is given by the following simple formula
	\[
	\PHE(\Lambda)= \frac {I}{2^{\abs I_2}3^{\abs I_3}}, 
	\]
	where  $I$ is the index $I\coloneqq [ \la b\ra : \Lambda\cap\la b\ra ]$, and where $\abs{I}_p$ denotes the $p$-adic valuation of $I$ subject to the convention that 
	$\abs{\infty}_p=0$.
	
	Therefore, the possible phenotypes for the subgroups of $\BSo(2,3)$ are given by
	all the positive integers not divisible by 2 and 3, 
	and infinity. 
	Denoting
	$\PK_q=\{\Lambda\leq\BSo(2,3)\colon \PHE(\Lambda)=q\}$, the partition \eqref{eqintro: partition PK} becomes
	\[
	\PK(\BSo(2,3))=\PK_\infty\sqcup\bigsqcup_{q\colon \gcd(q,2)=\gcd(q,3)=1}\PK_q.
	\]
	By Theorem \ref{thintro: infinite partition of PK(BS(m,n))},
	the action on each $\PK_q$ is topologically transitive. 
	Note that all finite index subgroups have finite phenotype. The set $\PK_\infty$
	is closed and colored in black in Figure \ref{fig:flower}; the subsets $\PK_q$ are open
	and colored in gray in the figure. 
	Finally the finite index subgroups are denoted by the dotted lines. Note that there are infinitely many finite index subgroups and they accumulate on the sets $\PK_q$.

	Note that for every finite $q$, the set $\overline{\PK_q}\cap\PK_\infty$ is non-empty and independent
	of $q$; indeed by Theorem \ref{thmintro: limit fixed phenotype} this is the set of subgroups of infinite phenotype
	contained in $\la\!\la b\ra\!\ra$. This set is illustrated as the black central disk in the
	figure. As one can guess in the figure, $\overline{\cup_{q\ \text{finite}}\PK_q}\cap\PK_\infty$ is strictly bigger than this
	set, and yet not the entirety of $\PK_\infty$, as prescribed by Theorem \ref{thmintro: limit varying phenotype}.
	
	We finally apply Theorem \ref{thmintro:maximalinvariantclosed}. Since $\gcd(2,3)=1$, for every finite phenotype $q$ the largest compact invariant subset of $\PK_q$ consists only of one point: the unique normal subgroup contained in $\PK_q$, namely $\la\!\la b^q\ra\!\ra$, pictured with a small star in the figure. 
	Moreover, $\maxclo_q$ consists of the finite index subgroups of phenotype $q$ represented by the dotted lines emanating from the star together with 
	 the single accumulation point $\la\!\la b^q\ra\!\ra$ of $\maxclo_q$.

	\begin{remark*}
		Figure \ref{fig:flower} is actually quite general: as soon as $\abs m\neq \abs n$, we have the exact same picture except that the  possible phenotypes are different, and the stars turn into bigger compact maximal invariant subsets. Moreover, the phenotype is given by a more complicated formula.
	\end{remark*}
	
	\subsection{Some ideas on the techniques of proofs}

	The definition of the topology on the space of subgroups leads us to look at the restriction of transitive actions to some part of their Schreier graph and then on assembling such parts from different actions (to form new actions): this leads us to the notion of pre-action, as considered in \cite{fimaCharacterizationHighTransitivity2022},  where to facilitate the verification of the group relation, we impose that $b$ is defined everywhere, i.e. on the whole domain of the pre-action (see Section~\ref{subsec: preactions}). These pre-actions are more malleable but the algebraic conditions underlying them still make them difficult to manipulate. 

	This is why we further downgrade the data and 
	 move on to purely combinatorial objects associated with actions and pre-actions: the $(m,n)$-graphs (Section~\ref{sect: (m,n)-graphs}). These are oriented graphs which carry labels on the vertices and on the edges and which satisfy simple arithmetic conditions linking degrees and labels (Definition~ \ref{def:mngraph}, 
	equalities \eqref{eq:transfert} and inequalities \eqref{eq: deg bound by gcd L(v), n m}). They generalize the Bass-Serre graphs of pre-actions used in \cite{fimaCharacterizationHighTransitivity2022} by adding their labels which record the size of the orbits of $b$, $b^m$ or $b^n$ according to the graph element considered. Notice that in \cite{fimaCharacterizationHighTransitivity2022} the $b$-orbits were assumed to be infinite. 
	
	All the vertex labels of a connected $(m,n)$-graph have the same $(m,n)$-phenotype (Proposition~\ref{prop:phenotype is well defined for conn graph}) which is thus defined to be the phenotype of the graph (Definition~\ref{def: phenotype for m,n-graphs}).
	
	At this level, we can consider assembling together different parts (originating from different actions). Consider two connected $(m,n)$-graphs that are non-saturated (at least one of the inequalities \eqref{eq: deg bound by gcd L(v), n m} is strict), then they
	can appear as subgraphs of the same $(m,n)$-graph as soon as they have the same phenotype (Theorem~\ref{thm: merging of (m,n)-graphs}). This relies on  two basic constructions, the Welding Lemma~\ref{lem: self welding lemma} and the Connecting Theorem~\ref{thm: connecting same phenotype}.

	We then proceed by upgrading from $(m,n)$-graphs to pre-actions and actions
	(Proposition~\ref{Extending pre-action onto a (m,n)-graph}).
	These upgrades are not uniquely determined, however if an $(m,n)$-graph $\Gc_2$ contains the $(m,n)$-graph $\Gc_1$ of a pre-action $\alpha_1$, then the upgraded pre-action $\alpha_2$ can be chosen to extend $\alpha_1$ (Proposition~\ref{Extending pre-action onto a (m,n)-graph}).
	
	To summarize, we will use several times the same construction scheme: 
	Considering two actions, we restrict them to a large but proper part of their domain (pre-actions). We downgrade the resulting pre-actions to $(m,n)$-graphs and glue them together. We saturate the resulting $(m,n)$-graph and upgrade it into one action that "contains" the chosen parts of both original actions as sub-pre-actions (Theorem~\ref{thm: merging of pre-actions}).
	
	\subsection{Subsequent work}

	Since the first version of the present paper appeared, two preprints have enriched the picture as follows.

	On the one hand, the three last-named authors proved in \cite{CGLMS-HT}
	that the dynamics on the pieces $\PK_q$ is in fact \emph{highly topologically transitive}.
	They also studied the property of high transitivity for transitive actions of $\BSo(m,n)$: 
	they characterized the pieces 
	containing subgroups $\Lambda$ such that the action $\Lambda \backslash \BSo(m,n) \curvearrowleft \BSo(m,n)$
	is highly transitive and they established that this property is generic in these pieces.

	On the other hand, Sasha Bontemps has extended  Theorems~\ref{thintro-PK(BS)}, \ref{thintro: infinite partition of PK(BS(m,n))} and \ref{thintro: char phenotype} to generalized Baumslag-Solitar groups, where the right notion of phenotype is more subtle \cite{Bontemps-pkGBS}. 
	She also obtained high topological transitivity results generalizing Theorem~C from the aforementioned preprint \cite{CGLMS-HT}.
	
	\paragraph{Acknowledgements.} 
	We are very grateful to both referees for their work and their detailed remarks which helped us to improve the paper.
	A.~C.\ acknowledges funding by the Deutsche Forschungsgemeinschaft (DFG, German Research Foundation) – 281869850 (RTG 2229).
	D.~G.\ is supported by the CNRS  and partially supported by the LABEX MILYON (ANR-10-LABX-0070) of Université de Lyon, within the program “Investissements d’Avenir” (ANR-11-IDEX-0007) operated by the French National Research Agency (ANR).
	F.~L.M.\ acknowledges funding by the ANR projects ANR-17-CE40-0026 AGRUME and  ANR-19-CE40-0008 AODynG.

	\section{Preliminaries and notations}
	
	In this text, we denote by $\Z_{\geq 0}\coloneqq \{0,1,2,\ldots\}$ the set of non-negative integers and by $\Z_{\geq 1}\coloneqq \{1,2,3,\ldots\}$ the set of positive integers. Given two integers $k,l\in\Z\smallsetminus\{0\}$, we denote by $\gcd(k,l)\in\Z_{\geq 1}$ the \textit{greatest common divisor} of $k$ and $l$. We use the convention that $\gcd(k,\infty)=k$ and $\frac \infty k=k\infty=\infty$. 
	
	Let $\primes$ be the set of prime numbers. Given an integer $k\in\Z\smallsetminus\{0\}$ and a prime $p\in \primes$, we denote by $\abs{k}_p$ the $p$-\textit{adic valuation} of $k$, that is $\abs{k}_p$ is the largest positive integer such that $p^{\abs{k}_p}$ divides $k$. 	
	
	\subsection{Graphs and Schreier graphs}
	\label{subsect: Graphs and Schreier graphs}
	
	All our graphs are defined as in \cite{serreTrees1980}. 
	That is, a graph $\Gc$ is a couple $(V(\Gc),E(\Gc))$ where $V(\Gc)$ is the \defin{vertex set} and $E(\Gc)$ is the \defin{edge set}, endowed with: 
	\begin{itemize}
		\item two maps $\source,\target\colon E(\Gc)\rightarrow V(\Gc)$ called \defin{source} and \defin{target} respectively;
		\item a fixed-point-free involution $E(\Gc) \to E(\Gc), e \mapsto \bar e$;
	\end{itemize}
	such that $\source(\bar e) = \target(e)$ and $\target(\bar e) = \source (e)$.
	
	An \defin{orientation} of the graph $\Gc$ is a partition $E(\Gc) = E^+(\Gc) \sqcup E^{-}(\Gc)$ whose pieces are exchanged by the involution $e\mapsto \bar e$. Edges in $E^+(\Gc)$ are called \defin{positive} edges and edges in $E^-(\Gc)$ are \defin{negative}.
	
	\begin{remark}
		In order to define an oriented graph $\Gc$, it is enough to define the set of vertices $V(\Gc)$, the set of positive edges $E^+(\Gc)$, and the restrictions of the source and target maps $\source,\target$ to $E^+(\Gc)$.
		Indeed, we can define $E^-(\Gc)$ to be a copy of $E^+(\Gc)$ and the involution $e\mapsto \bar e$ to be the natural identification of $E^+(\Gc)$ with $E^-(\Gc)$. We extend the source and target map by setting $\source(\bar e)\coloneqq \target(e)$ and $\target(\bar e)\coloneqq \source(e)$.
	\end{remark}
	
	The \defin{degree} of a vertex $v$ in a graph $\Gc$, is the cardinal  \[\deg(v)\coloneqq \lvert \{e\in E(\Gc): \source(e) = v\}\rvert=\lvert \{e\in E(\Gc): \target(e) = v\}\rvert.\]
	If $\Gc$ is oriented, 
	we say that an edge $e$ is:
	\begin{itemize}
		\item a \defin{$v$-outgoing} edge if it is positive and $\source(e)=v$;
		\item a \defin{$v$-incoming} edge if it is positive and $\target(e)=v$.
	\end{itemize}
	The \defin{outgoing degree} $\degout (v)$ of $v$ is the number of $v$-outgoing edges while its \defin{incoming degree} $\degin (v)$ is the number of $v$-incoming edges. We clearly have $\degout(v)+\degin(v) = \deg(v)$.
	
	A \defin{subgraph} $\Gc'$ of a graph $\Gc$ is a graph such that $V(\Gc')\subseteq V(\Gc)$, $E(\Gc')\subseteq E(\Gc)$ and the structural maps of $\Gc'$ are restrictions of those of $\Gc$.
	
	Still following \cite{serreTrees1980}, we call \defin{circuit} a subgraph isomorphic to a circular graph (of length $l\geq 1$) and \defin{loop} a circuit of length $1$. 
	The edge of a loop is also called a loop.
	
	A \textbf{path} in a graph $\Gc$ is a finite sequence of 
	edges $(e_1,\dots,e_n)$, such that for all $1\leq k\leq n-1$, 
	$\target(e_k)=\source(e_{k+1})$. 
	Similarly, an \textbf{infinite path} is a sequence of edges $(e_k)_{k\geq 1}$ such that $\target(e_k)=\source(e_{k+1})$ for all $k\geq 1$. Finally a (possibly infinite) path is called \textbf{simple} when the induced sequence of vertices is injective.
	
	The \defin{ball} $B(v,R)$ of radius $R$ centered at a vertex $v$ in a graph $\Gc$ is the subgraph induced by the set of vertices of $\Gc$ at distance $\leq R$ from $v$ in the path metric.  
	
	\paragraph{Schreier graphs.} Let $\Gamma$ be a group and let $S$ be a generating set of $\Gamma$. Consider a (right) action $\alpha\colon X \curvearrowleft \Gamma$. The \defin{Schreier graph} of $\alpha$ relatively to $S$ is the oriented graph 
	$\Schreier(\alpha)=\Schreier(\alpha, S)$ defined by 
	\[	
	V(\Schreier(\alpha))\coloneqq X \text{ and } E^+(\Schreier(\alpha))\coloneqq\{(x, s)\colon x\in X, s\in S\} 
	\]
	where $\source(x,s)=x$ and $\target(x,s)=xs$, together with the following labeling:
	the edge $(x,s)$ is labeled $s$ and its opposite $\overline{(x,s)}$ is labeled by $s^{-1}$.
	
	Given a subgroup $\Lambda\leq \Gamma$, we denote by $\Schreier(\Lambda,S)$ the Schreier graph of the natural action $\Lambda\backslash \Gamma\curvearrowleft \Gamma$. 
	
	The \defin{Cayley graph} of $\Gamma$ relatively to $S$ is the Schreier graph $\Schreier(\alpha, S)$ of the action $\alpha\colon \Gamma \curvearrowleft \Gamma$  by (right) translations. 
	This graph is denoted by $\Cayley(\Gamma, S)$ and clearly $\Cayley(\Gamma,S)=\Schreier(\{\id\},S)$.
	The $\Gamma$-action by left translations extends to the standard left action of $\Gamma$ on $\Cayley(\Gamma, S)$ by graph automorphisms 
	\footnote{This is why Schreier graphs were defined with respect to right actions.}. In particular $\Lambda\backslash\Cayley(\Gamma,S)=\Schreier(\Lambda,S)$.
	
	Let $\varphi\colon X\to Y$ be a $\Gamma$-equivariant map from $\alpha\colon X\curvearrowleft\Gamma$
	to $\beta\colon Y \curvearrowleft\Gamma$ and let $S$ be a generating set of $\Gamma$.
	The map $\varphi$ extends to a graph morphism from $\Schreier(\alpha, S)$ to $\Schreier(\beta, S)$ which respects the labelings. 
	In particular, given subgroups $\Lambda_1 \leq \Lambda_2\leq \Gamma$, the equivariant map $\Lambda_1\backslash \Gamma \to \Lambda_2\backslash \Gamma$ defines a surjective morphism $\Schreier(\Lambda_1, S) \to \Schreier(\Lambda_2, S)$.

	\subsection{Space of subgroups}\label{sec: space subgroups}
	
	Let $\Gamma$ be a countable group. We identify its set of subsets with $\{0,1\}^\Gamma$ and we endow it with the product topology, thus turning it into a Polish compact space.
	The \textbf{space of subgroups} of $\Gamma$ is the closed, hence compact Polish, subspace 
	\[
	\Sub(\Gamma) \coloneqq \{ \Lambda \in \{0,1\}^\Gamma\colon \Lambda \text{ is a subgroup} \},
	\]
	which is also totally disconnected. The clopen subsets 
	\[
	\Vc(I,O) \coloneqq \{ \Lambda \in \Sub(\Gamma)\colon I \subseteq \Lambda \text{ and } \Lambda\cap O = \emptyset \}
	\]
	of $\Sub(\Gamma)$ where $I, O$ run over finite subsets of $\Gamma$, form a basis of the topology. Note that a sequence $(\Lambda_n)_{n\geq 0}$ of subgroups converges to $\Lambda$ if and only if for all $\gamma\in\Gamma$,
	\[
	\left(\gamma\in \Lambda\right) \iff \left(\gamma \in \Lambda_i \text{ for } i \text{ large enough}\right).
	\]
	
	By the Cantor-Bendixson Theorem \cite{cantorUeberUnendlicheLineare1884, bendixsonQuelquesTheoremesTheorie1883} (see e.g.\ \cite[Thm.\ 6.4]{kechrisClassicalDescriptiveSet1995}), there is a unique decomposition
	\[
	\Sub(\Gamma) = \Cc(\Gamma) \sqcup \PK(\Gamma)
	\]
	where $\Cc(\Gamma)$ is a countable open subset and $\PK(\Gamma)$ is a closed perfect\footnote{A topological space is called \textbf{perfect} if it has no isolated points.} subspace called the \textbf{perfect kernel} of $\Gamma$.
	The set $\PK(\Gamma)$ is the largest subset $\PK\subseteq \Sub(\Gamma)$ without isolated points for the induced topology.
	In fact, $\PK(\Gamma)$ is exactly the set of \textbf{condensation points}, that is, the points whose neighborhoods in $\Sub(\Gamma)$ are all uncountable.
	
	\begin{remark}
		By a theorem of Brouwer, the space $\PK(\Gamma)$ is either empty or a Cantor space, see \cite[Thm.~7.4]{kechrisClassicalDescriptiveSet1995}.
	\end{remark}

	\begin{remark}\label{rmk: infinite index closed iff fg}
		The subset $\Sub_{[\infty]}(\Gamma)$ of infinite index subgroups of $\Gamma$ is closed in $\Gamma$ if and only if $\Gamma$ is finitely generated.    
		Indeed if $\Gamma$ is finitely generated, then its finite index subgroups are isolated.
		If $\Gamma$ is not finitely generated, its finite index subgroups are not finitely generated, but they are limit points of finitely generated (thus of infinite index)
		subgroups; so $\Sub_{[\infty]}(\Gamma)$ is dense in $\Sub(\Gamma)$.
	\end{remark}

	The group $\Gamma$ acts (on the right) by conjugation via $\Lambda\cdot\gamma \coloneqq \gamma\inv\Lambda\gamma$ on the space of its subgroups $\Sub(\Gamma)$.
	This action is continuous and the Cantor-Bendixson decomposition $\Sub(\Gamma)=\Cc(\Gamma)\sqcup \PK(\Gamma)$ is $\Gamma$-invariant.

	By the Baire category theorem, any countable closed subset of $\Sub(\Gamma)$ contains an isolated point, so $\Sub(\Gamma)$ has trivial perfect kernel if and only if it is countable. The following well-known proposition is useful for showing the latter property.
	\begin{proposition}
		\label{prop:Noether-by-subcountable}
		Let $\Gamma$ be a countable group, let $N$ be a normal subgroup of $\Gamma$ such that $\Gamma/N$ is Noetherian (all its subgroups are finitely generated), and assume that $\Sub(N)$ is countable. Then $\Sub(\Gamma)$ is countable. 
	\end{proposition}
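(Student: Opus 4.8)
The plan is to produce, for each subgroup $\Lambda \le \Gamma$, a finite "recipe" that determines $\Lambda$ uniquely among subgroups of $\Gamma$, using only countably many possible recipes; this yields $\abs{\Sub(\Gamma)} \le \aleph_0$. The two ingredients at our disposal are: $\Sub(N)$ is countable, and $\Gamma/N$ is Noetherian, so every subgroup of $\Gamma/N$ is finitely generated. First I would set up the two natural invariants attached to $\Lambda$: its trace $\Lambda \cap N \in \Sub(N)$, and its image $\bar\Lambda \coloneqq \Lambda N / N \le \Gamma/N$. The image $\bar\Lambda$ is finitely generated, say by the classes of finitely many elements $g_1 N, \dots, g_k N$ with $g_i \in \Lambda$. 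The claim I want is that the triple consisting of $\Lambda \cap N$ together with the finite list $(g_1, \dots, g_k)$ pins down $\Lambda$ exactly: namely $\Lambda$ is recovered as the set of all $\gamma \in \Gamma$ that can be written as $\gamma = g_{i_1}^{\varepsilon_1} \cdots g_{i_r}^{\varepsilon_r} \cdot c$ for some word in the $g_i^{\pm 1}$ and some $c \in \Lambda \cap N$.

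The key step is verifying this recovery formula. One inclusion is immediate: any such product lies in $\Lambda$ since each $g_i \in \Lambda$ and $\Lambda \cap N \subseteq \Lambda$. For the reverse inclusion, take $\gamma \in \Lambda$; then $\gamma N \in \bar\Lambda$, so $\gamma N = (g_{i_1} N)^{\varepsilon_1} \cdots (g_{i_r} N)^{\varepsilon_r}$ for some word, hence $w \coloneqq g_{i_1}^{\varepsilon_1} \cdots g_{i_r}^{\varepsilon_r}$ satisfies $w^{-1}\gamma \in N$; and since $w \in \Lambda$ and $\gamma \in \Lambda$, we get $w^{-1}\gamma \in \Lambda \cap N$. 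Thus $\gamma = w \cdot (w^{-1}\gamma)$ has the required form. This shows $\Lambda$ is determined by the data $(\Lambda \cap N; g_1, \dots, g_k)$.

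To finish, I would bound the number of possible data. The component $\Lambda \cap N$ ranges over the countable set $\Sub(N)$. For the generating tuple, the subtlety is that the $g_i$ range over all of $\Gamma$, which is countable, so for each fixed $k$ there are only countably many tuples $(g_1, \dots, g_k) \in \Gamma^k$; ranging over $k \in \Z_{\geq 0}$ keeps this countable. Hence the set of all admissible data is a countable union of countable sets, so countable, and the map sending a subgroup to (a choice of) its data is injective by the previous paragraph. Therefore $\Sub(\Gamma)$ is countable.

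The only genuine point requiring care — the "main obstacle," though it is mild — is that the choice of generators $g_1, \dots, g_k \in \Lambda$ lifting a generating set of $\bar\Lambda$ is not canonical, so strictly speaking one fixes, once and for all, for each finitely generated subgroup $\bar H \le \Gamma/N$ a finite generating set, and then for each $\Lambda$ chooses lifts $g_i \in \Lambda$ of those fixed generators of $\bar\Lambda$; this is a countable amount of choice data and causes no set-theoretic difficulty. Everything else is the routine verification above, and no topology on $\Sub(\Gamma)$ is actually needed for the argument — only the cardinality statements — though one could alternatively phrase it via the continuous maps $\Lambda \mapsto \Lambda \cap N$ and $\Lambda \mapsto \bar\Lambda$ if a topological proof were preferred.
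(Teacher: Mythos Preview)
Your proof is correct and follows essentially the same approach as the paper: recover $\Lambda$ from the pair $(\Lambda\cap N,\ \text{finite lift of generators of }\pi(\Lambda))$, then count. The only cosmetic difference is that the paper packages the argument as a surjection $\mathcal P_f(\Gamma)\times\Sub(N)\twoheadrightarrow\Sub(\Gamma)$, $(S',N')\mapsto\la S'\cup N'\ra$, which sidesteps the canonicity-of-choice issue you flag at the end.
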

	\begin{proof}
		Let $\Lambda\leq \Gamma$ and denote by $\pi\colon \Gamma\to \Gamma/N$ the quotient map.
		Since $\Gamma/N$ is Noetherian, we have $\pi(\Lambda)=\la S\ra$ for some finite set $S$. Fix a finite set $S'\subseteq \Lambda$ such that $\pi(S')=S$. Then we can recover $\Lambda$ from $S'$ and its intersection with $N$ as
		\[
		\Lambda = \la S'\cup (\Lambda\cap N)\ra.
		\]
		In other words, the map $(S',N')\mapsto\la S'\cup N'\ra$  surjects $\mathcal P_f(\Gamma)\times\Sub(N)$ onto $\Sub(\Gamma)$, where $\mathcal P_f(\Gamma)$ is the set of finite subsets of $\Gamma$, which is countable. Since $\Sub(N)$ is countable as well we conclude that $\Sub(\Gamma)$ is countable.
	\end{proof}
	\begin{corollary}
		If $\abs{m}=1$ or $\abs n=1$ then $\Sub(\BSo(m,n))$ is countable.
	\end{corollary}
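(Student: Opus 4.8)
The plan is to deduce the statement from Proposition~\ref{prop:Noether-by-subcountable}. Via the isomorphism $\BSo(m,n)\cong\BSo(n,m)$ (fixing $b$ and inverting the stable letter $t$) we may assume $\abs n=1$; set $r\coloneqq\abs m\in\Z_{\geq 1}$. Let $N=\langle\!\langle b\rangle\!\rangle$ be the normal closure of $b$ in $\BSo(m,n)$. Then $\BSo(m,n)/N$ is generated by the image of $t$, hence is cyclic, hence Noetherian; so by Proposition~\ref{prop:Noether-by-subcountable} it is enough to prove that $\Sub(N)$ is countable. First I would identify $N$. The defining relation yields $(tbt\inv)^m=b^{\pm1}$, so $\la b\ra\subseteq t\la b\ra t\inv$ and $N$ is the increasing union of the infinite cyclic subgroups $t^{k}\la b\ra t^{-k}$ ($k\geq 0$); since the inclusion $t^{k}\la b\ra t^{-k}\hookrightarrow t^{k+1}\la b\ra t^{-k-1}$ identifies the generator of the smaller group with the $(\pm m)$-th power of the generator of the larger one, a direct check gives $N\cong\Z[1/r]$ — this is just the familiar description of $\BSo(m,\pm1)$ as the ascending HNN extension $\Z[1/r]\rtimes\Z$. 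If $r=1$ then $N\cong\Z$ is finitely generated and we are done, so assume $r\geq 2$, with set of prime divisors $\{p_1,\dots,p_s\}$.

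It remains to show that $\Sub(\Z[1/r])$ is countable. The crucial observation is that $\Z[1/r]/\Z\cong\bigoplus_{i=1}^s\Z[1/p_i]/\Z$ is a torsion group each of whose summands is the Prüfer group $\Z(p_i^{\infty})$, whose only subgroups are the $\tfrac1{p_i^a}\Z/\Z$, $a\in\Z_{\geq 0}$, together with the whole group. As the summands are primary for distinct primes, any subgroup of $\Z[1/r]/\Z$ is the direct sum of its primary components, so $\Z[1/r]/\Z$ has only countably many subgroups; equivalently, $\Z[1/r]$ has only countably many subgroups containing $\Z$. For an arbitrary nonzero $H\leq\Z[1/r]$, let $d\geq 1$ be the least positive integer in $H$, so that $H\cap\Z=d\Z$ and $d\in H$; then $\tfrac1d H$ is a subgroup of $\tfrac1d\Z[1/r]$ containing $\Z$. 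Writing $d=d_0d_1$ with $\gcd(d_0,r)=1$ and every prime factor of $d_1$ dividing $r$, the element $d_1$ is a unit of $\Z[1/r]$, so $\tfrac1d\Z[1/r]=\tfrac1{d_0}\Z[1/r]$, and since $d_0$ is coprime to $r$ one has $\tfrac1{d_0}\Z[1/r]/\Z\cong(\Z/d_0\Z)\oplus\bigoplus_{i=1}^s\Z(p_i^{\infty})$, which again has a countable subgroup lattice by the same primary-decomposition argument. Hence, for each fixed $d$, there are only countably many possibilities for $\tfrac1d H$, and therefore for $H=d\cdot\tfrac1d H$. Letting $d$ range over $\Z_{\geq 1}$ and adding the trivial subgroup shows $\Sub(\Z[1/r])$ is countable, which completes the argument.

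The truly routine parts are the identification $N\cong\Z[1/r]$ (a one-line HNN-extension computation) and the elementary description of the subgroups of a Prüfer group. The only step calling for a little attention is the reduction from subgroups containing $\Z$ to arbitrary subgroups: one must check that dividing by the least positive integer $d\in H$ lands $\tfrac1d H$ inside a group of the controlled shape $\tfrac1{d_0}\Z[1/r]$ whose quotient by $\Z$ is still a torsion group with only countably many subgroups. This is exactly the place where the hypothesis that $\abs m$ or $\abs n$ equals $1$ (equivalently: that only finitely many primes get inverted in $N$) is essential — it is what separates $\Sub(\Z[1/r])$, which is countable, from $\Sub(\Q)$, which has cardinality $2^{\aleph_0}$.
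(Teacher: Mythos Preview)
Your proof is correct and follows the same strategy as the paper: reduce by symmetry, apply Proposition~\ref{prop:Noether-by-subcountable} with $N=\langle\!\langle b\rangle\!\rangle\cong\Z[1/r]$, and conclude from the countability of $\Sub(\Z[1/r])$. The only difference is that the paper merely cites \cite[Cor.~8.4]{becker_stability_2019} for the last step, whereas you supply a self-contained argument via the primary decomposition of $\Z[1/r]/\Z$ and the reduction by the least positive integer in $H$; your version is thus more complete but not methodologically different.
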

	\begin{proof}[Sketch of proof]
		We sketch the proof contained in \cite[Cor.~8.4]{beckerStabilityInvariantRandom2019}.
		By symmetry we may as well assume $m=1$. 
		Then $\BSo(m,n)$ is isomorphic to the semi-direct product $\Z[1/n]\rtimes \Z$ where $\Z$ acts by multiplication by $n$.
		As explained in the proof of \cite[Cor.~8.4]{beckerStabilityInvariantRandom2019}, $\Sub(\Z[1/n])$ is countable, so the result follows from the previous proposition.
	\end{proof}
	
	\paragraph{Space of pointed actions.}
	Let us now interpret the topological space $\Sub(\Gamma)$ in terms of pointed transitive group actions and their pointed Schreier graphs.
	To any pointed transitive group action $(\alpha,v)$, where $ \alpha:V \curvearrowleft \Gamma$ and $v\in V$, we associate the stabilizer $\Stab_\alpha(v) \in \Sub(\Gamma)$, 
	and we notice that $\Stab_{\alpha_1}(v_1) = \Stab_{\alpha_2}(v_2)$ if and only if $(\alpha_1,v_1)$ and $(\alpha_2,v_2)$ are isomorphic as pointed transitive actions. 
	\begin{notation} We denote by $[\alpha,v]$ the isomorphism class of any pointed transitive action $(\alpha,v)$.
	\end{notation}
	We therefore have a canonical bijection $[\alpha,v] \mapsto \Stab_\alpha(v)$ between the collection of isomorphism classes of pointed transitive actions and $\Sub(\Gamma)$. 
	Its inverse is given by $\Lambda \mapsto [\Lambda \bs \Gamma \reacts \Gamma, \Lambda]$. 
	Through this bijection, the action by conjugation of $\Gamma$ on $\Sub(\Gamma)$ becomes $[\alpha,v]\cdot \gamma= [\alpha,v\alpha(\gamma)]$, i.e., it moves the basepoint.
	
	Via the above identification, we obtain a topology on the set of  isomorphism classes of pointed actions $[\alpha,v]$. 
	
	It is clear that two pointed actions are isomorphic if and only if their Schreier graphs are isomorphic as pointed labeled graphs.
	Given two pointed labeled oriented graphs $(\Gc,v), (\Hc,w)$ and a positive integer $R$, we write $(\Gc,v) \simeq_R (\Hc,w)$ to mean that the $R$-balls around $v$ in $\Gc$ and around $w$ in $\Hc$ are isomorphic as pointed oriented labeled graphs.
	It is an exercise to check that if $\Gamma$ is generated by a finite set $S$, then the sets of the form
	\begin{equation}\label{eq: nbhd basis in terms of Schreier graph}
		\Nc([\alpha,v], R) \coloneqq
		\big\{ 
		[\alpha',v']\colon (\Schreier(\alpha,S),v) \simeq_R (\Schreier(\alpha',S),v')
		\big\},
	\end{equation}
	constitute a basis of clopen neighborhoods of $[\alpha,v]$.
	
	\subsection{Bass-Serre theory}
	\label{sect: Bass-Serre theory}
	
	Associated with the standard HNN-presentation of \[\BSo(m,n)=\la b,t\vert tb^{m}t^{-1}=b^{n}\ra,\] we have the $\BSo(m,n)$-action on its Bass-Serre tree $\Tree$. 
	Recall that $\Tree$ is the oriented tree with $V(\Tree)=\BSo(m,n)/\la b\ra$, $E^+(\Tree)=\BSo(m,n)/\la b^n\ra$,
	\[
	\source(\gamma\la b^n\ra)=\gamma\la b\ra,\text{ and }\target(\gamma\la b^n\ra)=\gamma t\la b\ra
	\]
	and given a subgroup $\Lambda\leq \BSo(m,n)$, the quotient $\Lambda\bs\Tree$ has the structure of a graph of groups whose fundamental group is $\Lambda$, see \cite{serreTrees1980}.
	
	\begin{remark}
		\label{rem: gpe fdmt si inter <b> trivial}
		Let $\Lambda\leq \BSo(m,n)$ be a subgroup. 
		If $\Lambda\cap \langle b\rangle = \{\id\}$, then $\Lambda$ acts freely on $\Tree$; thus it is the fundamental group of the quotient graph $\Lambda\bs \Tree$, hence $\Lambda$ is a free group.
	\end{remark}
	
	Let us now concentrate on a subgroup $\Lambda\leq \BSo(m,n)$ such that $\Lambda\cap \langle b\rangle\neq\{\id\}$. 
	Then for the induced action $\Lambda\action \Tree$, each edge and vertex stabilizer is infinite cyclic: the tree $\Tree$ is a GBS-tree (for Generalized Baumslag-Solitar), in the sense of \cite{foresterSplittingsGeneralizedBaumslag2006,levittAutomorphismGroupGeneralized2007}. 
	One can use this point of view to understand the graph of groups description of $\Lambda$.
	However, taking advantage of the transitivity of the $\BSo(m,n)$-action on the edges and the vertices, we provide a slightly more precise description.
	
	\begin{proposition}\label{Prop: Bass-Serre th. recovering the lambda pm from the quotient graph}
		Let $m$ and $n$ be non-zero integers.
		Let $\Lambda\leq \BSo(m,n)$ be a subgroup such that $\Lambda\cap \langle b\rangle\not\eq\{\id\}$.
		The quotient graph of groups arising from the action $\Lambda \curvearrowright \Tree$ 
		is isomorphic to the graph of groups obtained by attaching a copy of $\Z$ to every vertex and every edge of the quotient graph $\Lambda\bs \Tree$, with structural maps of positive edges
		\begin{eqnarray*}
			\Z_e \hookrightarrow \Z_{\source(e)}, &  & k \mapsto \frac{n}{\degout (\source(e))} \cdot k, \\
			\Z_e \hookrightarrow \Z_{\target(e)}, &  & k \mapsto \frac{m}{\degin (\target(e))} \cdot k.
		\end{eqnarray*}
	\end{proposition}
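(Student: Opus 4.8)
The plan is to apply the structure theorem of Bass--Serre theory to the action $\Lambda\acts\Tree$ and to identify the resulting quotient graph of groups by a direct computation, organized around a canonical, $\BSo(m,n)$-equivariant choice of generator for the $\BSo(m,n)$-stabilizer of each vertex of $\Tree$. Concretely, for a vertex $v=\gamma\la b\ra$ of $\Tree$ I would set $\beta_v\coloneqq\gamma b\gamma\inv$; conjugating $b$ by an element of $\la b\ra$ does nothing, so $\beta_v$ does not depend on $\gamma$, one has $\Stab_{\BSo(m,n)}(v)=\la\beta_v\ra$, and $\beta_{\delta v}=\delta\beta_v\delta\inv$ for all $\delta\in\BSo(m,n)$. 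For a positive edge $e=\gamma\la b^n\ra$ of $\Tree$ we have $\source(e)=\gamma\la b\ra$ and $\target(e)=\gamma t\la b\ra$, and the defining relation $b^n=tb^mt\inv$ gives $\Stab_{\BSo(m,n)}(e)=\la\beta_{\source(e)}^{\,n}\ra=\la\beta_{\target(e)}^{\,m}\ra$.

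Then I would extract the local data of the $\Lambda$-action. By the remark preceding the statement, every vertex and edge stabilizer of $\Lambda\acts\Tree$ is infinite cyclic; so for each $\Tree$-vertex $v$ there is a unique integer $a_v\geq 1$ with $\Stab_\Lambda(v)=\Lambda\cap\la\beta_v\ra=\la\beta_v^{\,a_v}\ra$, and $a_{\delta v}=a_v$ for $\delta\in\Lambda$ by equivariance. The positive $\Tree$-edges issuing from $v$, resp.\ pointing to $v$, biject with $\Z/|n|\Z$, resp.\ $\Z/|m|\Z$ (using the relation for the incoming ones), in such a way that $\beta_v$ acts by the translation $i\mapsto i+1$; since $\Lambda$ acts on $\Tree$ without inversions, the edges of $\Lambda\bs\Tree$ at the image $\bar v$ of $v$ correspond to the $\Stab_\Lambda(v)$-orbits of these, and counting the orbits of the translation by $a_v$ gives $\degout(\bar v)=\gcd(a_v,n)$ and $\degin(\bar v)=\gcd(a_v,m)$. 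Intersecting the two descriptions of $\Stab_{\BSo(m,n)}(e)$ with $\Lambda$ then shows that $\Stab_\Lambda(e)$ has index $|n|/\gcd(a_{\source(e)},n)$ in $\Stab_\Lambda(\source(e))$, which equals $|n|/\degout(\bar v)$ for $\bar v$ the image of $\source(e)$, and likewise, from the description via $\beta_{\target(e)}$, index $|m|/\gcd(a_{\target(e)},m)=|m|/\degin(\bar w)$ in $\Stab_\Lambda(\target(e))$, for $\bar w$ the image of $\target(e)$.

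Finally I would assemble this into the quotient graph of groups. Fix a lift in $\Tree$ of each vertex of $\Lambda\bs\Tree$; for a positive edge of $\Lambda\bs\Tree$ (the ``$e$'' of the statement) pick a lift $e\in E^+(\Tree)$ whose source is the chosen lift of the source vertex, and write $\target(e)=\mu\cdot w$ where $w$ is the chosen lift of the target vertex and $\mu\in\Lambda$. By Bass--Serre theory the quotient graph of groups is then: vertex group $\Stab_\Lambda(v)$ at the vertex represented by $v$, edge group $\Stab_\Lambda(e)$, the map to the source vertex group the plain inclusion, and the map to the target vertex group $x\mapsto\mu\inv x\mu$. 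Identifying each vertex group with $\Z$ via its generator $\beta_v^{\,a_v}$, and the edge group with $\Z$ via a suitably signed power of $\beta_{\source(e)}$: the source map becomes multiplication by $n/\degout(\source(e))$ by the index computation above (the sign of $n$ being absorbed in the choice of generator), while on the target side the equivariance $\beta_{\target(e)}=\mu\,\beta_w\,\mu\inv$ together with $a_{\target(e)}=a_w$ makes the conjugation by $\mu$ cancel, so the map becomes multiplication by $m/\degin(\target(e))$. This exhibits the asserted isomorphism of graphs of groups.

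The step I expect to be the main obstacle is this last one: one must carry the connecting elements $\mu$ through the Bass--Serre quotient construction and verify, via the equivariance of the $\beta_{(\cdot)}$, that they drop out of the structural maps; and one must pin down the signs of the edge-group generators so that the two structural maps come out with numerators $n$ and $m$ (not $|n|$, $|m|$) exactly as written. The remaining ingredients — the degree formulas and the index computations — are elementary arithmetic in infinite cyclic groups once the equivariant generators are set up, and the non-triviality (indeed infinite-cyclicity) of all the stabilizers, which is what makes the $a_v$ and the degrees well defined and positive, is precisely the content of the remark preceding the statement.
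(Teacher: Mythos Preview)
Your proposal is correct and follows essentially the same approach as the paper: both set up the canonical equivariant generators $\beta_v=\gamma b\gamma^{-1}$ for $\Stab_{\BSo(m,n)}(v)$, compute the indices of edge stabilizers in vertex stabilizers via the orbit structure of outgoing/incoming edges (your formula $\degout(\bar v)=\gcd(a_v,n)$ is exactly the paper's $|n|=|\lambda_\Lambda^-(e)|\cdot\degout(v)$ rearranged), and handle signs by orienting the $\Lambda$-stabilizers coherently with the ambient $\BSo(m,n)$-stabilizers so that the connecting conjugations drop out by equivariance. The paper is slightly more explicit than you are on the sign step---it first determines the multipliers $n,m$ for the full $\BSo(m,n)$-stabilizers and then observes the $\Lambda$-multipliers inherit their signs---which is precisely the point you correctly flagged as the main thing to pin down.
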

	
	\begin{proof}
		In this proof we set $\Gamma\coloneqq \BSo(m,n)$.
		Let us consider the action of $\Lambda$ on the tree $\Tree$. 
		Since $\Tree$ is locally finite, any edge adjacent to a vertex with
		infinite stabilizer has itself infinite stabilizer. 
		It follows that all vertex and edge $\Lambda$-stabilizers are infinite.
		Being subgroups of the $\Gamma$-stabilizers, they are all isomorphic to $\Z$.
		
		Observe that since $\Gamma$ acts transitively and the $\Gamma$-stabilizers are abelian, the $\Gamma$-stabilizers are canonically pairwise isomorphic: given any vertex $u \in V(\Tree)$ and $a\in \Stab_\Gamma(u)$, one has
		\begin{equation}
			\label{Eq: conj toutes pareilles 0}
			gag\inv = hah\inv
			\quad \text{ for any }
			g,h\in \Gamma \text{ such that } gu = hu.
		\end{equation}
		Indeed since $h\inv g\in\Stab_\Gamma(u)$, we get that $h^{-1}gag^{-1}h=a$.
		
		We now focus on the quotient graph of groups arising from the action $\Lambda \curvearrowright \Tree$.	
		Let us recall from \cite{serreTrees1980} that its vertex groups are $G_v\coloneqq \Stab_\Lambda(\tilde v)$ and edge groups are $G_e\coloneqq \Stab_\Lambda(\tilde e)$,
		where $\tilde v, \tilde e$ are some lifts of $v, e$ in $\Tree$.
		Given any $e\in E^+(\Lambda\bs \Tree)$, the structural map $G_e\hookrightarrow G_{\target(e)}$ is
		\begin{equation}
			\label{Eq: description des structure maps}
			\begin{array}{ccccc}
				G_e = \Stab_\Lambda(\tilde e) &\hookrightarrow & \Stab_\Lambda(\target(\tilde e)) &
				\to &\Stab_\Lambda\left(\widetilde{\target(e)}\right) = G_{\target(e)}\\
				a \ &\mapsto & a & \mapsto& \ gag\inv
			\end{array}
		\end{equation}
		where $g\in \Lambda$ is any element such that $g\cdot \target(\tilde e) = \widetilde{\target(e)}$ and
		the map $G_e\hookrightarrow G_{\source(e)}$ is similar.
		This is unambiguous by \eqref{Eq: conj toutes pareilles 0}. 
		
		Let us call \emph{orientation} of an infinite cyclic group the choice of one generator (over two). This provides an identification to $\Z$.
		Once every stabilizer is oriented,
		the inclusions $G_e\hookrightarrow G_{\source(e)}$ and $G_e\hookrightarrow G_{\target(e)}$ become multiplications by  non-zero integers  $\lambda_{\Lambda}^-(e)$ and $\lambda_{\Lambda}^+(e)$, respectively. 
		It now suffices to prove that, for well-chosen orientations, one has
		\begin{equation}
			\label{Eq: valeurs lambda +- 0}
			\lambda_\Lambda^-(e)
			= \frac{n}{\degout (\source(e))}\text{ and }
			\lambda_\Lambda^+(e) = \frac{m}{\degin (\target(e))}
		\end{equation}
		for every positive edge $e\in E^+(\Lambda\bs\Tree)$.
		
		Let us first observe that the absolute value of $\lambda_\Lambda^{\pm}(e)$ does not depend on the orientations: it is equal to $[G_v:G_e]$. In other words, if $\tilde e$ is a lift of $e$,  $\tilde v\coloneqq \source(\tilde e)=$ and $\tilde w\coloneqq\target(\tilde e)$, we have
		\begin{align}
			\label{lambda - et orbite 0}
			\abs{\lambda^-_{\Lambda}(e)} &= [\Stab_\Lambda(\tilde v):\Stab_\Lambda(\tilde e)] 
			= \abs{\Stab_\Lambda(\tilde v) \cdot \tilde e} \\
			\label{lambda + et orbite 0}
			\abs{\lambda^+_{\Lambda}(e)} &= [\Stab_\Lambda(\tilde w):\Stab_\Lambda(\tilde e)]
			= \abs{\Stab_\Lambda(\tilde w) \cdot \tilde e}.
		\end{align}
		
		Let $E_{\mathrm{out}}(\tilde v)$ be the set of $\tilde v$-outgoing edges. Its cardinal is $\abs{E_{\mathrm{out}}(\tilde v)}=\abs{n}$. 
		Any generator of $\Stab_\Gamma(\tilde v)$ acts as a single $\abs n$-cycle on $E_{\mathrm{out}}(\tilde v)$.
		Hence $E_{\mathrm{out}}(\tilde v)$ splits into $\Stab_\Lambda(\tilde v)$-orbits of equal size, that is $\abs{\lambda^-_{\Lambda}(e)}$ according to \eqref{lambda - et orbite 0}. 
		The number of these $\Stab_\Lambda(\tilde v)$-orbits is $\degout(v)$, thus $\abs{n} = \abs{\lambda^-_{\Lambda}(e)} \cdot \degout(v)$.
		We obtain similarly 
		$\abs{m} = \abs{\lambda^+_{\Lambda}(e)} \cdot \degin(w)$, using incoming edges and \eqref{lambda + et orbite 0}. 
		We have established that \eqref{Eq: valeurs lambda +- 0} holds in absolute value.
		
		Let us now turn to the signs in \eqref{Eq: valeurs lambda +- 0}, for which we need explicit orientations of the $\Lambda$-stabilizers.
		We actually start by orienting the $\Gamma$-stabilizers.
		
		Pick the vertex $\tilde u_0\coloneqq \la b\ra \in V(\Tree)$, then $\Stab_\Gamma(\tilde u_0)=\la b\ra$ and the edge $\tilde d_0\coloneqq\la b^n\ra\in E^+(\Tree)$ has source $\tilde u_0$ and target $t\tilde u_0$.
		Since the $\Gamma$-stabilizers are canonically pairwise identified by conjugation \eqref{Eq: conj toutes pareilles 0}, these choices induce a canonical conjugation-invariant orientation $x_*$ of all the vertex and edge $\Gamma$-stabilizers: $x_{g\tilde u_0} \coloneqq gbg^{-1}$ for $\Stab_\Gamma(g\tilde u_0)$ and $x_{g\tilde d_0} \coloneqq gb^ng^{-1}$ for $\Stab_\Gamma(g\tilde d_0)$.
		
		The inclusions $\Stab_\Gamma(\tilde e) \hookrightarrow \Stab_\Gamma(\source(\tilde e))$
		and $\Stab_\Gamma(\tilde e) \hookrightarrow \Stab_\Gamma(\target(\tilde e))$ become multiplications by non-zero integers that we denote by $\mu^-_{\Gamma}(\tilde e)$ and $\mu^+_{\Gamma}(\tilde e)$.
		We have $\mu^-_{\Gamma}(\tilde e) = n$ since $x_{\tilde e} = x_{\source(\tilde e)}^n$ and $\mu^+_{\Gamma}(\tilde e) = m$ since 
		\[
		x_{\tilde e} = g b^n g\inv = g (t b t\inv)^m g\inv = x_{\target(\tilde e)}^m.
		\]
		
		The $\Lambda$-stabilizers have finite index in the corresponding $\Gamma$-stabilizers.
		We orient them coherently with	the ambient $\Gamma$-stabilizers by using positive powers. 
		The $\Lambda$-conjugations between $\Lambda$-stabilizers remain orientation-preserving, therefore by \eqref{Eq: description des structure maps} the inclusion $\Stab_\Lambda(\tilde e) \hookrightarrow \Stab_\Lambda(\target(\tilde e))$ becomes the multiplication by  $\lambda^+_{\Lambda}(e)$. Similarly,
		the inclusion $\Stab_\Lambda(\tilde e) \hookrightarrow \Stab_\Lambda(\source(\tilde e))$
		becomes multiplication by $\lambda^-_{\Lambda}(e)$. Since the orientations
		are coherent, we conclude that $\lambda^-_{\Lambda}(e)$ has the same sign
		as $\mu_\Gamma^-(e)=n$ and $\lambda^+_{\Lambda}(e)$ has the same sign
		as $\mu_\Gamma^+(e)=m$.
	\end{proof}
	
	\begin{corollary}
		\label{cor: oriented graph determines isomorphis type}
		Let $m$ and $n$ be non-zero integers. 
		Let $\Lambda\leq \BSo(m,n)$ be a subgroup such that $\Lambda\cap \langle b\rangle\not\eq\{\id\}$.
		The isomorphism type of $\Lambda$ is completely determined by the oriented graph $\Lambda \bs \Tree$. \qed
	\end{corollary}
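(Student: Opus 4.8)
The plan is to obtain this as an immediate consequence of Proposition~\ref{Prop: Bass-Serre th. recovering the lambda pm from the quotient graph}. Recall first that, by Bass--Serre theory, the subgroup $\Lambda$ is isomorphic to the fundamental group of the quotient graph of groups associated with the action $\Lambda\curvearrowright\Tree$. Since $\Lambda\cap\langle b\rangle\neq\{\id\}$, Proposition~\ref{Prop: Bass-Serre th. recovering the lambda pm from the quotient graph} identifies this graph of groups, up to isomorphism, with the graph of groups $\Gb_\Lambda$ supported on the oriented graph $\Lambda\bs\Tree$, obtained by attaching a copy of $\Z$ to every vertex and every edge and declaring the positive-edge structural maps to be $k\mapsto \tfrac{n}{\degout(\source(e))}\,k$ and $k\mapsto \tfrac{m}{\degin(\target(e))}\,k$.

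Next I would argue that $\Gb_\Lambda$, and hence its fundamental group $\Lambda$, depends only on the oriented graph $\Lambda\bs\Tree$ (the parameters $m,n$ being fixed throughout). Indeed, suppose $\Lambda_1,\Lambda_2\leq\BSo(m,n)$ both meet $\langle b\rangle$ nontrivially and $\varphi\colon\Lambda_1\bs\Tree\xrightarrow{\ \sim\ }\Lambda_2\bs\Tree$ is an isomorphism of oriented graphs. Being a graph isomorphism compatible with $\source$, $\target$ and the edge-reversal involution, $\varphi$ preserves the outgoing and incoming degrees of each vertex; consequently the functions $\degout\circ\source$ and $\degin\circ\target$ on edges are intertwined by $\varphi$. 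Taking the identity map $\Z\to\Z$ on every vertex group and every edge group therefore yields an isomorphism of graphs of groups $\Gb_{\Lambda_1}\to\Gb_{\Lambda_2}$ covering $\varphi$: the only thing to check is compatibility with the structural maps, which holds because the multiplying integers $\tfrac{n}{\degout(\source(e))}$ and $\tfrac{m}{\degin(\target(e))}$ are transported to one another by $\varphi$. Since isomorphic graphs of groups have isomorphic fundamental groups, we conclude $\Lambda_1\cong\Lambda_2$, which is the assertion.

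This argument is essentially a bookkeeping check, and I do not expect a genuine obstacle: the real content lies in Proposition~\ref{Prop: Bass-Serre th. recovering the lambda pm from the quotient graph}, which has already pinned down the vertex and edge groups (all $\cong\Z$) and the structural maps in terms of data --- namely $\degout$, $\degin$, $m$ and $n$ --- that is manifestly recoverable from the oriented graph alone. The one point worth a sentence of care is that no further data is hidden in the graph of groups $\Gb_\Lambda$ beyond the underlying oriented graph and these structural integers, so that an isomorphism of oriented graphs automatically upgrades to an isomorphism of graphs of groups; the signs in the structural maps play no role here since replacing a generator of a $\Z$ by its inverse is a graph-of-groups isomorphism.
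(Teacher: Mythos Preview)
Your proof is correct and follows exactly the intended route: the paper gives no explicit proof of this corollary (it is marked \qed), treating it as an immediate consequence of Proposition~\ref{Prop: Bass-Serre th. recovering the lambda pm from the quotient graph}, and your argument spells out precisely why---the structural data of the graph of groups is read off from $\degout$, $\degin$, $m$ and $n$, all of which are determined by the oriented graph.
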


	\begin{proposition}
		Let $m$ and $n$ be non-zero integers and let $\Lambda\leq\BSo(m,n)$ be a subgroup.  
		\begin{enumerate}
			\item \label{item: finite phenotype and finite implies Z} If $\Lambda\cap \la b\ra\not\eq \{\id\}$, then either $\Lambda\simeq \Z$ is virtually a subgroup of $\langle b\rangle$ or $\Lambda$ is not a free group.
			\item  \label{item: fund group in amenable case}If $\vert m\vert =1$ or $\vert n\vert =1$, then the fundamental group of the underlying graph $\Lambda\bs \Tree$ is a free group of rank $\leq 1$.
		\end{enumerate}
	\end{proposition}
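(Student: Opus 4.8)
The plan is to feed the graph-of-groups description of $\Lambda$ provided by Proposition~\ref{Prop: Bass-Serre th. recovering the lambda pm from the quotient graph} into elementary Bass-Serre theory. For the first item I may assume $\Lambda$ is free (otherwise the second alternative holds), and I aim to prove the sharper fact that $\Lambda\cong\Z$ and $\Lambda$ is conjugate into $\langle b\rangle$ (a fortiori virtually a subgroup of $\langle b\rangle$). By Proposition~\ref{Prop: Bass-Serre th. recovering the lambda pm from the quotient graph}, $\Lambda=\pi_1(\mathbb{A})$ where $\mathbb{A}$ is the graph of groups carried by $\Lambda\bs\Tree$, all of whose vertex and edge groups are infinite cyclic and all of whose structural maps are multiplications by the nonzero integers $\lambda_\Lambda^{\pm}(e)$. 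Since subgroups of free groups are free and the fundamental group of any connected sub-graph-of-groups of $\mathbb{A}$ embeds in $\Lambda$, all such fundamental groups are free, and I would use this to constrain $\mathbb{A}$ sharply. No edge $e$ can be \emph{essential}, i.e.\ have $\abs{\lambda_\Lambda^-(e)},\abs{\lambda_\Lambda^+(e)}\ge 2$: otherwise the sub-graph-of-groups on its two endpoints is $\langle a,c\mid a^p=c^q\rangle$ with $p,q\ge 2$, which is not free, being non-cyclic (it surjects onto $\Z/p\ast\Z/q$) with a nontrivial central element $a^p$. Moreover $\Lambda\bs\Tree$ can carry no circuit: collapsing the inessential edges of a circuit — which cannot produce an essential edge, as $\Lambda$ stays free — reduces it to a single loop, whose fundamental group is some $\BSo(p,q)$ with $p,q\ne 0$, and $\BSo(p,q)$ is never free, its abelianisation having torsion-free rank one when $p\ne q$ (which would force $\BSo(p,q)\cong\Z$, absurd as it is then non-abelian) and rank two when $p=q$ (while then $b^p$ is a nontrivial central element).

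Hence $\Lambda\bs\Tree$ is a tree all of whose edges are inessential, and on such a graph of groups I would run a leaf-peeling induction on the number of edges to prove that the fundamental group of every \emph{finite} connected sub-graph-of-groups $\mathbb{B}$ is infinite cyclic: removing a leaf edge $e$ at a leaf $v$ of $\mathbb{B}$, either $G_e=G_v$, in which case collapsing $e$ yields $\mathbb{B}_0$ with $\pi_1(\mathbb{B}_0)=\pi_1(\mathbb{B})$, or $\pi_1(\mathbb{B})=\langle a\rangle\ast_{G_e}\pi_1(\mathbb{B}_0)$ with $\mathbb{B}_0=\mathbb{B}\smallsetminus\{v,e\}$, and by induction $\pi_1(\mathbb{B}_0)\cong\Z$, so $\pi_1(\mathbb{B})\cong\langle a,c\mid a^p=c^k\rangle$, which, being a subgroup of the free group $\Lambda$, must be free, forcing $p=1$ or $k=1$ and hence $\pi_1(\mathbb{B})\cong\Z$. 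Exhausting $\Lambda\bs\Tree$ by finite connected subgraphs then exhibits $\Lambda$ as a directed union of infinite cyclic groups, so $\Lambda$ is locally cyclic; being free and nontrivial (as $\Lambda\cap\langle b\rangle\ne\{\id\}$), $\Lambda\cong\Z$. Finally I would fix $c\ne\id$ in $\Lambda\cap\langle b\rangle$ and a generator $d$ of $\Lambda$: then $c$ fixes the vertex $\langle b\rangle$ of $\Tree$, hence is elliptic, hence — since $c=d^j$ for some $j\ne 0$ and powers of hyperbolic tree automorphisms are hyperbolic — $d$ is elliptic, and the Bass-Serre action of $\BSo(m,n)$ on $\Tree$ having no inversions, $d$, hence $\Lambda=\langle d\rangle$, fixes a vertex $\gamma\langle b\rangle$, so $\Lambda\le\Stab_{\BSo(m,n)}(\gamma\langle b\rangle)=\gamma\langle b\rangle\gamma\inv$. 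The one genuinely delicate step is this leaf-peeling induction, in particular checking that it still controls $\Lambda$ when $\Lambda\bs\Tree$ is infinite; passing first to an exhaustion by finite connected sub-graphs-of-groups is what makes it go through, and I expect this to be the main obstacle to execute carefully.

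For the second item, when $\abs{m}=1$ or $\abs{n}=1$ the group $\BSo(m,n)$ is metabelian (as recalled in the introduction), hence so is its subgroup $\Lambda$. By the Bass-Serre theory recalled in Section~\ref{sect: Bass-Serre theory}, $\Lambda$ is the fundamental group of the graph of groups carried by $\Lambda\bs\Tree$, and sending every vertex group to the trivial group defines a surjection from $\Lambda$ onto the topological fundamental group of the underlying graph $\Lambda\bs\Tree$. That fundamental group is therefore free — being a graph's fundamental group — and a quotient of the metabelian group $\Lambda$, hence metabelian; since a free metabelian group has rank at most $1$, this is exactly the assertion. I do not anticipate any real obstacle here beyond recording the standard retraction $\pi_1(\text{graph of groups})\twoheadrightarrow\pi_1(\text{underlying graph})$.
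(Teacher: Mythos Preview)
Your proof is correct. For item~2 it coincides with the paper's (you invoke ``metabelian'' where the paper says ``amenable'', but the argument is the same surjection from $\Lambda$ onto the free fundamental group of the underlying graph).

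For item~1, however, you take a substantially more elaborate route. The paper bypasses the graph-of-groups structure entirely and uses only the elementary fact that in a free group, $g h^k g\inv = h^l$ with $k,l\neq 0$ forces $g$ and $h$ to commute. Since $\Tree$ is locally finite, for every $\lambda\in\Lambda$ the intersection $\la b^s\ra \cap \lambda\la b^s\ra\lambda\inv$ (the $\Lambda$-stabilizer of the geodesic from $\la b\ra$ to $\lambda\la b\ra$) has finite index in both factors, yielding a relation $\lambda b^{sk}\lambda\inv = b^{sl}$; freeness of $\Lambda$ then makes $b^s$ central, whence $\Lambda\cong\Z$ and the conclusion follows at once. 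Your structural analysis (no essential edges, no circuits, leaf-peeling on finite subtrees, exhaustion to get local cyclicity, then an ellipticity argument on $\Tree$) is valid and even yields the sharper conclusion that $\Lambda$ is conjugate into $\la b\ra$, but it is considerably longer and requires the care you flag at several points --- tracking how multipliers change under edge collapses, and treating loops separately in the ``no essential edge'' step, where the relevant subgroup is $\BSo(p,q)$ rather than the amalgam $\la a,c\mid a^p=c^q\ra$. The paper's commutator trick sidesteps all of this in a few lines.
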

	If $\Lambda\cap \la b\ra= \{\id\}$,  then $\Lambda$ is the fundamental group of the underlying graph $\Lambda\bs \Tree$ (see Remark~\ref{rem: gpe fdmt si inter <b> trivial}).
		
	The first item of the proposition follows from standard techniques in $\ell^2$-cohomology: if $\Lambda\cap \la b\ra\not\eq \{\id\}$, then $\Lambda$ is the fundamental group of a graph of groups whose vertex and edge groups are isomorphic to $\mathbb Z$; all the $\ell^2$-Betti numbers of such a group vanish. For the comfort of the reader we propose a proof by hand.
	
	\begin{proof}
		We start with the first item. Recall that in a free group $F$, whenever non-trivial elements $g,h\in F$ satisfy $g h^k g\inv = h^l$ with $k\neq 0 \neq l$, then there is $a\in F$ such that $g,h$ are both powers of $a$. In particular, such elements $g,h$ always commute.
		
		Now, assume that $\Lambda$ is free and $\Lambda\cap \la b \ra\not\eq \{\id\}$, say $\Lambda\cap \la b \ra = \la b^s \ra$ where $s>0$.
		Pick any $\lambda\in\Lambda$ and set $H_\lambda \coloneqq \la b^s \ra \cap \lambda \la b^s \ra \lambda\inv$,
		which is the intersection of $\Lambda$ with the stabilizer of the geodesic $[\la b \ra, \lambda\la b \ra]$ in $\Tree$.
		Observe that $H_\lambda$
		is a finite index subgroup of both $ \la b^s \ra$ and $\lambda \la b^s \ra \lambda\inv$. 
		Therefore, there are $k\neq 0 \neq l$ such that $\lambda b^{sk} \lambda\inv = b^{sl}$. 
		As $\Lambda$ is free, $\lambda$ and $b^s$ commute.
		
		Consequently, the center of $\Lambda$ contains $\la b^s \ra$.
		Thus, the rank of $\Lambda$ is $1$; in other words $\Lambda$ is infinite cyclic.
		It is now clear that $\la b^s \ra$ has finite index in both $\Lambda$ and $\la b \ra$, so $\Lambda$ is virtually a subgroup of $\langle b\rangle$.
		
		Let us turn to the second item.
		The fundamental group of a graph of groups surjects onto the fundamental group of the underlying graph. The condition in Item \ref{item: fund group in amenable case} implies the amenability of $\BSo(m,n)$. Its subgroups thus cannot surject onto a non-amenable free group. 
	\end{proof}

	\section{Bass-Serre graphs}    
	\subsection{Pre-actions}
	\label{subsec: preactions}
	
	Let $m,n\in\Z\smallsetminus\{0\}$ and $\BSo(m,n) = \la b,t \ \vert\ tb^m =b^n t \ra$.
	
		Recall that a partial bijection of a set $X$ is a bijection between two subsets of $X$.
		Our actions are on the right; thus in a product of (partial) bijections $\sigma\tau$, the transformation $\sigma$ is applied first.
	
	\begin{definition}\label{def pre-action}
		Given a bijection $\beta$ of a set $X$ and 
		a partial bijection $\tau$ of $X$, we say that $\tau$ is \textbf{$(\beta^n,\beta^m)$-equivariant} if 
		$\tau\beta^m = \beta^n \tau$ as partial bijections,	that is:
		\begin{itemize}
			\item $\dom(\tau)$ is $\beta^n$-invariant;
			\item $\rng(\tau)$ is $\beta^m$-invariant;
			\item $x\tau \beta^m=x\beta^n\tau$ for all $x\in\dom(\tau)$.
		\end{itemize}
		A \textbf{pre-action} of $\BSo(m,n)$ on a set $X$ is a couple $(\beta,\tau)$ where $\beta$ is a bijection of $X$ and $\tau$ is a $(\beta^n,\beta^m)$-equivariant partial bijection of $X$. The set $X$ is called the \textbf{domain} of the pre-action.
		Such a pre-action is \textbf{saturated} if $\dom(\tau) = X = \rng(\tau)$.
	\end{definition}
	
	\begin{remark}
		Saturated pre-actions $(\beta,\tau)$ correspond to actions $\alpha$ of $\BSo(m,n)$ on the same set $X$ under the association 
		$\beta\leftrightarrow \alpha(b)$ and $\tau\leftrightarrow \alpha(t)$.
	\end{remark}
	
	\begin{definition}
		Given a pre-action $(\beta,\tau)$ of $\BSo(m,n)$, its \textbf{Schreier graph} is the oriented labeled
		graph $\Sch(\beta,\tau)=\mathcal G$ defined by
		\[ V(\mathcal G)\coloneqq X,\quad \left\{\begin{array}{l}
			E^+(\mathcal G)\coloneqq X\times\{b\}\sqcup \dom(\tau)\times\{t\}, \\
			E^-(\mathcal G)\coloneqq X\times\{b\inv\}\sqcup \rng(\tau)\times\{t\inv\}, 
		\end{array}\right.
		\]
		where the label of any edge is its second component and:
		\begin{itemize}
			\item for all $x\in X$, we set \[\source(x,b)\coloneqq x,\  \target(x,b)\coloneqq x\beta,\text{ and }\overline{(x,b)}\coloneqq (x\beta,b\inv);\]
			\item for all $x\in \dom(\tau)$, we set \[\source(x,t)\coloneqq x,\  \target(x,t)\coloneqq x\tau,\text{ and }\overline{(x,t)}\coloneqq (x\tau,t\inv).\]
		\end{itemize}
	\end{definition}
	
	Notice that the orientation of any edge $(x,l)$ is determined by its label $l$ and that the source of $(x,l)$ is $x$, regardless of its orientation.
	
	Noting that a $\BSo(m,n)$-action is transitive if and only if the associated Schreier graph is connected,
	we make the following definition.
	
	\begin{definition}\label{Def: transitive pre-action}
		A pre-action of $\BSo(m,n)$ is \textbf{transitive} if its Schreier graph is connected.
	\end{definition}

	\subsection{Bass-Serre graphs}
	
	We now introduce an important tool for our study. It is the labeled graph obtained from the Schreier graph defined in Section~\ref{subsec: preactions} by ``shrinking each $\beta$-orbit to one point''. We identify together the $t$-edges whose initial vertices belong to the same $\beta^{n}$-orbit. 
	Note that their terminal vertices automatically belong to the same $\beta^{m}$-orbit. 
	
	We label the vertices by the cardinality of the corresponding $\beta$-orbit and the edges by the cardinality of the corresponding $\beta^n$-orbit. This is illustrated by Figure \ref{fig: projection of Schreier graph onto Bass-Serre graph}. 
	The formal definition is as follows.
	
	\begin{definition}\label{def: Bass-Serre graph of pre-action}
		The \textbf{Bass-Serre graph} associated to a pre-action \mbox{$\alpha=(\beta,\tau)$} of $\BSo(m,n)$ on a set $X$
		is the oriented labeled graph $\BSe(\alpha)$ defined by
		$$V(\BSe(\alpha))\coloneqq X/\la\beta\ra, \quad
		\left\{\begin{array}{l}
			E^+(\BSe(\alpha))\coloneqq\dom(\tau)/\la\beta^n\ra,  \\
			E^-(\BSe(\alpha))\coloneqq\rng(\tau)/\la\beta^m\ra.
		\end{array}\right.
		$$
		For every $x\in\dom \tau$, we set
		\[\source(x\la \beta^n\ra)\coloneqq x\la \beta\ra,\  \target(x\la \beta^n\ra)\coloneqq x\tau\la \beta\ra,\text{ and }\overline{x\la \beta^n\ra}\coloneqq x\tau \la \beta^m\ra=x\la \beta^n\ra \tau.\]
		We define the label map $L\colon V(\BSe(\alpha))\sqcup E(\BSe(\alpha))\to \Z_{\geq 1}\cup \{\infty\}$ by
		\[
		L(x\la\beta\ra)\coloneqq\abs{x\la\beta\ra},
		\quad L(x\la\beta^n\ra)\coloneqq \abs{x\la\beta^n\ra},
		\quad L(y\la\beta^m\ra)\coloneqq \abs{y\la\beta^m\ra}.
		\]
	\end{definition}
	
	\begin{remark}
		For any $x\in \dom(\tau)$,
		the $(\beta^n,\beta^m)$-equivariant partial bijection $\tau$ induces a bijection
		from $x\langle \beta^n\rangle$ to $x\tau\langle \beta^m\rangle$. Thus both the target and the opposite maps of $\BSe(\alpha)$ are well-defined and the label of each edge is equal to the label of its opposite.
	\end{remark}
	
	\begin{remark}
		We view the sets $E^+(\BSe(\alpha))$ and $E^-(\BSe(\alpha))$ as disjoint sets, even though we might have that $\dom(\tau)/\la\beta^n\ra \cap \rng(\tau)/\la\beta^m\ra \neq \emptyset$.
		Note that the source of an edge $x\la \beta^k\ra\in E^\pm(\BSe(\alpha))$ is $x\la\beta\ra$ regardless of its orientation.
	\end{remark}
	
	\begin{remark}\label{rmk: exchange m and n in BS graph}
		The groups $\BSo(m,n)$ and $\BSo(n,m)$ are isomorphic via $b\mapsto b$ and $t\mapsto t\inv$.
		For every pre-action $(\beta,\tau)$ of $\BSo(m,n)$, the couple $(\beta,\tau\inv)$ is a pre-action of $\BSo(n,m)$. At the level of Bass-Serre graphs, $\BSe(\beta,\tau)$ and $\BSe(\beta,\tau\inv)$ coincide, except that the orientation is reversed.
	\end{remark}
	
	\begin{remark}
		In the case of a transitive $\BSo(m,n)$-action, 
		the graph underlying our Bass-Serre graph
		is the quotient of the Bass-Serre tree $\Tree$ by the stabilizer of any point of $X$, as will be explained in Section~\ref{sect: Bass-Serre graphs and Bass-Serre theory}.
	\end{remark}
	
	We now clarify what we meant by ``shrinking each $\beta$-orbit to a point'', by noting that we have the following simplicial map from the Schreier graph to the Bass-Serre graph of any pre-action.
	
	\begin{definition}\label{dfn: projection BS graph}
		The \textbf{projection} associated to a pre-action $\alpha=(\beta,\tau)$ is the application $\pi_\alpha$ given by
		\begin{alignat*}{4}
			V(\Schreier{(\alpha)}) &\to V(\BSe(\alpha)) ,&
			x &\mapsto x\la \beta \ra\\
			E_t^+(\Schreier{(\alpha)}) &\to E^+(\BSe(\alpha)) ,& (x,t) &\mapsto x\la \beta^n \ra \\
			E_t^-(\Schreier{(\alpha)}) &\to E^-(\BSe(\alpha)),\quad & (x,t^{-1}) &\mapsto x\la \beta^m \ra \\
			E_b(\Schreier(\alpha)) &\to V(\BSe(\alpha)), & (x,b^{\pm 1}) &\mapsto x\la\beta\ra
		\end{alignat*}
		where $E_{t}^\pm(\Schreier{(\alpha)})$ is the subset of edges in $\Schreier{(\alpha)}$ whose label is $t$ or $t\inv$ respectively and $E_b$ is the subset of edges whose label is $b$ or $b^{-1}$.
	\end{definition}
	This projection is illustrated in Figure \ref{fig: projection of Schreier graph onto Bass-Serre graph}.
	Given any subgraph $\mathcal G\subseteq \Schreier(\alpha)$ or path $\mathfrak p$ in $\Schreier{(\alpha)}$ we obtain a subgraph $\pi_\alpha(\mathcal G)\subseteq \BSe(\alpha)$ or a path $\pi_\alpha(\mathfrak p)$ in $\BSe(\alpha)$.
	
	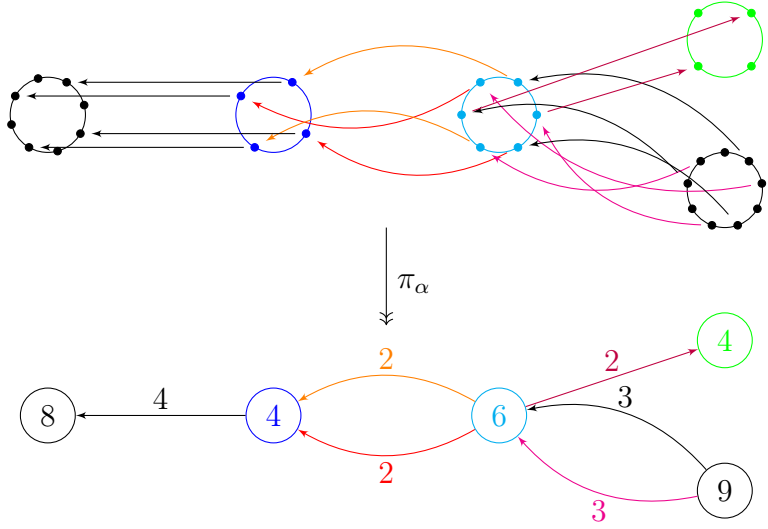
\begin{figure}[h!]
		\centering
		\begin{tikzpicture}
			\tikzset{vertex/.style = {shape=circle,draw,minimum size=1.5em}}
			\tikzset{edge/.style = {->,> = latex'}}
			\node[vertex] (e) at  (-4.5,0) {8};
			\node[vertex, color = blue] (a) at  (-1.5,0) {4};
			\node[vertex, color = cyan] (b) at  (1.5,0) {6};
			\node[vertex, color = green] (c) at  (4.5,1) {4};
			\node[vertex] (d) at  (4.5,-1) {9};
			\draw[edge, color = red] (b) to[bend left] node [label={[label distance=-0.1cm,below]:2}] {} (a);
			\draw[edge, color= orange] (b) to[bend right] node [label={[label distance=-0.2cm,above]:2}] {} (a);
			\draw[edge, color = purple] (b) to node [label={[label distance=-0.2cm,above]:2}] {} (c);
			\draw[edge,color = magenta] (d) to[bend left] node [label={[label distance=-0.1cm,below]:3}] {} (b);
			\draw[edge] (d) to[bend right] node [label={[label distance=-0.2cm,above]:3}] {} (b);
			\draw[edge] (a) to node [label={[label distance=-0.2cm,above]:4}] {} (e);	
			\draw[->>] (0,2.5) -- (0,1.2);
			\draw (0, 1.75) node[right]{$\pi_\alpha$};
			\node (e') at (-4.5,4) {};
			\draw (e') circle (0.5);  
			\draw[fill] (e') +(60:0.5) node (e1){}  circle (0.05);
			\draw[fill] (e') +(105:0.5) node (e2){}  circle (0.05);
			\draw[fill] (e')+(150:0.5) node (e3){} circle (0.05);
			\draw[fill] (e') +(195:0.5) node (e4){}  circle (0.05);
			\draw[fill] (e')+(240:0.5) node (e5){} circle (0.05);
			\draw[fill] (e') +(285:0.5) node (e6){}  circle (0.05);
			\draw[fill] (e')+(330:0.5) node (e7){} circle (0.05);
			\draw[fill] (e') +(15:0.5) node (e8){}  circle (0.05);
			\node (a') at (-1.5,4) {};
			\draw[color = blue] (a') circle (0.5);  
			\draw[fill, color = blue] (a') +(60:0.5) node (a1){}  circle (0.05);
			\draw[fill, color = blue] (a')+(150:0.5) node (a2){} circle (0.05);
			\draw[fill, color = blue] (a')+(240:0.5) node (a3){} circle (0.05);
			\draw[fill, color = blue] (a')+(330:0.5) node (a4){} circle (0.05);
			\node (b') at (1.5,4) {};
			\draw[color = cyan] (b') circle (0.5);
			\draw[fill, color = cyan] (b') +(0:0.5) node (b1){}  circle (0.05);
			\draw[fill, color = cyan] (b') +(60:0.5) node (b2){}  circle (0.05);
			\draw[fill, color = cyan] (b') +(120:0.5) node (b3){}  circle (0.05);
			\draw[fill, color = cyan] (b') +(180:0.5) node (b4){}  circle (0.05);
			\draw[fill, color = cyan] (b') +(240:0.5) node (b5){}  circle (0.05);
			\draw[fill, color = cyan] (b') +(300:0.5) node (b6){}  circle (0.05);
			\node (c') at (4.5,5) {};
			\draw[color = green] (c') circle (0.5);  
			\draw[fill, color = green] (c') +(45:0.5) node (c1){}  circle (0.05);
			\draw[fill, color = green] (c')+(135:0.5) node (c2){} circle (0.05);
			\draw[fill, color = green] (c')+(225:0.5) node (c3){} circle (0.05);
			\draw[fill, color = green] (c')+(315:0.5) node (c4){} circle (0.05);
			\node (d') at (4.5,3) {};
			\draw (d') circle (0.5);
			\draw[fill] (d') +(10:0.5) node (d1){}  circle (0.05);
			\draw[fill] (d') +(50:0.5) node (d2){}  circle (0.05);
			\draw[fill] (d') +(90:0.5) node (d3){}  circle (0.05);
			\draw[fill] (d') +(130:0.5) node (d4){}  circle (0.05);
			\draw[fill] (d') +(170:0.5) node (d5){}  circle (0.05);
			\draw[fill] (d') +(210:0.5) node (d6){}  circle (0.05);
			\draw[fill] (d') +(250:0.5) node (d7){}  circle (0.05);
			\draw[fill] (d') +(290:0.5) node (d8){}  circle (0.05);
			\draw[fill] (d') +(330:0.5) node (d9){}  circle (0.05);
			\draw[edge, color = red] (b6) to[bend left] (a4);
			\draw[edge, color = red] (b3) to[bend left] (a2);
			\draw[edge, color = orange] (b5) to[bend right] (a3);
			\draw[edge, color = orange] (b2) to[bend right] (a1);
			\draw[edge, color = magenta] (d7) to[bend left] (b1);
			\draw[edge, color = magenta] (d1) to[bend left] (b3);
			\draw[edge, color = magenta] (d4) to[bend left] (b5);
			\draw[edge] (d8) to[bend right] (b6);
			\draw[edge] (d2) to[bend right] (b2);
			\draw[edge] (d5) to[bend right] (b4);
			\draw[edge, color = purple] (b4) to (c1);
			\draw[edge, color = purple] (b1) to (c3);
			\draw[edge] (a1) to (e1);
			\draw[edge] (a2) to (e3);
			\draw[edge] (a3) to (e5);
			\draw[edge] (a4) to (e7);
		\end{tikzpicture}
		\caption{The projection from the Schreier graph onto the Bass-Serre graph of some non-saturated transitive $\BSo(2,3)$-pre-action.
			The dotted circles represent the $\beta$-orbits in the Schreier graph.}
		\label{fig: projection of Schreier graph onto Bass-Serre graph}
	\end{figure}
	
	Note that for every vertex $v=x\la \beta\ra$, 
	\[\abs{x\la \beta^k\ra}=\frac{\abs{x\la \beta\ra}}{\gcd(\abs{x\la \beta\ra},k)},\]
	thus the following facts hold:
	\begin{itemize}
		\item all the $v$-outgoing edges $e$ have the same label, which is:
		\begin{equation*}
			L(e)=\frac{L(v)}{\gcd(L(v),n)},
		\end{equation*}
		\item all the $v$-incoming edges $e'$ have the same label, which is:
		\begin{equation*}
			L(e')=\frac{L(v)}{\gcd(L(v),m)}.
		\end{equation*}
	\end{itemize}
	We also have the following relations between labels and degrees:
	\begin{itemize}
		\item The outgoing degree $\deg_{\mathrm{out}}(v)$
		is equal to the number of $\beta^n$-orbits contained
		in $x\la \beta\ra \cap \dom(\tau)$. 
		Recall that $\dom(\tau)$ is $\beta^n$-invariant.
		Since $x\la \beta\ra$ contains exactly $\gcd(L(v),n)$ orbits under $\beta^n$, we get
		\begin{equation*}
			\degout(v)\leq\gcd(L(v),n),   
		\end{equation*}
		with equality if and only if $x\la\beta\ra \subseteq \dom(\tau)$.
		\item Similarly, the incoming degree $\degin(v)$ is equal to the number of $\beta^m$-orbits contained
		in $x\la \beta\ra \cap \rng(\tau)$, so
		\begin{equation*}
			\degin(v) \leq \gcd(L(v),m),
		\end{equation*}
		with equality if and only if $x\la\beta\ra \subseteq \rng(\tau)$.
	\end{itemize}
	
	\begin{remark}
		As a consequence of the last two items, the pre-action is an action if and only if, for every vertex $v$,
		\[
		\degout(v) = \gcd(L(v),n)\ \ \text{ and } \ \ \degin(v) = \gcd(L(v),m).
		\] 
	\end{remark}

	\subsection{\texorpdfstring{$(m,n)$-graphs}{(m,n)-graphs}}
	\label{sect: (m,n)-graphs}
	
	We now introduce an axiomatization
	of the Bass-Serre graphs we obtain from pre-actions. Recall that by convention $\gcd(\infty,k)=\abs k$ for all $k\neq 0$.
	
	\begin{definition}\label{def:mngraph}
		An $(m,n)$\textbf{-graph} is an oriented labeled graph $\mathcal G=(V,E)$ with label map $L\colon V\sqcup E\to\Z_{\geq 1}\cup\{\infty\}$ such that:
		\begin{itemize}
			\item for every positive edge $e\in E^+$,
			\begin{equation}
				\label{eq:transfert}
				\frac{L(\source(e))}{\gcd(L(\source(e)),n)}=L(e)=\frac{L(\target(e))}{\gcd(L(\target(e)),m)};
			\end{equation}
			
			\item for every negative edge $e\in E^-$, $L(e)=L(\bar e)$;
			\item for every vertex $v\in V$, we have
			\begin{equation}\label{eq: deg bound by gcd L(v), n m}
				\degout(v)\leq \gcd(L(v),n)\ \ \text{ and } \ \ \degin(v)\leq \gcd(L(v),m).
			\end{equation}
		\end{itemize}
	\end{definition}
	
	\begin{example}
		The Bass-Serre graph of any pre-action of $\BSo(m,n)$ is an $(m,n)$-graph. The converse will be shown in Proposition \ref{prop: realization of BS graph}.
	\end{example}
	
	\begin{remark}
		Observe that an edge label is uniquely determined by the label of any of its vertices. 
		The edge labels are thus redundant and are just calculation tools 
		(see also Remark~\ref{rem: edge phenotype is coarser}).		
	\end{remark}

	\begin{example}
		Let us see how labels interact for $m=2$ and $n=3$. If $e$ is an edge in a $(2,3)$-graph, then once we fix the label of one of the  extremities, the other one can be chosen according to the Table \ref{table}, using Formula \eqref{eq:transfert} for $L(e)$. The reader is invited to consult the webpage
		\cite{carderiHowBuildGraphs2022} to see the kinds of local constraints which occur in general.
		
		\begin{center}
			\begin{tabular}{|c|c|}
				\hline
				\begin{tabular}{c}
					\\
					If $\gcd(L(\source(e)),2)=1$\\
					$L(\target(e))\in \displaystyle{\left\{L(e),\ 2L(e) \right\}}$
				\end{tabular}
				& \begin{tabular}{c}
					\\
					If $\gcd(L(\source(e)),2)=2$\\
					$L(\target(e))=2L(e)$
				\end{tabular}\\
				&\\
				\hline
				\begin{tabular}{c}
					\\
					If $\gcd(L(\target(e)),3)=1$\\
					$L(\source(e))\in \displaystyle{\left\{L(e),\ 3L(e) \right\}}$
				\end{tabular} & 
				\begin{tabular}{c}
					\\
					If $\gcd(L(\target(e)),3)=3$\\
					$L(\source(e))=3L(e)$
				\end{tabular}\\
				&\\
				\hline
			\end{tabular}
			\captionof{table}{How the label of the extremities impact each other}\label{table}
		\end{center}
		
		In Figure \ref{figure: 2 examples}, we give an illustrative example.

		\begin{figure}[h!]
			\centering
			\begin{subfigure}{.45\textwidth}
				\centering
				\begin{tikzpicture}
					\tikzset{vertex/.style = {shape=circle,draw,minimum size=1.5em}}
					\tikzset{edge/.style = {->,> = latex'}}
					\node[vertex] (b) at  (1.5,0) {3};
					\node[vertex] (c) at  (4.5,1.5) {1};
					\node[vertex] (d) at  (4.5,-1.5) {1};
					\node[vertex] (e) at  (4.5,0) {2};
					\draw[edge] (b) to node [label={[label distance=-0.2cm,above]:1}] {} (d);
					\draw[edge] (b) to node [label={[label distance=-0.2cm,above]:1}] {} (c);
					\draw[edge] (b) to node [label={[label distance=-0.2cm,above]:1}] {} (e);
					(e);
				\end{tikzpicture}
				\captionsetup{justification=centering}
				\caption{Various choices \\ \ \  for the label $L(t(e))$.}
				\label{fig:sub1}
			\end{subfigure}
			\begin{subfigure}{.45\textwidth}
				\centering
				\begin{tikzpicture}
					\tikzset{vertex/.style = {shape=circle,draw,minimum size=1.5em}}
					\tikzset{edge/.style = {->,> = latex'}}
					\node[vertex] (b) at  (1.5,0) {6};
					\node[vertex] (c) at  (4.5,1.5) {4};
					\node[vertex] (d) at  (4.5,-1.5) {4};
					\node[vertex] (e) at  (4.5,0) {4};
					\draw[edge] (b) to node [label={[label distance=-0.2cm,above]:2}] {} (d);
					\draw[edge] (b) to node [label={[label distance=-0.2cm,above]:2}] {} (c);
					\draw[edge] (b) to node [label={[label distance=-0.2cm,above]:2}] {} (e);
					(e);
				\end{tikzpicture}
				\captionsetup{justification=centering}
				\caption{No choice for \\ \ \   the label $L(t(e))$.}
				\label{fig:sub2}
			\end{subfigure}
			\caption{Two examples of $(2,3)$-graphs.\label{figure: 2 examples}} 
	\end{figure}
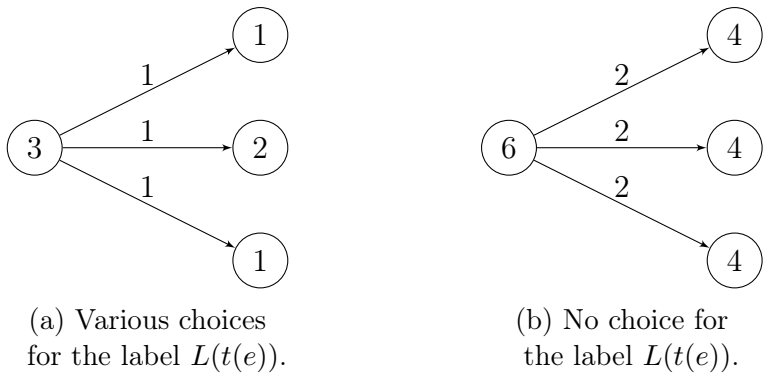
\end{example} 

\begin{remark}\label{rmk: exchange m and n on (m,n)-graph}
	As in Remark \ref{rmk: exchange m and n in BS graph},
	every $(m,n)$-graph can be turned into an $(n,m)$-graph by flipping the orientations of its edges. 
	Note that this operation does not affect the labels. 
\end{remark}

\begin{remark}\label{rmk: cardinal of Labels}
	In a connected $(m,n)$-graph, the labels are, either all finite, or all $\infty$ by Equation~\eqref{eq:transfert}. This will be made more precise in Proposition~\ref{prop:phenotype is well defined for conn graph}.
	Observe that any oriented graph $\Gc$ satisfying $\degin(v)\leq m$ and $\degout(v)\leq n$ for every $v\in V(\Gc)$ becomes an $(m,n)$-graph if we set all the labels to be infinite.
	However one cannot always put finite labels, see Lemma~\ref{lem: non-maximal degree in finite phenotype}. 
\end{remark}

\begin{definition} \label{def: saturated m,n-graph, degrees as functions of labels}
	Let $\Gc$ be an $(m,n)$-graph. A vertex $v$ in $\Gc$ is \textbf{saturated} if
	the inequalities \eqref{eq: deg bound by gcd L(v), n m} are indeed equalities, i.e.
	\begin{equation*}
		\degout(v)=\gcd(L(v),n)\ \ \text{ and } \ \ \degin(v)=\gcd(L(v),m).
	\end{equation*} 
	The $(m,n)$-graph $\Gc$ is \textbf{saturated} if all its vertices are saturated.
\end{definition}

\begin{example}\label{ex:saturated-> action}
	The Bass-Serre graph of a pre-action of $\BSo(m,n)$ is saturated if and only if the pre-action is an action. 
\end{example}

\subsection{Realizing \texorpdfstring{$(m,n)$}{(m,n)}-graphs as Bass-Serre graphs}

\begin{proposition}\label{prop: realization of BS graph}
	Every $(m,n)$-graph $\mathcal G$ is the Bass-Serre graph of at least one pre-action of $\BSo(m,n)$.
	Any such pre-action is an action if and only if $\Gc$ is saturated.
\end{proposition}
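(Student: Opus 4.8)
The plan is to realize $\mathcal G$ directly: each vertex of $\mathcal G$ becomes a $\beta$-orbit of the prescribed cardinality, and each positive edge becomes a $\tau$-orbit joining a chosen $\beta^n$-orbit of its source to a chosen $\beta^m$-orbit of its target. The second assertion will then be essentially immediate from Example~\ref{ex:saturated-> action}.

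\emph{Building $\beta$.} For each $v\in V(\mathcal G)$ I would take a set $X_v$ of cardinality $L(v)$ carrying a bijection $\beta_v$ which is a single $L(v)$-cycle when $L(v)<\infty$ (take $X_v=\Z/L(v)\Z$ with $\beta_v$ the shift) and the successor map on $X_v=\Z$ when $L(v)=\infty$. Set $X\coloneqq\bigsqcup_v X_v$ and $\beta\coloneqq\bigsqcup_v\beta_v$. Then the $\beta$-orbits are exactly the sets $X_v$, each $X_v$ splits into exactly $\gcd(L(v),n)$ orbits under $\beta^n$ (of cardinality $L(v)/\gcd(L(v),n)$ each), and symmetrically for $\beta^m$. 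Note that all degrees in $\mathcal G$ are finite, as $\degout(v)\leq\gcd(L(v),n)\leq\abs n$ and $\degin(v)\leq\gcd(L(v),m)\leq\abs m$ by \eqref{eq: deg bound by gcd L(v), n m}.

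\emph{Building $\tau$.} By \eqref{eq: deg bound by gcd L(v), n m}, for every $v$ I can choose an injection from the set of $v$-outgoing edges into the set of $\beta^n$-orbits of $X_v$, and an injection from the set of $v$-incoming edges into the set of $\beta^m$-orbits of $X_v$. For $e\in E^+(\mathcal G)$, write $O_e\subseteq X_{\source(e)}$ and $O'_e\subseteq X_{\target(e)}$ for the orbits thereby attached to $e$. The transfer equation \eqref{eq:transfert} gives $\abs{O_e}=L(e)=\abs{O'_e}$; since $\beta^n$ acts freely and transitively on $O_e$ and $\beta^m$ on $O'_e$, fixing base-points $x_e\in O_e$, $y_e\in O'_e$ and setting $x_e\beta^{nk}\mapsto y_e\beta^{mk}$ defines a $(\beta^n,\beta^m)$-equivariant bijection $O_e\to O'_e$ (well-defined because $L(e)$ divides every $k$ with $x_e\beta^{nk}=x_e$). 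Let $\tau$ be the union of these bijections over $e\in E^+(\mathcal G)$. The domains $O_e$ are pairwise disjoint, and likewise the ranges $O'_e$ (distinct orbits attached to a common vertex via an injection, or orbits lying in different $X_v$), so $\tau$ is a well-defined partial bijection, $\dom(\tau)=\bigsqcup_e O_e$ is $\beta^n$-invariant, $\rng(\tau)=\bigsqcup_e O'_e$ is $\beta^m$-invariant (these two sets may overlap, which is harmless), and $(\beta,\tau)$ is a pre-action of $\BSo(m,n)$.

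\emph{Identifying the Bass-Serre graph and concluding.} The assignment $X_v\mapsto v$ on vertices and $O_e\mapsto e$, $\tau(O_e)=O'_e\mapsto\bar e$ on edges is a bijection compatible with $\source$, $\target$, the edge-involution, and the orientation, and it preserves labels since $L(X_v)=\abs{X_v}=L(v)$ and $L(O_e)=\abs{O_e}=L(e)$; hence $\BSe(\beta,\tau)\cong\mathcal G$, which proves the first assertion. For the second assertion, if $(\beta',\tau')$ is \emph{any} pre-action with $\BSe(\beta',\tau')\cong\mathcal G$, then by Example~\ref{ex:saturated-> action} it is an action if and only if its Bass-Serre graph, namely $\mathcal G$, is saturated. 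The only genuine work is the bookkeeping for $\tau$: the degree bounds \eqref{eq: deg bound by gcd L(v), n m} are exactly what makes the choice of pairwise distinct orbits possible, and the transfer equation \eqref{eq:transfert} is exactly what makes the equivariant gluing $O_e\to O'_e$ possible. I expect the mild care needed around loops (where $O_e$ and $O'_e$ live in the same $X_v$, so $\dom(\tau)$ and $\rng(\tau)$ can meet) to be the only subtle point.
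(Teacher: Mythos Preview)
Your proof is correct and takes essentially the same approach as the paper: build a $\beta$-cycle of the prescribed length over each vertex, then for each positive edge use the degree bounds \eqref{eq: deg bound by gcd L(v), n m} to pick unused $\beta^n$- and $\beta^m$-orbits and glue them via $x_e\beta^{nk}\mapsto y_e\beta^{mk}$, which the transfer equation \eqref{eq:transfert} makes well-defined. The only packaging difference is that the paper deduces the statement from the slightly stronger extension result (Proposition~\ref{Extending pre-action onto a (m,n)-graph}), adding the edges one at a time via Zorn's lemma rather than choosing all injections simultaneously as you do; the edge-by-edge construction is identical to yours.
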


The above proposition is a consequence of the following stronger statement where by definition, a \textbf{sub-}$(m,n)$\textbf{-graph} of an $(m,n)$-graph $\mathcal G$ is a
subgraph $\mathcal G'$ labeled by the restriction of the label map
of $\mathcal G$.

\begin{proposition}[Extension of pre-actions from $(m,n)$-graphs] \label{Extending pre-action onto a (m,n)-graph}
	Let $\Gc_1$ be the Bass-Serre graph of a pre-action $\alpha_1$ and let $\Gc_2$ be an $(m,n)$-graph that contains $\Gc_1$ as a  sub-$(m,n)$-graph.
	Then $\mathcal G_2$ is the Bass-Serre graph of a pre-action $\alpha_2$ that extends $\alpha_1$.
\end{proposition}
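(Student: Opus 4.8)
The plan is to build the pre-action $\alpha_2=(\beta_2,\tau_2)$ by hand out of $\Gc_2$, realizing every vertex of $\Gc_2$ by a $\beta_2$-orbit of the prescribed cardinality and every positive edge by a $(\beta_2^n,\beta_2^m)$-equivariant partial bijection, while leaving all the data already carried by $\Gc_1$ untouched. Write $\alpha_1=(\beta_1,\tau_1)$ on $X_1$, so that $\Gc_1=\BSe(\alpha_1)$. Since $\Gc_1$ is a sub-$(m,n)$-graph of $\Gc_2$, for each vertex $v$ of $\Gc_1$ the $\beta_1$-orbit $O_v$ over $v$ has cardinality $L_{\Gc_2}(v)$. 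I would first set $X_2:=X_1\sqcup\bigsqcup_{v\in V(\Gc_2)\setminus V(\Gc_1)}O_v$, where each new $O_v$ is an abstract set of size $L_{\Gc_2}(v)$ carrying a bijection that is a single cycle (a $\Z$-orbit if $L_{\Gc_2}(v)=\infty$), and let $\beta_2$ be $\beta_1$ on $X_1$ and these cyclic bijections on the new orbits. Then $X_1$ is $\beta_2$-invariant, $\beta_2$ extends $\beta_1$, and $X_2/\langle\beta_2\rangle$ is canonically the labeled vertex set of $\Gc_2$.

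The heart of the construction is the choice of $\tau_2$. For each vertex $v$ the orbit $O_v$ splits into exactly $\gcd(L(v),n)$ orbits under $\beta_2^n$ and exactly $\gcd(L(v),m)$ orbits under $\beta_2^m$. For every positive edge $e$ of $\Gc_2$ I would pick a $\beta_2^n$-orbit $D_e\subseteq O_{\source(e)}$, a $\beta_2^m$-orbit $R_e\subseteq O_{\target(e)}$, and a $(\beta_2^n,\beta_2^m)$-equivariant bijection $\tau_e\colon D_e\to R_e$, so that: for $e\in E^+(\Gc_1)$ (recall $E^+(\Gc_1)=\dom(\tau_1)/\langle\beta_1^n\rangle$) one takes $D_e=e$, $R_e=\bar e$ and $\tau_e=\tau_1|_{D_e}$; and for fixed $v$ the orbits $D_e$ with $\source(e)=v$ are pairwise distinct, and likewise the $R_e$ with $\target(e)=v$. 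The existence of $\tau_e$ is forced by \eqref{eq:transfert}: it gives $\lvert D_e\rvert=L(\source(e))/\gcd(L(\source(e)),n)=L(e)=L(\target(e))/\gcd(L(\target(e)),m)=\lvert R_e\rvert$, and since $\beta_2^n$ acts on $D_e$ and $\beta_2^m$ on $R_e$ each as a single cycle, one may identify both with $\Z/L(e)\Z$ (or with $\Z$ if $L(e)=\infty$) so that the relevant power of $\beta_2$ becomes translation by $1$, and take $\tau_e$ to be the identity; for $e\in E^+(\Gc_1)$ the map $\tau_e$ is already given and is of this form because $\tau_1$ is $(\beta_1^n,\beta_1^m)$-equivariant. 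The distinctness requirement can be met because $\Gc_1$, being a Bass-Serre graph, already uses distinct $\beta_1^n$-orbits at $v$; because the $v$-outgoing edges of $\Gc_1$ form a subset of those of $\Gc_2$; and because \eqref{eq: deg bound by gcd L(v), n m} applied to $\Gc_2$ bounds the number of $v$-outgoing edges of $\Gc_2$ by $\gcd(L(v),n)$, so there remain enough unused $\beta_2^n$-orbits of $O_v$ to receive the edges outside $\Gc_1$. The symmetric argument with $m$ handles the $R_e$.

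Then I would set $\tau_2:=\bigsqcup_{e\in E^+(\Gc_2)}\tau_e$. The $D_e$ are pairwise disjoint (distinct sub-orbits of a single $O_v$, or lying over different vertices), so $\dom(\tau_2)=\bigsqcup_e D_e$ is $\beta_2^n$-invariant; similarly $\rng(\tau_2)=\bigsqcup_e R_e$ is $\beta_2^m$-invariant, and $\tau_2$ is a well-defined injective $(\beta_2^n,\beta_2^m)$-equivariant partial bijection, so $\alpha_2:=(\beta_2,\tau_2)$ is a pre-action of $\BSo(m,n)$. It extends $\alpha_1$ since $\dom(\tau_1)=\bigsqcup_{e\in E^+(\Gc_1)}D_e\subseteq\dom(\tau_2)$ and $\tau_2$ agrees with $\tau_1$ there. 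Unwinding Definition~\ref{def: Bass-Serre graph of pre-action}, the assignment sending $x\langle\beta_2\rangle$ to the vertex $v$ with $x\in O_v$, together with $D_e\mapsto e$ and $R_e\mapsto\bar e$, is then an isomorphism of oriented labeled graphs from $\BSe(\alpha_2)$ onto $\Gc_2$ which restricts to the identity on $\Gc_1$, since source, target, opposite and label maps all match by the items above.

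I expect the only real difficulty to be the simultaneous bookkeeping of the middle paragraph: one must extend, at each vertex, the orbit-assignment already made by $\alpha_1$ without collisions. This works precisely because \eqref{eq:transfert} guarantees the two candidate orbits attached to an edge have equal cardinality, so that an equivariant bijection between them exists, and \eqref{eq: deg bound by gcd L(v), n m} guarantees that at each vertex there is room for all of the new edges. Once these two facts are in place, the identification $\BSe(\alpha_2)\cong\Gc_2$ is a routine unravelling of the definitions.
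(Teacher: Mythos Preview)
Your proof is correct and follows essentially the same approach as the paper: extend $\beta_1$ to $\beta_2$ by adding one cyclic orbit of the prescribed size for each new vertex, then realize each positive edge of $\Gc_2\setminus\Gc_1$ by picking an unused $\beta_2^n$-orbit at the source and an unused $\beta_2^m$-orbit at the target and matching them by an equivariant bijection, the existence and fit of these choices being forced by \eqref{eq:transfert} and \eqref{eq: deg bound by gcd L(v), n m}. The only cosmetic difference is that the paper adds the missing edges one at a time (invoking Zorn's lemma to reduce to the case of a single extra edge), whereas you make all the orbit assignments simultaneously; both organisations are equally valid.
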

\begin{proof}
	We start with a pre-action $(\beta_1,\tau_1)$ on $X_1$ which yields the Bass-Serre graph $\mathcal G_1$.
	Let $W\coloneqq V(\mathcal G_2)\smallsetminus V(\mathcal G_1)$ and $X_2\coloneqq X_1\sqcup\bigsqcup_{v\in W} X_v$
	where each $X_v$ is a set of cardinality $\abs{X_v}=L(v)$.
	We extend $\beta_1$ to a permutation $\beta_2$ of $X_2$ by making it act as a cycle of length $L(v)$ on $X_v$.
	
	By Zorn's lemma, it suffices to extend $\tau_1$ when $\mathcal G_1$ only lacks one positive $\mathcal G_2$-edge.
	So suppose $E^+(\mathcal G_1)\sqcup\{e\}=E^+(\mathcal G_2)$. Then by 
	Inequality \eqref{eq: deg bound by gcd L(v), n m} in Definition \ref{def:mngraph},
	\[\degout^{\mathcal G_1}(\source(e))<\degout^{\mathcal G_2}(\source(e))\leq \gcd(L(\source(e)),n)\] 
	and similarly
	\[\degin^{\mathcal G_1}(\target(e))<\degin^{\mathcal G_2}(\target(e))\leq \gcd(L(\target(e)),m).\]
	We can thus find a $\beta_2^n$-orbit $y\la\beta_2^n\ra$ contained in the $\beta_2$-orbit $\source(e)$
	but disjoint from $\dom(\tau_1)$ and a $\beta_2^m$-orbit $z\la\beta_2^m\ra$ contained in the $\beta_2$-orbit
	$\target(e)$ but disjoint from $\rng(\tau_1)$.
	
	Since these two orbits $y\la \beta_2^n\ra$ and $z\la \beta_2^m\ra$ share the same cardinal $L(e)$, 
	we can define $\tau_2$ as an extension of $\tau_1$ which is also $(\beta_2^n,\beta_2^m)$-equivariant when restricted to $y\la\beta_2^n\ra$ by letting
	\[
	y\beta_2^{kn}\tau_2=z\beta_2^{km}\text{ for all }k\in\Z.
	\]
	By construction $\tau_2$ is the desired extension.
\end{proof}

The pre-action $\alpha_2$ arising in Proposition \ref{Extending pre-action onto a (m,n)-graph} is definitively not unique in general.
In a forthcoming work, we will characterize which $(m,n)$-graphs arise as Bass-Serre graphs of continuum many non-isomorphic actions.
In particular we will show that the $(m,n)$-graphs whose underlying graph has non-finitely generated fundamental group are 
of this kind.
Such $(m,n)$-graphs always exist as soon as $\abs m\geq 2$ and $\abs n\geq 2$. 
Here we give a simple example of a graph associated to continuum many non-isomorphic actions 
for $n=m=2$.

\begin{example}\label{ex: many realizations}
	Let $\Gc$ be the $(2,2)$-graph whose underlying graph is such that $V(\Gc)=\mathbb Z$ and for every $z\in V(\Gc)$ there are exactly two $z$-outgoing edges, one to $z$ and the other to $z+1$. That is, $\Gc$ is a line where every vertex has an extra loop. We set the labels of $\Gc$ to be all infinite. 
	
	\begin{figure}[h!]
		\centering
		\begin{tikzpicture}
			\tikzset{vertex/.style = {shape=circle,draw,minimum size=1.5em}}
			\tikzset{edge/.style = {->,> = latex'}}
			\node[vertex] (e) at  (-4.5,0) {$\infty$};
			\node[vertex] (a) at  (-1.5,0) {$\infty$};
			\node[vertex] (b) at  (1.5,0) {$\infty$};
			\node[vertex] (c) at  (4.5,0) {$\infty$};
			\draw[edge] (e) to node {} (a);
			\draw[edge] (a) to node {} (b);
			\draw[edge] (b) to node {} (c);
			\draw[edge] (e) to [out=45,in=135,looseness=10] (e);
			\draw[edge] (a) to [out=45,in=135,looseness=10] (a);
			\draw[edge] (b) to [out=45,in=135,looseness=10] (b);
			\draw[edge] (c) to [out=45,in=135,looseness=10] (c);
			\node (f) at (-6,0) {};
			\node (g) at (6,0) {};
			\draw[edge, dotted] (f) to node {} (e);
			\draw[dotted] (c) to node {} (g);
		\end{tikzpicture}
		\caption{The (2,2)-graph $\Gc$}
	\end{figure}
	
	Set $X\coloneqq V(\Gc)\times \mathbb Z\cong \mathbb Z\times\mathbb Z$. 
	For every function $f\colon \Z\to\Z$ such that $\forall w < 0,\, f(w)=0$ and $f(0)\neq 0$,
	we define an action $\alpha_f$ as follows: for all $(k,l)\in X$ 
	\begin{align*}
		(k,l)\alpha_f(b)\coloneqq&(k,l+1);\\
		(k,l)\alpha_f(t)\coloneqq&\begin{cases}(k+1,l)&\text{ if }l\text{ is odd};\\ (k,l+f(k))&\text{ if }l\text{ is even}.\end{cases}
	\end{align*}
	It is easy to check that all $\alpha_f$ are actions of $\BSo(2,2)$ whose Bass-Serre graph is $\Gc$,
	that $\alpha_f$ and $\alpha_g$ are non-conjugate for $f\neq g$,
	and that there are continuum many such actions.
\end{example}

\subsection{Additional properties of \texorpdfstring{$(m,n)$}{(m,n)}-graphs}
In this section, we collect some basic consequences of the definition of $(m,n)$-graphs.
Observe that Equation \eqref{eq:transfert} is equivalent to the fact that 
\begin{equation}\label{eq: def mngraphes with max}
	\max(\abs{L(s(e))}_p - \abs{n}_p,0)
	= \abs{L(e)}_p
	= \max(\abs{L(\target(e))}_p - \abs{m}_p, 0)
\end{equation}
from which we obtain the following.  

\begin{remark}\label{labels of extremities}   
	Consider an oriented labeled graph $\mathcal G=(V,E)$ with label map $L\colon V\sqcup E\to\Z_{\geq 1}$ satisfying $L(\bar e) = L(e)$ for every edge $e$. 
	The labeled graph $\Gc$ is an $(m,n)$-graph if and only if the following two conditions hold:
	\begin{itemize}
		\item for every positive edge $e$ and every prime $p$ such that $\abs{L(e)}_p \geq 1$,
		\begin{equation}\label{eq: labels of extremities: equality case}
			\abs{L(\source(e))}_p = \abs{L(e)}_p + \abs{n}_p
			\quad \text{ and } \quad
			\abs{L(\target(e))}_p = \abs{L(e)}_p + \abs{m}_p ,
		\end{equation}
		\item for every positive edge $e$ and every prime $p$ such that $\abs{L(e)}_p = 0$,
		\begin{equation}\label{eq: labels of extremities: inequalities case}
			0 \leq \abs{L(\source(e))}_p \leq \abs{n}_p
			\quad \text{ and } \quad
			0 \leq \abs{L(\target(e))}_p \leq \abs{m}_p . 
		\end{equation}
	\end{itemize}
\end{remark}    

In particular, in an $(m,n)$-graph, $\abs{L(\source(e))}_p>\abs{n}_p$ if and only if $\abs{L(\target(e))}_p>\abs m_p$, and if one of these two equivalent conditions is met then
\begin{equation}\label{eq: transfer for p}
	\abs{L(\target(e))}_p=\abs{L(\source(e))}_p+\abs m_p-\abs n_p.
\end{equation}

\begin{lemma}\label{lem: infinite paths}
	Fix a prime $p$ such that $\abs n_p<\abs m_p$
	and let $\mathcal G$ be an $(m,n)$-graph. 
	If $(e_k)_{k\geq 1}$ is any infinite path consisting of positive edges with $L(\source(e_1))\neq \infty$ and $\abs{L(\source(e_1))}_p>\abs n_p$, then
	\begin{align*}
		\lim_{k\to +\infty} \abs{L(\source(e_k))}_p=+\infty.
	\end{align*}
	If $(e_k)_{k\geq 1}$ is any infinite path consisting of negative edges with $L(\source(e_1))\neq \infty$, then
	\begin{align*}
		\limsup_{k\to +\infty} \abs{L(\source(e_k))}_p<\abs m_p.
	\end{align*}
\end{lemma}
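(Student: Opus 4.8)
The plan is to follow the $p$-adic valuation $a_k \coloneqq \abs{L(\source(e_k))}_p$ of the source labels along the path, using the transfer relation \eqref{eq: transfer for p} together with the equivalence preceding the statement, namely that for a positive edge $e$ one has $\abs{L(\source(e))}_p > \abs{n}_p$ if and only if $\abs{L(\target(e))}_p > \abs{m}_p$. First I would record that, since the $e_k$ form a path, all its vertices lie in a single connected sub-$(m,n)$-graph, so the hypothesis $L(\source(e_1))\neq\infty$ forces every $a_k$ to be a genuine non-negative integer by Remark~\ref{rmk: cardinal of Labels} (alternatively, directly from \eqref{eq:transfert}); this legitimizes all the arithmetic below. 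Write $d \coloneqq \abs{m}_p - \abs{n}_p \geq 1$.

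For the first statement the $e_k$ are positive, with $\source(e_{k+1}) = \target(e_k)$, and I would prove by induction that $a_k > \abs{n}_p$ for every $k$: the base case is the hypothesis, and if $a_k > \abs{n}_p$ then the equivalence applied to $e_k$ yields $a_{k+1} = \abs{L(\target(e_k))}_p > \abs{m}_p > \abs{n}_p$, while \eqref{eq: transfer for p} gives $a_{k+1} = a_k + d$. Hence $a_k = a_1 + (k-1)d \to +\infty$ because $d\geq 1$, which is the claim.

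For the second statement the $e_k$ are negative, so each $\bar e_k$ is a \emph{positive} edge running from $\source(e_{k+1})$ to $\source(e_k)$. Applying the equivalence and \eqref{eq: transfer for p} to $\bar e_k$ translates into: if $a_k > \abs{m}_p$ then $a_{k+1} = a_k - d$, and if $a_k \leq \abs{m}_p$ then $a_{k+1} \leq \abs{n}_p$. The key step is then a one-line monovariant argument: the sequence cannot satisfy $a_k > \abs{m}_p$ for all $k$, since it would then be the strictly decreasing sequence $a_1 - (k-1)d$ of non-negative integers; so there is a first index $K$ with $a_K \leq \abs{m}_p$. From this, $a_{K+1} \leq \abs{n}_p$, and since $\abs{n}_p < \abs{m}_p$ a trivial induction gives $a_j \leq \abs{n}_p$ for all $j \geq K+1$, whence $\limsup_k a_k \leq \abs{n}_p < \abs{m}_p$.

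I do not anticipate a genuine obstacle: the argument is essentially bookkeeping. The two points that deserve a moment's care are (i) justifying that the labels along the path are finite before performing $p$-adic arithmetic, and (ii) correctly orienting the reversed edges $\bar e_k$ in the negative-path case so that the sign in \eqref{eq: transfer for p} comes out as a decrease rather than an increase — getting this direction right is exactly what makes the monovariant work.
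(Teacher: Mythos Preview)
Your proof is correct and follows essentially the same approach as the paper: an induction on \eqref{eq: transfer for p} along positive paths, and for negative paths the observation that the $p$-adic valuation must eventually drop to $\leq \abs{n}_p$ and then stays there. Your version is slightly more explicit about the finiteness of labels and the orientation of $\bar e_k$, but the underlying argument is identical.
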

\begin{proof}
	If $(e_k)_{k\geq 1}$ is an infinite path consisting of positive edges such that $\abs{L(\source(e_1))}_p>\abs n_p$, then by a straightforward induction using Equation \eqref{eq: transfer for p} we have that  \begin{equation}\label{eq: padic goes up}
		\abs{L(\source(e_k))}_p=\abs{L(\source(e_1))}_p+k(\abs m_p-\abs n_p) 
	\end{equation}for all $k\geq 1$. The first result follows.
	
	For the second one, let $(e_k)_{k\geq 1}$ be an infinite path consisting of negative edges. 
By exchanging the roles in Equation \eqref{eq: transfer for p}, we have the claim: 
{\em  if $e$ is a negative edge  then  $\abs{L(\source(e))}_p>\abs m_p$ if and only if  $\abs{L(\target(e))}_p>\abs{n}_p$; and when this occurs 
$\abs{L(\target(e))}_p=\abs{L(\source(e))}_p-\abs m_p+\abs n_p.$}
\\
Thus, $\abs{L(\source(e_{k+1}))}_p=\abs{L(\target(e_k))}_p<\abs{L(\source(e_k))}_p$ as long as  $\abs{L(\source(e_k))}_p>\abs m_p$.
 So there must be $k_0\in\N$ such that $\abs{L(\source(e_{k_0}))}_p\leq \abs m_p$ (this could have already happened for $k_0=1$). 
From this point, we have $\abs{L(\source(e_{k_0+1}))}_p=\abs{L(\target(e_{k_0}))}_p\leq \abs n_p<\abs m_p$ and an inductive use of the claim gives $\abs{L(\source(e_k))}_p\leq \abs n_p<\abs m_p$ for all $k> k_0$. This finishes the proof.
\end{proof}

\begin{remark}
	It follows from Equation \eqref{eq: padic goes up} that any infinite path $(e_k)_{k\geq 1}$ consisting of positive edges with $L(\source(e_1))\neq \infty$ and $\abs{L(\source(e_1))}_p>\abs n_p$ has to be a simple path.
\end{remark}

\begin{lemma}\label{lem: non-maximal degree in finite phenotype}
	If $\abs m> \abs n$ and $\mathcal G$ is an $(m,n)$-graph with a vertex of finite label, then there is a vertex $v\in V(\mathcal G)$ such that $\degin(v)< \abs m$.
\end{lemma}
\begin{proof}
	Assume by contradiction that $\degin(v)= \abs m$ for all $v\in V(\mathcal G)$. 
	Then we can build inductively an infinite path $(e_k)_{k\in\N}$ consisting of negative edges with $L(\source(e_0))$ finite. 
	By the previous lemma this path goes through some vertex $v_0$ that $\abs{L(v_0)}_p<\abs m_p$. Then $\degin(v_0)=\gcd(L(v_0),m)<\abs m$, a contradiction.
\end{proof}

\subsection{Bass-Serre theory for \( \BSo(m,n) \)}
\label{sect: Bass-Serre graphs and Bass-Serre theory}

Take $m,n\in \mathbb Z\smallsetminus \{0\}$.
Set $\Gamma\coloneqq\BSo(m,n)=\la b,t\vert tb^{m}t^{-1}=b^{n}\ra$ and put $S\coloneqq \{b,t\}$. Denote by $\Tree$ the associated Bass-Serre tree and remark that it is the underlying oriented graph of the Bass-Serre graph of the transitive and free action: $\Tree=\BSe(\Gamma\curvearrowleft \Gamma)$.

Besides the Schreier graph, we can associate to each subgroup $\Lambda\leq \Gamma$ two decorated graphs:
\begin{itemize}
	\item the Bass-Serre graph of the action $\Lambda\backslash\Gamma \curvearrowleft\Gamma$;
	\item the quotient graph of groups $\Lambda\bs \Tree$ of the action $\Lambda\curvearrowright \Tree$. 
\end{itemize}
Let us observe that the underlying oriented graphs of the two above decorated graphs are the same. Indeed they are obtained as quotients of commuting actions as one can see in the following diagram where by $\curvearrowleft^V\la b \ra$ we mean that $\la b\ra$ acts only on the set of vertices,   where the $\swarrow$ arrows are graph morphisms obtained by quotienting by left $\Lambda$-actions, and where the dashed $\searrow$ arrows are projections as in Definition~\ref{dfn: projection BS graph}:
\begin{center}
	\begin{tikzcd}
		& \Lambda\curvearrowright \Cayley(\Gamma,S)\curvearrowleft^V \la b \ra
		\arrow[rd,dashed] \arrow[ld]\\
		\Lambda\backslash \Cayley(\Gamma,S)\curvearrowleft^V \la b \ra
		\arrow[d,equal]
		&& \Lambda\curvearrowright\BSe(\Gamma\curvearrowleft \Gamma)
		\arrow[d, equal] \\
		\Schreier(\Lambda,S)\curvearrowleft^V \la b \ra
		\arrow[dr,dashed]&&
		\Lambda\curvearrowright\Tree \arrow[dl]\\
		&\BSe(\Lambda\backslash \Gamma\curvearrowleft \Gamma) \simeq  \Lambda\backslash\Tree&\\
	\end{tikzcd}
\end{center}

Next, observe that, $\BSe(\Lambda\backslash \Gamma\curvearrowleft \Gamma)$ being saturated, one has $\degin(v) = \gcd(L(v),m)$ and $\degout(v) = \gcd(L(v),n)$ for every vertex $v$ in this graph.
Hence, for every edge $e$, one has
\[
\frac{L(\source(e))}{L(e)}= \gcd(L(\source(e)),n) = \degout(\source(e))
\quad \text{ and } \quad
\frac{L(\target(e))}{L(e)}=  \degin(\target(e)).
\]
Thus Remark \ref{rem: gpe fdmt si inter <b> trivial} and Proposition \ref{Prop: Bass-Serre th. recovering the lambda pm from the quotient graph} can be immediately reformulated in terms of the labels of the Bass-Serre graph $\BSe(\Lambda\backslash \Gamma\curvearrowleft \Gamma)$ as follows:
\begin{proposition}\label{prop: Bass-Serre graph vs graph of groups}
	Let $m$ and $n$ be non-zero integers.
	Let $\mathcal G$ be a saturated connected $(m,n)$-graph and 
	let $\Lambda$ be a subgroup of $\Gamma=\BSo(m,n)$ such that $\BSe(\Lambda\backslash \Gamma\curvearrowleft \Gamma) \simeq \Gc$. 
	\begin{enumerate}
		\item If all labels of $\Gc$ are infinite, then $\Lambda$ is a free group, namely isomorphic to the fundamental group of the graph $\Gc$.
		
		\item If all labels of $\Gc$ are finite, then the quotient graph of groups arising from the action $\Lambda \curvearrowright \Tree$ 
		is isomorphic to the graph of groups obtained by attaching a copy of $\Z$ to every vertex and every edge of $\Gc$, with structural maps of positive edges
		\begin{eqnarray*}
			\Z_e \hookrightarrow \Z_{\source(e)}, &  & k \mapsto \frac{n \cdot L(e)}{L(\source(e))} \cdot k, \\
			\Z_e \hookrightarrow \Z_{\target(e)}, &  & k \mapsto \frac{m \cdot L(e)}{L(\target(e))} \cdot k.
		\end{eqnarray*}
	\end{enumerate}
\end{proposition}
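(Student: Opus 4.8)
The plan is to reduce the statement to the two facts already established, namely Remark~\ref{rem: gpe fdmt si inter <b> trivial} and Proposition~\ref{Prop: Bass-Serre th. recovering the lambda pm from the quotient graph}, via the dictionary between $\BSe(\Lambda\backslash\Gamma\curvearrowleft\Gamma)$ and the quotient $\Lambda\bs\Tree$ recorded just before the statement. First I would observe that by Remark~\ref{rmk: cardinal of Labels} a connected $(m,n)$-graph has labels that are either all finite or all infinite, so the two cases of the proposition are exhaustive. The key point is that the label of a vertex $v$ of $\BSe(\Lambda\backslash\Gamma\curvearrowleft\Gamma)$ is the cardinality of the corresponding $\la b\ra$-orbit on $\Lambda\bs\Gamma$; in particular the label of the base vertex $v_0 = \Lambda\la b\ra$ is $[\la b\ra : \la b\ra\cap\Lambda]$, which is finite exactly when $\la b\ra\cap\Lambda\neq\{\id\}$.

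For the first item, if all labels of $\Gc$ are infinite then $\la b\ra\cap\Lambda = \{\id\}$, so Remark~\ref{rem: gpe fdmt si inter <b> trivial} gives that $\Lambda$ acts freely on $\Tree$ and is isomorphic to the fundamental group of $\Lambda\bs\Tree$. Since the underlying oriented graph of $\Lambda\bs\Tree$ is that of $\BSe(\Lambda\backslash\Gamma\curvearrowleft\Gamma)\simeq\Gc$, this yields $\Lambda\cong\pi_1(\Gc)$.

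For the second item, if all labels of $\Gc$ are finite then $\la b\ra\cap\Lambda\neq\{\id\}$, so Proposition~\ref{Prop: Bass-Serre th. recovering the lambda pm from the quotient graph} applies: the quotient graph of groups of $\Lambda\curvearrowright\Tree$ is obtained by attaching $\Z$ to each vertex and edge of $\Lambda\bs\Tree\simeq\Gc$, with structural maps $k\mapsto\frac{n}{\degout(\source(e))}\,k$ and $k\mapsto\frac{m}{\degin(\target(e))}\,k$. It then remains to convert degrees into labels: since $\Gc$ is saturated, $\degout(\source(e))=\gcd(L(\source(e)),n)$ and $\degin(\target(e))=\gcd(L(\target(e)),m)$, while Equation~\eqref{eq:transfert} rewrites these as $L(\source(e))/L(e)$ and $L(\target(e))/L(e)$ respectively. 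Substituting gives precisely $\frac{n\,L(e)}{L(\source(e))}$ and $\frac{m\,L(e)}{L(\target(e))}$, as claimed.

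The proof is short because the substance has been front-loaded into the earlier results; the only steps needing care are bookkeeping: checking that the isomorphism $\BSe(\Lambda\backslash\Gamma\curvearrowleft\Gamma)\simeq\Gc$ is compatible with the identification of underlying graphs with $\Lambda\bs\Tree$ from the commutative diagram above, and tracking the left/right conventions when reading a vertex label as an index $[\la b\ra:\la b\ra\cap g\inv\Lambda g]$. I do not expect any genuine obstacle beyond these.
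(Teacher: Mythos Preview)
Your proposal is correct and follows essentially the same approach as the paper: the paragraph immediately preceding the proposition already records the conversion $\degout(\source(e))=L(\source(e))/L(e)$ and $\degin(\target(e))=L(\target(e))/L(e)$ via saturation and Equation~\eqref{eq:transfert}, and then states the proposition as a direct reformulation of Remark~\ref{rem: gpe fdmt si inter <b> trivial} and Proposition~\ref{Prop: Bass-Serre th. recovering the lambda pm from the quotient graph}. Your write-up makes the two cases and the label/intersection dictionary a bit more explicit, but the substance is identical.
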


Then, combining Proposition \ref{prop: Bass-Serre graph vs graph of groups} and Lemma~\ref{lem: non-maximal degree in finite phenotype}, we get the following rephrasing of Corollary \ref{cor: oriented graph determines isomorphis type}:

\begin{corollary}\label{cor:iso type of Lambda vs BS(Lambda)}
	Let $m$ and $n$ be non-zero integers such that $\abs m\neq \abs n$. Then the isomorphism type of $\Lambda\leq \BSo(m,n)$ depends only on the graph structure of $\BSe(\Lambda)$.
\end{corollary}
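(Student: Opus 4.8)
The plan is to show that the underlying oriented graph of $\BSe(\Lambda)\coloneqq\BSe(\Lambda\backslash\BSo(m,n)\curvearrowleft\BSo(m,n))$ — with its labels forgotten — already pins down $\Lambda$ up to isomorphism; here ``graph structure'' means precisely this unlabelled oriented graph. First I would record that $\BSe(\Lambda)$ is connected (the action $\Lambda\backslash\BSo(m,n)\curvearrowleft\BSo(m,n)$ is transitive) and saturated (it is the Bass-Serre graph of an action, Example~\ref{ex:saturated-> action}). Since $\BSo(m,n)\cong\BSo(n,m)$ (Remark~\ref{rmk: exchange m and n in BS graph}) and reversing all orientations turns an $(m,n)$-graph into an $(n,m)$-graph while preserving whether two such graphs share an underlying oriented structure (Remark~\ref{rmk: exchange m and n on (m,n)-graph}), I would assume from the start that $\abs m>\abs n$.

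The heart of the argument is a dichotomy visible on the oriented graph $\Gc\coloneqq\BSe(\Lambda)$ alone:
\[
\text{all labels of }\Gc\text{ equal }\infty \iff \degin(v)=\abs m\text{ for every vertex }v.
\]
For the forward implication, saturation gives $\degin(v)=\gcd(L(v),m)=\gcd(\infty,m)=\abs m$. For the converse, Remark~\ref{rmk: cardinal of Labels} says the labels of the connected graph $\Gc$ are either all $\infty$ or all finite, and in the latter case Lemma~\ref{lem:A VOIR} — this is the one and only place the hypothesis $\abs m\neq\abs n$ is used — yields a vertex $v$ with $\degin(v)<\abs m$. That the hypothesis is genuinely needed is seen from the ``line with loops'' $(2,2)$-graph of Example~\ref{ex: many realizations}, which supports both an all-infinite and an all-finite labelling (the latter by a short check), corresponding to subgroups that are respectively free and non-free. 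Consequently, if the Bass-Serre graphs of two subgroups $\Lambda_1,\Lambda_2$ have isomorphic underlying oriented graphs, then either both are all-$\infty$ labelled or both are all-finite labelled.

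It then remains to treat the two cases with Proposition~\ref{prop: Bass-Serre graph vs graph of groups}. In the all-$\infty$ case, item~(1) gives that $\Lambda$ is free of rank the rank of $\pi_1(\Gc)$, which depends only on $\Gc$. In the all-finite case, item~(2) gives that $\Lambda$ is the fundamental group of the graph of groups carried by $\Gc$ with a $\Z$ on each vertex and edge and structural maps $k\mapsto\frac{n\,L(e)}{L(\source(e))}\,k$ and $k\mapsto\frac{m\,L(e)}{L(\target(e))}\,k$ along each positive edge $e$; I would then observe that saturation forces $\frac{L(\source(e))}{L(e)}=\degout(\source(e))$ and $\frac{L(\target(e))}{L(e)}=\degin(\target(e))$, so these maps are multiplication by $\frac{n}{\degout(\source(e))}$ and $\frac{m}{\degin(\target(e))}$ — integers determined by $\Gc$ (with $m,n$ fixed). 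Hence the graph of groups, and with it $\Lambda$, depends only on the oriented graph underlying $\BSe(\Lambda)$.

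The only real obstacle is the middle dichotomy; everything else is bookkeeping plus a direct appeal to Proposition~\ref{prop: Bass-Serre graph vs graph of groups}. I would be careful with the orientation-reversal step used to reduce to $\abs m>\abs n$, and with the routine point that a graph of groups whose vertex and edge groups are all $\Z$ with prescribed integer structural maps has its fundamental group determined up to isomorphism by that data.
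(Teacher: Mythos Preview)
Your proof is correct and follows essentially the same approach as the paper: detect from the oriented graph alone (via saturation and Lemma~\ref{lem:A VOIR}) whether the labels are all infinite or all finite, then invoke Proposition~\ref{prop: Bass-Serre graph vs graph of groups}. You are in fact more careful than the paper in making the reduction to $\abs m>\abs n$ explicit before applying Lemma~\ref{lem:A VOIR}, and in spelling out why the structural maps in the finite-label case depend only on the degrees; your aside on Example~\ref{ex: many realizations} showing the hypothesis is sharp is a nice bonus.
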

\begin{proof}
	Recall that if an $(m,n)$-graph is saturated and has only infinite labels, then all vertices have incoming degree $\abs m$ and outgoing degree $\abs n$.
	Lemma~\ref{lem: non-maximal degree in finite phenotype} thus allows us to detect whether the Bass Serre graph of $\Lambda$ contains infinite labels by purely looking at its graph structure: it has infinite labels if and only if all vertices have degree $\abs n+\abs m$.
	The result now follows from Proposition \ref{prop: Bass-Serre graph vs graph of groups}.
\end{proof}

\begin{remark}  When $\abs m= \abs n$, the statement analogue to that of Corollary~\ref{cor:iso type of Lambda vs BS(Lambda)} fails since
	the central subgroup $\Lambda=\la b^{2n}\ra$ has the same Bass-Serre graph as the trivial subgroup $\{\id\}$.\end{remark}

\section{Phenotype}\label{sec: phenotype}

In this section, we introduce a central invariant to understand transitive
$\BSo(m,n)$-(pre)-actions: the \emph{phenotype} (see Definition \ref{def: phenotype for actions}). We first define the $(m,n)$-phenotype of a natural number. We then prove that given a transitive  pre-action $(\tau,\beta)$, all cardinalities of $\beta$-orbits have the same phenotype.

\subsection{Phenotypes of natural numbers}

Recall that $\primes$ denotes the set of prime numbers and that 
given $p\in \primes$ and $k\in\Z$, we denote by $\abs{k}_p$ the $p$-adic valuation of $k$.

\begin{definition}[Phenotype of a natural number]
	\label{df: phenotype natural number}
	Let $k\in\Z_{\geq 1}$. We set
	\begin{eqnarray*}
		\primes_{m,n}\phantom{(k)}&\coloneqq &\left\lbrace p\in \primes\colon|m|_p=|n|_p\right\rbrace,\\ 
		\primes_{m,n}(k)&\coloneqq& \left\lbrace p\in\primes\colon|m|_p=|n|_p\text{ and }|k|_p>|n|_p\right\rbrace.
	\end{eqnarray*}
	
	The $(m,n)$-\defin{phenotype} of $k$, denoted by $\Phe_{m,n}(k)$,  is the following positive integer:
	\[\Phe_{m,n}(k)\coloneqq \prod_{p\in \primes_{m,n}(k)} p^{|k|_p}.\]
	If $k=\infty$, we set $\Phe_{m,n}(k)\coloneqq\infty$.
\end{definition}

\begin{example}
	If $m$ and $n$ are coprime, then for every $k\in\Z$ 
	\begin{align*}
		\primes_{m,n}\phantom{(k)}&=\left\lbrace p\in\primes\colon p\text{ does not divide }mn\right\rbrace\\
		\primes_{m,n}(k)&=\left\lbrace p\in\primes\colon p \text{ divides } k\text{ and }p\text{ does not divide }mn\right\rbrace.
	\end{align*}
	In this case, $\Phe_{m,n}(k)$ is the greatest divisor of $k$ that is coprime to $mn$.
\end{example}
\begin{example}\label{ex: phenotype for non coprime}
	If $m=2^2\cdot3^2\cdot5$ and $n=2^2\cdot3$, then $\primes_{m,n}=\primes\smallsetminus\{3,5\}$ and 
	\[\primes_{m,n}(k)=\begin{cases} 
		\{p\in\primes \colon p\text{ divides } k\}\smallsetminus\{2,3,5\}&\text{if } 2^3 \text{ does not divide } k\\ 
		\{p\in\primes \colon p\text{ divides } k\}\smallsetminus \{3,5\}& \text{if } 2^3 \text{ divides } k.
	\end{cases}\]
	For example $\Phe_{m,n}(2\cdot3\cdot7)=7$ and $\Phe_{m,n}(2^5\cdot 3\cdot 7)=2^5\cdot 7$.
\end{example}

\begin{remark}\label{rmk: phen lcm gcd}
	If $k,l$ both have phenotype $q$, then so do their lcm and gcd.
\end{remark}

The following lemma will be useful in Section \ref{Sect: Perfect kernel and pluripotency}.

\begin{lemma}\label{lem: preimage phenotypes}
	Let $q=\Phe_{m,n}(k)$ be a finite $(m,n)$-phenotype. 
	Then $\Phe_{m,n}^{-1}(\{q\})$ is finite if and only if $\abs m=\abs n$.
\end{lemma}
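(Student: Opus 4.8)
The plan is to unwind Definition~\ref{df: phenotype natural number} and treat the two implications separately. The guiding observation is that a prime $p$ affects the value $\Phe_{m,n}(\ell)$ only when $p\in\primes_{m,n}$ (that is, $|m|_p=|n|_p$), and then only through whether the valuation $|\ell|_p$ exceeds the threshold $|n|_p$; moreover for $p\in\primes_{m,n}(\ell)$ one has $|\ell|_p>|n|_p\geq 0$, hence $|\ell|_p\geq 1$, so the set of prime divisors of $\Phe_{m,n}(\ell)$ is exactly $\primes_{m,n}(\ell)$ and $|\Phe_{m,n}(\ell)|_p=|\ell|_p$ for each such $p$. Since $q$ is finite, $\Phe_{m,n}^{-1}(\{q\})$ is automatically contained in $\Z_{\geq 1}$ (as $\Phe_{m,n}(\infty)=\infty$).

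First I would handle the implication ``$|m|=|n|\ \Rightarrow\ \Phe_{m,n}^{-1}(\{q\})$ finite''. When $|m|=|n|$ we have $\primes_{m,n}=\primes$, so $\primes_{m,n}(\ell)=\{p\in\primes:|\ell|_p>|n|_p\}$ for every $\ell\in\Z_{\geq 1}$. Fix $\ell$ with $\Phe_{m,n}(\ell)=q$. By the observation above, every prime $p$ dividing $q$ satisfies $|\ell|_p=|q|_p$, while every prime $p$ not dividing $q$ satisfies $p\notin\primes_{m,n}(\ell)$, hence $|\ell|_p\leq|n|_p$. Therefore $|\ell|_p\leq|q|_p+|n|_p$ for all primes $p$, i.e.\ $\ell$ divides $q\,|n|$, leaving only finitely many possibilities for $\ell$.

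Next, the implication ``$|m|\neq|n|\ \Rightarrow\ \Phe_{m,n}^{-1}(\{q\})$ infinite''. Since $|m|\neq|n|$, I would pick a prime $p_0$ with $|m|_{p_0}\neq|n|_{p_0}$, so that $p_0\notin\primes_{m,n}$, and let $k\in\Z_{\geq 1}$ be an integer with $\Phe_{m,n}(k)=q$ (one exists by hypothesis). Consider the integers $kp_0^{\,j}$ for $j\geq 0$. For every prime $p\neq p_0$ one has $|kp_0^{\,j}|_p=|k|_p$, so the conditions ``$|m|_p=|n|_p$ and $|\cdot|_p>|n|_p$'' hold for $kp_0^{\,j}$ exactly when they hold for $k$; and $p_0$ never lies in $\primes_{m,n}(\cdot)$. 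Hence $\primes_{m,n}(kp_0^{\,j})=\primes_{m,n}(k)$, and the valuations agree over this common index set, giving $\Phe_{m,n}(kp_0^{\,j})=\Phe_{m,n}(k)=q$ for all $j$. These integers are pairwise distinct, so $\Phe_{m,n}^{-1}(\{q\})$ is infinite, which completes the proof when combined with the previous paragraph.

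No step is genuinely difficult; the only point requiring a little care is the finiteness direction, where one must notice that primes not dividing $q$ cannot have arbitrarily large valuation in $\ell$ because they are capped by $|n|_p$, so that only the finitely many primes dividing $n$ but not $q$ produce any freedom, and each contributes only finitely many choices.
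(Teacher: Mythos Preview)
Your proof is correct and follows essentially the same approach as the paper's: for $|m|\neq|n|$ you multiply $k$ by powers of a prime $p_0$ with $|m|_{p_0}\neq|n|_{p_0}$, exactly as the paper does; for $|m|=|n|$ you observe that valuations of $\ell$ at primes not dividing $q$ are bounded by $|n|_p$, which is the same argument as the paper's (your explicit bound $\ell\mid q\,|n|$ is a nice sharpening of the paper's looser ``only finitely many such $k'$'').
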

\begin{proof}
	Assume first $\abs{m} \neq \abs{n}$. In this case, there is a prime number $p$ such that $\abs{m}_p \neq \abs{n}_p$. We get $\Phe_{m,n}(p^i k) = q$ for all $i$, hence $\Phe_{m,n}^{-1}(\{q\})$ is infinite.
	
	If $\abs{m} = \abs{n}$, then $\primes_{m,n}=\primes$. 
	If $k$ and $k'$ are two integers with the same phenotype, the only primes $p$ for which the valuations of $k$ and $k'$ may differ are those for which $\abs{k}_p\leq \abs{m}_p$ and in this case $\abs{k'}_p$ must also be bounded by $\abs{m}_p$.
	There are only finitely many such $k'$.
\end{proof}

\subsection{\texorpdfstring{Phenotypes of $(m,n)$-graphs}{Phenotypes of (m,n)-graphs}}

If $v$ is a vertex of an $(m,n)$-graph, we use the shorter expression ``phenotype of the vertex $v$'' to mean ``phenotype of the label of the vertex $v$''. The key feature of the notion of phenotype is the following statement. 

\begin{proposition}\label{prop:phenotype is well defined for conn graph}
	The vertices of a connected $(m,n)$-graph all have the same $(m,n)$-phenotype.
\end{proposition}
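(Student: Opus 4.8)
The plan is to show that the $(m,n)$-phenotype is invariant when crossing a single edge, and then propagate along paths using connectedness. Since the graph is connected, it suffices to prove that for every positive edge $e$ one has $\Phe_{m,n}(L(\source(e))) = \Phe_{m,n}(L(\target(e)))$. If all labels are $\infty$ this is immediate from the convention, so assume all labels are finite (Remark~\ref{rmk: cardinal of Labels}). The phenotype is defined prime by prime, so I would fix a prime $p$ and compare the $p$-adic contributions $\mathbf{1}[p\in\primes_{m,n}]\cdot\mathbf{1}[\abs{L(v)}_p>\abs n_p]\cdot\abs{L(v)}_p$ at the two endpoints.

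First I would dispose of the primes $p\notin\primes_{m,n}$, i.e.\ with $\abs m_p\neq\abs n_p$: these never contribute to the phenotype of any integer by definition, so they are irrelevant regardless of the labels. So fix $p\in\primes_{m,n}$, meaning $\abs m_p=\abs n_p=:a$. Now I invoke Remark~\ref{labels of extremities}/Equations~\eqref{eq: labels of extremities: equality case}--\eqref{eq: labels of extremities: inequalities case}. If $\abs{L(e)}_p\geq 1$, then $\abs{L(\source(e))}_p = \abs{L(e)}_p + a$ and $\abs{L(\target(e))}_p = \abs{L(e)}_p + a$, so the two endpoint valuations are equal; moreover both exceed $a=\abs n_p$ (since $\abs{L(e)}_p\geq1$), so $p$ contributes the same factor $p^{\abs{L(e)}_p+a}$ to both phenotypes. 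If instead $\abs{L(e)}_p=0$, then $0\leq\abs{L(\source(e))}_p\leq\abs n_p = a$ and $0\leq\abs{L(\target(e))}_p\leq\abs m_p = a$, so in particular $\abs{L(\source(e))}_p\leq\abs n_p$ and $\abs{L(\target(e))}_p\leq\abs n_p$, which means $p\notin\primes_{m,n}(L(\source(e)))$ and $p\notin\primes_{m,n}(L(\target(e)))$; hence $p$ contributes nothing to either phenotype. Thus the contribution of every prime agrees at the two endpoints, giving $\Phe_{m,n}(L(\source(e))) = \Phe_{m,n}(L(\target(e)))$.

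For the global statement, I would then argue: let $u,v$ be two vertices of the connected $(m,n)$-graph $\Gc$. Choose a path from $u$ to $v$ (ignoring orientation); since $L(\bar e)=L(e)$ and the phenotype equality above is symmetric in source and target, crossing any edge — positive or negative — preserves the phenotype of the endpoint labels. By induction on the length of the path, $\Phe_{m,n}(L(u)) = \Phe_{m,n}(L(v))$.

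I do not expect a serious obstacle here: the only mildly delicate point is making sure the $\abs{L(e)}_p=0$ case genuinely forces $p$ out of $\primes_{m,n}(\cdot)$ at \emph{both} endpoints, which works precisely because $\abs m_p=\abs n_p$ for $p\in\primes_{m,n}$ — this is exactly where the defining condition of $\primes_{m,n}$ is used and where the argument would break for primes outside $\primes_{m,n}$ (but those were already handled). One should also remember to state the infinite-label case separately, invoking that in a connected $(m,n)$-graph the labels are either all finite or all $\infty$, in which case every vertex has phenotype $\infty$ by convention.
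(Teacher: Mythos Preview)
Your proof is correct and follows essentially the same approach as the paper: reduce to a single positive edge, handle the infinite-label case separately, and then argue prime by prime that the $p$-adic contributions to the phenotype agree at the two endpoints. The only difference is cosmetic: the paper works directly from Equation~\eqref{eq:transfert} and rederives the needed $p$-adic consequences inline, whereas you invoke the already-stated reformulation in Remark~\ref{labels of extremities}; the case split $\abs{L(e)}_p\geq 1$ versus $\abs{L(e)}_p=0$ and the use of $\abs m_p=\abs n_p$ for $p\in\primes_{m,n}$ are identical in spirit.
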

\begin{proof}
	It is enough to check that for any positive edge $e$ from $v_-$ to $v_+$, the phenotypes of $v_-$ and $v_+$ are the same. 
	If the phenotype of one of them is infinite, then this is a direct consequence of Equation \eqref{eq:transfert} from Definition~\ref{def:mngraph}.
	Otherwise, remark that for every positive integer $k$ and every $p\in\primes_{m,n}$, \[\left|\frac{k}{\gcd(k,n)}\right|_p>0\ \Leftrightarrow\  p\in\primes_{m,n}(k).\]
	Equation \eqref{eq:transfert} implies 
	\[
	\left|\frac{L(v_-)}{\gcd(L(v_-),n)}\right|_p=|L(e)|_p=\left|\frac{L(v_+)}{\gcd(L(v_+),m)}\right|_p
	\]
	and hence $\primes_{m,n}(L(v_-))=\primes_{m,n}(L(v_+))$. If $p\in\primes_{m,n}(L(v_-))$, then $L(v_-)$ has higher $p$-valuation than $m$ and $n$, so 
	\[
	|L(v_-)|_p-|n|_p=\left|\frac{L(v_-)}{\gcd(L(v_-),n)}\right|_p=\left|\frac{L(v_+)}{\gcd(L(v_+),m)}\right|_p=|L(v_+)|_p-|m|_p.
	\]
	Since $\abs n_p=\abs m_p$, we conclude that $\abs{L(v_-)}_p=\abs{L(v_+)}_p$ for all 
	$p\in\mathcal P_{m,n}(L(v_-))=\mathcal P_{m,n}(L(v_+))$. Therefore $L(v_-)$ and $L(v_+)$ share the same phenotype.
\end{proof}

\begin{remark}\label{rem: edge phenotype is coarser}
	One can prove that the edges of a connected $(m,n)$-graph also all have the same $(m,n)$-phenotype. 
	However, it is a coarser invariant: there are connected graphs with different vertex phenotypes, but with the same edge phenotype.
	For example, fix 
	\[
	m=2^2\cdot3^2\cdot5,\ n=2^2\cdot3
	\]
	and consider the graph consisting of a single oriented edge $e$ and its two endpoints. 
	If the label of its origin is
	$L(\source(e))=2^3\cdot 7$,	then 
	\[
	L(e)=\frac{L(\source(e))}{\gcd(L(\source(e)), n)}=2\cdot 7\text{ and }\Phe_{m,n}(L(e))=7
	\]
	while $\Phe_{m,n}(L(\source(e))=2^3\cdot 7$.
	If instead we set the label of its origin to be
	$L(\source(e))=2^4\cdot 7$, then we get
	\[
	L(e)=2^2\cdot 7\text{ and }\Phe(L(e))=7
	\]
	while $\Phe_{m,n}(L(\source(e))=2^4\cdot 7\neq 2^3\cdot 7$. 
	We will thus not use the phenotype of edges.
\end{remark}

Proposition \ref{prop:phenotype is well defined for conn graph} allows us to define the phenotypes of connected $(m,n)$-graphs and transitive $\BSo(m,n)$-pre-actions.

\begin{definition}
	\label{def: phenotype for m,n-graphs}
	The \textbf{phenotype of a connected $(m,n)$-graph} $\Gc$ is the common phenotype of the labels of its vertices. We denote it $\PHE(\Gc)$.
\end{definition}

\subsection{\texorpdfstring{Phenotypes of $\BSo(m,n)$-actions}{Phenotypes of B\source(mn)-actions}}

Recall that a pre-action is transitive if its Schreier graph is connected, which is equivalent to its Bass-Serre graph being connected.

\begin{definition}
	\label{def: phenotype for actions}
	The \textbf{phenotype of a transitive (pre)-action $\alpha$ of $\BSo(m,n)$} is the common phenotype 
	of the cardinalities $\Phe_{m,n}(\abs{x\la b\ra})$ of its $\la b\ra$-orbits.
	We denote it $\PHE(\alpha)$.
\end{definition}
By definition, the phenotype of any transitive (pre)-action coincides with the phenotype of its Bass-Serre graph. 

\begin{remark} \label{rem: finite BS finite phen}
	Any $\BSo(m,n)$-action with finite Bass-Serre graph and finite phenotype is necessarily an action on a finite set whose cardinality is the sum of the labels of the vertices. 
\end{remark}

For infinite phenotype, we have the following.

\begin{lemma} \label{lem: finite BS with inf phen} There exists an infinite phenotype transitive $\BSo(m,n)$-action with finite Bass-Serre graph if and only if $\abs m=\abs n$. 
\end{lemma}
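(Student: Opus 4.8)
The claim is an "if and only if" about the existence of an infinite-phenotype transitive $\BSo(m,n)$-action with finite Bass-Serre graph. By the discussion preceding the statement, the phenotype of such an action equals the phenotype of its (finite, connected) Bass-Serre graph, so by Remark~\ref{rmk: cardinal of Labels} and Proposition~\ref{prop:phenotype is well defined for conn graph} an infinite-phenotype graph is one all of whose vertex labels are $\infty$. And by Example~\ref{ex:saturated-> action} together with Proposition~\ref{prop: realization of BS graph}, producing the action is the same as producing a finite connected saturated $(m,n)$-graph with all labels $\infty$. So the statement reduces to: \emph{a finite connected saturated $(m,n)$-graph with all labels $\infty$ exists iff $\abs m=\abs n$.} When all labels are $\infty$, the saturation conditions become $\degout(v)=\gcd(\infty,n)=\abs n$ and $\degin(v)=\gcd(\infty,m)=\abs m$ at every vertex; Equation~\eqref{eq:transfert} is automatic. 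So I must decide when a finite connected oriented graph with constant out-degree $\abs n$ and constant in-degree $\abs m$ exists.

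First, the easy direction ($\Leftarrow$): if $\abs m=\abs n=:d$, take the graph with a single vertex $v$ and $d$ loops at $v$ (each positive loop contributing one to both $\degout$ and $\degin$ — here I should be slightly careful with the Serre-style edge conventions, but a single vertex with $d$ geometric loops does the job), set its label to $\infty$; it is finite, connected, and saturated, hence is the Bass-Serre graph of an action by Proposition~\ref{prop: realization of BS graph}, and that action has infinite phenotype. (Concretely this is the standard action of $\BSo(d,d)$ on $\Z$ by $b\colon x\mapsto x+1$ and $t=\id$, or rather on $\langle b^d\rangle\backslash \BSo(d,d)$.)

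Now the hard direction ($\Rightarrow$): suppose such an action exists, equivalently a finite connected oriented graph $\Gc$ with $\degout(v)=\abs n$ and $\degin(v)=\abs m$ for every vertex. Summing over all vertices, $\sum_v \degout(v) = \abs n \cdot \abs{V(\Gc)}$ counts the positive edges, and $\sum_v \degin(v) = \abs m\cdot\abs{V(\Gc)}$ also counts the positive edges; since $\abs{V(\Gc)}$ is finite and nonzero, $\abs m=\abs n$. That is the whole obstruction argument — it is a one-line counting/handshake argument, and the genuine content is really in the \emph{setup}: recognizing (via the cited results) that infinite phenotype forces all labels $\infty$, that in that regime saturation is exactly "constant in/out degrees $\abs m,\abs n$", and that finiteness of the Bass-Serre graph is what makes the edge count finite so the averaging works. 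I would write these reductions carefully and then the counting is immediate.

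The step I expect to require the most care is not a difficulty of ideas but of bookkeeping: translating "infinite phenotype transitive action with finite Bass-Serre graph" into the combinatorial statement about $(m,n)$-graphs, making sure I invoke the right earlier results (Remark~\ref{rmk: cardinal of Labels} for "all labels finite or all $\infty$", Definition~\ref{def: phenotype for actions} and the remark after it for "phenotype of action = phenotype of its Bass-Serre graph", Example~\ref{ex:saturated-> action} and Proposition~\ref{prop: realization of BS graph} for the dictionary between saturated $(m,n)$-graphs and actions), and handling the Serre convention that each geometric loop corresponds to a pair $\{e,\bar e\}$ with $\source(e)=\target(e)$ so it contributes $1$ to $\degout$ and $1$ to $\degin$. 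Once that dictionary is in place the proof is a two-line edge count. I would also note in passing that the finiteness hypothesis is essential: without it, an infinite line with a loop at each vertex (as in Example~\ref{ex: many realizations}, but oriented to have out-degree and in-degree matching arbitrary $\abs m\neq\abs n$ after adding enough loops) gives infinite-phenotype actions even when $\abs m\neq\abs n$.
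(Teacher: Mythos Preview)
Your proposal is correct and follows essentially the same approach as the paper: reduce to a finite connected saturated $(m,n)$-graph with all labels $\infty$, observe that saturation then forces $\degout(v)=\abs n$ and $\degin(v)=\abs m$ at every vertex, and conclude $\abs m=\abs n$ by counting positive edges two ways; for the converse, the paper likewise exhibits the bouquet of $\abs n$ loops on one vertex with label $\infty$ and invokes Proposition~\ref{prop: realization of BS graph}.
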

\begin{proof}
	Consider an infinite phenotype $\BSo(m,n)$-action with finite Bass-Serre graph $\mathcal G$. Since $\mathcal G$ is saturated,
	all its vertices have outgoing degree $\abs{n}$ and incoming degree $\abs{m}$. 
	But there must be globally as many outgoing edges as incoming edges, so since $\mathcal G$ is finite
	we must have $\abs{n}=\abs{m}$.
	
	Conversely if $\abs{n}=\abs{m}$, consider the bouquet of $\abs n$ circles with edges and vertices labeled by $\infty$, 
	and observe that this is a connected saturated $(m,n)$-graph. 
	Proposition \ref{prop: realization of BS graph} 
	provides a transitive action 
	having this labeled bouquet of circles as its finite Bass-Serre graph of infinite phenotype.
\end{proof}

\subsection{Merging pre-actions}
In order to establish some of the main results of this article, we will need ``cut and paste'' operations on pre-actions, for instance:
\begin{itemize}
	\item putting two prescribed pre-actions inside a single transitive action (useful for topological transitivity properties);
	\item modifying an action so as to add or remove a circuit in its Schreier graph (useful to get a new action that is close but distinct from the original one).
\end{itemize}
We now present these ``cut and paste'' operations.
The main one is the following and the rest of this section will be devoted to its proof. Other useful results will appear in the course of the proof.

\begin{theorem}[The merging machine]
	\label{thm: merging of pre-actions}
	Assume $\abs{m}\geq 2$ and $\abs{n}\geq 2$. 
	Let $\alpha_1$ and $\alpha_2$ two transitive non-saturated pre-actions of $\BSo(m,n)$ with the same phenotype. 
	There exists a transitive action $\alpha$ which contains copies of $\alpha_1$ and $\alpha_2$ with disjoint domains.
\end{theorem}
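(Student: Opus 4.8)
The plan is to run the ``degrade, glue, saturate, upgrade'' scheme sketched in the introduction, the bulk of the work being a purely combinatorial merging statement about $(m,n)$-graphs. We may assume the domains $X_1$ and $X_2$ of $\alpha_1$ and $\alpha_2$ are disjoint (replace $\alpha_2$ by an isomorphic copy if needed), and we set $\mathcal G_i\coloneqq \BSe(\alpha_i)$. These are connected $(m,n)$-graphs, non-saturated by Example~\ref{ex:saturated-> action} (since $\alpha_i$ is not an action), with $\PHE(\mathcal G_1)=\PHE(\alpha_1)=\PHE(\alpha_2)=\PHE(\mathcal G_2)$, and their disjoint union $\mathcal G_1\sqcup\mathcal G_2$ is exactly the Bass-Serre graph of the disconnected pre-action $\alpha_1\sqcup\alpha_2$ on $X_1\sqcup X_2$. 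It therefore suffices to produce a \emph{connected saturated} $(m,n)$-graph $\mathcal H$ containing $\mathcal G_1\sqcup\mathcal G_2$ as a sub-$(m,n)$-graph: Proposition~\ref{Extending pre-action onto a (m,n)-graph}, applied to the pre-action $\alpha_1\sqcup\alpha_2$ and the $(m,n)$-graph $\mathcal H\supseteq\BSe(\alpha_1\sqcup\alpha_2)$, then yields a pre-action $\alpha$ with $\BSe(\alpha)=\mathcal H$ extending $\alpha_1\sqcup\alpha_2$; since $\mathcal H$ is saturated, $\alpha$ is an action (Proposition~\ref{prop: realization of BS graph}); since $\mathcal H$ is connected, $\alpha$ is transitive (Definition~\ref{Def: transitive pre-action}); and the inclusions $X_1,X_2\hookrightarrow\dom(\alpha)$ realize $\alpha_1$ and $\alpha_2$ as copies inside $\alpha$ with disjoint domains.

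The combinatorial heart — and the main obstacle — is the \emph{connecting step}: two connected non-saturated $(m,n)$-graphs with the same phenotype $q$ embed disjointly into a single connected $(m,n)$-graph. I would prove this by building an explicit \emph{finite} bridge. Pick a non-saturated vertex $v_i$ in each $\mathcal G_i$; up to reversing all orientations of the ambient graph (Remark~\ref{rmk: exchange m and n on (m,n)-graph}, which trades $(m,n)$ for $(n,m)$ and swaps in- and out-degrees) it is enough to treat the cases where $v_1$ admits a new outgoing edge and $v_2$ admits either a new incoming edge (join them by a simple directed path) or a new outgoing edge (join them by two directed strands amalgamated at a common ``hub'' vertex $z$ with $\degin(z)=2$). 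The labels along the bridge must satisfy the transfer equalities~\eqref{eq:transfert} together with the degree bounds~\eqref{eq: deg bound by gcd L(v), n m}, equivalently the $p$-adic relations~\eqref{eq: labels of extremities: equality case}--\eqref{eq: labels of extremities: inequalities case} (compare the move table, Table~\ref{table}). The arithmetic input is that $\abs m,\abs n\geq 2$ force the existence of a label $\ell$ with $\Phe_{m,n}(\ell)=q$ and $\gcd(\ell,m)\geq 2$ (dually with $\gcd(\ell,n)\geq 2$), so that a hub of label $\ell$ can absorb two edges on one side, while the equality of phenotypes is precisely what allows one to interpolate legal labels from $L(v_1)$ and from $L(v_2)$ down to $\ell$. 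Since the bridge is finite, the rigidity of Lemma~\ref{lem: infinite paths} (forced blow-up of $p$-adic valuations along \emph{infinite} positive paths) is no obstruction, and the amalgamation of the two strands at the hub is handled by a self-welding lemma for non-saturated graphs.

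It remains to saturate. The connected graph $\mathcal H_0\supseteq\mathcal G_1\sqcup\mathcal G_2$ produced above is usually not saturated; to enlarge it to a saturated one without disturbing $\mathcal G_1\sqcup\mathcal G_2$, repeatedly attach, to each vertex $v$ with $\degout(v)<\gcd(L(v),n)$, a new outgoing edge to a fresh vertex carrying the label $\abs n\cdot L(v)/\gcd(L(v),n)$ — which one checks is an admissible $(m,n)$-graph label via Remark~\ref{labels of extremities}, and which automatically has phenotype $q$ by Proposition~\ref{prop:phenotype is well defined for conn graph} — and dually for $\degin$. A level-by-level (transfinite) recursion, whose output $\mathcal H$ may be infinite (which is allowed), gives a connected saturated $(m,n)$-graph still containing $\mathcal G_1\sqcup\mathcal G_2$ as a sub-$(m,n)$-graph, and feeding this $\mathcal H$ into the reduction of the first paragraph finishes the proof. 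Everything but the connecting step is routine bookkeeping once Propositions~\ref{Extending pre-action onto a (m,n)-graph} and~\ref{prop: realization of BS graph} and the elementary arithmetic of $\Phe_{m,n}$ are in hand; constructing a finite bridge whose vertex labels simultaneously satisfy all the $(m,n)$-graph constraints and land on a hub of the right shape is where the real difficulty lies.
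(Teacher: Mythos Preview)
Your proposal follows essentially the same route as the paper: reduce to a combinatorial merging statement for $(m,n)$-graphs (the paper's Theorem~\ref{thm: merging of (m,n)-graphs}), connect the two non-saturated graphs by a finite path whose endpoint labels and orientations are prescribed (the paper's Connecting Theorem~\ref{thm: connecting same phenotype} plus the Welding Lemma~\ref{lem: self welding lemma}), forest-saturate (Lemma~\ref{lem: saturation lemma}), and upgrade via Proposition~\ref{Extending pre-action onto a (m,n)-graph}. One slip to fix: in your saturation step, the fresh target of a new outgoing edge at $v$ should carry label $\abs{m}\cdot L(v)/\gcd(L(v),n)$, not $\abs{n}\cdot L(v)/\gcd(L(v),n)$ --- it is the \emph{target} side of~\eqref{eq:transfert} that must be satisfied, so you need $\gcd(L(w),m)=\abs m$; your choice fails e.g.\ for $(m,n)=(2,3)$, $L(e)=1$.
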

Given a pre-action $\alpha = (\beta,\tau)$ and two sub-pre-actions $\alpha_1,\alpha_2$, 
let us recall that the domain of $\alpha$ is the set $\dom(\beta) = \rng(\beta)$.
Notice that $\alpha_1$ and $\alpha_2$ have disjoint domains if and only if their Bass-Serre graphs $\BSe(\alpha_1)$ and $\BSe(\alpha_2)$ are disjoint (that is, have no common vertex) in $\BSe(\alpha)$.

First, taking advantage of Proposition \ref{Extending pre-action onto a (m,n)-graph}, we reduce to the case of $(m,n)$-graphs, for which the analogous result is the following. 
\begin{theorem}[The merging machine for $(m,n)$-graphs]
	\label{thm: merging of (m,n)-graphs}
	Assume $\abs{m}\geq 2$ and $\abs{n}\geq 2$. 
	Let $\Gc_1$ and $\Gc_2$ be two connected and non-saturated $(m,n)$-graphs with the same phenotype.
	There exists a connected and saturated $(m,n)$-graph $\Gc$ which contains disjoint copies of  $\Gc_1$ and $\Gc_2$. 
\end{theorem}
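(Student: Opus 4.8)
The plan is to reduce the merging of two $(m,n)$-graphs to two more elementary operations: a \emph{welding} step that enlarges a single non-saturated $(m,n)$-graph into a saturated one without changing its phenotype (this is the Self-Welding Lemma~\ref{lem: self welding lemma} advertised in the introduction), and a \emph{connecting} step that joins two non-saturated graphs of the same phenotype by an edge (or a small bridge) into a single connected $(m,n)$-graph, again non-saturated (the Connecting Theorem~\ref{thm: connecting same phenotype}). Once both are available, the proof of Theorem~\ref{thm: merging of (m,n)-graphs} is short: given $\Gc_1$ and $\Gc_2$ connected, non-saturated, of common phenotype $q$, first apply the Connecting Theorem to produce a connected non-saturated $(m,n)$-graph $\Gc_0$ containing disjoint copies of $\Gc_1$ and $\Gc_2$ and still of phenotype $q$ (phenotype being a connected invariant by Proposition~\ref{prop:phenotype is well defined for conn graph}, this is automatic once $\Gc_0$ is connected and contains a vertex of $\Gc_1$). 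Then apply the Self-Welding Lemma to $\Gc_0$ to get a connected \emph{saturated} $(m,n)$-graph $\Gc \supseteq \Gc_0$, which therefore contains the disjoint copies of $\Gc_1$ and $\Gc_2$. This is the desired $\Gc$.

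For the connecting step, the idea is that adding one positive edge $e$ between a vertex $v_1 \in V(\Gc_1)$ with $\degout(v_1) < \gcd(L(v_1),n)$ and a vertex $v_2 \in V(\Gc_2)$ with $\degin(v_2) < \gcd(L(v_2),m)$ respects the degree bounds~\eqref{eq: deg bound by gcd L(v), n m}, provided we can also satisfy the transfer relation~\eqref{eq:transfert}, i.e. $L(v_1)/\gcd(L(v_1),n) = L(e) = L(v_2)/\gcd(L(v_2),m)$. In general the two quantities $L(v_1)/\gcd(L(v_1),n)$ and $L(v_2)/\gcd(L(v_2),m)$ need not agree, so one cannot just draw an edge; instead one routes through an auxiliary finite path of new vertices whose labels interpolate between the two. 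The combinatorial heart is: given the common phenotype $q$, realize a finite path of $(m,n)$-graph vertices starting with label compatible with $v_1$ (via $n$) and ending compatible with $v_2$ (via $m$). Here one uses Remark~\ref{labels of extremities}: along such a path the $p$-adic valuations for $p \in \primes_{m,n}$ are rigidly controlled by the phenotype, while for $p \notin \primes_{m,n}$ one has freedom (Equation~\eqref{eq: transfer for p} lets one raise or lower valuations by $|m|_p - |n|_p \neq 0$). One should be careful with the case $\abs m = \abs n$ versus $\abs m \neq \abs n$, and with non-saturation being preserved — adding a bridge only increases some degrees by $1$ and we never exceed the gcd bound at the new internal vertices if we give them large enough labels, so non-saturation at the internal vertices is free; one keeps $\Gc_1, \Gc_2$ themselves non-saturated since only one outgoing slot at $v_1$ and one incoming slot at $v_2$ get used.

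For the welding step, starting from a connected non-saturated $(m,n)$-graph $\Gc_0$, we must add vertices and edges to make every vertex saturated, keeping the graph connected and not changing the phenotype (new vertices get labels of phenotype $q$, which exists because $q \in \QQ_{m,n}$ and in fact $\Phe_{m,n}(q)=q$). The natural strategy is a transfinite/greedy construction: repeatedly pick a vertex $v$ that is not yet saturated, say $\degout(v) < \gcd(L(v),n)$, and attach a new outgoing edge from $v$; its target is either a new vertex with an appropriately chosen label, or — to keep the graph from growing forever — an existing vertex with a free incoming slot of the right edge-label. The difficulty is book-keeping: each attached edge may create a new deficiency at its far endpoint, so one needs to argue the process can be organized to terminate or to converge to a saturated graph (e.g. by a back-and-forth / amalgamation argument, pairing up deficiencies, or by taking a suitable direct limit). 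I expect \textbf{this welding/saturation step to be the main obstacle}: one must simultaneously (i) respect the rigid $\primes_{m,n}$-constraints so the phenotype is preserved, (ii) respect the $\degin \le \gcd(L(v),m)$ and $\degout \le \gcd(L(v),n)$ bounds everywhere at every stage, and (iii) ensure the final graph is connected and genuinely saturated rather than merely a limit with residual deficiencies. The key technical lemma making this possible should be a ``matching of slots'' statement: for any prescribed edge-label $\ell$ of phenotype $q$, there exist $(m,n)$-graph vertices with a free outgoing slot of label $\ell$ and with a free incoming slot of label $\ell$, so deficiencies of the same edge-label can always be cancelled against each other, possibly after inserting finitely many auxiliary vertices; combined with Proposition~\ref{Extending pre-action onto a (m,n)-graph} at the very end to pass from the saturated $(m,n)$-graph to an actual action when needed for Theorem~\ref{thm: merging of pre-actions}.
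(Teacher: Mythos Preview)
Your two-step architecture --- connect $\Gc_1$ and $\Gc_2$ into a single connected $(m,n)$-graph, then saturate --- is exactly the paper's plan. But you have misread the named lemmas and, as a consequence, inverted where the difficulty lies. The Welding Lemma~\ref{lem: self welding lemma} is \emph{not} a saturation procedure: it is the trivial observation that two vertices $v,w$ with the same label can be identified into one, provided the summed in/out degrees still respect~\eqref{eq: deg bound by gcd L(v), n m}. In the paper's proof, the Connecting Theorem~\ref{thm: connecting same phenotype} first builds an \emph{abstract} simple path $\Gc_0$ whose two endpoint labels are $L(v_1)$ and $L(v_2)$ and whose terminal-edge orientations match the missing slots at the chosen non-saturated vertices $v_1\in\Gc_1$, $v_2\in\Gc_2$; then the Welding Lemma is applied twice to glue the endpoints of $\Gc_0$ onto $v_1$ and $v_2$ inside $\Gc_1\sqcup\Gc_0\sqcup\Gc_2$. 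That is the entire connecting step. Your sketch of the bridge construction (interpolating labels via~\eqref{eq: transfer for p}, with freedom at primes outside $\primes_{m,n}$) is the right content for the Connecting Theorem, and \emph{that} is where the real work is; note it must also handle the case where $v_1,v_2$ are both missing edges of the same orientation, which is why the path's endpoint orientations are prescribed independently.

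Saturation is done by a separate and much easier Forest-saturation Lemma~\ref{lem: saturation lemma}: at every non-saturated vertex, attach the missing edges to \emph{brand-new} leaves with the maximal admissible label, and iterate. Each vertex present at stage $j$ is saturated by stage $j{+}1$, so the increasing union is saturated; the added part is a forest. There is no slot-matching, no termination subtlety, and no residual deficiency: the ``new deficiency at the far endpoint'' you worry about is handled at the next stage, and so on forever --- the output is typically infinite, which is perfectly fine. You also do not need the connected graph $\Gc_0$ to remain non-saturated before this step; if it happens to be saturated already, the lemma applies vacuously.
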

\begin{remark}
	The hypothesis that both ${\abs m},\abs n\geq 2$ is necessary.
	If $m=1$ but $\abs n\neq 1$, we can consider the $(1,n)$-graph consisting of a single vertex with infinite label and only one loop. This graph is not saturated but it cannot be connected to another copy of itself. Indeed, the reader can check that the only saturated graph containing it admits a unique circuit, namely the loop itself.
\end{remark}
\begin{proof}[Proof of Theorem \ref{thm: merging of pre-actions} based on Theorem \ref{thm: merging of (m,n)-graphs}]
	The Bass-Serre graphs $\BSe(\alpha_1)$ and $\BSe(\alpha_2)$ are connected non-saturated $(m,n)$-graphs with the same phenotype. Therefore we can apply Theorem \ref{thm: merging of (m,n)-graphs} to obtain a connected and saturated $(m,n)$-graph $\Gc$ which contains disjoint copies of $\BSe(\alpha_1)$ and $\BSe(\alpha_2)$.
	
	Then, we apply Proposition~\ref{Extending pre-action onto a (m,n)-graph} to the pre-action $\alpha_1\sqcup \alpha_2$, 
	whose Bass-Serre graph $\BSe(\alpha_1)\sqcup \BSe(\alpha_2)$ is contained in $\Gc$,
	to ensure the existence of a $\BSo(m,n)$-pre-action $\alpha$ which extends $\alpha_1\sqcup \alpha_2$. Thus $\alpha$ extends both $\alpha_1$ and $\alpha_2$ with disjoint domains.
	Since $\Gc$ is connected and saturated, $\alpha$ is a transitive and saturated pre-action, i.e., it is a genuine transitive action of $\BSo(m,n)$ that satisfies the requirements of Theorem~\ref{thm: merging of pre-actions}.
\end{proof}

We now present some general results we will use in order to prove Theorem \ref{thm: merging of (m,n)-graphs}.
We begin with two easy properties of phenotypes which will be useful in the proof.

\begin{lemma}\label{properties of phenotypes}
	For any $k\in \Z_{\geq 1}$, if $q=\Phe_{m,n}(k)$, then $ \Phe_{m,n}(q)=q$ and $\gcd(q,n) = \gcd(q,m)$.
\end{lemma}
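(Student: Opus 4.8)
The plan is to argue prime by prime using $p$-adic valuations, unwinding Definition~\ref{df: phenotype natural number}. First I would record the elementary observation that, since $q=\Phe_{m,n}(k)=\prod_{p\in\primes_{m,n}(k)}p^{\abs{k}_p}$ is a product of prime powers over distinct primes, one has $\abs{q}_p=\abs{k}_p$ for every $p\in\primes_{m,n}(k)$ and $\abs{q}_p=0$ for every $p\notin\primes_{m,n}(k)$. Everything follows from this.

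For the identity $\Phe_{m,n}(q)=q$, I would show $\primes_{m,n}(q)=\primes_{m,n}(k)$. If $p\in\primes_{m,n}(k)$, then $\abs{m}_p=\abs{n}_p$ and $\abs{q}_p=\abs{k}_p>\abs{n}_p$, so $p\in\primes_{m,n}(q)$. Conversely, if $p\notin\primes_{m,n}(k)$, then $\abs{q}_p=0\le\abs{n}_p$, so $p\notin\primes_{m,n}(q)$. Hence $\Phe_{m,n}(q)=\prod_{p\in\primes_{m,n}(q)}p^{\abs{q}_p}=\prod_{p\in\primes_{m,n}(k)}p^{\abs{k}_p}=q$.

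For $\gcd(q,n)=\gcd(q,m)$, I would compare $p$-adic valuations: $\abs{\gcd(q,n)}_p=\min(\abs{q}_p,\abs{n}_p)$ and $\abs{\gcd(q,m)}_p=\min(\abs{q}_p,\abs{m}_p)$ for every prime $p$. If $p\in\primes_{m,n}(k)$, then $\abs{q}_p=\abs{k}_p>\abs{n}_p=\abs{m}_p$, so both minima equal $\abs{n}_p=\abs{m}_p$. If $p\notin\primes_{m,n}(k)$, then $\abs{q}_p=0$, so both minima equal $0$. Thus the two numbers have the same valuation at every prime and are therefore equal.

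There is no genuine obstacle here; the statement is essentially immediate once the definition is unwound. The only point requiring a little care is the treatment of primes outside $\primes_{m,n}$ (those with $\abs{m}_p\neq\abs{n}_p$): such primes never lie in $\primes_{m,n}(k)$ nor in $\primes_{m,n}(q)$, and $q$ has trivial valuation at them, so they contribute nothing to either claim.
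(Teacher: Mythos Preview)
Your proof is correct and follows essentially the same approach as the paper: both arguments first record that $\abs{q}_p=\abs{k}_p$ for $p\in\primes_{m,n}(k)$ and $\abs{q}_p=0$ otherwise, deduce $\primes_{m,n}(q)=\primes_{m,n}(k)$ to get $\Phe_{m,n}(q)=q$, and then compare $p$-adic valuations to obtain $\gcd(q,n)=\gcd(q,m)$. Your write-up is slightly more explicit in checking both inclusions and both cases, but there is no substantive difference.
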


\begin{proof}
	We get directly from Definition \ref{df: phenotype natural number} that $\abs{q}_p = \abs{k}_p$ if $p\in \primes_{m,n}(k)$, and $\abs{q}_p = 0$ for the other primes $p$. Consequently, we get $\primes_{m,n}(q) = \primes_{m,n}(k)$ and then $\Phe_{m,n}(q) = \Phe_{m,n}(k) = q$. Finally, since every prime $p$ dividing $q$ satisfies $\abs{m}_p = \abs{n}_p$ and  $\abs{n}_p < \abs{q}_p$, we obtain
	\[
	\gcd(q,n) = \prod_{p\in \primes : \,p|q} p^{\abs{n}_p}
	= \prod_{p\in \primes : \,p|q} p^{\abs{m}_p} = \gcd(q,m).\qedhere
	\]
\end{proof}

In the following lemma, by welding
two vertices we mean taking the quotient graph obtained by identifying these vertices. 
Its proof 
is a direct consequence of the definition of an $(m,n)$-graph, so we omit it.
\begin{lemma}[Welding lemma] 
	\label{lem: self welding lemma}
	Let $m,n\in\Z\smallsetminus\{0\}$ and let $\Gc$ be an $(m,n)$-graph and $v,w$ be two distinct vertices such that:
	\begin{itemize}
		\item $L\coloneqq L(v)=L(w)$;
		\item $\degout (v)+\degout(w)\leq \gcd(n, L)$;
		\item $\degin (v)+\degin(w)\leq \gcd(m, L)$.
	\end{itemize}
	Welding together $v$ and $w$ delivers an $(m,n)$-graph.\qed
\end{lemma}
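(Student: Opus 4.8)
The plan is to unwind the definition of an $(m,n)$-graph directly. Write $\Gc'$ for the quotient graph obtained from $\Gc$ by identifying $v$ and $w$ into a single vertex $[vw]$: concretely $V(\Gc')\coloneqq(V(\Gc)\setminus\{v,w\})\sqcup\{[vw]\}$, while the edge set $E(\Gc')\coloneqq E(\Gc)$, the involution $e\mapsto\bar e$, and the orientation $E^+(\Gc)\sqcup E^-(\Gc)$ are inherited unchanged, and the source and target maps are those of $\Gc$ post-composed with the quotient map $q\colon V(\Gc)\to V(\Gc')$. The label map $L'$ is $L$ on untouched vertices, $L'([vw])\coloneqq L$ (well-defined because $L(v)=L(w)=L$), and $L'(e)\coloneqq L(e)$ on edges. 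First I would note that $\Gc'$ is a graph in Serre's sense: the identities $\source(\bar e)=\target(e)$ and $\target(\bar e)=\source(e)$ in $\Gc'$ follow from the corresponding identities in $\Gc$ by applying $q$, the involution is still fixed-point-free since it is literally the same map on the same edge set, and the orientation is still exchanged by it. The degree-consistency condition $\abs{\{e\colon\source(e)=[vw]\}}=\abs{\{e\colon\target(e)=[vw]\}}$ needs no separate check, being automatic from the involution bijection $e\mapsto\bar e$.

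Then I would verify the three clauses of Definition~\ref{def:mngraph} in turn. The transfer equalities \eqref{eq:transfert} and the condition $L'(e)=L'(\bar e)$ for negative edges are immediate: edge labels are untouched, and for a positive edge $e$ the endpoint labels in $\Gc'$ coincide with those in $\Gc$ — even when an endpoint is $v$ or $w$, since $L'([vw])=L=L(v)=L(w)$ — so these equalities transport verbatim from $\Gc$. For the degree inequalities \eqref{eq: deg bound by gcd L(v), n m}: at every vertex $u\neq[vw]$, welding changes neither $\degout(u)$, nor $\degin(u)$, nor $L'(u)$, because $u\notin\{v,w\}$ means no edge incident to $u$ has had its source or target moved; so these inequalities are inherited from $\Gc$. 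The only vertex requiring an argument is $[vw]$.

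The crux — and the only place the hypotheses enter — is the computation at $[vw]$. The set of $[vw]$-outgoing positive edges of $\Gc'$ is the union of the $v$-outgoing and the $w$-outgoing positive edges of $\Gc$, and this union is disjoint precisely because $v\neq w$; hence $\degout_{\Gc'}([vw])=\degout_{\Gc}(v)+\degout_{\Gc}(w)\leq\gcd(n,L)=\gcd(L'([vw]),n)$ by the second hypothesis, and symmetrically $\degin_{\Gc'}([vw])=\degin_{\Gc}(v)+\degin_{\Gc}(w)\leq\gcd(m,L)=\gcd(L'([vw]),m)$ by the third. This completes the check that $\Gc'$ is an $(m,n)$-graph. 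I do not anticipate a genuine obstacle here: the only point deserving care is to do the bookkeeping of which edges land at the welded vertex on the \emph{disjoint} union (so that the two degrees add), which is exactly what the hypothesis $v\neq w$ guarantees.
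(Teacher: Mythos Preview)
Your proof is correct and is exactly the routine verification the paper has in mind: the authors write that the lemma ``is a direct consequence of the definition of an $(m,n)$-graph'' and omit the proof entirely. Your careful bookkeeping (in particular the disjointness of the $v$- and $w$-outgoing edge sets thanks to $v\neq w$, which makes the degrees add) fills in precisely the details they left to the reader.
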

Note that in this lemma $\Gc$ can be finite or infinite, connected or not.
Together with the welding lemma, the following result will allow us to connect non saturated $(m,n)$-graphs via the well-known technique of arc welding.

\begin{theorem}[Connecting lemma]\label{thm: connecting same phenotype}
	
	Assume $\abs{m}\geq 2$ and $\abs{n} \geq 2$.
	Let $k,\ell \in \Z_{\geq 1}$ such that $\Phe_{m,n}(k) = \Phe_{m,n}(\ell)$, and let $\varepsilon_k,\varepsilon_\ell \in \{+,-\}$. There exists an $(m,n)$-graph $\Gc$ which is a simple edge path $(e_1,\ldots, e_h)$ of length $h\geq 1$ such that:
	\begin{itemize}
		\item $L(\source(e_1))= k$ and $L(\target(e_h))=\ell$;
		\item the orientations of $e_1$ and $e_h$ are given by $e_1 \in E(\Gc)^{\varepsilon_k}$ and $e_h \in E(\Gc)^{\varepsilon_\ell}$.
	\end{itemize}
\end{theorem}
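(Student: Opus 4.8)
The plan is to build the path by concatenating a small number of elementary $(m,n)$-graph segments, using the prime-by-prime description of labels in Remark~\ref{labels of extremities} as the bookkeeping device. First I would reduce to a normal form for $k$ and $\ell$. Write $q\coloneqq\Phe_{m,n}(k)=\Phe_{m,n}(\ell)$. By Lemma~\ref{properties of phenotypes} we have $\Phe_{m,n}(q)=q$, so it suffices to produce, for any $j\in\Z_{\geq 1}$ with $\Phe_{m,n}(j)=q$ and any prescribed orientation $\varepsilon\in\{+,-\}$ of the first edge, a simple edge path starting at a vertex labeled $j$ and ending at a vertex labeled $q$ whose first edge has orientation $\varepsilon$; concatenating such a path from $k$ down to $q$ with the reverse of such a path from $\ell$ down to $q$ (and, if needed, inserting a short gadget at $q$ to fix the orientation $\varepsilon_\ell$ of the last edge) yields the desired path from $k$ to $\ell$. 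So the core task is: \emph{descend from an arbitrary label of phenotype $q$ to the canonical label $q$}.

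For the descent I would proceed one prime at a time. Fix a prime $p$. If $p\in\primes_{m,n}$ (i.e.\ $\abs m_p=\abs n_p$) then by definition of phenotype $\abs k_p=\abs q_p$ already, so nothing needs to be done at $p$. If $p\notin\primes_{m,n}$, say $\abs m_p>\abs n_p$ (the other case is symmetric via Remark~\ref{rmk: exchange m and n on (m,n)-graph}/\ref{rmk: exchange m and n in BS graph}), then $\abs q_p=0$ and I must kill the $p$-part $\abs k_p$ of the current label. Here I use a positive edge pointing \emph{into} the current vertex: by \eqref{eq: transfer for p}, traversing a positive edge backwards replaces $\abs{L}_p$ by $\max(\abs L_p+\abs n_p-\abs m_p,0)$ when $\abs L_p>\abs n_p$, which strictly decreases it; iterating, after finitely many such edges we reach $\abs L_p\leq\abs n_p<\abs m_p$, and one more step brings the $p$-valuation to a value $\leq\abs n_p$; the remaining residual $p$-part (which is $<\abs n_p\le\abs m_p$) can be cleared by one further edge on which we are free to choose the extremity label with $\abs\cdot_p=0$, since for such an edge \eqref{eq: labels of extremities: inequalities case} only imposes an inequality. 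Crucially, a step at $p$ does not resurrect the valuation at any other prime $p'$: for $p'\in\primes_{m,n}$ the valuation is transported unchanged by \eqref{eq: transfer for p} or is already $0$ and bounded by \eqref{eq: labels of extremities: inequalities case}; for $p'\notin\primes_{m,n}$ I simply choose, at each vertex along the way, the extremity label so that $\abs\cdot_{p'}$ stays at its target value (either still being reduced, or already $0$). Since only finitely many primes divide $k$, this terminates and produces a finite simple edge path (simplicity of the positive-edge portions is guaranteed by Equation~\eqref{eq: padic goes up}; for the finitely many "free choice" edges one just picks fresh vertices) from a vertex labeled $k$ to a vertex labeled $q$.

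Two loose ends remain. First, the orientation of the very first edge is prescribed: if it clashes with what the descent naturally wants, I prepend a two-edge detour through an auxiliary vertex of the same phenotype (possible because $\abs m,\abs n\geq 2$ gives enough room in the degree bounds \eqref{eq: deg bound by gcd L(v), n m} to attach an extra edge), which reverses the orientation of the outgoing edge without changing endpoints' phenotype; the same trick handles the prescribed orientation $\varepsilon_\ell$ of the last edge. Second, one must check at every vertex created that the degree bounds \eqref{eq: deg bound by gcd L(v), n m} hold: along the path each interior vertex has exactly one incoming and one outgoing edge of the path, so $\degin,\degout\leq 1$, and since $\abs m,\abs n\geq 2$ these are automatically $\leq\gcd(L(v),m)$ and $\leq\gcd(L(v),n)$ as long as those gcd's are $\geq 1$, which they always are. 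The main obstacle I anticipate is purely organizational: making the simultaneous multi-prime bookkeeping precise so that reducing at one prime provably never disturbs another, and handling the boundary case $h=1$ (when $k=\ell=q$ already and the two prescribed orientations happen to be compatible with a single edge, e.g.\ a loop) versus when a genuine detour gadget is forced.
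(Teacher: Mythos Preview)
Your strategy is the paper's: descend from $k$ and from $\ell$ to the canonical label $q$, concatenate the two descents, and patch the endpoint orientations with a short gadget at $q$. Two points in your execution need repair.

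First, the claim that for every $p\in\primes_{m,n}$ one already has $\abs{k}_p=\abs{q}_p$ is false: this holds only for $p\in\primes_{m,n}(k)$. For $p\in\primes_{m,n}\setminus\primes_{m,n}(k)$ one may have $0<\abs{k}_p\leq\abs{n}_p$ while $\abs{q}_p=0$, so these primes must also be killed. This is easy---at the very first edge you are in the inequality regime \eqref{eq: labels of extremities: inequalities case} at such $p$ and may freely set the new valuation to $0$---but it must be said; the paper's ``while loop'' does exactly this.

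Second, and more substantively, your degree check is wrong. You assert that each interior vertex of the path has $\degin,\degout\leq 1$, but these degrees refer to the $(m,n)$-orientation, not the path direction. At a vertex where the path passes from a positive edge to a negative one, both adjacent path edges are positive edges with target that vertex, so $\degin=2$; the symmetric switch gives $\degout=2$. You therefore need $\gcd(L(v),m)\geq 2$ (respectively $\gcd(L(v),n)\geq 2$) at such a vertex, and the bare hypothesis $\abs m,\abs n\geq 2$ does not give this. The paper arranges the descent so that there is at most one such switch (all primes with $\abs m_p\leq\abs n_p$ are treated first via positive edges, then all primes with $\abs m_p>\abs n_p$ via negative edges) and argues explicitly that at the switch the current label is still divisible by some prime $p\mid m$, whence $\gcd(L(v),m)\geq 2$. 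Your prime-by-prime interleaving may create several switches; the same observation works at each, but you have to make it rather than wave it away.
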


\begin{proof}
	Observe that every $(m,n)$-graph can be turned into an $(n,m)$-graph by flipping the orientations of its edges. 
	Note that this operation does not affect the labels nor its phenotype.
	We thus can restrict ourselves to the case where the orientation $
	\varepsilon_k$ of the first edge in the path is asked to be positive and no assumption is made on $\varepsilon_\ell$. Let us set $q\coloneqq \Phe_{m,n}(k) = \Phe_{m,n}(\ell)$.

	We first treat the case $k=q=\ell$. Recall from Lemma \ref{properties of phenotypes} that $\Phe_{m,n}(q) = q$ and that we have $\gcd(m,q) = \gcd(n,q)$.  
	Hence, there exists an $(m,n)$-graph with two vertices and a unique positive edge $f_1$ 
	such that $L(\source(f_1)) = q = L(\target(f_1))$, and $L(f_1) = \frac{q}{\gcd(m,q)} = \frac{q}{\gcd(n,q)}$. 
	If $\varepsilon_\ell$ is positive, we are done. If not, create a vertex $v$ with label $L(v) = \frac{q}{\gcd(n,q)}m$.
	We get $\gcd(m,L(v)) = \abs{m}$, hence $\gcd(m,L(v)) \geq 2$.
	Therefore, we can equip $v$ with two distinct incoming positive edges $f_1, f_2$.
	Such edges have to be labeled by $\frac{L(v)}{\gcd(m,L(v))} = \frac{q}{\gcd(n,q)}$ so that we can label $\source(f_1)$ and $\source(f_2)$ by $q$, and $(f_1,\bar f_2)$ is the path we are looking for. The theorem is thus 
	proved for $k=\ell=q$.
	
	Let us now treat the case $k\neq q$ and $\ell=q$.
	Recall that 
	$\primes_{m,n}(k)= \left\lbrace p\in\primes\colon|m|_p=|n|_p\text{ and }|n|_p<|k|_p\right\rbrace$ 
	and
	$\Phe_{m,n}(k)= \prod_{p\in \primes_{m,n}(k)} p^{|k|_p}.$
	Thus any number $L\in \Z_{\geq 1}$ with phenotype $q$ admits a unique decomposition as follows:
	
	\begin{equation}\label{eq: general factorization}
		L = q	\cdot \prod_{\substack{p\in \primes\smallsetminus \primes_{m,n}(k)\\ \abs{m}_p \leq \abs{n}_p}} p^{\abs{L}_p} 
		\prod_{\substack{p\in \primes\\ \abs{m}_p > \abs{n}_p}} p^{\abs{L}_p} .
	\end{equation}
	
	In a first step, we construct (algorithmically) a simple path consisting of positive edges
	with vertices $v_0,v_1,\ldots,v_r$, such that $v_0$ has label $k$, and such that the decomposition of $L(v_r)$ reduces to
	\begin{equation}\label{eq: L(vr) after first step}
		L(v_r) = q  \cdot
		\prod_{p\in \primes\colon \abs{m}_p > \abs{n}_p} p^{\abs{L(v_r)}_p},
	\end{equation}
	that is, such that $\abs{L(v_r)}_p = 0$ whenever $\abs{m}_p \leq \abs{n}_p$ and $p\notin \primes_{m,n}(k)$. 
	
	To do so, starting with $i=0$ and $L(v_0)=k$, while $L(v_i)$ has prime divisors $p$ such that $\abs{m}_p \leq \abs{n}_p$ and $p\notin \primes_{m,n}(k)$, 
	we connect $v_i$ to a new vertex $v_{i+1}$ by a positive edge $f_i$. 
	According to Remark \ref{labels of extremities},
	we label $f_i$ by $\abs{L(f_i)}_p \coloneqq \max(\abs{L(v_i)}_p-\abs{n}_p,0)$ 
	and set
	\[
	\abs{L(v_{i+1})}_p \coloneqq
	\begin{cases}
		\abs{L(f_i)}_p + \abs{m}_p & \text{if } \abs{L(f_i)}_p \geq 1 \\
		0 & \text{if } \abs{L(f_i)}_p = 0
	\end{cases}
	\]
	for every prime $p$. Then, we replace $i$ by $i+1$, which terminates the ``while'' loop. Notice that we exit from the loop after finitely many steps. Indeed, given a prime $p$ such that $\abs{m}_p \leq \abs{n}_p$ and $p\notin \primes_{m,n}(k)$, we have:
	\begin{itemize}
		\item either $\abs{L(f_1)}_p=0$ in the case $\abs{m}_p = \abs{n}_p$ and $\abs{k}_p\leq \abs{n}_p$,
		which implies $\abs{L(v_i)}_p=0$ for all $i\geq 1$;
		
		\item or $\abs{L(v_{i+1})}_p = \abs{L(v_i)}_p - \abs{n}_p + \abs{m}_p < \abs{L(v_i)}_p$ whenever $\abs{L(v_i)}_p \geq 1$ in the case $\abs{m}_p < \abs{n}_p$.
	\end{itemize}
	When we exit the ``while'' loop, Remark \ref{labels of extremities} guarantees that we have constructed an $(m,n)$-graph, and the loop condition guarantees that the last vertex $v_r$ satisfies $\abs{L(v_r)}_p = 0$ whenever $\abs{m}_p \leq \abs{n}_p$ and $p\notin \primes_{m,n}(k)$.
	
	If we are lucky, we have $L(v_r) = q$. If not, in a second step, 
	we notice that the same 
	algorithm, exchanging the roles of $m$ and $n$,
	produces a simple path consisting of negative edges
	from a vertex $w_0$ such that $L(w_0) = L(v_r)$ to a vertex $w_s$ labeled by $q$. 
	The decomposition \eqref{eq: L(vr) after first step} of $L(v_r)\neq q$
	also shows that $\gcd(m,L(v_r)) \geq 2$, so vertices labeled $L(v_r)$ can have two distinct positive incoming edges. 
	Using Lemma~\ref{lem: self welding lemma}, we weld
	$v_r$ and $w_0$ together and get a simple path from $v_0$ to $w_s$.

	In any subcase, we now have a path $(e_1, \ldots, e_{h'})$ such that $e_1$ is positive, $L(\source(e_1)) = k$, and $L(\target(e_{h'})) = q$. 
	If $e_{h'}$ has the orientation prescribed by $\varepsilon_\ell$, we are done;
	if not, using the case $k=q=\ell$, with the first edge having the same orientation as $e_{h'}$, and the last one having the orientation prescribed by $\varepsilon_\ell$, 
	we extend our path to a simple path $(e_{1},\ldots,e_h)$ with $L(\source(e_1))=k$ and $L(\target(e_h)) = q$ such that $e_1,e_h$ have the correct orientations. 
	This concludes the case $\ell = q$ and $k\neq q$.
	
	The case $k=q$ and $\ell \neq q$ is obtained by	exchanging the roles of $k$ and $l$ in the above argument. Therefore, let us finally treat the case $k\neq q$ and $\ell\neq q$. The former cases furnish paths $(f_1,\ldots, f_r)$ and $(f'_1,\ldots, f'_s)$, that we may assume disjoint, such that
	\[
	L(\source(f_1)) = k, \quad L(\target(f_r)) = q = L(\source(f'_1)), \quad L(\target(f'_s))= \ell,
	\]
	the orientations of $f_1$ and $f'_s$ are given by $\varepsilon_k$ and $\varepsilon_\ell$, and the orientations $f_r, f'_1$ coincide. Then, we just weld the vertices
	$\target(f_r)$ and $\source(f'_1)$ together, and the path $(f_1,\ldots, f_r,f'_1,\ldots, f'_s)$ is as desired.
\end{proof}

\begin{remark}\label{remark: connecting lemma n=1}
	In Theorem \ref{thm: connecting same phenotype}, the assumption $\abs{m}\geq 2$ and $\abs{n} \geq 2$ is necessary.
	Indeed Theorem \ref{thm: connecting same phenotype} would be false for $n=1$. If $v$ is a vertex in a $(m,1)$-graph with $L(v)=1$ and $e$ is an edge such that $\target(e)=v$, then \[1=L(\target(e))=\frac{L(\target(e))}{\gcd(L(\target(e)),m)}=\frac{L(\source(e))}{\gcd(L(\source(e)),1)}=L(\source(e)).\]
	Clearly any vertex with label $1$ has at most one outgoing and one incoming edge. This implies that the labels of the vertices in any directed path which ends in $v$ must be all $1$. In other words,
	if we have any simple edge path as in Theorem \ref{thm: connecting same phenotype} such that $\ell=1$ and $\varepsilon_\ell=-$, then we must have that $k=1$ (and $\varepsilon_k=+$). 
\end{remark}

\begin{definition}
	\label{def: forest-saturation}
	Let $\Gc$ be a connected $(m,n)$-graph. A saturated extension $\Gc'$ of $\Gc$ is called a \defin{forest-saturation} of $\Gc$ if it 
satisfies
	\begin{itemize}
		\item the subgraph induced in $\Gc'$ by $V(\Gc)$ is exactly $\Gc$;
		\item the subgraph induced in $\Gc'$ by $V(\Gc')\smallsetminus V(\Gc)$ is a forest $\mathcal{F}$;
		\item each connected component of $\mathcal{F}$ is connected to $\Gc$ by a single edge of $\Gc'$.
	\end{itemize}
\end{definition}

\begin{lemma}[Forest-saturation lemma]\label{lem: saturation lemma}
	Let $\Gc$ be a connected $(m,n)$-graph. There is a forest-saturation $\Gc'$ of $\Gc$ such that 
	all vertices of the forest $\mathcal F$ induced in $\Gc'$ by $V(\Gc')\smallsetminus V(\Gc)$ have degree $\geq 1+\min({\abs{m}}, \abs{n})$ in $\Gc'$.
\end{lemma}

The reader can observe in the following construction proving Lemma~\ref{lem: saturation lemma} that, while the labels of the new edges are prescribed, the axioms of $(m,n)$-graphs allows some choices concerning the labels of the new vertices. 
The systematic choice of the maximal label will be made for the new vertices among all those satisfying the transfer equation \eqref{eq:transfert} $\frac{L(\source(e))}{\gcd(L(\source(e)),n)}=L(e)=\frac{L(\target(e))}{\gcd(L(\target(e)),m)}$. Hence the forest-saturation constructed in this proof is called the 
\defin{maximal forest-saturation} of $\Gc$. Notice that other choices would have led to forest-saturations with different underlying graphs, by virtue of the relationship between labels and degrees (see Definition~\ref{def: saturated m,n-graph, degrees as functions of labels}). These forest-saturations 
are further studied in the recent preprint \cite{CGLMS-HT}.

\begin{proof}[Proof of Lemma~\ref{lem: saturation lemma}]
	We can assume that the connected graph $\Gc$ is not yet saturated:  it admits non-saturated vertices i.e., vertices $v$ for which one of the inequalities~\eqref{eq: deg bound by gcd L(v), n m} $\degout(v)\leq\gcd(L(v),n)$ or $\degin(v) \leq \gcd(L(v),m)$
	is strict.
	For every non-saturated vertex $v$ of $\Gc$ we add 
	\begin{itemize}
		\item $(\gcd (L(v), n)-\degout(v))$-many new $v$-outgoing edges labeled $L_{\mathrm{out}}\coloneqq \frac{L(v)}{\gcd(n, L(v))}$ with extra target vertices labeled $m L_{\mathrm{out}}$; and 
		\item $(\gcd (L(v), m)-\degin(v))$-many new $v$-incoming edges labeled $L_{\mathrm{in}}\coloneqq \frac{L(v)}{\gcd(m, L(v))}$ with extra source vertices labeled $n L_{\mathrm{in}}$.
	\end{itemize}
	
	We then iterate this construction. All the non-saturated vertices of the $j$-th step become saturated at the $(j+1)$-th one.
	The increasing union $\Gc'$ of these $(m,n)$-graphs is a saturated $(m,n)$-graph. The complement of $\Gc$ in it is a forest since at each step, each new edge has a new vertex as one of its vertices.
	The label of each new vertex $v$ is an integer multiple of either $m$ or $n$. Thus the degree $\degout(v)+\degin(v)=\gcd(L(v),n)+\gcd(L(v),m)$ of $v$ is larger than 
	$1+\min({\abs{m}}, \abs{n})$ as expected.
\end{proof}

\begin{proof}[Proof of Theorem~\ref{thm: merging of (m,n)-graphs}]
	By hypothesis, for $i=1,2$, there is a non-saturated vertex $v_i$ in $\Gc_i$;
	i.e. a vertex for which one of the inequalities~\eqref{eq: deg bound by gcd L(v), n m} 
	is strict. If $\degin(v_i)< \gcd(L(v_i),m)$, then let $\epsilon_i\coloneqq +$; otherwise let $\epsilon_i\coloneqq-$.
	The labels of $v_1, v_2$ having identical phenotypes, the connecting lemma (Theorem~\ref{thm: connecting same phenotype}) furnishes an
	$(m,n)$-graph $\Gc_0$ which is a simple edge path $(e_1,\ldots, e_h)$ such that
	$L(\source(e_1))= L(v_1)$ and $L(\target(e_h))=L(v_2)$, and the orientations of $e_1$ and $e_h$ are given by $-\epsilon_1$ and $\epsilon_2$ respectively.
	
	We then consider the disjoint union $\Gc_1 \sqcup \Gc_0 \sqcup \Gc_2$. 
	We claim that we can merge the vertices $v_1$ and $\source(e_1)$ thanks to the welding Lemma~\ref{lem: self welding lemma}. 
	Indeed, the choice of orientation for $e_1$ and the form of $\Gc_0$ (a path of edges) are made for the assumptions of Lemma~\ref{lem: self welding lemma} to hold.
	Then, we can merge $v_2$ and $\target(e_h)$, applying Lemma~\ref{lem: self welding lemma} again
	(this time, using the fact that the orientation of $e_h$ is well chosen).
	This produces a connected $(m,n)$-graph $\Gc_3$ which contains disjoint copies of $\Gc_1$ and $\Gc_2$.
	
	It now suffices to apply the saturation Lemma~\ref{lem: saturation lemma} to $\Gc_3$ so as to obtain a connected saturated $(m,n)$-graph $\Gc$
	that satisfies the requirements of Theorem~\ref{thm: merging of (m,n)-graphs}.
\end{proof}

\section{Perfect kernel and dense orbits}
\label{Sect: Perfect kernel and pluripotency}

\subsection{Perfect kernels of Baumslag-Solitar groups}

In case $\abs{m}=1$ or $\abs{n}=1$, it follows from the proof of \cite[Cor.~8.4]{beckerStabilityInvariantRandom2019}
that $\Sub(\BSo(m,n))$ is countable, hence the perfect kernel $\PK(\BSo(m,n))$ is empty.
Our main theorem describes the perfect kernels in the remaining cases. 
\begin{theorem}\label{thm: perfect kernel of Baumslag-Solitar groups}
	Let $m,n\in\Z$ such that $\abs{m}\geq 2$ and $\abs{n}\geq 2$. We have
	\[
	\PK(\BSo(m,n)) = 
	\big\{ 
	\Lambda\in\Sub(\BSo(m,n)): \Lambda \bs \BSo(m,n) / \la b \ra \textrm{ is infinite} 
	\big\}.
	\]
\end{theorem}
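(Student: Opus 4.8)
The plan is to set $\Gamma\coloneqq\BSo(m,n)$, $S\coloneqq\{b,t\}$, and identify $\Sub(\Gamma)$ with the space of pointed transitive $\Gamma$-actions, or of pointed Schreier graphs; for $\Lambda\le\Gamma$ let $\Schreier(\Lambda,S)$ be its Schreier graph pointed at the coset $\Lambda$, and $\Nc(\Lambda,R)$ the basic clopen neighborhood of $\Lambda$ from \eqref{eq: nbhd basis in terms of Schreier graph}. Since the vertex set of $\BSe(\Lambda\bs\Gamma\reacts\Gamma)$ is $\Lambda\bs\Gamma/\la b\ra$, the right-hand side of the theorem equals $X\coloneqq\{\Lambda:\BSe(\Lambda\bs\Gamma\reacts\Gamma)\text{ has infinitely many vertices}\}$, with complement $F\coloneqq\{\Lambda:\BSe(\Lambda\bs\Gamma\reacts\Gamma)\text{ is finite}\}$. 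As $\PK(\Gamma)$ is exactly the set of condensation points, I would prove $\PK(\Gamma)=X$ by showing that every $\Lambda\in F$ has a countable neighborhood (so $\PK(\Gamma)\subseteq X$) and that every $\Lambda\in X$ is a condensation point (so $X\subseteq\PK(\Gamma)$).

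First I would treat $F$. If $[\Gamma:\Lambda]<\infty$ then $\Lambda$ is isolated, because $\Gamma$ is finitely generated. Otherwise $\BSe(\Lambda\bs\Gamma\reacts\Gamma)$ is a finite connected saturated $(m,n)$-graph of infinite index, so it has no finite label (Remark~\ref{rem: finite BS finite phen}), hence all its labels equal $\infty$ (Remark~\ref{rmk: cardinal of Labels}), which forces $\abs m=\abs n$ (Lemma~\ref{lem: finite BS with inf phen}); in particular $\Lambda\cap\la b\ra=\{\id\}$ and $\Lambda$ is free (Remark~\ref{rem: gpe fdmt si inter <b> trivial}). Then $\Schreier(\Lambda,S)$ is a finite union of bi-infinite $b$-lines joined by $t$-bundles, each bundle being $b^{\pm m}$-periodic (since $\tau\beta^m=\beta^n\tau$ and $n=\pm m$) and therefore determined by a single one of its edges, a ``phase'' in $\Z$. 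Choosing $R$ larger than all these finitely many phases and than the distances from the basepoint to fixed representatives of the $b$-lines, I claim $\Nc(\Lambda,R)=\{\Lambda\}$: if $\Schreier(\Lambda',S)$ has the same $R$-ball around its basepoint, the equivariance $\tau'\beta'^m=\beta'^n\tau'$ forces every $t$-edge visible in the ball to spread into a complete bundle with exactly the phase seen in $\Schreier(\Lambda,S)$; hence the union of the finitely many $b$-lines through the basepoint of $\Schreier(\Lambda',S)$ is invariant under $\beta'^{\pm1},\tau',\tau'^{-1}$, so by connectedness it is all of $\Schreier(\Lambda',S)$, and $\Lambda'=\Lambda$. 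Thus $\Lambda$ is isolated. (If one prefers to avoid the explicit bound, the same propagation only yields $\Nc(\Lambda,R)\subseteq F$; but $F$ is countable — countably many finite graphs, each carrying a countable set of realizations indexed by the integer phases — so $\Lambda$ still has a countable neighborhood.) Either way $\PK(\Gamma)\subseteq X$.

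Next I would treat $X$. Fix $\Lambda\in X$ and $R_0\ge1$; since the $\Nc(\Lambda,R)$ form a neighborhood basis of $\Lambda$, it is enough to make $\Nc(\Lambda,R_0)$ uncountable. Let $q\coloneqq\PHE(\Lambda)$ and fix two non-isomorphic non-saturated transitive pre-actions $\gamma_0,\gamma_1$ of phenotype $q$ (e.g.\ realize, via Proposition~\ref{prop: realization of BS graph}, a single vertex, resp.\ a single positive edge, labelled by $q$ — recall $\Phe_{m,n}(q)=q$ by Lemma~\ref{properties of phenotypes}). I would build a binary tree of subgroups $(\Lambda_s)$ indexed by finite $0$–$1$ words, and radii $R_0<R_1<\cdots$, as follows: set $\Lambda_\emptyset\coloneqq\Lambda$; given $\Lambda_s\in X$ agreeing with $\Lambda$ on the $R_0$-ball, the restriction $Y_s$ of $\Schreier(\Lambda_s,S)$ to the union of the $b$-orbits meeting the $R_{|s|}$-ball is a non-saturated transitive pre-action with finite Bass–Serre graph of phenotype $q$ (non-saturated because $\Schreier(\Lambda_s,S)$ is connected with a $t$-edge leaving $Y_s$, as $\Lambda_s\in X$); by the merging machine (Theorem~\ref{thm: merging of pre-actions}) glue $Y_s$ to a disjoint copy of $\gamma_i$ to get $\Lambda_{si}$ ($i=0,1$), which lies in $X$ (the saturated graph produced by the merging machine is infinite when $\abs m,\abs n\ge2$, since forest-saturation of a non-saturated finite $(m,n)$-graph never terminates, cf.\ Lemma~\ref{lem: saturation lemma}), agrees with $\Lambda_s$ on the $R_{|s|}$-ball ($\supseteq Y_s\supseteq$ the $R_0$-ball), and satisfies $\Lambda_{s0}\ne\Lambda_{s1}$ with the two graphs differing on a bounded ball (the grafted pieces $\gamma_0,\gamma_1$ being non-isomorphic and attached at bounded distance); pick $R_{|s|+1}$ larger than the radius of that ball. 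For each infinite $0$–$1$ sequence $\xi$ the subgroups along the branch $\xi$ converge to some $\Lambda_\xi\in\Nc(\Lambda,R_0)$ (agreement on the $R_0$-ball being a closed, inherited condition), and distinct branches give distinct limits (splitting at stage $k$ forces a difference inside the $R_{k+1}$-ball). Hence $\Nc(\Lambda,R_0)$ contains a Cantor set, so it is uncountable and $\Lambda$ is a condensation point. Combining the two halves, $\PK(\Gamma)=X$.

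The main obstacle is the second half, and inside it the branching step: running the merging machine so that $\Lambda_{s0}$ and $\Lambda_{s1}$ are genuinely distinct (and visibly so on a bounded ball) while both leave the prescribed $R_0$-ball untouched. This is precisely where the malleability of pre-actions and the non-uniqueness of completions of $(m,n)$-graphs — Proposition~\ref{Extending pre-action onto a (m,n)-graph} and Example~\ref{ex: many realizations} — are needed. The other delicate point is the infinite-index case of the first half, where the $b^{\pm m}$-periodicity of $t$-bundles is what rigidifies Schreier graphs with only finitely many $b$-orbits.
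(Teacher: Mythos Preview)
Your two-part structure is right, but each half has an issue and both diverge from the paper's route.

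\textbf{First half.} The claim that an infinite-index $\Lambda\in F$ (so $\abs m=\abs n$, all labels $\infty$) is \emph{isolated} is false. For $\BSo(2,2)$, act on $\Z$ by $\beta(x)=x+1$, $\tau=\id$, basepoint $0$; for every even $N>2R$ the same formulas on $\Z/N\Z$ give a subgroup in $\Nc(\Lambda,R)$ containing $b^N\notin\Lambda$. Your propagation pins down $\tau'$ on each $b'$-orbit but cannot prevent those orbits from being long finite cycles rather than lines, so ``$\Lambda'=\Lambda$'' does not follow. What propagation \emph{does} give is your parenthetical fallback: the finitely many $b'$-orbits meeting the $R$-ball are closed under $\tau',\tau'^{-1}$, hence exhaust the domain, so $\Nc(\Lambda,R)\subseteq F$, and $F$ is indeed countable. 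That fallback is correct and is a genuinely different, geometric alternative to the paper's argument, which instead passes to the quotient $\pi\colon\BSo(m,n)\to\BSo(m,n)/\la b^m\ra$ and exhibits an open neighborhood consisting of finitely generated (hence countably many) subgroups.

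\textbf{Second half.} The Cantor-tree construction is heavier than needed and the branching step is not justified. Non-isomorphy of the germs $\gamma_0,\gamma_1$ does \emph{not} force $\Lambda_{s0}\neq\Lambda_{s1}$: the merging machine is only an existence statement, and with your specific choices (a vertex versus an edge labelled $q$) both outputs are obtained from $\BSe(Y_s)$ by attaching a path and then forest-saturating, so the two Bass--Serre graphs have the same first Betti number and nothing you wrote rules out their coinciding. The paper avoids this by proving only that $\mathcal L$ has no isolated points --- enough, since any dense-in-itself subset of a Polish space lies in the perfect kernel --- via two \emph{explicit} extensions of the finite pre-action whose underlying graphs have different first Betti numbers: the maximal forest-saturation, versus the same with one extra circuit spliced in at a forest vertex (using Theorem~\ref{thm: connecting same phenotype} and Lemma~\ref{lem: self welding lemma}). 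If you want to keep your binary tree, use that circuit/no-circuit dichotomy at each node; then distinctness at every stage is immediate from the Betti number.
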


Let us temporarily give a name to the set appearing in Theorem \ref{thm: perfect kernel of Baumslag-Solitar groups}:
\[
\Jc = \Jc(m,n) \coloneqq
\big\{ 
\Lambda\in\Sub(\BSo(m,n)): \Lambda \bs \BSo(m,n) / \la b \ra \text{ is infinite}
\big\},
\]
and recall that $\Sub_{[\infty]}(\Gamma)$ denotes the space of infinite index subgroups of $\Gamma$.

Given an action $\alpha$ of $\Gamma$ on a space $X$ and a point $v\in X$, we have already introduced the notation $[\alpha,v]$ for the action $\alpha$ \textit{pointed} at $v$. 

\begin{remark}\label{rem: description L}
	In terms of pointed transitive actions, $\Jc(m,n)$ is the set of pointed transitive actions with infinitely many $b$-orbits, whence
	\(
	\Jc= \big\{[\alpha,v] \colon \BSe(\alpha) \text{ is infinite} \big\}.
	\)
	Moreover:
	\begin{itemize}
		\item if $\abs{m} \neq \abs{n}$, we have
		\(
		\Jc(m,n) = \Sub_{[\infty]}(\BSo(m,n)),
		\)
		since every infinite action has an infinite Bass-Serre graph by Lemma \ref{lem: finite BS with inf phen}.
		
		\item if $\abs{m} = \abs{n}$, we have 
		\(
		\Jc(m,n) = \pi\inv\big( \Sub_{[\infty]}(\BSo(m,n)/ \la b^m \ra) \big),
		\)
		where $\pi$ is the homomorphism from $\BSo(m,n)$ to its quotient by the normal subgroup
		$\la b^m\ra=\la b^n\ra$.
		Indeed, since $\la b^m\ra$ has finite index in $\la b \ra$, we get that $\Lambda \bs \BSo(m,n) / \la b \ra$ is finite if and only if $\Lambda \bs \BSo(m,n) / \la b^m \ra$ is finite.
		
	\end{itemize}
\end{remark}
Therefore, Theorem \ref{thm: perfect kernel of Baumslag-Solitar groups} can be rephrased in two ways, as follows. 
\begin{theorem}
	\label{thm: explicit description K(BS(m,n))}
	Let $m,n\in\Z$ such that $\abs{m}\geq 2$ and $\abs{n}\geq 2$.
	\begin{enumerate}
		\item In terms of pointed transitive actions, the perfect kernel corresponds exactly to actions whose Bass-Serre graph is infinite:
		\[
		\PK(\BSo(m,n)) = \big\{[\alpha,v] \colon \BSe(\alpha) \text{ is infinite} \big\}.
		\]
		
		\item In terms of subgroups:
		\begin{itemize}
			\item if $\abs{m} \neq \abs{n}$,  the perfect kernel is equal to 
			the space of infinite index subgroups:
			\[
			\PK(\BSo(m,n)) = \Sub_{[\infty]}(\BSo(m,n));
			\]
			\item if $\abs{m} = \abs{n}$, we have: 
			\[
			\PK(\BSo(m,n)) = \pi\inv\big( \Sub_{[\infty]}(\BSo(m,n)/ \la b^m \ra) \big),
			\]
			where $\pi$ is the homomorphism from $\BSo(m,n)$ to its quotient by the normal subgroup
			$\la b^m\ra=\la b^n\ra$. \qed
		\end{itemize}
	\end{enumerate}
\end{theorem}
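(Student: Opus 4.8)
The plan is to prove the two inclusions $\PK(\Gamma)\subseteq\Lc$ and $\Lc\subseteq\PK(\Gamma)$, where $\Gamma=\BSo(m,n)$ and $\Lc=\Lc(m,n)$; by Remark~\ref{rem: description L} this yields at once all the reformulations in the statement, so it is enough to prove $\PK(\Gamma)=\Lc$. Recall that $\Lc$ is the set of subgroups $\Lambda$ (equivalently, pointed transitive actions $[\alpha,v]$) whose Bass--Serre graph is infinite.

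\emph{The inclusion $\PK(\Gamma)\subseteq\Lc$.} First, $\Lc$ is closed: by Remark~\ref{rem: description L} it equals either $\Sub_{[\infty]}(\Gamma)$, which is closed because $\Gamma$ is finitely generated (Remark~\ref{rmk: infinite index closed iff fg}), or $\pi^{-1}\big(\Sub_{[\infty]}(\Gamma/\la b^m\ra)\big)$, which is closed because $\pi$ is continuous and the quotient is finitely generated. Hence $\Sub(\Gamma)\setminus\Lc$ is open, and since $\Cc(\Gamma)$ is the \emph{largest} countable open subset in the Cantor--Bendixson decomposition, it is enough to check that $\Sub(\Gamma)\setminus\Lc$ is countable. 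A subgroup $\Lambda$ in $\Sub(\Gamma)\setminus\Lc$ has a finite connected Bass--Serre graph, whose labels are then, by Remark~\ref{rmk: cardinal of Labels}, either all finite or all $\infty$. In the first case the action on $\Lambda\bs\Gamma$ is on a finite set by Remark~\ref{rem: finite BS finite phen}, so $\Lambda$ has finite index; in the second case the $\la b\ra$-orbit of the base point is infinite, hence $\Lambda\cap\la b\ra=\{\id\}$, and $\Lambda$ is free of finite rank by Remark~\ref{rem: gpe fdmt si inter <b> trivial} (the underlying graph being finite). Either way $\Lambda$ is finitely generated, and the countable group $\Gamma$ has only countably many finitely generated subgroups.

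\emph{The inclusion $\Lc\subseteq\PK(\Gamma)$.} It suffices to show that each $\Lambda\in\Lc$ is a condensation point, i.e.\ that every basic neighbourhood $\Nc([\alpha,v],R)$ is uncountable, where $[\alpha,v]$ is the pointed transitive action with stabiliser $\Lambda$ and $\BSe(\alpha)$ is infinite; set $q\coloneqq\PHE(\alpha)$. Let $\alpha_1$ be the restriction of $\alpha$ to the union of all $\la b\ra$-orbits lying at Bass--Serre distance at most $R+1$ from the orbit of $v$, together with the $t$-arrows among them. Then $\alpha_1$ is a transitive pre-action of phenotype $q$ which coincides with $\alpha$ on the $R$-ball around $v$ in the Schreier graph and is saturated at every vertex of that ball, yet is non-saturated, since $\BSe(\alpha)$ is infinite and connected, hence unbounded, so $\alpha_1$ omits a $t$-arrow at some $\la b\ra$-orbit at distance $R+1$ from that of $v$. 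Feed $\BSe(\alpha_1)$ and a gadget graph $\Hc_q$ into the merging machine for $(m,n)$-graphs, Theorem~\ref{thm: merging of (m,n)-graphs}: here $\Hc_q$ is a connected non-saturated $(m,n)$-graph of phenotype $q$ admitting $2^{\aleph_0}$ pairwise non-isomorphic realisations as $\Gamma$-actions --- for instance a half-line of vertices each carrying an extra loop, all labelled by an integer $L$ with $\Phe_{m,n}(L)=q$ and $\gcd(L,m),\gcd(L,n)\geq 2$ (when $q=\infty$ take all labels $\infty$), the freedom in the choice of $\tau$ being that of Example~\ref{ex: many realizations}. The output is a connected saturated $(m,n)$-graph $\Gc$ containing disjoint copies of $\BSe(\alpha_1)$ and $\Hc_q$, where moreover all the newly attached edges sprout either at non-saturated vertices of $\BSe(\alpha_1)$ --- which lie at distance $>R$ from $v$ --- or on the $\Hc_q$-side. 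By Proposition~\ref{Extending pre-action onto a (m,n)-graph}, $\alpha_1$ extends to an action $\gamma$ realising $\Gc$, and the part of the extension over $\Hc_q$ is unconstrained by $\alpha_1$, so there are $2^{\aleph_0}$ pairwise non-isomorphic such actions $\gamma$. Each of them, pointed at $v$, lies in $\Nc([\alpha,v],R)$ (as $\gamma$ agrees with $\alpha$ on the $R$-ball around $v$) and in $\Lc$ (as $\BSe(\gamma)=\Gc$ is infinite), and distinct $\gamma$'s give distinct subgroups. Hence $\Nc([\alpha,v],R)$ is uncountable, so $\Lambda\in\PK(\Gamma)$, and with the previous paragraph $\PK(\Gamma)=\Lc$.

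\emph{Main obstacle.} The crux is the gadget $\Hc_q$: one must show that for all $|m|,|n|\geq 2$ and every $q\in\QQ_{m,n}$ there is a label $L$ with $\Phe_{m,n}(L)=q$ and $\gcd(L,m),\gcd(L,n)\geq 2$ --- immediate when $\gcd(m,n)=1$, but needing a careful choice of auxiliary prime factors (via Lemma~\ref{properties of phenotypes}) when $m$ and $n$ share primes --- and then rerun the construction of Example~\ref{ex: many realizations} for this label to confirm that the half-line-with-loops graph genuinely has $2^{\aleph_0}$ non-isomorphic realisations, and that this freedom survives being glued into $\Gc$ and filled in through Proposition~\ref{Extending pre-action onto a (m,n)-graph}. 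A minor additional point is the bookkeeping in the second paragraph --- the choice of $\alpha_1$ and the location of its non-saturated vertices --- ensuring the actions $\gamma$ really do agree with $\alpha$ on a prescribed ball, together with the label dichotomy of Remark~\ref{rmk: cardinal of Labels} used in the first paragraph.
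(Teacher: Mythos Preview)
Your argument for $\PK(\Gamma)\subseteq\Lc$ is correct and in fact cleaner than the paper's: you give a single uniform reason (every $\Lambda\notin\Lc$ is finitely generated, hence the complement is countable and open), whereas the paper treats $\abs m\neq\abs n$ and $\abs m=\abs n$ separately.

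For $\Lc\subseteq\PK(\Gamma)$, however, the gadget $\Hc_q$ fails precisely where you say it is ``immediate''. For the half-line-with-loops graph with constant vertex label $L$ to be an $(m,n)$-graph, the transfer equation \eqref{eq:transfert} applied to a loop (or to any edge joining two vertices of equal label) forces
\[
\frac{L}{\gcd(L,n)}=\frac{L}{\gcd(L,m)},\qquad\text{i.e.}\quad \gcd(L,m)=\gcd(L,n).
\]
When $\gcd(m,n)=1$ these two divisors of $m$ and $n$ are coprime, so equality forces both to be $1$, contradicting the degree requirement $\gcd(L,n)\geq 2$. Hence for $\gcd(m,n)=1$ your gadget does not exist for any finite phenotype $q$; your ``main obstacle'' paragraph has the cases reversed. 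A second, softer gap is that Example~\ref{ex: many realizations} is specific to $\BSo(2,2)$ with infinite labels; the paper explicitly postpones the general ``continuum many realisations'' statement to a sequel, so you cannot invoke it here.

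The paper sidesteps all of this by proving something weaker but sufficient: since $\Lc$ is closed, it is enough to show $\Lc$ has no isolated points. Given $[\alpha_0,v]\in\Lc$ and $R$, it restricts to a non-saturated pre-action as you do, takes one forest-saturation $\Gc_1$, then produces a second saturation $\Gc_2$ by inserting a single extra circuit outside the $R$-ball; the two resulting actions are distinct because their Bass--Serre graphs have fundamental groups of different rank. This needs only the forest-saturation lemma and the connecting lemma, not any ``continuum many realisations'' statement. Your condensation approach could be rescued (for instance by replacing the constant-label half-line by an infinite graph whose underlying graph has non-finitely-generated $\pi_1$, and then appealing to the unproved sequel result), but as written it does not go through.
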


\begin{proof}[Proof of Theorem~\ref{thm: perfect kernel of Baumslag-Solitar groups}]
	Our aim is to prove that $\PK(\BSo(m,n)) = \Jc(m,n)$.
	It will be convenient to write one inclusion in terms of pointed transitive actions and the other in terms of subgroups. 
	
	Let us first prove the inclusion $\PK(\BSo(m,n)) \supseteq \Jc$. 
	It suffices to show that no element of $\Jc$ is isolated in $\Jc$. Recall the definition of the topology in terms of pointed actions, see Section \ref{sec: space subgroups} and in particular Equation \eqref{eq: nbhd basis in terms of Schreier graph}.
	Let us fix a pointed transitive action $(\alpha_0,v)$ representing an element of  $\Jc$
	and a radius $R\geq 0$. We will show that the basic neighborhood $\Nc([\alpha_0,v], R)$ contains at least two distinct
	elements of $\Jc$.
	
	Let $(\beta,\tau)$ be the pre-action obtained by restricting $\alpha_0$ to the
	union of the
	$b$-orbits of the vertices of the ball of radius $R+1$ centered at $v$ in the Schreier graph of $\alpha_0$.
	The Bass-Serre graph of $(\beta,\tau)$ is the projection in $\BSe(\alpha_0)$ (see Definition \ref{dfn: projection BS graph}) of this ball, hence is finite.
	Since $\BSe(\alpha_0)$ is infinite, the pre-action $(\beta,\tau)$ is not saturated. 
	
	We now build two $(m,n)$-graphs $\Gc_1,\Gc_2$
	that extend the finite non-saturated Bass-Serre graph $\Gc$ of $(\beta,\tau)$ in two different ways.
	First, let $\Gc_1$ be a forest-saturation of $\Gc$ given by Lemma~\ref{lem: saturation lemma}.
	In particular, the subgraph induced in $\Gc_1$ by $V(\Gc_1)\smallsetminus V(\Gc)$ is a forest whose vertices have degree at least $3\leq 1+\min(\abs m,\abs n)$ in $\Gc_1$.
	
	We then construct $\Gc_2$ by modifying $\Gc_1$. 
	Let us pick a vertex $v\in V(\Gc_1)\smallsetminus V(\Gc)$.
	The subgraph induced in $\Gc_1$ by $V(\Gc_1)\smallsetminus \{v\}$ has at least $3$ connected components. 
	Choose two connected components disjoint from $\Gc$ and remove them.
	In the resulting $(m,n)$-graph $\Gc_1'$, the vertex $v$ is the only one that is not saturated: two edges are missing.

	Theorem \ref{thm: connecting same phenotype} gives us an $(m,n)$-graph which is a simple edge path $\mathcal P$ whose extremities have the same label as $v$ and for which the orientations of the end edges are compatible with that of the missing edges of $v$.
	We then apply twice the welding lemma, Lemma \ref{lem: self welding lemma}, so as to weld the two extremities of $\mathcal P$ to $v$.
	We eventually define $\mathcal G_2$ to be a forest-saturation of the graph that we obtained. 
	Observe that $\mathcal G_1$ is not isomorphic to $\mathcal G_2$ since the  fundamental groups of their underlying graphs are free groups of distinct ranks. 
	
	Finally, we extend $(\beta,\tau)$ to pre-actions $\alpha_1$ and $\alpha_2$ whose Bass-Serre graphs are $\Gc_1$ and $\Gc_2$ respectively, thanks to Proposition~\ref{Extending pre-action onto a (m,n)-graph}.
	Since $\Gc_1,\Gc_2$ are saturated, $\alpha_1,\alpha_2$ are actually actions by Example~\ref{ex:saturated-> action}.
	We already remarked that $\Gc_1$ is not isomorphic to $\Gc_2$, so the pointed transitive actions $(\alpha_1,v)$ and $(\alpha_2,v)$ are not isomorphic: $[\alpha_1,v]\neq [\alpha_2,v]$. 
	Moreover, the balls of radius $R$ centered at the basepoints in the Schreier graphs of $\alpha_0, \alpha_1, \alpha_2$ all coincide by construction with that of $(\beta,\tau)$, 
	so $[\alpha_1,v]$ and $[\alpha_2,v]$ are both in $\Nc([\alpha_0,v], R)$.
	
	Let us now turn to the inclusion $\PK(\BSo(m,n)) \subseteq \Jc$.
	Let us pick a subgroup $\Lambda \in\Sub(\BSo(m,n)) \smallsetminus \Jc(m,n)$ and let us prove that it is not in the perfect kernel.
	
	If $\abs{m} \neq \abs{n}$, then $\Lambda$ has finite index in $\BSo(m,n)$ by Remark \ref{rem: description L}, hence it is isolated in $\Sub(\BSo(m,n))$.
	
	If $\abs{m} = \abs{n}$, then $\pi(\Lambda)$ has finite index in $\BSo(m,n)/\la b^m \ra$ by Remark \ref{rem: description L}, hence it is finitely generated.
	Therefore, the set 
	\[
	\Vc \coloneqq \{\Lambda'\in \Sub(\BSo(m,n)): \pi(\Lambda') \geq \pi(\Lambda) \}
	\]
	is a neighborhood of $\Lambda$, 
	since it contains the basic neighborhood $\Vc(S,\emptyset)=\{\Lambda'\in\Sub(\BSo(m,n))\colon S\subseteq \Lambda'\}$ where $S\subseteq \Lambda$ is a finite set such that $\pi(S)$ generates $\pi(\Lambda)$.
	
	Now, for any $\Lambda'\in\Vc$, the subgroup $\pi(\Lambda')$ has finite index in $\BSo(m,m)/\la b^m \ra$. Hence $\pi(\Lambda')$ is finitely generated, so $\Lambda'$ itself is finitely generated since it is written as an extension with cyclic kernel:
	\[
	1 \to \la b^m\ra \cap \Lambda'  \to \Lambda' \to \pi(\Lambda') \to 1.
	\]
	Therefore all subgroups of $\Vc$ are finitely generated, which implies that $\Vc$ is countable and hence $\Lambda$ is not in $\PK(\BSo(m,n))$.
\end{proof}

\begin{corollary}\label{cor: m neq n infinite phenotype perfect}
	If $\abs m \geq 2$, $\abs n \geq 2$ and $\abs m\neq\abs n$, then
	\[
	\PHE^{-1}(\infty)\subseteq\PK(\BSo(m,n));
	\]
	in other words, every infinite phenotype subgroup is in the perfect kernel.
\end{corollary}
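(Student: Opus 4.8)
The plan is to reduce everything to Theorem~\ref{thm: perfect kernel of Baumslag-Solitar groups} together with Lemma~\ref{lem: finite BS with inf phen}. Recall that under the standing hypothesis $\abs m\neq\abs n$, the perfect kernel admits, by Theorem~\ref{thm: explicit description K(BS(m,n))} and Remark~\ref{rem: description L}, the equivalent descriptions
\[
\PK(\BSo(m,n)) = \Sub_{[\infty]}(\BSo(m,n)) = \big\{\Lambda\in\Sub(\BSo(m,n))\colon \BSe(\Lambda\bs\BSo(m,n)) \text{ is infinite}\big\}.
\]
So it suffices to show that a subgroup of infinite phenotype has an infinite Bass-Serre graph.

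First I would unwind the definitions. Let $\Lambda\leq\BSo(m,n)$ with $\PHE(\Lambda)=\infty$. By Definition~\ref{def: phenotype for actions}, $\PHE(\Lambda)$ is the phenotype of the transitive action $\Lambda\bs\BSo(m,n)\reacts\BSo(m,n)$, which coincides with the phenotype of its Bass-Serre graph $\BSe(\Lambda\bs\BSo(m,n))$. Thus this graph is a connected, saturated $(m,n)$-graph of infinite phenotype. Suppose, for contradiction, that it is finite. Then $\Lambda\bs\BSo(m,n)\reacts\BSo(m,n)$ is an infinite-phenotype transitive $\BSo(m,n)$-action with finite Bass-Serre graph, so Lemma~\ref{lem: finite BS with inf phen} forces $\abs m=\abs n$, contradicting the hypothesis. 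Hence $\BSe(\Lambda\bs\BSo(m,n))$ is infinite, and therefore $\Lambda\in\PK(\BSo(m,n))$ by the description above.

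There is no real obstacle here: once Theorem~\ref{thm: perfect kernel of Baumslag-Solitar groups} and Lemma~\ref{lem: finite BS with inf phen} are available, the corollary is immediate. The only point requiring a little care is the bookkeeping of the several equivalent descriptions of $\PK(\BSo(m,n))$ valid when $\abs m\neq\abs n$, and the observation that the phenotype of $\Lambda$ can be read directly off the (possibly infinite) Bass-Serre graph of its coset action.
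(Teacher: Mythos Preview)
Your proof is correct and follows essentially the same route as the paper's. The only difference is cosmetic: the paper uses the characterization $\PK(\BSo(m,n))=\Sub_{[\infty]}(\BSo(m,n))$ and observes directly that infinite phenotype forces an infinite $b$-orbit, hence infinite index; you instead use the Bass-Serre graph characterization and invoke Lemma~\ref{lem: finite BS with inf phen} to rule out a finite graph, which amounts to the same thing.
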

\begin{proof}
	Any subgroup with infinite phenotype has infinite index and hence it is in $\PK(\BSo(m,n))$ according to Theorem \ref{thm: explicit description K(BS(m,n))}.
\end{proof}

\subsection{Phenotypical decomposition of the perfect kernel}

Let us now turn to a description of the internal structure of $\PK(\BSo(m,n))$.
\begin{notation}
	Let $m,n \in \Z\smallsetminus \{-1,0,1\}$.
	We denote by $\QQ_{m,n}$ the set of all possible $(m,n)$-phenotypes, that is,
	\(
	\QQ_{m,n} \coloneqq \Phe_{m,n}(\Z_{\geq 1}\cup \{\infty\}).
	\)
\end{notation}

\begin{definition}
	The phenotype of a subgroup $\Lambda\leq \BSo(m,n)$ is the $(m,n)$-phenotype of the index of $\Lambda\cap \la b\ra$ in $\la b\ra$
		\[
	\PHE(\Lambda)=\PHE(\Lambda\cap \langle b\rangle)
	\coloneq\Phe_{m,n}\big([\la b \ra : \Lambda\cap \la b\ra]\big).
	\]
	This yields a function
	\(
	\PHE: \Sub(\BSo(m,n)) \to \QQ_{m,n}\subseteq \mathbb Z_{\geq 1}\cup\lbrace\infty\rbrace.
	\)
\end{definition}
	In particular $\PHE(\langle b^k\rangle )=\Phe_{m,n}(k)$ for $k\in\mathbb Z_{\geq 1}$ and the phenotype of the trivial subgroup is infinite.

\begin{remark}
	\label{Rem: Phenotypes des b-sous-groupes}
	The index $[\la b \ra : \Lambda\cap \la b\ra]$ is the cardinal of the $\la b\ra$-orbit of the point $\Lambda$ in the action $\Lambda \bs \BSo(m,n) \curvearrowleft \BSo(m,n) $.
	Hence $\PHE(\Lambda)$ is the phenotype of this action (as given in Definition~\ref{def: phenotype for actions}).
	Since the latter doesn't depend on the basepoint,
	the function $\PHE$ is invariant under conjugation.
\end{remark}

It easily follows form the definitions that if $\PHE(\Lambda)=\PHE(\Lambda')$ then $\PHE(\Lambda)=\PHE(\Lambda\cap \Lambda')$, see Remark \ref{rmk: phen lcm gcd}.

\begin{proposition} \label{prop: phenotype partition}
	In the partition of the space of subgroups of $\BSo(m,n)$ according to their phenotype
	\[\Sub(\BSo(m,n))=\bigsqcup_{q\in \QQ_{m,n}} \PHE^{-1}(q),\]
	the pieces are non-empty and satisfy: 
	\begin{enumerate}
		\item For every finite $q\in \QQ_{m,n}$, the piece $\PHE^{-1}(q)$ is open; it is also closed if and only if $\abs m=\abs n$.
		\item For $q=\infty$, the piece $\PHE^{-1}(\infty)$ is closed and not open.
	\end{enumerate}
\end{proposition}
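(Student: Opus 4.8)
The plan is to deduce everything from the identity $\PHE(\Lambda)=\Phe_{m,n}\bigl([\la b\ra:\Lambda\cap\la b\ra]\bigr)$ of Remark~\ref{Rem: Phenotypes des b-sous-groupes}, combined with the description of the basic clopen sets $\Vc(I,O)$. For non-emptiness: given a finite $q\in\QQ_{m,n}$, pick $k\in\Z_{\geq 1}$ with $\Phe_{m,n}(k)=q$; then $\PHE(\la b^k\ra)=q$, while for $q=\infty$ the trivial subgroup has infinite phenotype. So all pieces are non-empty.

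Next I would establish a decomposition of each finite-phenotype piece into basic clopen sets. For each $d\in\Z_{\geq 1}$,
\[
\bigl\{\Lambda\in\Sub(\BSo(m,n))\colon \Lambda\cap\la b\ra=\la b^d\ra\bigr\}=\Vc\bigl(\{b^d\},\{b,b^2,\dots,b^{d-1}\}\bigr),
\]
because a subgroup of the infinite cyclic group $\la b\ra$ that contains $b^d$ but none of $b,\dots,b^{d-1}$ is forced to equal $\la b^d\ra$. Since a subgroup $\Lambda$ satisfies $\PHE(\Lambda)=q$ for finite $q$ exactly when $[\la b\ra:\Lambda\cap\la b\ra]$ is a finite integer $d$ with $\Phe_{m,n}(d)=q$, this yields
\[
\PHE^{-1}(q)=\bigsqcup_{\substack{d\geq 1\\ \Phe_{m,n}(d)=q}}\Vc\bigl(\{b^d\},\{b,\dots,b^{d-1}\}\bigr),
\]
a union of clopen sets, hence open: this is the openness assertion of item~(1). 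Consequently $\PHE^{-1}(\infty)=\Sub(\BSo(m,n))\setminus\bigcup_{q\in\QQ_{m,n}\setminus\{\infty\}}\PHE^{-1}(q)$ is a complement of a union of open sets, hence closed, giving one half of item~(2).

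For the dichotomy in item~(1): if $\abs m=\abs n$, Lemma~\ref{lem: preimage phenotypes} gives that $\{d\colon\Phe_{m,n}(d)=q\}$ is finite, so the last displayed union is finite and $\PHE^{-1}(q)$ is clopen, in particular closed. If $\abs m\neq\abs n$, choose a prime $p$ with $\abs m_p\neq\abs n_p$; then $p\notin\primes_{m,n}$, so $\Phe_{m,n}(p^{i}q)=\Phe_{m,n}(q)=q$ for all $i\geq 0$ (using $\Phe_{m,n}(q)=q$ from Lemma~\ref{properties of phenotypes}, valid since $q\in\QQ_{m,n}$). Hence each $\la b^{p^{i}q}\ra$ lies in $\PHE^{-1}(q)$, yet $\la b^{p^{i}q}\ra\to\{\id\}$ in $\Sub(\BSo(m,n))$ — every nontrivial power of $b$, and every element outside $\la b\ra$, eventually leaves these subgroups — while $\PHE(\{\id\})=\infty\neq q$; so $\PHE^{-1}(q)$ is not closed.

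Finally, $\PHE^{-1}(\infty)$ is not open: $\la b^{k}\ra\to\{\id\}\in\PHE^{-1}(\infty)$ as $k\to\infty$, while $\PHE(\la b^{k}\ra)=\Phe_{m,n}(k)$ is finite, so no neighborhood of $\{\id\}$ is contained in $\PHE^{-1}(\infty)$; this completes item~(2). The only load-bearing ingredients are the elementary identification of $\{\Lambda\colon\Lambda\cap\la b\ra=\la b^d\ra\}$ with a basic clopen set and the finiteness dichotomy of Lemma~\ref{lem: preimage phenotypes}; I do not anticipate a genuine obstacle, the one spot deserving care being the routine verification of the convergences $\la b^{k_i}\ra\to\{\id\}$.
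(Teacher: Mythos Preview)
Your proof is correct and follows essentially the same approach as the paper: both decompose $\PHE^{-1}(q)$ for finite $q$ as the (disjoint) union over $d\in\Phe_{m,n}^{-1}(q)$ of the clopen sets $\{\Lambda:\Lambda\cap\la b\ra=\la b^d\ra\}$, conclude openness (and closedness of $\PHE^{-1}(\infty)$ by complementation), and then use Lemma~\ref{lem: preimage phenotypes} together with the convergence $\la b^{k_i}\ra\to\{\id\}$ to settle the closedness dichotomy and the non-openness of $\PHE^{-1}(\infty)$. The only cosmetic difference is that you explicitly exhibit the sequence $\la b^{p^iq}\ra$ when $\abs m\neq\abs n$, whereas the paper simply appeals to the infiniteness of $\Phe_{m,n}^{-1}(q)$.
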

In particular, the function $\PHE\colon\Sub(\BSo(m,n))\to \Z_{\geq 1}\cup\{+\infty\}$ is Borel. It is continuous if and only if $\abs{m}=\abs{n}$.

\begin{proof}
	Given $k\in\mathbb Z_{\geq 1}$, we set 
	\[A_k\coloneqq \left\{\Lambda\in\Sub(\BSo(m,n))\colon \Lambda \cap \la b \ra= \la b^k\ra\right\}.\]
	Writing $A_k$ as 
	\[A_k=\lbrace\Lambda\in\Sub(\BSo(m,n)\colon b^k\in \Lambda,\ b^i\notin \Lambda\ \text{ for every } 1\leq i<k\rbrace\]
	makes it clear that $A_k$ is clopen for every $k\in \Z_{\geq 1}$. 
	Moreover $\langle b^k\rangle\in A_k$,
	so in particular $A_k$ is not empty. 
	Now, by definition, for every $q\in\Z_{\geq 1}$ we have
	\begin{equation}\label{eq: decomposition as clopen Ak}
		\PHE^{-1}(q)=\bigsqcup_{k\in \Phe_{m,n}\inv(q)}A_k.
	\end{equation} 
	Hence $\PHE^{-1}(q)$ is open for every finite $q$ and, by taking the complement,
	$\PHE^{-1}(\infty)$ is closed.
	
	Take a sequence of positive integers $(k_i)_{i\in\mathbb N}$ tending to $\infty$. 
	Observe that the subgroups $\lbrace\langle b^{k_i}\rangle\rbrace_{i}$ have finite phenotype  and converge to the trivial subgroup which has infinite phenotype. 
	Therefore $\PHE^{-1}(\infty)$ is not open.
	Moreover, if $\Phe_{m,n}^{-1}(q)$ is not finite, we can choose all the $k_i$'s with phenotype $q$; 
	the same argument shows that $\PHE^{-1}(q)$ is not closed.
	Finally, the clopen decomposition \eqref{eq: decomposition as clopen Ak} shows that
	$\PHE^{-1}(q)$ is closed as soon as $\Phe_{m,n}^{-1}(q)$ is finite. By Lemma~\ref{lem: preimage phenotypes}, $\Phe_{m,n}^{-1}(q)$ is finite exactly when $\abs{m}=\abs{n}$.
\end{proof}   

We now restrict the above partition to the perfect kernel
\begin{equation}
	\PK(\BSo(m,n))=\bigsqcup_{q\in \QQ_{m,n}} \PK_q(\BSo(m,n)),
\end{equation}
where
\begin{equation}
	\PK_q(\BSo(m,n))\coloneqq\PK(\BSo(m,n))\cap \PHE^{-1}_{m,n}(q).
\end{equation}

\begin{remark}\label{rem: non-empty pieces}
	Observe that each $\mathcal K_q(\BSo(m,n))$ is not empty: indeed it contains $\la b^q\ra$ which belongs to the perfect kernel by Theorem \ref{thm: perfect kernel of Baumslag-Solitar groups}. 
	Moreover, in the proof of Theorem \ref{thm: perfect kernel of Baumslag-Solitar groups} the $(m,n)$-graphs we construct have the same phenotype, so every element of $\mathcal K_q(\BSo(m,n))$ is actually a non-trivial limit of elements of $\mathcal K_q(\BSo(m,n))$. We conclude that $\mathcal K_q(\BSo(m,n))$ is equal to the perfect kernel of $\PHE_{m,n}\inv(q)$.
\end{remark}

Let us show that the action of $\BSo(m,n)$ by conjugation on each term is \textit{topologically transitive} in the following sense.

\begin{definition}
	An action by homeomorphisms of a group $\Gamma$ on a topological space $X$ is called \textbf{topologically transitive} if for every nonempty open sets $U$ and $V$,
	there is a point whose $\Gamma$-orbit intersects both $U$ and $V$.
\end{definition}

\begin{theorem}\label{thm: gdelta dense orbits}
	Let $m,n$ be integers such that ${\abs m}, \abs n\geq 2$. Then for every phenotype $q\in \QQ_{m,n}$, the action by conjugation of $\BSo(m,n)$ on the invariant subspace $\PK_q(\BSo(m,n))$ is topologically transitive.
\end{theorem}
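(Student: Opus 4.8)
The plan is to establish topological transitivity by producing, for any two nonempty open subsets $U,V$ of $\PK_q(\BSo(m,n))$, a single point whose $\BSo(m,n)$-orbit (under conjugation) meets both. Since the basic clopen neighborhoods of the space of subgroups are, through the pointed-action dictionary of Section~\ref{sec: space subgroups}, exactly the sets $\Nc([\alpha,v],R)$ determined by the $R$-ball of a pointed Schreier graph, it suffices to find, given two pointed transitive actions $(\alpha_1,v_1)$ and $(\alpha_2,v_2)$ of phenotype $q$ with infinite Bass-Serre graphs, together with a radius $R$, a single transitive action $\alpha$ of phenotype $q$ with infinite Bass-Serre graph, carrying two base points $w_1,w_2$ such that the $R$-ball around $w_i$ in $\Schreier(\alpha)$ is isomorphic to the $R$-ball around $v_i$ in $\Schreier(\alpha_i)$. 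Indeed, then $\Lambda\coloneqq \Stab_\alpha(w_1)$ lies in $U\cap \PK_q$ (after shrinking $U,V$ to basic neighborhoods), and some conjugate of $\Lambda$, namely $\Stab_\alpha(w_2)$, lies in $V\cap\PK_q$, because moving the base point is exactly the conjugation action; and $\Lambda$ is in the perfect kernel by Theorem~\ref{thm: perfect kernel of Baumslag-Solitar groups} since $\BSe(\alpha)$ is infinite.

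First I would, as in the proof of Theorem~\ref{thm: perfect kernel of Baumslag-Solitar groups}, replace each $\alpha_i$ by its restriction to the union of the $b$-orbits of the vertices in the ball of radius $R+1$ around $v_i$ in $\Schreier(\alpha_i)$; this is a transitive pre-action $(\beta_i,\tau_i)$ whose Bass-Serre graph $\Gc_i$ is finite, and it is non-saturated because $\BSe(\alpha_i)$ is infinite. Moreover $\Gc_i$ has phenotype $q$ since phenotype is inherited by sub-$(m,n)$-graphs of a connected $(m,n)$-graph (Proposition~\ref{prop:phenotype is well defined for conn graph} applied inside $\BSe(\alpha_i)$). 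Now I apply the Merging Machine for $(m,n)$-graphs (Theorem~\ref{thm: merging of (m,n)-graphs}) to $\Gc_1$ and $\Gc_2$: since $\abs m,\abs n\geq 2$ and both are connected, non-saturated, of the same phenotype $q$, there is a connected saturated $(m,n)$-graph $\Gc$ containing disjoint copies of $\Gc_1$ and $\Gc_2$. By Proposition~\ref{Extending pre-action onto a (m,n)-graph} applied to $(\beta_1,\tau_1)\sqcup(\beta_2,\tau_2)$, whose Bass-Serre graph $\Gc_1\sqcup\Gc_2$ sits inside $\Gc$, we get a pre-action $\alpha$ with $\BSe(\alpha)\simeq\Gc$ extending both; by Example~\ref{ex:saturated-> action} it is a genuine transitive action, and $\PHE(\alpha)=\PHE(\Gc)=q$ by Definition~\ref{def: phenotype for actions}.

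It remains to arrange that $\BSe(\alpha)$ is infinite and that the $R$-balls are preserved. For the $R$-balls: the copies of $\Gc_i=\BSe(\beta_i,\tau_i)$ inside $\Gc$ are the images under the projection $\pi_\alpha$ of the preimages of the $(R+1)$-balls; since $\alpha$ extends $(\beta_i,\tau_i)$, the Schreier ball of radius $R$ around $v_i$ in $\Schreier(\alpha)$ coincides with that of $(\beta_i,\tau_i)$, which in turn coincides with the one in $\alpha_i$ — so setting $w_i\coloneqq v_i$ works. For infiniteness of $\BSe(\alpha)$: if $\Gc$ as produced by Theorem~\ref{thm: merging of (m,n)-graphs} happens to be finite, I would first enlarge it, before saturation, by welding onto some non-saturated vertex an infinite ray of $(m,n)$-graph of phenotype $q$ furnished by iterating the Connecting Theorem~\ref{thm: connecting same phenotype} (or simply attach, at a vertex whose label allows it, an infinite path with repeating label pattern of phenotype $q$), using the Welding Lemma~\ref{lem: self welding lemma}; then forest-saturate (Lemma~\ref{lem: saturation lemma}), which only adds vertices, keeping the graph infinite and of phenotype $q$. (Alternatively one can observe that $\Gc_1$ being finite and non-saturated, its forest-saturation is already infinite whenever $\min(\abs m,\abs n)\geq 2$, because the added forest has vertices of degree $\geq 3$; the only case needing care is the bouquet-of-circles situation excluded by the phenotype-$q$, infinite-Bass-Serre hypotheses.) The main obstacle, and the point requiring the most care, is precisely this guarantee that the merged and saturated graph can always be taken infinite while staying in phenotype $q$ and extending both given pre-actions; everything else is an assembly of the already-established merging, welding, extension and forest-saturation machinery.
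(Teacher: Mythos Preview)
Your proposal follows essentially the same strategy as the paper's own proof: restrict each $\alpha_i$ to the $b$-saturation of the $(R{+}1)$-ball to get a transitive, non-saturated, finite pre-action $(\beta_i,\tau_i)$ of phenotype $q$, apply the merging machine (the paper invokes Theorem~\ref{thm: merging of pre-actions} directly; you invoke Theorem~\ref{thm: merging of (m,n)-graphs} together with Proposition~\ref{Extending pre-action onto a (m,n)-graph}, which is exactly how Theorem~\ref{thm: merging of pre-actions} is proved), and then move the base point by some $\gamma$ to land in the second basic neighborhood.

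You are in fact more careful than the paper on one point: you insist that $\BSe(\alpha)$ be infinite, so that $[\alpha,v]\in\PK_q$ rather than merely $\PHE^{-1}(q)$. The paper does not address this explicitly. Your remedy, however, is slightly misphrased: if the merged saturated graph $\Gc$ produced by Theorem~\ref{thm: merging of (m,n)-graphs} is finite, this is precisely because the welded graph $\Gc_3$ in that proof was \emph{already} saturated, so there is no ``non-saturated vertex'' left to weld an infinite ray onto. The fix must occur earlier in the construction. One clean way: lengthen the connecting path $\Gc_0$ from Theorem~\ref{thm: connecting same phenotype} so that it passes through a vertex whose label is a multiple of $m$ (as in the $k=\ell=q$, $\varepsilon_\ell=-$ case of that proof, where the middle vertex has label $\tfrac{q}{\gcd(n,q)}\,m$); such an internal path vertex has $\gcd(L,m)=\abs m\geq 2$ while its degree in the path is $2$, so it is not saturated. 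Then $\Gc_3$ is non-saturated and its forest-saturation is infinite by Lemma~\ref{lem: saturation lemma} (each new vertex has degree $\geq 1+\min(\abs m,\abs n)\geq 3$). With this adjustment your argument is complete and aligned with the paper's.
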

\begin{proof}
	We again use the definition of the topology in terms of pointed actions, see Section \ref{sec: space subgroups} and in particular Equation \eqref{eq: nbhd basis in terms of Schreier graph}. So let us fix two pointed actions $(\alpha_1,v_1)$ and $(\alpha_2,v_2)$ in $\PK_q(\BSo(m,n))$, take $R>0$, and consider the basic open sets $\mathcal N([\alpha_1,v_1],R)$ and $\mathcal N([\alpha_2,v_2],R)$. We need to construct a pointed action whose orbit meets both open sets. 
	
	As in the proof of Theorem \ref{thm: perfect kernel of Baumslag-Solitar groups}, we let $(\beta_i,\tau_i)$, for $i=1,2$, be the pre-action obtained by restricting $\alpha_i$ to the
	union of the
	$b$-orbits of the vertices of the balls $B(v_i,R+1)$ of radius $R+1$ centered at $v_i$  in the Schreier graph of $\alpha_i$. 
	The Bass-Serre graph of $(\beta_i,\tau_i)$ is finite.
	Since $\BSe(\alpha_i)$ is infinite, the pre-action $(\beta_i,\tau_i)$ is not saturated. 
	
	Moreover $(\beta_1,\tau_1)$ and 
	$(\beta_2,\tau_2)$
	have the same phenotype, so we can apply the merging machine (Theorem \ref{thm: merging of pre-actions}) to obtain an action $\alpha$ whose Schreier graph contains (copies of) the balls $B(v_i,R+1)$. 
	
	Pointing $\alpha$ at the copy of $v_1$ that we denote by $v$, we have $(\Schreier(\alpha),v)\simeq_R(\Schreier(\alpha_1),v_1)$. 
	By transitivity of $\alpha$, there is  $\gamma\in\BSo(m,n)$ such that $v\alpha(\gamma)$ is the copy of $v_2$, and thus $(\Schreier(\alpha), v\alpha(\gamma))\simeq_R(\Schreier(\alpha_2),v_2)$. 
	In particular, the orbit of $[\alpha, v]$ meets both $ \mathcal N([\alpha_1,v_1],R)$ and $ \mathcal N([\alpha_2,v_2],R)$. 
\end{proof}

\begin{corollary}\label{cor: dense conj class}
	Let $m,n$ be integers such that ${\abs m}, \abs n\geq 2$. Then for every $q\in \QQ_{m,n}$, there is a dense $G_\delta$ subset of $\PK_q(\BSo(m,n))$ consisting of subgroups with dense conjugacy class in $\PK_q(\BSo(m,n))$.
\end{corollary}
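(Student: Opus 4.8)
The plan is to derive this purely formally from Theorem~\ref{thm: gdelta dense orbits} via the classical Baire-category argument, using that the conjugacy class of a subgroup is exactly its orbit under the conjugation action. The first step is to check that $\PK_q(\BSo(m,n))$ is a nonempty Baire space with a countable basis. It is nonempty by Remark~\ref{rem: non-empty pieces}, and it is a locally closed subset of the compact metrizable space $\Sub(\BSo(m,n))$: it equals $\PK(\BSo(m,n))\cap \PHE\inv(q)$, which is the intersection of a closed set with an open set when $q$ is finite, and the intersection of two closed sets when $q=\infty$, by Proposition~\ref{prop: phenotype partition}. Locally closed subsets of Polish spaces are Polish, hence Baire; and second countability is inherited.

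Next, I would fix a countable basis $(U_k)_{k\in\N}$ of nonempty open subsets of $\PK_q(\BSo(m,n))$, and for each $k$ form the saturation
\[
W_k \coloneqq \bigcup_{\gamma\in\BSo(m,n)} U_k\cdot\gamma ,
\]
which is open and invariant under the (right) conjugation action. The crucial observation is that every $W_k$ is dense. Indeed, let $V\subseteq \PK_q(\BSo(m,n))$ be any nonempty open set. By the topological transitivity established in Theorem~\ref{thm: gdelta dense orbits}, there is a subgroup $\Lambda$ whose $\BSo(m,n)$-orbit meets both $U_k$ and $V$; pick $\gamma_1,\gamma_2$ with $\Lambda\cdot\gamma_1\in U_k$ and $\Lambda\cdot\gamma_2\in V$. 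Then $\Lambda'\coloneqq\Lambda\cdot\gamma_2$ lies in $V$ and $\Lambda'\cdot(\gamma_2\inv\gamma_1)\in U_k$, so $\Lambda'\in W_k\cap V$. Thus each $W_k$ is dense open.

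By the Baire category theorem applied in $\PK_q(\BSo(m,n))$, the set $G\coloneqq\bigcap_{k\in\N} W_k$ is a dense $G_\delta$. Every $\Lambda\in G$ belongs to all the $W_k$, so its orbit --- equivalently its conjugacy class --- meets every basic open set $U_k$, and is therefore dense in $\PK_q(\BSo(m,n))$; this is exactly the desired statement. There is no genuine obstacle here: the entire mathematical content sits in Theorem~\ref{thm: gdelta dense orbits}, and the only points requiring a little care are that the conjugation action is a right action (so the saturations take the form $U_k\cdot\gamma$) and that $\PK_q(\BSo(m,n))$ is indeed a nonempty Baire space, as verified in the first step.
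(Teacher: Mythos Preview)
Your proposal is correct and follows essentially the same approach as the paper: both first verify that $\PK_q(\BSo(m,n))$ is Polish (the paper notes it is open or closed in $\PK(\BSo(m,n))$, you argue via local closedness in $\Sub(\BSo(m,n))$), and then apply the standard Baire-category argument that topological transitivity in a Polish space yields a dense $G_\delta$ of points with dense orbit. The paper simply cites this last step as ``well-known'', whereas you spell it out by forming the saturations $W_k$ and intersecting them.
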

\begin{proof}[Proof of Corollary~\ref{cor: dense conj class}]
	By Proposition~\ref{prop: phenotype partition}, each $\PK_q(\BSo(m,n))$ is Polish as an open or a closed subset of the Polish space $\PK(\BSo(m,n))$. 
	
	The corollary now follows from a well-known characterization of topological transitivity in Polish spaces: if $(U_i)$ is a countable base of non-empty open subsets, then the set $\cap_{i\in \N} U_i\Gamma$ of points with dense orbit is a dense $G_\delta$ by the Baire theorem.
\end{proof}

\subsection{Closed invariant subsets with a fixed finite phenotype}

Given  a finite phenotype $q$, we will show that there is a largest closed invariant subset inside the (open but non closed when $\abs m\neq \abs n$) set of subgroups of phenotype $q$. The following lemma is key.

\begin{lemma}\label{lem:geod ray to infty}
	Let $\abs m\neq \abs n$, and let $L\in\Z_{\geq 1}$  satisfying: 
	\[
	\exists p\in\mathcal P, \abs m_p\neq\abs n_p\text{ and } \abs L_p>\min(\abs m_p,\abs n_p).
	\]
	Then for any saturated $(m,n)$-graph which contains $L$ as a label, the range of the label map is unbounded.
\end{lemma}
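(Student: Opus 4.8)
The plan is to produce inside the graph an infinite ray of positive edges along which the $p$-adic valuation of the successive vertex labels increases strictly, and then read off that the labels grow without bound; this is exactly the mechanism isolated in Lemma~\ref{lem: infinite paths}.

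First I would normalize the setup. By Remark~\ref{rmk: exchange m and n on (m,n)-graph} an $(m,n)$-graph becomes an $(n,m)$-graph under reversal of all orientations, without any change of labels, and the hypothesis is symmetric in $m$ and $n$; so I may assume $\abs n_p<\abs m_p$, in which case the hypothesis reads $\abs L_p>\abs n_p$, and in particular $\abs L_p\geq 1$. Next, from the element carrying the label $L$ I would extract a vertex $v_0$ with $\abs{L(v_0)}_p>\abs n_p$: if $L$ labels a vertex, take that vertex; if $L$ labels an edge $e$, then since $\abs L_p\geq 1$ Remark~\ref{labels of extremities} gives $\abs{L(\source(e))}_p=\abs L_p+\abs n_p>\abs n_p$, so $v_0\coloneqq\source(e)$ works.

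Then I would build the ray inductively. Starting from $v_0$, suppose a vertex $v_k$ with $\abs{L(v_k)}_p>\abs n_p$ has been reached. Since $\Gc$ is saturated, $\degout(v_k)=\gcd(L(v_k),n)\geq 1$, so $v_k$ has at least one positive outgoing edge $e_{k+1}$; set $v_{k+1}\coloneqq\target(e_{k+1})$. The transfer relation~\eqref{eq: transfer for p}, applicable precisely because $\abs{L(v_k)}_p>\abs n_p$, yields
\[
\abs{L(v_{k+1})}_p=\abs{L(v_k)}_p+\abs m_p-\abs n_p>\abs m_p>\abs n_p,
\]
so the induction continues. Iterating, $\abs{L(v_k)}_p=\abs{L(v_0)}_p+k(\abs m_p-\abs n_p)\to+\infty$ because $\abs m_p-\abs n_p\geq 1$; hence $L(v_k)\geq p^{\abs{L(v_k)}_p}\to+\infty$, so the label map of $\Gc$ takes arbitrarily large values, i.e. its range is unbounded. (Equivalently, one may simply observe that the path just constructed satisfies the hypotheses of Lemma~\ref{lem: infinite paths}.)

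I do not expect any genuine obstacle here. The only two points requiring a moment of care are the harmless reduction to the case $\abs n_p<\abs m_p$ (which the exchange remark makes legitimate since $\min(\abs m_p,\abs n_p)$ is symmetric), and the observation that in a saturated $(m,n)$-graph every vertex has at least one positive outgoing edge — which holds simply because $\gcd(L(v),n)\geq 1$ for every vertex $v$, guaranteeing that the inductive construction of the ray never gets stuck.
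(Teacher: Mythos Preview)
Your proposal is correct and follows essentially the same route as the paper: reduce by symmetry to $\abs n_p<\abs m_p$, use saturation to produce an infinite path of positive edges, and invoke Lemma~\ref{lem: infinite paths} (or its underlying transfer computation) to conclude. You are in fact slightly more careful than the paper, which tacitly assumes $L$ is a vertex label, whereas you also treat the case that $L$ labels an edge.
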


\begin{proof}
	By symmetry, we may as well assume that $\abs n_p<\abs m_p$ for a fixed prime $p$, and so $\abs L_p>\abs n_p$. 
	Let $v_0\in V(\mathcal G)$ have label $L$.
	Since our Bass-Serre graph $\mathcal G$ is saturated, every vertex has at least one outgoing edge.
	We can thus build inductively an infinite path $(e_k)_{k\in\N}$ consisting of positive edges with $\source(e_0)=v_0$. 
	The conclusion then follows directly from Lemma \ref{lem: infinite paths}.	\end{proof}
\begin{remark}
	When $\abs n=\abs m$, the lemma fails because labels are bounded: if $L_0$ is a label then all labels in the same connected component must satisfy $\abs L_p\leq \max(\abs{L_0}_p,\abs m_p,\abs n_p)$ for all primes $p$ by Equation \eqref{eq: transfer for p} and the discussion that precedes it.
\end{remark}

Let $q$ be a finite $(m,n)$-phenotype. In order to describe which saturated $(m,n)$-graphs have unbounded labels, we now define
\begin{equation}
	\label{eq: def of integer s(q,m,n)}
	s(q,m,n)\coloneqq q\cdot \prod_{\substack{ p\in\primes\\  \abs{q}_p=0;\\ \abs{m}_p=\abs{n}_p>0}} p^{|m|_p}\cdot \prod_{\substack{p\in\primes\\\abs m_p\neq\abs n_p}} p^{\min{\{\abs n_p,\abs m_p}\}}.
\end{equation}
\begin{remark}
	The definition is motivated by the fact that $s(q,m,n)$ is the largest label of phenotype $q$ which does not satisfy the hypothesis of Lemma \ref{lem:geod ray to infty}. 
	As we will see in the proof of Theorem \ref{thm:maximalinvariantclosed}, a saturated $(m,n)$-graph with phenotype $q$ has unbounded labels if and only if one of its labels does not divide  $s(q,m,n)$.
\end{remark}

Proposition \ref{prop: phenotype partition} implies that every subgroup (or pointed action) that lies in the closure of the set of subgroups of phenotype $q$ has phenotype either $q$ or $\infty$, and phenotype $\infty$ can occur only when $\abs{m}\neq \abs{n}$.
We can now characterize the subgroups $\Lambda$ with phenotype $q$ whose orbit approaches subgroups with infinite phenotype.

\begin{theorem}\label{thm:maximalinvariantclosed}
	Let $m,n$ be integers such that ${\abs m},\abs n\geq 2$ and denote by $q\in\QQ_{m,n}\smallsetminus\{\infty\}$ a finite $(m,n)$-phenotype. 
	Let $s=s(q,m,n)$ as in Equation \eqref{eq: def of integer s(q,m,n)}. Then the space \[
	\maxclo_q\coloneqq \PHE^{-1}(q)\cap \left\lbrace\Lambda\in\Sub(\BSo(m,n)):\ \Lambda\geq\langle\!\langle b^s\rangle\!\rangle\right\rbrace
	\]
	of subgroups of phenotype $q$ containing the normal subgroup $\la\!\la b^s\ra\!\ra$ satisfies the following properties:
	
	\begin{enumerate}[label=(\arabic*)]
		\item \label{item: maxclo is max}$\maxclo_q$ is the largest closed $\BSo(m,n)$-invariant subset of $\Sub(\BSo(m,n))$ contained in $\PHE^{-1}(q)$; in particular, all normal subgroups of phenotype $q$ and all finite index subgroups of phenotype $q$
		contain $\la\!\la b^s\ra\!\ra$;

		\item \label{item: m=n closed} If $\abs m= \abs n$, then $\maxclo_q = \PHE^{-1}(q)$;
		
		\item \label{item: Lambda going to infinity} For every $\Lambda\in \PHE^{-1}(q)\smallsetminus \maxclo_q$, the orbit of $\Lambda$ accumulates to $\PHE^{-1}(\infty)$;
		
		\item \label{item: maxclo is meager} If $\abs m\neq \abs n$, then  $\maxclo_q\cap \PK_q(\BSo(m,n))$ has empty interior in $\PK_q(\BSo(m,n))$;
		
		\item \label{item: gcp m n 1 maxclo small}  If $\gcd(m,n)=1$, then $s=q$ and $\maxclo_q\cap \PK(\BSo(m,n))=\{\langle\!\langle b^q\rangle\!\rangle\}$; in particular $\langle\!\langle b^q\rangle\!\rangle$ is the unique normal subgroup of phenotype $q$ of infinite index.
	\end{enumerate}
\end{theorem}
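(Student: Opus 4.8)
The strategy is to set $s := s(q,m,n)$ and prove the five items essentially in order, reducing everything to statements about saturated $(m,n)$-graphs and their labels via Theorem~\ref{thm: explicit description K(BS(m,n))} and Proposition~\ref{prop: realization of BS graph}. The key arithmetic fact to establish first is:
\emph{a saturated $(m,n)$-graph of phenotype $q$ has unbounded labels if and only if one of its labels does not divide $s$.} The ``if'' direction is immediate from Lemma~\ref{lem:geod ray to infty}: by the very definition \eqref{eq: def of integer s(q,m,n)}, $s$ is the largest integer of phenotype $q$ that \emph{fails} the hypothesis of Lemma~\ref{lem:geod ray to infty}, so any label $L$ of phenotype $q$ with $L \nmid s$ must satisfy $\abs{L}_p > \min(\abs m_p,\abs n_p)$ for some prime $p$ with $\abs m_p \neq \abs n_p$, and then the label map is unbounded. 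For the ``only if'' direction, one checks using \eqref{eq: transfer for p} and the discussion preceding it that if every label divides $s$ then, component by component, the $p$-adic valuations of labels stay bounded by $\max(\abs{s}_p,\abs m_p,\abs n_p)$ for every prime $p$, and only finitely many primes are involved; hence the labels are bounded, in fact the Bass-Serre graph can only have finitely many vertices of each finite label and (together with $\abs m,\abs n \geq 2$) one sees there is no obstruction — but what we actually need is just boundedness, which suffices to keep us out of $\PHE^{-1}(\infty)$ by Lemma~\ref{lem: finite BS with inf phen} in the case $\abs m = \abs n$, and more care is needed when $\abs m \neq \abs n$ (see below).

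For item~\ref{item: maxclo is max}: first, $\maxclo_q$ is closed because $\PHE^{-1}(q) = \bigsqcup_{k \in \Phe_{m,n}^{-1}(q)} A_k$ is a disjoint union of clopen sets (proof of Proposition~\ref{prop: phenotype partition}) and the condition $\Lambda \geq \langle\!\langle b^s\rangle\!\rangle$ is closed; conjugation-invariance is clear since $\langle\!\langle b^s\rangle\!\rangle$ is normal. For maximality, suppose $F \subseteq \PHE^{-1}(q)$ is closed and invariant and contains some $\Lambda$ with $\Lambda \not\geq \langle\!\langle b^s\rangle\!\rangle$; I want to derive a contradiction by producing a point of $F$ of infinite phenotype, which will simultaneously prove item~\ref{item: Lambda going to infinity}. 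Since $b^s$ is not in the normal closure inside $\Lambda$, there is a conjugate $\gamma b^s \gamma^{-1} \notin \Lambda$, i.e.\ in the Schreier graph $\Schreier(\Lambda \bs \BSo(m,n))$ the vertex $\Lambda\gamma$ lies on a $b$-orbit whose length does not divide $s$ — equivalently $\BSe(\Lambda \bs \BSo(m,n))$ has a vertex label not dividing $s$. By the arithmetic fact above, this Bass-Serre graph has unbounded labels, hence (when $\abs m \neq \abs n$; the case $\abs m = \abs n$ is handled in item~\ref{item: m=n closed}) it contains vertices of arbitrarily large finite label. Now restrict to finite balls around such a far-away vertex $w$, obtaining a non-saturated pre-action, extend it by a forest-saturation (Lemma~\ref{lem: saturation lemma}) arranged so that one of the original large labels persists, and apply Lemma~\ref{lem:geod ray to infty}: one gets a sequence of pointed actions $[\alpha_i, w_i]$ in the orbit-closure of $\Lambda$ (conjugates move the basepoint), with $w_i$ chosen so that the local picture around $w_i$ forces, in the limit, an infinite $b$-orbit — so the limit has infinite phenotype. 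This shows the orbit of $\Lambda$ accumulates on $\PHE^{-1}(\infty)$, contradicting $F \subseteq \PHE^{-1}(q)$ being closed. The ``in particular'' clause about normal and finite-index subgroups follows since those give closed invariant (indeed finite) orbits.

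For item~\ref{item: m=n closed}: when $\abs m = \abs n$, Lemma~\ref{lem: preimage phenotypes} gives $\Phe_{m,n}^{-1}(q)$ finite, so by Proposition~\ref{prop: phenotype partition} $\PHE^{-1}(q)$ is already closed; moreover every label of an $(m,n)$-graph of phenotype $q$ divides the largest element of $\Phe_{m,n}^{-1}(q)$, and a direct computation with \eqref{eq: def of integer s(q,m,n)} shows this largest element is exactly $s$, whence every $\Lambda$ of phenotype $q$ has all $b$-orbit-lengths dividing $s$, i.e.\ contains $\langle\!\langle b^s\rangle\!\rangle$, so $\maxclo_q = \PHE^{-1}(q)$. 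For item~\ref{item: maxclo is meager}, assuming $\abs m \neq \abs n$: given any basic open set $\Nc([\alpha,v],R)$ meeting $\PK_q$, run the merging machine (Theorem~\ref{thm: merging of pre-actions}) to glue the $R$-ball of $\alpha$ to an auxiliary non-saturated $(m,n)$-graph of phenotype $q$ carrying a label not dividing $s$ (such exists because $\abs m \neq \abs n$ means some prime $p$ has $\abs m_p \neq \abs n_p$, and we may take a vertex with large $p$-valuation) — the resulting action has a vertex label $\nmid s$, hence lies in $\PK_q \setminus \maxclo_q$, and is infinite so lies in $\PK$; thus the complement of $\maxclo_q$ is dense in $\PK_q$, and since $\maxclo_q$ is closed it has empty interior. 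Finally item~\ref{item: gcp m n 1 maxclo small}: when $\gcd(m,n)=1$ every prime $p$ dividing $s$ either has $\abs q_p > 0$ (and then $p \mid q$) or is handled by the third product in \eqref{eq: def of integer s(q,m,n)} with $\min\{\abs n_p,\abs m_p\}=0$, so that product is empty, and the middle product is empty since $\abs m_p = \abs n_p > 0$ is impossible under coprimality; hence $s = q$. Then $\maxclo_q \cap \PK$ consists of subgroups $\Lambda$ of phenotype $q$, infinite index, with $\Lambda \geq \langle\!\langle b^q\rangle\!\rangle$; passing to the quotient $\BSo(m,n)/\langle\!\langle b^q\rangle\!\rangle$, which (since $\gcd(m,n)=1$ and $b^q$ is killed) is a cyclic-by-$\BSo(m/\gcd,n/\gcd)$-type quotient where $b$ has finite order $q$ — one argues this quotient is virtually abelian / has trivial perfect kernel, forcing $\Lambda/\langle\!\langle b^q\rangle\!\rangle$ to be finite-index if it is not forced to be everything, and combined with infinite index of $\Lambda$ this pins down $\Lambda = \langle\!\langle b^q\rangle\!\rangle$; the uniqueness of a normal subgroup of phenotype $q$ and infinite index is then immediate.

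\textbf{Main obstacle.} The delicate point is the ``only if'' half of the arithmetic dichotomy combined with the limiting argument in item~\ref{item: Lambda going to infinity}: one must carefully produce, from a single far-away vertex of large label in an \emph{infinite} Bass-Serre graph, a genuine \emph{convergent} sequence of conjugates of $\Lambda$ whose limit has an infinite $b$-orbit — this requires choosing the basepoints and the local radii so that Lemma~\ref{lem:geod ray to infty} forces the $p$-adic valuation of the basepoint's $b$-orbit length to tend to infinity, and checking that the forest-saturations used do not accidentally bound the labels. Getting the quantifiers right here (which vertex, which radius, which prime) is the crux.
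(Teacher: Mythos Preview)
Your overall architecture matches the paper's, but there are two genuine gaps.

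\textbf{Item~\ref{item: Lambda going to infinity} is where you go astray, and it is much simpler than you think.} Once you know (via Lemma~\ref{lem:geod ray to infty}) that the Bass-Serre graph of $\Lambda$ has unbounded labels, you are done: a vertex $v_i$ with $L(v_i)\to\infty$ is a coset $\Lambda\gamma_i\la b\ra$, and $L(v_i)=[\la b\ra:\gamma_i^{-1}\Lambda\gamma_i\cap\la b\ra]$. So the conjugates $\gamma_i^{-1}\Lambda\gamma_i$ lie in the orbit of $\Lambda$ and satisfy $b^k\notin\gamma_i^{-1}\Lambda\gamma_i$ for all $0<k<L(v_i)$; by compactness a subsequential limit $\Delta$ exists and contains no power of $b$, hence $\PHE(\Delta)=\infty$. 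Your detour through ``restrict to a ball, forest-saturate, arrange labels to persist'' is not only unnecessary but actually wrong: a forest-saturation produces a \emph{new} action which is not a conjugate of $\Lambda$, so those pointed actions are not in the orbit of $\Lambda$ and you cannot conclude anything about its orbit-closure. What you flag as the ``main obstacle'' dissolves once you stop building auxiliary actions and simply move the basepoint in the original one.

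\textbf{Item~\ref{item: maxclo is max}, closedness.} Your argument that $\maxclo_q$ is closed is incomplete: $\PHE^{-1}(q)$ is open (a union of clopen $A_k$) but not closed when $\abs m\neq\abs n$, and intersecting an open set with the closed set $\mathcal B_s:=\{\Lambda\geq\la\!\la b^s\ra\!\ra\}$ need not yield a closed set. The missing observation is that any $\Lambda\in\mathcal B_s$ has $\Lambda\cap\la b\ra\geq\la b^s\ra$, so only the finitely many $A_k$ with $k\mid s$ meet $\mathcal B_s$; thus $\maxclo_q=\mathcal B_s\setminus\bigcup_{q'\neq q,\,q'\mid s}\PHE^{-1}(q')$ is closed minus a finite union of opens. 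The paper does exactly this.

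Two minor remarks. For item~\ref{item: maxclo is meager}, your merging-machine argument works, but the paper's route is shorter: exhibit one point $\la b^{sp}\ra\in\PK_q\setminus\maxclo_q$ (any prime $p$ with $\abs m_p\neq\abs n_p$), then invoke the dense orbit from Corollary~\ref{cor: dense conj class}; since $\maxclo_q$ is closed and invariant, that dense orbit lies entirely in its complement. For item~\ref{item: gcp m n 1 maxclo small}, your sketch of the quotient is too vague; concretely, $\BSo(m,n)/\la\!\la b^q\ra\!\ra\cong\Z/q\Z\rtimes\Z$ since $\gcd(q,m)=\gcd(q,n)=1$ makes $\bar b^m,\bar b^n$ generators of $\la\bar b\ra$, and then the image of $\Lambda$ meets $\la\bar b\ra$ trivially (phenotype $q$), hence injects into $\Z$, hence is trivial or gives $\Lambda$ finite index.
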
	
\begin{proof}[Proof of Theorem \ref{thm:maximalinvariantclosed}]
	The proofs of \ref{item: m=n closed} and \ref{item: Lambda going to infinity} rely on the following claim.
	
	\begin{claim}
	     For any $\Lambda\in \PHE\inv(q)\smallsetminus \maxclo_q$, 
	     there is a prime $p$ such that \(\abs{m}_p \neq \abs{n}_p\) 
	     and a vertex label \(L\) in the Bass-Serre graph of \(\Lambda\) such that 
	     \(\abs{L}_p>\abs{s}_p\).
	\end{claim}
	\begin{cproof}
	    Observe that a subgroup $\Lambda$ contains $\langle\!\langle b^s\rangle\!\rangle$ if and only if all the $b$-orbits of the corresponding action $\Lambda\backslash \BSo(m,n)\curvearrowleft\BSo(m,n)$ have cardinality which divides $s$.
	    So if $\Lambda\in \PHE\inv(q)\smallsetminus \maxclo_q$,
	    we can fix a prime \(p\) such that \(\abs{L}_p>\abs{s}_p\), and 
	    we will prove that \(\abs{m}_p \neq \abs{n}_p\).
	    
	    Assume by contradiction that \(\abs{m}_p = \abs{n}_p\).
	    Then $\abs s_p \geq \abs{m}_p=\abs{n}_p$: if \(\abs{m}_p=0\) then the inequality clearly holds, otherwise by Equation \eqref{eq: def of integer s(q,m,n)},
    	\begin{itemize}
    		\item if $p$ divides $q=\Phe_{m,n}(s)$, then $\abs{s}_p = \abs{q}_p > \abs{m}_p = \abs{n}_p$;
    		\item if $p$ does not divide $q=\Phe_{m,n}(s)$, then $\abs{s}_p = \abs{m}_p = \abs{n}_p$.
    	\end{itemize}
    	Thus, we have $\abs L_p > \abs{m}_p=\abs{n}_p$, in other words $p\in\primes_{m,n}(L)$ (see Definition \ref{df: phenotype natural number}).
    	Hence, we have  $\abs{\Phe_{m,n}(L)}_p=\abs{L}_p>\abs s_p\geq\abs{\Phe_{m,n}(s)}_p$. This is a contradiction since both phenotypes are equal to $q$.
	\end{cproof}
	
	We can now easily prove \ref{item: m=n closed} by the contrapositive: by the above claim if $\maxclo_q \neq \PHE^{-1}(q)$ then there is a prime
	\(p\) such that \(\abs{m}_p\neq\abs{n}_p\), in particular \(\abs m\neq\abs n\).

    Let us prove \ref{item: Lambda going to infinity}. 
    Let $\Lambda\in \PHE\inv(q)\smallsetminus \maxclo_q$.
    The claim above provides a prime \(p\) such that \(\abs m_p\neq\abs n_p\) and the Bass-Serre graph of \(\Lambda\) admits a vertex label $L$ such that $\abs{L}_p > \abs{s}_p$.
	It follows from Equation \eqref{eq: def of integer s(q,m,n)} that $\abs s_p=\min(\abs m_p,\abs n_p)$, so $\abs L_p>\min(\abs m_p,\abs n_p)$.
	Lemma \ref{lem:geod ray to infty} thus applies, and so there is a sequence of vertices in the Bass-Serre graph of $\Lambda$ whose labels tend to $+\infty$.
	In other words, there is a sequence $(\gamma_i)_{i\geq 0}$ such that the index of $\gamma_i\Lambda\gamma_i\inv\cap \la b\ra$ in $\la b \ra$ tends to $+\infty$. 
	By compactness, we may assume that this sequence converges, and its limit $\Delta$ cannot contain a non-zero power of $b$ since $[\la b\ra : \gamma_i\Lambda\gamma_i\inv\cap \la b\ra] \to +\infty$. Hence $\Delta$ has infinite phenotype, which proves \ref{item: Lambda going to infinity}.

	We now prove \ref{item: maxclo is max}. We first claim that $\maxclo_q$ is closed in $\Sub(\BSo(m,n))$. Indeed the set
	$$\mathcal{B}_s\coloneqq \left\lbrace\Lambda\in\Sub(\BSo(m,n)):\ \Lambda\geq\langle\!\langle b^s\rangle\!\rangle\right\rbrace$$
	is a countable intersection of basic clopen sets and hence it is closed. 
	Then, notice that $\mathcal{B}_s$ intersects only finitely many sets $\PHE\inv(q')$, since $q'$ must be finite and divide $s$. Proposition \ref{prop: phenotype partition} claims that the $\PHE\inv(q')$ are open, hence 
	\[\maxclo_q = \mathcal{B}_s \smallsetminus \bigcup_{\substack {q'\neq q\\ q' \text{ divides } s}} \PHE\inv(q')  
	\]
	is closed. Also note that $\maxclo_q$ is obviously $\BSo(m,n)$-invariant. 
		Finally Item \ref{item: Lambda going to infinity} implies that every closed $\BSo(m,n)$-invariant subset of $\PHE\inv(q)$ is contained in $\maxclo_q$.
	This proves that $\maxclo_q$ is the largest closed $\BSo(m,n)$-invariant subset of $\Sub(\BSo(m,n))$ contained in $\PHE\inv(q)$. 
	Since all normal subgroups and all finite index subgroups have finite (hence closed) orbits, the remaining statement in Item \ref{item: maxclo is max} is clear.
	
	Let us prove Item \ref{item: maxclo is meager}. Suppose $\abs n\neq\abs m$; 
	let $p$ be a prime number such that $\abs m_p\neq \abs n_p$.
	By definition $\Phe_{m,n}(sp)=\Phe_{m,n}(s)=q$, so $\la b^{sp}\ra\in \PK_q(\BSo(m,n))\smallsetminus \maxclo_q$. 
	Consider a subgroup $\Lambda\in \PK_q(\BSo(m,n))$ whose orbit is dense in $\PK_q(\BSo(m,n))$, as provided by Corollary \ref{cor: dense conj class}.
	Since the orbit of $\Lambda$ accumulates to $\la b^{sp}\ra\notin \maxclo_q$ and $\maxclo_q$ is invariant and closed, the latter does not contain any point of that orbit. Hence the complement 
	$\PK_q(\BSo(m,n))\smallsetminus \maxclo_q$ contains the dense orbit of $\Lambda$. We conclude that $\maxclo_q\cap \PK_q(\BSo(m,n))$ has empty interior in $\PK_q(\BSo(m,n))$.
	
	We finally prove Item \ref{item: gcp m n 1 maxclo small}.
	The equality $s=q$ follows immediately from Formula \eqref{eq: def of integer s(q,m,n)} for $s(q,m,n)$.
	We have the presentation
	\[
	\BSo(m,n)/\la\!\la b^q\ra\!\ra=\la \bar b, \bar t\colon \bar t \bar b^m\bar t^{-1}=\bar b^n, \bar b^q=1 \ra.
	\]
	Since $\gcd(q,m)=\gcd(q,n)=1$, the elements $\bar{b}^m$ and $\bar{b}^n$ both generate $\langle \bar b\rangle$ in $\BSo(m,n)/\la\!\la b^q\ra\!\ra$.
	We thus have a natural semi-direct product decomposition 
	\[
	\BSo(m,n)/\la\!\la b^q\ra\!\ra\cong \Z/q\Z\rtimes\Z=\la \bar{b}\ra \rtimes \la \bar{t}\ra
	\]
	
	Consider $\Lambda\in\maxclo_q$ in the perfect kernel; it contains $\la\!\la b^q\ra\!\ra$.
	It suffices to prove that the image $\Lambda_q\coloneqq \Lambda/\la\!\la b^q\ra\!\ra$ of $\Lambda$ in $\la \bar{b}\ra \rtimes \la \bar{t}\ra$ is trivial.
	Since $\PHE(\Lambda)=q$,
	the index $[\la b \ra: \Lambda \cap \la b \ra]$ is a multiple of $q$,
	so  we have $\Lambda_q\cap\la\bar b\ra=\{\id\}$. 
	Thus $\Lambda_q$ is mapped injectively in the quotient $\la \bar{b}\ra \rtimes \la \bar{t}\ra/\la \bar{b}\ra\simeq \Z$. If this image were not $\{0\}$, then $\Lambda$ would have finite index in $\BSo(m,n)$, contradicting that $\Lambda$ is in the perfect kernel.
	The group $\Lambda_q$ is thus trivial as wanted.
\end{proof}
\begin{remark}
	In terms of actions, $\maxclo_q$ is the set of classes $[\alpha,v]$ all of whose cardinalities of $b$-orbits divide $s$ and have phenotype $q$. 
\end{remark}

\begin{proposition}\label{prop: quotients by b to the k}
	Let $m,n\in \Z\smallsetminus \{0\}$ and $k\in \Z_{\geq 1}$.
	Let $$G_{m,n,k}\coloneqq \BSo(m,n)/\la\!\la b^k\ra\!\ra=\left\langle\bar t, \bar b\mid\ \bar t\bar b^m\bar t^{-1}=\bar b^n, \bar b^k=1 \right\rangle$$ 
	and let \[r(k)\coloneqq \max\{r'\in \N\colon r' \text{ divides } k \text{ and } \gcd(r',m)=\gcd(r',n)\}.\]
	Then:
	\begin{enumerate}
		\item $b$ has order $r(k)$ in the quotient $G_{m,n,k}$; in particular $\langle\!\langle b^k \rangle\!\rangle = \langle\!\langle b^{r(k)} \rangle\!\rangle$;
		
		\item the group $G_{m,n,k}=G_{m,n,r(k)}$
		is the HNN extension of $\mathbb Z/r(k)\mathbb Z=\langle \bar b\rangle$ with respect to the relation $\bar t\bar b^m\bar t^{-1}=\bar b^n$.
		
		\item $\Phe_{m,n}(k) = \Phe_{m,n}(r(k))=\PHE(\langle\!\langle b^k\rangle\!\rangle)$.
	\end{enumerate}
\end{proposition}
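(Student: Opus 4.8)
The plan is to reduce everything to the equality of normal subgroups $\langle\!\langle b^k\rangle\!\rangle=\langle\!\langle b^{r(k)}\rangle\!\rangle$ inside $\BSo(m,n)$, and then read off the three items from the HNN structure of the quotient. First I would record the $p$-adic description of $r(k)$: for every prime $p$ one has $\abs{r(k)}_p=\abs k_p$ when $\abs m_p=\abs n_p$, and $\abs{r(k)}_p=\min(\abs k_p,\abs m_p,\abs n_p)$ when $\abs m_p\neq\abs n_p$. From this it is immediate that $r(k)$ divides $k$ and that $\gcd(m,r(k))=\gcd(n,r(k))$; and comparing with Definition~\ref{df: phenotype natural number} prime by prime gives $\Phe_{m,n}(r(k))=\Phe_{m,n}(k)$, since the primes contributing to either phenotype are exactly those with $\abs m_p=\abs n_p$ and $\abs k_p>\abs n_p$, and they contribute the same power to both.

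The core step is to prove $b^{r(k)}\in\langle\!\langle b^k\rangle\!\rangle$. Write $\langle\!\langle b^k\rangle\!\rangle\cap\langle b\rangle=\langle b^{d_0}\rangle$ with $d_0\in\Z_{\geq 1}$ (nontrivial, since $b^k$ lies in it), and let $J\coloneqq d_0\Z$ be the set of exponents $j$ with $b^j\in\langle\!\langle b^k\rangle\!\rangle$, which is a subgroup of $\Z$ containing $k$. Since $\langle\!\langle b^k\rangle\!\rangle$ is normal and $tb^mt^{-1}=b^n$ (hence also $t^{-1}b^nt=b^m$), $J$ is closed under the two operations $j\mapsto\frac{n\,j}{\gcd(j,m)}$ and $j\mapsto\frac{m\,j}{\gcd(j,n)}$: indeed $b^{\mathrm{lcm}(j,m)}$ is a power of $b^j$ and is conjugated by $t$ to $b^{n\,j/\gcd(j,m)}$, and symmetrically with $t^{-1}$. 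Now for each ``bad'' prime $p$, meaning $\abs m_p\neq\abs n_p$ and $\abs k_p>\min(\abs m_p,\abs n_p)$ — say $\abs m_p<\abs n_p$, the opposite case being symmetric — I would iterate $j\mapsto\frac{m\,j}{\gcd(j,n)}$ starting from $j=k$; a one-line valuation computation shows $\abs{\,\cdot\,}_p$ strictly drops as long as it exceeds $\abs n_p$ and then equals $\abs m_p$, so after finitely many steps one reaches some $j_p\in J$ with $\abs{j_p}_p\leq\abs m_p=\min(\abs m_p,\abs n_p)=\abs{r(k)}_p$. As $d_0$ divides every element of $J$, this gives $\abs{d_0}_p\leq\abs{r(k)}_p$ at every bad prime, while at every other prime $\abs{d_0}_p\leq\abs k_p=\abs{r(k)}_p$; hence $d_0\mid r(k)$, i.e.\ $b^{r(k)}\in\langle\!\langle b^k\rangle\!\rangle$. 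Combined with the obvious inclusion $\langle\!\langle b^k\rangle\!\rangle\subseteq\langle\!\langle b^{r(k)}\rangle\!\rangle$ (because $r(k)\mid k$), this yields $\langle\!\langle b^k\rangle\!\rangle=\langle\!\langle b^{r(k)}\rangle\!\rangle$, hence $G_{m,n,k}=G_{m,n,r(k)}$.

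To finish I would invoke HNN theory. Because $\gcd(m,r(k))=\gcd(n,r(k))$, the subgroups $\langle\bar b^m\rangle$ and $\langle\bar b^n\rangle$ of $\Z/r(k)\Z$ are both equal to its unique subgroup of order $r(k)/\gcd(m,r(k))$, and the assignment $\bar b^m\mapsto\bar b^n$ is a well-defined isomorphism between them (the congruences $\bar b^{jm}=\bar b^{j'm}$ and $\bar b^{jn}=\bar b^{j'n}$ are equivalent since $r(k)/\gcd(r(k),m)=r(k)/\gcd(r(k),n)$). Therefore $G_{m,n,r(k)}=\langle\bar b,\bar t\mid\bar b^{r(k)}=1,\ \bar t\bar b^m\bar t^{-1}=\bar b^n\rangle$ is exactly the HNN extension of $\Z/r(k)\Z$ along this isomorphism, which is item~(2); and by the normal form theorem for HNN extensions the base group $\Z/r(k)\Z$ embeds, so $\bar b$ has order exactly $r(k)$ in $G_{m,n,r(k)}=G_{m,n,k}$, which is item~(1) (the identity $\langle\!\langle b^k\rangle\!\rangle=\langle\!\langle b^{r(k)}\rangle\!\rangle$ being the ``in particular''). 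Finally, for item~(3): $\PHE(\langle\!\langle b^k\rangle\!\rangle)=\PHE(\langle\!\langle b^{r(k)}\rangle\!\rangle)$ by the normal-closure equality, and since $\bar b$ has order $r(k)$ we get $\langle\!\langle b^{r(k)}\rangle\!\rangle\cap\langle b\rangle=\langle b^{r(k)}\rangle$, so Remark~\ref{Rem: Phenotypes des b-sous-groupes} gives $\PHE(\langle\!\langle b^{r(k)}\rangle\!\rangle)=\Phe_{m,n}(r(k))$, which equals $\Phe_{m,n}(k)$ by the first step.

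The hard part will be the descent in the second paragraph. The subtlety is that the two operations can raise valuations at primes other than the one being targeted, so iterating them blindly need not converge to a single exponent; the remedy is precisely not to aim for $b^{r(k)}$ along one chain, but to manufacture a separate witness $j_p\in J$ for each bad prime $p$ and to combine the information only through the common divisor $d_0$ of $J$. Verifying termination and the valuation bookkeeping at a fixed bad prime is then a routine $p$-adic calculation using $\abs{m\,j/\gcd(j,n)}_p=\abs m_p+\max(\abs j_p-\abs n_p,0)$.
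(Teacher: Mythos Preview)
Your proof is correct, but the paper's argument for the key step $b^{r(k)}\in\langle\!\langle b^k\rangle\!\rangle$ is considerably shorter. Where you iterate the operations $j\mapsto mj/\gcd(j,n)$ and $j\mapsto nj/\gcd(j,m)$ to drive down the $p$-adic valuation of witnesses in $J=d_0\Z$ at each bad prime, the paper observes that your $d_0$ is exactly $\ord(\bar b)$ in $G_{m,n,k}$, and that since $\bar b^m$ and $\bar b^n$ are conjugate they have the same order, forcing $\gcd(d_0,m)=\gcd(d_0,n)$ in one stroke. Together with $d_0\mid k$ and the fact (from the $p$-adic formula for $r(k)$) that $r(k)$ is a common multiple of all divisors of $k$ with equal gcd against $m$ and $n$, this gives $d_0\mid r(k)$ immediately. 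Your explicit descent makes the mechanism visible---one literally sees the normal closure swallowing lower powers of $b$ via conjugation by $t^{\pm 1}$---and avoids appealing to the ``$r(k)$ is a common multiple'' remark, at the cost of the extra bookkeeping you flag in your last paragraph. The remaining steps (HNN structure, embedding of the base, phenotype identification via Remark~\ref{Rem: Phenotypes des b-sous-groupes}) match the paper's.
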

\begin{remark}
    It follows from Item 1 in the above proposition that $r(k)=[\la b \ra: \langle\!\langle b^k \rangle\!\rangle\cap \la b\ra ]$.
\end{remark}

It is a routine computation, working prime number by prime number, to check that
\begin{equation}\label{eq: r(k)}
	r(k) = \prod_{\substack{ p\in\primes\\  \abs{m}_p=\abs{n}_p}} p^{\vert k\vert_p} \cdot  \prod_{\substack{ p\in\primes\\  \abs{m}_p\neq \abs{n}_p}} p^{\min(\vert k\vert_p, \vert m\vert_p, \vert n\vert_p)}
\end{equation}
In particular, $r(k)$ is a multiple of all the $r'$s which divide $k$ and satisfy
$\gcd(r',m)=\gcd(r',n)$. Moreover $r(r(k))=r(k)$.

\begin{remark} It also follows from  Item 1 and 3 of the above proposition that
the set of integers $k$ of phenotype $q$ such that $r(k)=k$ parametrizes the normal subgroups of the form $\langle\!\langle b^{k'} \rangle\!\rangle$ of phenotype $q$. Comparing Equations \eqref{eq: r(k)} and \eqref{eq: def of integer s(q,m,n)}, one can check that this is exactly the set of integers $k$ that are multiple of $q$ and that divide $s(q,m,n)$, i.e.\ 
$k=q\cdot j$ where
\begin{enumerate}
\item $\abs{j}_p=0$ for $p\in\primes$ such that $\abs{m}_p=\abs{n}_p=0$;

\item 
$\abs{j}_p\leq \abs{m}_p$
for $p\in\primes$ such that $\abs{m}_p=\abs{n}_p>0$ and $\abs{q}_p=0$; 

\item
$\abs{j}_p\leq \min{\{\abs n_p,\abs m_p}\}$ for $p\in\primes$ such that $\abs m_p\neq\abs n_p$.

\end{enumerate}
\end{remark}

\begin{proof}[Proof of Proposition \ref{prop: quotients by b to the k}]
	Set $r\coloneqq r(k)$. 
	Since $\bar b^m$ and $\bar b^n$ are conjugate in $G_{m,n,k}$, they have the same order: 
	\[
	\frac{\ord(\bar b)}{\gcd(\ord(\bar b),m)}=\ord(\bar b^m)=\ord(\bar b^n)=\frac{\ord(\bar b)}{\gcd(\ord(\bar b),n)}.
	\] 
	Thus $\gcd(\ord(\bar b),m)=\gcd(\ord(\bar b),n)$. Moreover $\ord (\bar b)$ divides $k$. 
	So by the definition of $r$, the order $\ord(\bar b)$ divides $r$ and hence $b^{r}\in \langle\!\langle b^k\rangle\!\rangle$. On the other hand $b^k\in \langle b^r\rangle$, so $\langle\!\langle b^r\rangle\!\rangle=\langle\!\langle b^k\rangle\!\rangle$ and $G_{m,n,k}=G_{m,n,r}$.
	
	Since $\gcd(r,m)=\gcd(r,n)$, the subgroups generated by $\tilde b^m$ and $\tilde b^n$ in the group $\mathbb Z/r\mathbb Z=\langle \tilde b: \tilde b^r=1\rangle$  are isomorphic. 
	We can thus consider the HNN-extension of $\mathbb Z/r\mathbb Z=\langle \tilde b: \tilde b^r=1\rangle$ with the relation $\tilde t \tilde b^m\tilde t^{-1}=\tilde b^n$. It admits the presentation $\langle \tilde t,\tilde b\mid \tilde t \tilde b^m\tilde t^{-1}=\tilde b^n, \tilde b^r=1\rangle$ and it is hence isomorphic to $G_{m,n,r}$.
	
	By the Normal Form Theorem for HNN-extensions, the vertex group injects, i.e., $\bar b$ has order exactly $r$. Finally Formula~\eqref{eq: r(k)} imply that $\Phe_{m,n}(k)=\Phe_{m,n}(r)=\PHE(\langle\!\langle b^r\rangle\!\rangle)$.
\end{proof}

\begin{theorem}\label{th: normal subgroups in finite phenotype}
	Let $m,n\in \Z\smallsetminus \{0\}$ and $q$ be a finite phenotype.
	\begin{enumerate}[label=(\arabic*)]
		\item \label{item: unique normal of phen q}
		If $\gcd(m, n)=1$, then the perfect kernel contains a unique normal subgroup of phenotype $q$, namely $\la\!\la b^q\ra\!\ra$.
		\item If $\gcd(m, n)\neq 1$, then the perfect kernel contains continuum many normal subgroups of phenotype $q$.
	\end{enumerate}
\end{theorem}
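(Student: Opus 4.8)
The plan is to deduce item~\ref{item: unique normal of phen q} from Theorem~\ref{thm:maximalinvariantclosed}, and to prove item~(2) by producing continuum many normal subgroups of phenotype $q$ inside a suitably chosen ``large'' quotient of $\BSo(m,n)$ and pulling them back.

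For item~\ref{item: unique normal of phen q} we may assume $\abs m,\abs n\geq 2$ (the other cases being degenerate: $\gcd(m,n)=1$ then forces $\abs m=1$ or $\abs n=1$, and $\Sub(\BSo(m,n))$ is countable). Since $\gcd(m,n)=1$, we have $\abs m\neq\abs n$, so $\PK(\BSo(m,n))=\Sub_{[\infty]}(\BSo(m,n))$ by Theorem~\ref{thm: explicit description K(BS(m,n))}. A normal subgroup $N$ of phenotype $q$ is a fixed point for the conjugation action, so $\{N\}$ is a compact invariant subset of $\PHE^{-1}(q)$; by Theorem~\ref{thm:maximalinvariantclosed}-\ref{item: maxclo is max} it is contained in $\maxclo_q$, and if moreover $N\in\PK(\BSo(m,n))$ then $N\in\maxclo_q\cap\PK(\BSo(m,n))=\{\bnorm{q}\}$ by Theorem~\ref{thm:maximalinvariantclosed}-\ref{item: gcp m n 1 maxclo small}. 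Conversely $\bnorm{q}$ is normal, has phenotype $\Phe_{m,n}(q)=q$ (Remark~\ref{Rem: Phenotypes des b-sous-groupes} together with Lemma~\ref{properties of phenotypes}) and infinite index since $\BSo(m,n)/\bnorm{q}$ surjects onto $\Z$; hence $\bnorm{q}\in\PK(\BSo(m,n))$. So item~\ref{item: unique normal of phen q} is essentially a reformulation of Theorem~\ref{thm:maximalinvariantclosed}-\ref{item: gcp m n 1 maxclo small}.

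For item~(2), put $d\coloneqq\gcd(m,n)\geq 2$ and choose the integer
\[
s\coloneqq q\cdot\prod_{\substack{p\in\primes\\ p\mid d,\ p\nmid q}}p^{\abs{d}_p}.
\]
A prime-by-prime verification, using $\Phe_{m,n}(q)=q$ (Lemma~\ref{properties of phenotypes}), shows that $\Phe_{m,n}(s)=q$ and $d\mid s$. Let $G\coloneqq\BSo(m,n)/\bnorm{s}$. By Proposition~\ref{prop: quotients by b to the k}, $G$ is the HNN extension of the finite cyclic group $\Z/r(s)\Z=\la\bar b\ra$ whose stable letter $\bar t$ induces an isomorphism $\bar b^m\mapsto\bar b^n$ between the subgroups $\la\bar b^{\gcd(m,r(s))}\ra$ and $\la\bar b^{\gcd(n,r(s))}\ra$. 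Since $d\mid s$ implies $d\mid r(s)$ (by formula~\eqref{eq: r(k)}), and $d\mid m$, both associated subgroups have index at least $d\geq 2$ in $\Z/r(s)\Z$; hence $G$ is a proper HNN extension of a non-trivial finite cyclic group, or a non-trivial free product $\Z/r(s)\Z\ast\Z$ when that index equals $r(s)$. In either case $G$ is a non-elementary virtually free group — in particular a non-elementary hyperbolic group — and $\bar b$ has order exactly $r(s)$ in $G$.

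It now suffices to find $2^{\aleph_0}$ normal subgroups $M\trianglelefteq G$ with $[G:M]=\infty$ and $M\cap\la\bar b\ra=\{\id\}$: the preimage $\widehat M\trianglelefteq\BSo(m,n)$ of such an $M$ contains $\bnorm{s}$ and satisfies $\widehat M\cap\la b\ra=\la b^{r(s)}\ra$, so $\PHE(\widehat M)=\Phe_{m,n}(r(s))=\Phe_{m,n}(s)=q$ (Remark~\ref{Rem: Phenotypes des b-sous-groupes} and Proposition~\ref{prop: quotients by b to the k}), while $[\BSo(m,n):\widehat M]=[G:M]=\infty$ places $\widehat M$ in $\PK(\BSo(m,n))$ by Theorem~\ref{thm: explicit description K(BS(m,n))} (in the case $\abs m=\abs n$ one uses in addition that $\widehat M\cap\la b^m\ra$ has finite index in $\la b^m\ra$, so that $\pi(\widehat M)$ still has infinite index). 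Distinct $M$'s give distinct $\widehat M$'s, yielding continuum many normal subgroups of phenotype $q$ in the perfect kernel.

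To produce the subgroups $M$, fix a torsion-free normal subgroup $F\trianglelefteq G$ of finite index; it is free of rank $\geq 2$ and $F\cap\la\bar b\ra=\{\id\}$. As $F$ is a non-elementary hyperbolic group normal in $G$, one can choose an ``independent'' family $(w_i)_{i\in\N}$ of elements of $F$ satisfying a small-cancellation condition over $G$, so that for $N$ large the normal closures $\la\!\la w_i^{N}:i\in S\ra\!\ra$, for $S\subseteq\N$, are pairwise distinct and of infinite index in $G$, and — being generated by $G$-conjugates of elements of $F$ — are contained in $F$, hence disjoint from $\la\bar b\ra$. The main obstacle is precisely this last step: promoting the fact that $G$ has many infinite-index normal subgroups to the statement that $G$ has continuum many of them lying inside the torsion-free part $F$, which is what keeps the phenotype equal to $q$ rather than to the phenotype of a proper divisor of $r(s)$. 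The remaining ingredients — the arithmetic of $s$, $r(s)$ and $\Phe_{m,n}$, and the dictionary between normal subgroups of $G$ and of $\BSo(m,n)$ — are routine.
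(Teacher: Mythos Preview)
Your item~\ref{item: unique normal of phen q} is the same as the paper's: both reduce to Theorem~\ref{thm:maximalinvariantclosed}\ref{item: gcp m n 1 maxclo small}. (Your parenthetical ``$\gcd(m,n)=1$ then forces $\abs m=1$ or $\abs n=1$'' is garbled; presumably you meant that the cases $\abs m=1$ or $\abs n=1$ are degenerate because the perfect kernel is empty.)

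For item~(2), your setup is close to the paper's but heavier than needed. The paper picks a single prime $p$ dividing $\gcd(m,n)$ and sets $k=q$ if $p\mid q$, else $k=qp$; this already gives $\Phe_{m,n}(k)=q$, $r(k)=k$, and $\gcd(k,m)=\gcd(k,n)>1$, so that $G_{m,n,k}$ is a proper HNN extension of $\Z/k\Z$ and non-amenable. As you do, it then takes a finite-index normal free subgroup $F\trianglelefteq G_{m,n,k}$, necessarily of rank $\geq 2$ and with $F\cap\la\bar b\ra=\{1\}$.

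The substantive difference is the final step. Where you invoke small cancellation to manufacture continuum many normal subgroups of $G$ inside $F$ --- a step you yourself flag as ``the main obstacle'' and do not carry out --- the paper simply observes that every \emph{characteristic} subgroup $N$ of $F$ is automatically normal in $G_{m,n,k}$ (since $F$ is normal there), and cites Bryant's theorem \cite{bryantCharacteristicSubgroupsFree1974} that a non-abelian free group has continuum many characteristic subgroups. Each such $N$ satisfies $N\cap\la\bar b\ra=\{1\}$ because $F$ does, so its preimage $\tilde N$ has $\tilde N\cap\la b\ra=\la b^k\ra$ and hence phenotype $q$; at most countably many of the $\tilde N$'s can miss the perfect kernel.

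Your small-cancellation route is plausible in principle (proper HNN extensions of this kind are SQ-universal, and hyperbolic small-cancellation methods do produce such families), but as written it is a genuine gap: you assert the existence of an ``independent family'' $(w_i)$ with the required properties without proof or precise reference. The characteristic-subgroup argument is both shorter and complete.
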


\begin{proof}
	The case $\gcd (m,n)=1$ follows from Item \ref{item: gcp m n 1 maxclo small} of Theorem \ref{thm:maximalinvariantclosed}. Therefore let us assume that $\gcd(m, n)\neq 1$.
	
	Consider a prime $p$ which divides both $m$ and $n$. Then either $\abs q_p\neq 0$ and we set $k\coloneqq q$ otherwise set $k\coloneqq qp$. In both cases, remark that $\Phe_{m,n}(k)=q$, that $\gcd(k,m)=\gcd(k,n)$ and hence $r(k)=k$. 
	Then Proposition~\ref{prop: quotients by b to the k} yields that $\bar b$ has order $k$ in $G_{m,n,k}$.
	Furthermore since $k_0\coloneqq \gcd(k,m)=\gcd(k,n)>1$, the elements $\bar b^n$ and $\bar b^m$ are not generators of the subgroup $\la\bar b\ra$: the group $G_{m,n,k}$ is not a semi-direct product. 
	We claim that $G_{m,n,k}$ is not amenable. Indeed, we can write the group $G_{m,n,k}$ as the amalgamated free product
	\[G_{m,n,k}=\langle \bar{t}, \bar{c} \mid \bar t (\bar c)^{\frac{m}{k_0}} \bar t ^{-1}=(\bar c)^{\frac{n}{k_0}}, (\bar c)^{\frac{k}{k_0}}=1\rangle *_{\bar c=\bar b^{k_0}} \langle \bar b\mid  \bar b^{k}\rangle\]
	and one can easily check that $G_{m,n,k}$ admits as a quotient the non-amenable free product $\langle \tilde t \rangle *\langle \tilde b \mid \tilde b^{k_0}\rangle$.
	
	Since $G_{m,n,k}$ is the fundamental group of a finite graph of finite groups, it admits a finite index normal subgroup $F$ which is a finitely generated free group \cite[Prop.\ 11 p.\ 120]{serreTrees1980}. Since $G_{m,n,k}$ is non-amenable, this normal free subgroup is not amenable. 
	
	Every characteristic subgroup $N$ of $F$ is itself normal in $G_{m,n,k}$. 
	Thus the pull-back under the quotient map $\BSo(m,n)\twoheadrightarrow G_{m,n,k}$ is a normal subgroup $\tilde{N}\triangleleft \BSo(m,n)$.
	Since the intersection of $F$ with the finite group $\langle \bar{b}\rangle$ is trivial, the same holds for its characteristic subgroups: $N\cap \langle \bar{b}\rangle=\{\id\}$. Therefore the order of the image of $b$ in $G_{m,n,k}/N=\BSo(m,n)/\tilde{N}$ is the same as in $G_{m,n,k}$, namely $k$.
	In other words, $\tilde N \cap \langle b\rangle=\langle b^k\rangle$. By definition, \[\PHE(\tilde N)=\Phe_{m,n}([\langle b\rangle:\tilde N \cap \langle b\rangle]=\Phe_{m,n}(k)=q.\]
	
	There are continuum many characteristic subgroups $N$ in the finitely generated free subgroup $F$ \cite{bryantCharacteristicSubgroupsFree1974} (see also \cite{bowenCharacteristicRandomSubgroups2017}).
	At most countably many of them lie outside the perfect kernel, so the theorem follows. 
\end{proof}

\section{Limits of finite phenotype subgroups}

In this section, we characterize the subgroups of infinite phenotype of $\BSo(m,n)$
which arise as limits of finite phenotype subgroups.
We will use a version of the straightforward fact that finitely generated subgroups always form a dense set in the space of subgroups. 

\begin{lemma}\label{lem: finitely generated dense on each phenotype}
	Let $m,n\in\Z\smallsetminus \{0\}$. For every phenotype $q\in\QQ_{m,n}$, the finitely generated subgroups of phenotype $q$ are dense in $\PHE^{-1}(q)$.
\end{lemma}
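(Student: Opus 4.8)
The plan is to adapt the standard argument that finitely generated subgroups form a dense subset of $\Sub(\Gamma)$, equipping it with a safeguard that keeps the phenotype equal to $q$. Recall from Remark~\ref{Rem: Phenotypes des b-sous-groupes} that $\PHE(\Lambda)=\Phe_{m,n}\big([\la b\ra:\Lambda\cap\la b\ra]\big)$ depends only on $\Lambda\cap\la b\ra$. So if, while approximating a subgroup $\Lambda$ by finitely generated ones, we make sure the intersection with $\la b\ra$ is preserved, the phenotype will be preserved too. The naive approximation $\langle\Lambda\cap F\rangle$ (for $F$ finite) may fail precisely here: once all powers of $b$ lying in $\Lambda$ drop out of the finite generating set, the phenotype jumps to $\infty$.

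Concretely, I would fix $\Lambda\in\PHE^{-1}(q)$ and split into two cases. If $q$ is finite, then $k:=[\la b\ra:\Lambda\cap\la b\ra]$ is a finite positive integer with $\Phe_{m,n}(k)=q$ and $\Lambda\cap\la b\ra=\la b^k\ra$; set $G:=\{b^k\}\subseteq\Lambda$. If $q=\infty$, then $\Lambda\cap\la b\ra=\{\id\}$, since the only infinite-index subgroup of $\la b\ra\cong\Z$ is trivial; set $G:=\emptyset$. In both cases $\la G\ra=\Lambda\cap\la b\ra$, with the convention $\langle\emptyset\rangle=\{\id\}$, and $G\subseteq\Lambda$.

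Next, for every finite subset $F\subseteq\Lambda$, consider $\Lambda_F:=\big\langle(\Lambda\cap F)\cup G\big\rangle$, a finitely generated subgroup contained in $\Lambda$. From $G\subseteq\Lambda_F\subseteq\Lambda$ we get the chain $\la G\ra\subseteq\Lambda_F\cap\la b\ra\subseteq\Lambda\cap\la b\ra=\la G\ra$, so $\Lambda_F\cap\la b\ra=\Lambda\cap\la b\ra$, and hence $\PHE(\Lambda_F)=\PHE(\Lambda)=q$ by Remark~\ref{Rem: Phenotypes des b-sous-groupes}. For density, given a basic clopen neighborhood $\Vc(I,O)$ of $\Lambda$ (with $I\subseteq\Lambda$ and $O\cap\Lambda=\emptyset$ finite), take $F:=I$: then $I\subseteq\Lambda\cap F\subseteq\Lambda_F$ and $\Lambda_F\cap O\subseteq\Lambda\cap O=\emptyset$, so $\Lambda_F\in\Vc(I,O)\cap\PHE^{-1}(q)$. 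Since such sets form a neighborhood basis at $\Lambda$, this exhibits $\Lambda$ as a limit of finitely generated subgroups of phenotype $q$.

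There is no genuine obstacle here; the argument is essentially a one-line bookkeeping observation, the only subtlety being that the finite set $G$ recording $\Lambda\cap\la b\ra$ must be carried along in the generating set, and that $\Lambda_F\subseteq\Lambda$ then forces the intersection with $\la b\ra$ not to shrink. Note that the proof uses nothing about $m,n$ beyond their being non-zero, so the statement holds for all $m,n\in\Z\setminus\{0\}$.
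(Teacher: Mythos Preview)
Your proof is correct and follows essentially the same approach as the paper: approximate $\Lambda$ by finitely generated subgroups that are forced to contain a generator of $\Lambda\cap\la b\ra$, so that the intersection with $\la b\ra$ (and hence the phenotype, via Remark~\ref{Rem: Phenotypes des b-sous-groupes}) is preserved. The paper's proof is just a terser version of yours, writing $\Lambda\cap\la b\ra=\la b^k\ra$ uniformly (with $k=0$ covering the infinite-phenotype case) and observing that $\Lambda$ is the increasing union of finitely generated subgroups containing $b^k$.
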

\begin{proof}
	Let $\Lambda$ be a non finitely generated subgroup of phenotype $q$. Let $k\in\Z_{\geq 0}$ such that $\Lambda\cap \la b\ra=\la b^k\ra$. The group $\Lambda$ can be written as the increasing union of finitely generated subgroups all containing $b^k$. They have the same phenotype as $\Lambda$.
\end{proof}

\subsection{Limits of subgroups with fixed finite phenotype}

Recall from Proposition \ref{prop: phenotype partition} that, for $q$ finite, $\PHE\inv(q)$ is open while $\PHE\inv(\infty)$ is closed,
and from Theorem~\ref{thm:maximalinvariantclosed} \ref{item: Lambda going to infinity} that the orbit of any $\Lambda\in \PHE^{-1}(q)\smallsetminus \maxclo_q$ accumulates to $\PHE^{-1}(\infty)$.
We now determine the set of such accumulation points in $\PHE\inv(\infty)$:  this is exactly the set of subgroups contained in the normal closure $\la\!\la b\ra\!\ra$ of $\la b\ra$ but having trivial intersection with $\la b\ra$ itself (since they belong to $\PHE\inv(\infty)$).
\begin{theorem} \label{thm: closure of phen q}
	Suppose $\abs m\neq \abs n$ and let $q$ be a finite phenotype. Then
	\[
	\overline{\PHE\inv(q)}\cap\PHE\inv(\infty)=\{\Lambda\in\PHE^{-1}(\infty)\colon \Lambda\leq \la\!\la b\ra\!\ra\}.
	\]
\end{theorem}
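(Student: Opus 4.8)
The plan is to prove the two inclusions separately. For the inclusion $\subseteq$, I would argue by contradiction using the structure of the normal closure. Suppose $\Lambda\in\overline{\PHE\inv(q)}\cap\PHE\inv(\infty)$. Since $\Lambda$ has infinite phenotype, we already know $\Lambda\cap\la b\ra=\{\id\}$ (otherwise $\PHE(\Lambda)=\Phe_{m,n}([\la b\ra:\Lambda\cap\la b\ra])$ would be finite by Remark~\ref{Rem: Phenotypes des b-sous-groupes}). So the only thing to prove is $\Lambda\leq\la\!\la b\ra\!\ra$. The quotient $\BSo(m,n)/\la\!\la b\ra\!\ra$ is isomorphic to $\Z$ (generated by the image $\bar t$), via the homomorphism $\phi\colon\BSo(m,n)\to\Z$ sending $b\mapsto 0$, $t\mapsto 1$; this is well-defined since the relator $tb^mt\inv b^{-n}$ maps to $0$. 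I would show that $\phi(\Lambda)=\{0\}$. Fix $\lambda\in\Lambda$; then for a sequence $\Lambda_i\to\Lambda$ with $\PHE(\Lambda_i)=q$ we have $\lambda\in\Lambda_i$ for $i$ large. Writing things in terms of the $b$-orbit cardinalities: since $\PHE(\Lambda_i)=q$ is finite, the index $k_i\coloneqq[\la b\ra:\Lambda_i\cap\la b\ra]$ is finite with $\Phe_{m,n}(k_i)=q$. The idea is that any element $\lambda$ of $\Lambda_i$ must fix the base coset $\Lambda_i$ in $\Lambda_i\bs\BSo(m,n)$, and reading the $t$-displacement of that coset forces $\phi(\lambda)$ to be controlled. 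More precisely: in the Bass-Serre graph $\BSe(\Lambda_i\bs\BSo(m,n))$, which is finite with all labels dividing $s=s(q,m,n)$ by Theorem~\ref{thm:maximalinvariantclosed}, any loop at the base vertex based on the word $\lambda$ must have total $t$-exponent equal to $\phi(\lambda)$; but this graph is finite and balanced (equal numbers of positive and negative $t$-edges only globally, not locally), so this does not immediately give $\phi(\lambda)=0$. Hence I expect a cleaner route: use that $\la\!\la b\ra\!\ra$ is the kernel of $\phi$ and that every subgroup of phenotype $q$ contains $\la\!\la b^s\ra\!\ra$ (Theorem~\ref{thm:maximalinvariantclosed}~\ref{item: maxclo is max}), so $\Lambda_i\cap\la\!\la b\ra\!\ra\supseteq\la\!\la b^s\ra\!\ra$; passing to the limit, $\Lambda\supseteq\la\!\la b^s\ra\!\ra$. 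Now $\phi(\Lambda_i)=\phi(\la b\ra)\cdot\phi(\text{stuff})$—actually $\phi(\Lambda_i)$ is a subgroup of $\Z$, say $d_i\Z$; if $d_i=0$ we are fine, and if $d_i\neq 0$ then $\Lambda_i$ surjects onto a finite-index subgroup of $\Z$ times... I need to rule out that $\phi(\Lambda)$ is a nonzero subgroup of $\Z$: but if $\phi(\Lambda)=d\Z$ with $d\neq 0$, then $\Lambda$ contains $\la\!\la b^s\ra\!\ra$ and surjects onto $d\Z$, so $\Lambda\supseteq\la\!\la b^s\ra\!\ra$ together with an element $\mu$ with $\phi(\mu)=d$; then $\langle\la\!\la b^s\ra\!\ra,\mu\rangle$ has finite index in $\BSo(m,n)$ (since $\BSo(m,n)/\la\!\la b^s\ra\!\ra$ is virtually cyclic-by-finite... actually $\BSo(m,n)/\la\!\la b^s\ra\!\ra$ need not be—let me instead note that $\Lambda$ would then be a finite-index subgroup or at least not of infinite phenotype), contradicting $\PHE(\Lambda)=\infty$. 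This last step needs care and is, I expect, the main obstacle: showing that $\phi(\Lambda)\neq\{0\}$ together with $\la\!\la b^s\ra\!\ra\leq\Lambda$ forces finite phenotype.

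For the inclusion $\supseteq$, I would use the merging machinery and the forest-saturation technique exactly as in the proof of Theorem~\ref{thm: perfect kernel of Baumslag-Solitar groups}. Let $\Lambda\in\PHE\inv(\infty)$ with $\Lambda\leq\la\!\la b\ra\!\ra$; I want to approximate it by subgroups of phenotype $q$. By Lemma~\ref{lem: finitely generated dense on each phenotype} it suffices to approximate, for each finite ball radius $R$, the $R$-ball of the Schreier graph $\Schreier(\Lambda\bs\BSo(m,n))$ by the $R$-ball of a Schreier graph of a transitive action of phenotype $q$. The condition $\Lambda\leq\la\!\la b\ra\!\ra$ means that in the Schreier graph of $\Lambda\bs\BSo(m,n)$, the image under $\phi$ of every loop is $0$; equivalently the Bass-Serre graph $\BSe(\Lambda\bs\BSo(m,n))$ maps to $\Z$ (via the $t$-height) and this map is "unbounded-free" enough. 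Restricting to the finite pre-action $(\beta,\tau)$ on the $b$-orbits of $B(v,R+1)$, its Bass-Serre graph $\Gc$ is a finite non-saturated $(m,n)$-graph, all of whose labels are $\infty$ (since $\Lambda$ has infinite phenotype, all $b$-orbits are infinite). I would then modify $\Gc$ to a finite non-saturated $(m,n)$-graph $\Gc'$ which still contains $\Gc$ but now has some vertex of finite label with phenotype $q$: attach, at a non-saturated vertex, a connecting path (Theorem~\ref{thm: connecting same phenotype}) to a vertex carrying label $q$; this is legal since $\Phe_{m,n}(\infty)$... wait, the connecting lemma requires finite labels at both ends. Instead, I would change one infinite label to a finite one of phenotype $q$: this is where the hypothesis $\Lambda\leq\la\!\la b\ra\!\ra$ matters—the Bass-Serre graph being "flat" over $\Z$ means we can consistently re-label a tail. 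Concretely, pick a vertex $w$ of $\Gc$ far from $v$ (distance $>R$ in the Schreier graph sense), detach the part of $\Gc$ beyond $w$, and reattach instead a finite saturated $(m,n)$-graph of phenotype $q$ via a connecting path; then forest-saturate and realize as an action $\alpha'$ by Proposition~\ref{Extending pre-action onto a (m,n)-graph}. The resulting action has phenotype $q$, and its $R$-ball around the base point agrees with that of $\Lambda\bs\BSo(m,n)$. Letting $R\to\infty$ gives a sequence in $\PHE\inv(q)$ converging to $\Lambda$.

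The technically delicate point in the $\supseteq$ direction is ensuring that the "detach and reattach" operation can be performed without disturbing the $R$-ball, i.e. that there is a vertex $w$ of $\Gc$ outside the $R$-ball whose removal (of the component beyond it) leaves a non-saturated vertex to which we can weld a new piece; this uses that $\BSe(\Lambda\bs\BSo(m,n))$ is infinite (so there is room outside the $R$-ball) together with the condition $\Lambda\leq\la\!\la b\ra\!\ra$ to guarantee the relabeling is consistent with the $(m,n)$-graph axioms — roughly, that the $t$-height function on $\BSe(\Lambda\bs\BSo(m,n))$, which exists precisely because $\Lambda\leq\la\!\la b\ra\!\ra$, lets us split off a sub-$(m,n)$-graph along a level set. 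I expect the converse inclusion $\subseteq$ — specifically the step ruling out $\phi(\Lambda)\neq\{0\}$ — to be the main obstacle, and I would handle it by the semi-direct-type argument sketched above, quotienting by $\la\!\la b^s\ra\!\ra$ and using that the resulting group is (virtually) $\Z$-by-finite so that a subgroup surjecting onto a finite-index subgroup of $\Z$ must have finite index, hence finite phenotype.
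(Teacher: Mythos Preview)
Your proposal has genuine gaps in both directions.

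\textbf{Inclusion $\subseteq$.} Your key step is to invoke Theorem~\ref{thm:maximalinvariantclosed}\ref{item: maxclo is max} to conclude that every subgroup $\Lambda_i$ of phenotype $q$ contains $\la\!\la b^s\ra\!\ra$, and then pass this to the limit $\Lambda$. But that theorem does \emph{not} say this: $\maxclo_q$ is defined as the set of phenotype-$q$ subgroups containing $\la\!\la b^s\ra\!\ra$, and the theorem asserts it is the \emph{maximal closed invariant} subset of $\PHE\inv(q)$, not that it equals $\PHE\inv(q)$. Indeed, Items~\ref{item: Lambda going to infinity} and~\ref{item: maxclo is meager} show that $\maxclo_q$ is a proper subset with empty interior in $\PK_q(\BSo(m,n))$. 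So a generic sequence $\Lambda_i\to\Lambda$ with $\PHE(\Lambda_i)=q$ will \emph{not} satisfy $\la\!\la b^s\ra\!\ra\leq\Lambda_i$. Worse, your conclusion $\la\!\la b^s\ra\!\ra\leq\Lambda$ would force $b^s\in\Lambda$, directly contradicting $\PHE(\Lambda)=\infty$; so the argument, had it gone through, would have proved too much. The paper instead proves a quantitative lemma (Lemma~\ref{lem: small power b in lambda}): for any fixed $\gamma\notin\la\!\la b\ra\!\ra$ there is an integer $R=R(q,\gamma)$ such that \emph{every} phenotype-$q$ subgroup containing $\gamma$ must already contain $b^R$. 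This gives a clopen neighborhood of $\Delta$ (containing $\gamma$ but excluding $b^R$) disjoint from $\PHE\inv(q)$.

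\textbf{Inclusion $\supseteq$.} Your plan to ``change one infinite label to a finite one'' in the Bass-Serre graph cannot work as stated: by Equation~\eqref{eq:transfert} and connectedness, in a connected $(m,n)$-graph the labels are either all finite or all infinite (Remark~\ref{rmk: cardinal of Labels}), so you cannot locally relabel. More fundamentally, the $R$-ball in the Schreier graph of $\Lambda$ already sees infinite $b$-orbits; matching it by a phenotype-$q$ action requires the $b$-orbit of the basepoint to have cardinality $N$ with $N>R$ \emph{and} $\Phe_{m,n}(N)=q$, and you must simultaneously match the $t$-edges. Your sketch does not explain how the hypothesis $\Lambda\leq\la\!\la b\ra\!\ra$ makes this possible. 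The paper's route is algebraic and bypasses the graph manipulation entirely: since every $\gamma\in\Lambda$ has $t$-height $\Sigma_\gamma=0$, the commutation Lemma~\ref{lem: commutation} gives $\gamma b^{N_j}\gamma\inv=b^{\pm N_j}$ for suitably chosen $N_j$ with $\Phe_{m,n}(N_j)=q$ and $N_j\to\infty$. Hence $\Lambda$ normalizes $\la b^{N_j}\ra$, the product $\la \Lambda, b^{N_j}\ra\cong\Lambda\ltimes\la b^{N_j}\ra$ has phenotype $q$, and letting $j\to\infty$ these subgroups converge to $\Lambda$. This is precisely where $\Lambda\leq\la\!\la b\ra\!\ra$ is used, and it is the idea your sketch is missing.
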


We need two preparatory lemmas. We start with an easy consequence of the defining relation $tb^m=b^nt$ of $\BSo(m,n)$.

\begin{notation}\label{notation: length and height}
	Given $\gamma\in \BSo(m,n)$, let us denote: 
	\begin{itemize}
		\item by $\kappa_\gamma$ the $t$-\textit{length} 
		of $\gamma$, 
		namely the number of occurrences of $t^{\pm 1}$ in the normal form of $\gamma$;
		\item by $\Sigma_\gamma$ the number of occurrences of $t$ minus the number of occurrences of $t\inv$ in the normal form of $\gamma$,
		which is often called the $t$-\textit{height} of $\gamma$.
	\end{itemize}
	Remark that $\Sigma_\gamma$ is the image of $\gamma$ in $\BSo(m,n)/\la\!\la b\ra\!\ra\cong \Z$. 
	In particular $\Sigma_\gamma=0$ if and only if $\gamma\in \la\!\la b\ra\!\ra$.
\end{notation}

\begin{lemma}\label{lem: commutation}
	Fix $\gamma\in \BSo(m,n)$. Let $A\in \Z$ be such that for all primes $p\in\mathcal P$
	\begin{itemize}
		\item if $\abs m_p=\abs n_p$ then $\abs A_p\geq \abs m_p$;
		\item otherwise $\abs A_p\geq \kappa_\gamma \abs m_p$ and $\abs A_p\geq \kappa_\gamma \abs n_p$.
	\end{itemize}
	Then there is $B\in\mathbb Z$, such that $\gamma b^A=b^{B}\gamma$, where $\abs{B}$ is determined by: 
	\[
	\abs B_p=\abs A_p+\Sigma_\gamma (\abs n_p-\abs m_p) \text{ for all }p\in\primes.
	\]
\end{lemma}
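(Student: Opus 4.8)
The plan is to prove this by induction on the $t$-length $\kappa_\gamma$, reducing to the two base cases $\gamma = b^j$ (where $\kappa_\gamma = 0$) and $\gamma = t^{\pm 1}$ (where $\kappa_\gamma = 1$).

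First I would dispose of the case $\gamma = b^j$: here $\kappa_\gamma = 0$ and $\Sigma_\gamma = 0$, so $b^j b^A = b^A b^j$ works with $B = A$, and indeed $\abs{B}_p = \abs{A}_p$ for all $p$, which matches the formula. Next, the key computation is the case $\gamma = t$. The defining relation is $tb^m = b^n t$, hence $tb^{mk} = b^{nk}t$ for all $k \in \Z$. So if $m$ divides $A$, say $A = mk$, then $tb^A = b^{nk}t$ and we may take $B = (n/m)A$; then $\abs{B}_p = \abs{A}_p + \abs{n}_p - \abs{m}_p = \abs{A}_p + \Sigma_t(\abs{n}_p - \abs{m}_p)$, as required. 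The hypotheses on $A$ in the lemma for $\kappa_\gamma = 1$ say precisely that $\abs{A}_p \geq \abs{m}_p$ when $\abs{m}_p = \abs{n}_p$ and $\abs{A}_p \geq \abs{m}_p$, $\abs{A}_p \geq \abs{n}_p$ otherwise --- in all cases $\abs{A}_p \geq \abs{m}_p$, so $m \mid A$, and the computation applies. For $\gamma = t^{-1}$, use $t^{-1}b^n = b^m t^{-1}$, so $t^{-1}b^{nk} = b^{mk}t^{-1}$; here the hypothesis gives $n \mid A$ and $B = (m/n)A$ with $\abs{B}_p = \abs{A}_p + \abs{m}_p - \abs{n}_p = \abs{A}_p + \Sigma_{t^{-1}}(\abs{n}_p - \abs{m}_p)$.

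For the inductive step, write $\gamma = \gamma' s$ with $s \in \{t, t^{-1}\}$ and $\kappa_{\gamma'} = \kappa_\gamma - 1$. The subtlety is that the hypothesis on $A$ is tailored to $\kappa_\gamma$, and I must first pass $b^A$ through $s$, then through $\gamma'$, checking at each stage that the exponent still satisfies the relevant hypothesis. Concretely: $\gamma b^A = \gamma' (s b^A)$. To push $b^A$ through $s$ I need $\abs{A}_p \geq \abs{m}_p$ (if $s = t$) or $\abs{A}_p \geq \abs{n}_p$ (if $s = t^{-1}$) for the relevant $p$; but more carefully, for primes with $\abs{m}_p \neq \abs{n}_p$ the hypothesis gives $\abs{A}_p \geq \kappa_\gamma \abs{m}_p \geq \abs{m}_p$ and $\geq \kappa_\gamma \abs{n}_p \geq \abs{n}_p$, which suffices since $\kappa_\gamma \geq 1$. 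This yields $s b^A = b^{A'} s$ with $\abs{A'}_p = \abs{A}_p + \Sigma_s(\abs{n}_p - \abs{m}_p)$. Now I must check $A'$ satisfies the $\kappa_{\gamma'}$-hypothesis: for $\abs{m}_p = \abs{n}_p$ we have $\abs{A'}_p = \abs{A}_p \geq \abs{m}_p$, fine; for $\abs{m}_p \neq \abs{n}_p$, since $\abs{\Sigma_s} = 1$ we get $\abs{A'}_p \geq \abs{A}_p - \abs{\abs{n}_p - \abs{m}_p} \geq \kappa_\gamma \max(\abs{m}_p,\abs{n}_p) - \max(\abs{m}_p,\abs{n}_p) = (\kappa_\gamma - 1)\max(\abs{m}_p,\abs{n}_p) = \kappa_{\gamma'}\max(\abs{m}_p,\abs{n}_p)$, which gives both $\abs{A'}_p \geq \kappa_{\gamma'}\abs{m}_p$ and $\abs{A'}_p \geq \kappa_{\gamma'}\abs{n}_p$. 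So the inductive hypothesis applies to $\gamma'$ and $A'$, yielding $\gamma' b^{A'} = b^B \gamma'$ with $\abs{B}_p = \abs{A'}_p + \Sigma_{\gamma'}(\abs{n}_p - \abs{m}_p) = \abs{A}_p + (\Sigma_s + \Sigma_{\gamma'})(\abs{n}_p - \abs{m}_p) = \abs{A}_p + \Sigma_\gamma(\abs{n}_p - \abs{m}_p)$, using additivity of the $t$-height. Combining, $\gamma b^A = \gamma' b^{A'} s = b^B \gamma' s = b^B \gamma$.

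The main obstacle is the bookkeeping in the inductive step: one must be careful that the exponent remains divisible by the right power of $m$ or $n$ at each step (so that the relation $tb^{mk} = b^{nk}t$ can actually be applied), and that the estimate $\abs{A'}_p \geq \kappa_{\gamma'}\max(\abs{m}_p,\abs{n}_p)$ survives the decrement. The choice of the constant $\kappa_\gamma$ (rather than something smaller) in the hypothesis is exactly what makes this induction close. Two minor points also need attention: the normal form and $t$-length/$t$-height are well-defined on $\BSo(m,n)$ (standard Bass--Serre/HNN normal form theory, already invoked in the excerpt), and the resulting $B$ is well-defined since $b$ has infinite order in $\BSo(m,n)$ when $\abs m, \abs n \geq 2$ (and the statement is only used in that regime); in any case $\abs{B}$ is determined $p$-adically by the displayed formula, which is all that is claimed.
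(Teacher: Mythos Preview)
Your approach is exactly the paper's: the paper's proof reads in full ``This follows from a straightforward induction on $\kappa_\gamma$ using the relation $tb^m=b^nt$. We leave the details to the reader,'' and you have supplied those details, with the $p$-adic bookkeeping in the inductive step carried out correctly. One small wrinkle: not every $\gamma$ with $\kappa_\gamma\geq 1$ can be written as $\gamma' s$ with $s\in\{t,t^{-1}\}$ and $\kappa_{\gamma'}=\kappa_\gamma-1$ (take $\gamma=tb$ when $\abs m\neq 1$: then $tb\,t^{-1}\notin\la b\ra$); instead write the normal form as $\gamma=\gamma' s\, b^{j}$, commute $b^A$ past the trailing $b^{j}$ first, and then your argument goes through verbatim.
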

\begin{proof}
	This follows from a straightforward induction  on $\kappa_\gamma$ using the relation $tb^m=b^nt$. We leave the details to the reader. 
\end{proof}

The proof of the inclusion in Theorem \ref{thm: closure of phen q} from left to right relies on the following lemma.

\begin{lemma}\label{lem: small power b in lambda}
	Fix $\gamma\not\in \la\!\la b\ra\!\ra$ and let $q$ be a finite phenotype.
	There is an integer $R=R(q,\gamma)$
	such that every subgroup $\Lambda$ of phenotype $q$ containing $\gamma$ must also contain $b^{R}$.
\end{lemma}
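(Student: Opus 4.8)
The plan is to reduce the statement to a uniform bound on the positive integer $k$ determined by $\Lambda\cap\langle b\rangle=\langle b^k\rangle$ (such a $k$ exists because $\Lambda$ has finite phenotype $q$, and then $\Phe_{m,n}(k)=q$). If I can show that for every prime $p$ there is a bound $c_p$ on $\abs k_p$ depending only on $q$, $m$, $n$ and the $t$-length $\kappa_\gamma$ of $\gamma$, with $c_p=0$ for all but finitely many $p$, then I am done: $R\coloneqq\prod_p p^{c_p}$ satisfies $k\mid R$, hence $b^R=(b^k)^{R/k}\in\langle b^k\rangle\subseteq\Lambda$. Note that $\kappa_\gamma\ge1$, since $\gamma\notin\langle\!\langle b\rangle\!\rangle$ forces $\Sigma_\gamma\neq0$.

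For a prime $p$ with $\abs m_p=\abs n_p$, the bound is immediate from Definition~\ref{df: phenotype natural number}: either $p\in\primes_{m,n}(k)$, in which case $\abs k_p=\abs q_p$, or $\abs k_p\le\abs n_p$; in both cases $\abs k_p\le\max(\abs q_p,\abs m_p)$. This uses nothing about $\gamma$, and when $\abs m=\abs n$ it already completes the proof, in accordance with Lemma~\ref{lem: preimage phenotypes}. The real content is a prime $p$ with $\abs m_p\neq\abs n_p$, and here $\gamma$ enters. I would pass to the saturated $(m,n)$-graph $\BSe\coloneqq\BSe(\Lambda\bs\BSo(m,n)\reacts\BSo(m,n))$, whose base vertex $v_0\coloneqq\Lambda\langle b\rangle$ has label $L(v_0)=[\langle b\rangle:\Lambda\cap\langle b\rangle]=k$. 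Reading a normal form of $\gamma$ as a path from $\Lambda$ in $\Schreier(\Lambda,\{b,t\})$ and projecting it through $\pi_\alpha$ (Definition~\ref{dfn: projection BS graph}) produces a closed path $(v_0,v_1,\dots,v_\ell=v_0)$ in $\BSe$ with $\ell=\kappa_\gamma$ edges, in which the number of positive edges minus the number of negative edges equals $\Sigma_\gamma$. Set $d\coloneqq\abs m_p-\abs n_p\neq0$ and $\mu\coloneqq\max(\abs m_p,\abs n_p)$. From Equation~\eqref{eq: transfer for p} and the equivalence preceding it, each edge of an $(m,n)$-graph changes the $p$-valuation of its endpoints' labels by at most $\mu$, and whenever the source $u$ of a positive edge $e$ satisfies $\abs{L(u)}_p>\abs n_p$ one has $\abs{L(\target(e))}_p-\abs{L(u)}_p=d$ exactly. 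Suppose for contradiction that $\abs k_p>\abs n_p+\kappa_\gamma\mu$. Then $\abs{L(v_i)}_p\ge\abs k_p-i\mu>\abs n_p$ for every $0\le i\le\ell$, so the transfer identity applies at every edge of the loop: traversing a positive edge shifts $\abs{L(\cdot)}_p$ by $+d$ and a negative edge by $-d$. Summing the shifts around the loop yields $0=\abs{L(v_\ell)}_p-\abs{L(v_0)}_p=d\cdot\Sigma_\gamma$, which is impossible since $d\neq0\neq\Sigma_\gamma$. Therefore $\abs k_p\le\abs n_p+\kappa_\gamma\mu$.

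Combining the two cases, $\abs k_p\le(1+\kappa_\gamma)\max(\abs q_p,\abs m_p,\abs n_p)$ for all $p$, so $R\coloneqq\mathrm{lcm}(q,\abs m,\abs n)^{1+\kappa_\gamma}$ works and depends only on $q$ and $\gamma$. The main obstacle is precisely the reason one cannot argue directly via Lemma~\ref{lem: commutation} (conjugate $b^k$ by $\gamma$, intersect with $\langle b\rangle$, iterate): conjugation by $\gamma$ raises the $p$-valuation at some primes while lowering it at others — the effects of $\gamma$ and $\gamma^{-1}$ being opposite — and inflating $k$ to satisfy the hypotheses of Lemma~\ref{lem: commutation} re-inflates exactly the valuations one is trying to shrink. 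The closed-path argument avoids this because a loop in an $(m,n)$-graph cannot have nonzero net $p$-adic drift. The only points needing care are identifying the projected path (length $\kappa_\gamma$, signed length $\Sigma_\gamma$) and the per-edge estimate, both routine from the properties of $(m,n)$-graphs already established.
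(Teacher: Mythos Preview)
Your argument is correct and takes a genuinely different route from the paper's. Both reduce to bounding $\abs k_p$ for each prime $p$ (where $\Lambda\cap\langle b\rangle=\langle b^k\rangle$) and handle the case $\abs m_p=\abs n_p$ identically via the definition of phenotype. For the primes with $\abs m_p\neq\abs n_p$, however, the paper argues algebraically through Lemma~\ref{lem: commutation}: given a putatively large $l_i=\abs N_{p_i}$, it \emph{inflates} $N$ at all the other relevant primes to form an auxiliary exponent $N'$ meeting the hypotheses of that lemma while keeping $\abs{N'}_{p_i}=l_i$, then conjugates $b^{N'}$ by $\gamma^{\pm1}$ to obtain $b^{N''}\in\Lambda$ with $\abs{N''}_{p_i}<\abs N_{p_i}$, contradicting $\Lambda\cap\langle b\rangle=\langle b^N\rangle$. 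So your closing remark overshoots a bit: the naive iteration you describe does fail, but the paper circumvents exactly the prime-by-prime interference you identify by this inflation trick.

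Your approach instead stays inside the $(m,n)$-graph formalism: $\gamma\in\Lambda$ projects to a closed walk in $\BSe(\Lambda\backslash\BSo(m,n))$ of length $\kappa_\gamma$ and signed length $\Sigma_\gamma\neq0$ based at $v_0$; the a priori bound $\abs{L(v_i)}_p\geq\abs k_p-i\mu$ forces $\abs{L(v_i)}_p>\abs n_p$ along the whole walk, so the exact transfer identity~\eqref{eq: transfer for p} applies at every edge and the net $p$-adic drift around the loop equals $(\abs m_p-\abs n_p)\Sigma_\gamma\neq0$, a contradiction. This is arguably cleaner --- it handles one prime at a time with no auxiliary exponent and sits naturally in the Bass-Serre graph framework of Section~\ref{sect: (m,n)-graphs} --- while the paper's argument stays closer to the group relation and avoids invoking the graph at all.
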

\begin{proof}
	Up to replacing $\gamma$ by its inverse, let us assume $\Sigma_\gamma>0$. 
	We define $M\coloneqq\max\{\abs m_p,\abs n_p\colon  p\in\mathcal P\}$,
	and then
	\[
	R\coloneqq q\left(\prod_{\substack{ p\in\primes\\  \abs{m}_p+\abs{n}_p>0}}p\right)^{\kappa_\gamma M}.
	\]
	Fix $\Lambda$ of phenotype $q$. 
	Since $q$ is finite, we have $\la b\ra \cap\Lambda = \la b^N\ra$ with $N>0$.
	We have to show that $N$ divides $R$.
	Notice that $\Phe_{m,n}(N) = q$, thus $N$ decomposes as
	\[
	N=q\cdot p_1^{l_1}\cdots p_k^{l_k}p_{k+1}^{l_{k+1}}\cdots p_r^{l_r},
	\]
	where $r\geq 0$ and $l_1,\ldots,l_r\geq 1$, while the $p_i$ are distinct prime numbers coprime with $q$, see Definition \ref{df: phenotype natural number}. 
	Moreover, we order them so that $p_1,\dots,p_k\in \mathcal P_{m,n}\smallsetminus\mathcal P_{m,n}(N)$ and $p_{k+1},\ldots,p_r\in \mathcal P \smallsetminus \mathcal P_{m,n}$.
	
	Observe that $\abs m_{p_i} = \abs n_{p_i} \geq \abs{N}_{p_i} = l_i \geq 1$ when $p_i\in \mathcal P_{m,n}\smallsetminus\mathcal P_{m,n}(N)$
	and $\abs m_{p_i} \neq \abs n_{p_i}$ when $p_i \in \mathcal P \smallsetminus\mathcal P_{m,n}$.
	Hence, $\abs m_{p_i}+\abs n_{p_i}>0$ for every $i\in\{1,\ldots,r\}$.     
	Consequently, to establish that $N$ divides $R$, it suffices to prove 
	\begin{equation}\label{eq: bound on li}
		\forall i\in\{1,\ldots,r\},\quad l_i \leq \kappa_\gamma M.
	\end{equation}
	
	Observe that $\kappa_\gamma \geq 1$ since $\gamma\notin \la\!\la b \ra\!\ra$.
	For $i\in \{1,\dots,k\}$, Equation \eqref{eq: bound on li} holds 
	since $p_i\in \mathcal P_{m,n}\smallsetminus\mathcal P_{m,n}(N)$, thus
	\[
	l_i\leq \abs m_{p_i}=\abs n_{p_i}\leq M\leq \kappa_\gamma M.
	\]
	
	Let us hence fix $i\in\{k+1,\dots,r\}$ and suppose by contradiction that $l_i > \kappa_\gamma M$. 
	Consider 
	\[
	N'= N \times (p_1\cdots p_k)^M \left( p_{k+1}\cdots \widehat{p_{i}}\cdots p_r \right)^{\kappa_\gamma M}
	\]
	where by $\widehat{p_i}$ we mean that the factor $p_i$ is removed from the product. 
	Clearly $b^{N'}\in \Lambda$ and $\abs{N'}_{p_i}=l_i$. 
	Put 
	\[
	\varepsilon\coloneqq \mathrm{sign}(\abs m_{p_i}-\abs n_{p_i}).
	\]
	Note that $p_i \notin \primes_{m,n}$, hence $\abs{m}_{p_i} \neq \abs{n}_{p_i}$, so $\varepsilon\neq 0$. 
	Since we assumed $\abs{N}_{p_i}=l_i\geq \kappa_\gamma M$, we also have $\abs{N'}_{p_i}\geq \kappa_\gamma M$.
	It is then clear that $N'$ satisfies the assumption of Lemma \ref{lem: commutation},
	so $\gamma^\varepsilon b^{N'}\gamma^{-\varepsilon}=b^{N''}$, where
	\begin{align*}
		\abs{N''}_{p_i}&=l_i+\Sigma_{\gamma^\varepsilon}(\abs n_{p_i}-\abs m_{p_i})
		=l_i+\varepsilon\Sigma_{\gamma}(\abs n_{p_i}-\abs m_{p_i})\\
		&=l_i-\Sigma_\gamma\abs{\abs m_{p_i}-\abs{n}_{p_i}} < l_i.
	\end{align*}
	Clearly $b^{N''}\in \Lambda$, hence $b^{N''}\in \la b^N \ra$. But $\abs{N''}_{p_i}<\abs N_{p_i}$, a contradiction.
	We thus have established Equation \eqref{eq: bound on li}, which finishes the proof.
\end{proof}

\begin{proof}[Proof of Theorem \ref{thm: closure of phen q}]
	
	Set \[\mathcal L\coloneqq\{\Lambda\in\PHE^{-1}(\infty)\colon \Lambda\leq \la\!\la b\ra\!\ra\}.\]
	We first show the inclusion
	\(
	\overline{\PHE\inv(q)}\cap\PHE\inv(\infty)\subseteq\mathcal L.
	\)
	Take $\Delta\in\PHE\inv(\infty)\smallsetminus\mathcal L$ and $\gamma\in\Delta\smallsetminus \la\!\la b\ra\!\ra$. By Lemma \ref{lem: small power b in lambda}, there is an $R$ such that every subgroup $\Lambda$ of phenotype $q$ containing $\gamma$ also contains $b^{R}$.
	Thus the clopen neighborhood of $\Delta$ given by
	\[\mathcal O\coloneqq\{\Lambda\in \Sub(\BSo(m,n))\colon \gamma\in\Lambda,\  b^{R}\not\in \Lambda\}
	\]
	does not intersect $\PHE\inv(q)$. Thus $\Delta$ is not in the closure of $\PHE\inv(q)$.

	We now show the reverse inclusion
	\(
	\mathcal L\subseteq \overline{\PHE\inv(q)}\cap\PHE\inv(\infty).
	\)
	Remark that as in Lemma \ref{lem: finitely generated dense on each phenotype}, the finitely generated elements of $\mathcal L$ are dense in $\mathcal L$: every element of $\mathcal L$ is an increasing union of finitely generated subgroups which have to be in $\mathcal L$ as well.  
	So take $\Lambda=\la S\ra\in \mathcal L$ where $S$ is finite;
	we will show that $\Lambda$ is a limit of subgroups with phenotype $q$. 
	Set $\kappa\coloneqq \max_{\gamma\in S}\kappa_\gamma$, 
	where $\kappa_\gamma$ is the $t$-length of $\gamma$ (see Notation \ref{notation: length and height}).
	Set $M\coloneqq\max\{\abs m_p,\abs n_p\colon  p\in\mathcal P\}$. 
	Note that $\primes\smallsetminus \primes_{m,n}$ is finite, since it is composed of primes $p$ such that $\abs{m}_p+\abs{n}_p>0$, and that $\abs m_p=0$ for all but finitely many primes $p$.
	Hence, for $j\geq 1$, we can define the integer
	\[
	N_j\coloneqq q\cdot\prod_{p\in\mathcal P_{m,n}\smallsetminus \mathcal P_{m,n}(q)}p^{\abs m_p}\cdot
	\prod_{p\in\primes\smallsetminus \primes_{m,n}} p^{j\kappa M}.
	\]
	Observe that $\Phe_{m,n}(N_j)=q$. 
	
	Since $\Lambda\leq \la\!\la b\ra\!\ra$, the height $\Sigma_\gamma$ is zero (see Notation \ref{notation: length and height}) for every $\gamma\in S$, 
	whence, for every $\gamma\in S$ and every $j$, Lemma \ref{lem: commutation} gives $\gamma b^{N_j}=b^{\pm N_j}\gamma$.
	Thus, $\Lambda=\la S\ra$ normalizes $\la b^{N_j}\ra $. Moreover, $\Lambda$ has trivial intersection with $\la b^{N_j}\ra$ because it has infinite phenotype. In particular for $j=1$, we have a natural isomorphism
	\[\Phi\colon \Lambda\ltimes \la b^{N_1}\ra\to  \la\!\Lambda, b^{N_1}\ra.\] 
	Since $N_1$ divides $N_j$, we get 
	\[\Phi(\Lambda\ltimes \la b^{N_j}\ra)=\la\!\Lambda,b^{N_j}\ra.\]
	Observe that $\Phi$ induces a homeomorphism 
	\[ \Sub(\Lambda\ltimes \la b^{N_1}\ra) \to \Sub(\la\!\Lambda, b^{N_1}\ra)\subseteq \Sub(\BSo(m,n)),\] 
	and that the sequence of subgroups $(\Lambda\ltimes \la b^{N_j}\ra)_{j\geq 1}$ converges to $\Lambda\ltimes \{\id\}$. Therefore we have that $\la\!\Lambda, b^{N_j}\ra$ converges to $\Lambda$. 
	Since $\PHE(\la\!\Lambda, b^{N_j}\ra)=\Phe_{m,n}(N_j)=q$, the group $\Lambda$ is the limit of a sequence of elements of phenotype $q$ as wanted.  
\end{proof}

\subsection{Limits of subgroups with varying finite phenotype}

In Theorem \ref{thm: closure of phen q}, we showed that $\overline{\PHE^{-1}(q)}\cap \PHE^{-1}(\infty)$
does not depend on the finite phenotype $q$. We will now consider the closure of all subgroups with finite phenotype and we will first analyse what happens if $\abs m=\abs n$.

\begin{proposition}\label{prop: limit finite phen m=n}
	Let $m,n$ be integers such that $\abs m=\abs n\geq 2$. Then \[\PHE\inv(\infty)\subseteq \overline{\bigcup_{q \text{ finite}}\PHE\inv(q)}.\]
	In other words, every subgroup with infinite phenotype is a limit of subgroups with finite (variable) phenotypes.
\end{proposition}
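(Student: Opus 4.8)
The plan is to reduce to a single subgroup and produce an explicit approximating sequence. First, observe that $\Phe_{m,n}(k)$ is finite for every $k\in\Z_{\geq 1}$ (it divides $k$) and equals $\infty$ only for $k=\infty$; hence by Remark~\ref{Rem: Phenotypes des b-sous-groupes}, $\PHE^{-1}(\infty)$ is exactly the set of subgroups $\Lambda$ with $\Lambda\cap\la b\ra=\{\id\}$, while any subgroup meeting $\la b\ra$ nontrivially automatically has finite phenotype. So I would fix $\Lambda$ with $\Lambda\cap\la b\ra=\{\id\}$ and exhibit a sequence of subgroups meeting $\la b\ra$ nontrivially and converging to $\Lambda$.

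The mechanism uses the hypothesis $\abs m=\abs n$ in exactly one place: for any integer $N$ divisible by $\abs m$, the subgroup $\la b^N\ra$ is normal in $\BSo(m,n)$. Indeed, since $\abs m_p=\abs n_p$ for every prime $p$, Lemma~\ref{lem: commutation} applies with $A=N$ and yields, for each $\gamma$, an integer $B$ with $\gamma b^N=b^B\gamma$ and $\abs B_p=\abs N_p$ for all $p$, forcing $B=\pm N$ and $\gamma\la b^N\ra\gamma^{-1}=\la b^N\ra$. Then I would pick a chain $N_1\mid N_2\mid\cdots$ of multiples of $\abs m$ with $N_j\to\infty$ (say $N_j\coloneqq j!\,\abs m$) and set $\Lambda_j\coloneqq\la\Lambda,b^{N_j}\ra$. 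Since $\la b^{N_j}\ra$ is normal and $\Lambda\cap\la b^{N_j}\ra\subseteq\Lambda\cap\la b\ra=\{\id\}$, this is an internal semidirect product $\Lambda_j=\la b^{N_j}\ra\rtimes\Lambda$, and the same triviality gives $\Lambda_j\cap\la b\ra=\la b^{N_j}\ra$; hence $\PHE(\Lambda_j)=\Phe_{m,n}(N_j)$ is finite by Remark~\ref{Rem: Phenotypes des b-sous-groupes}.

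It remains to check $\Lambda_j\to\Lambda$ in $\Sub(\BSo(m,n))$, and here I would reuse the device from the proof of Theorem~\ref{thm: closure of phen q}: the map $(b^{cN_1},\lambda)\mapsto b^{cN_1}\lambda$ is an isomorphism of the abstract semidirect product $\la b^{N_1}\ra\rtimes\Lambda$ onto $\Lambda_1\leq\BSo(m,n)$, hence induces a homeomorphism $\Sub(\la b^{N_1}\ra\rtimes\Lambda)\to\Sub(\Lambda_1)\subseteq\Sub(\BSo(m,n))$ carrying $\la b^{N_j}\ra\rtimes\Lambda$ to $\Lambda_j$ and $\{\id\}\rtimes\Lambda$ to $\Lambda$; and inside $\la b^{N_1}\ra\rtimes\Lambda$ one plainly has $\la b^{N_j}\ra\rtimes\Lambda\to\{\id\}\rtimes\Lambda$ because $N_j/N_1\to\infty$. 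Transporting this convergence concludes the argument. (By density of finitely generated subgroups of each phenotype, Lemma~\ref{lem: finitely generated dense on each phenotype}, one could first reduce to $\Lambda$ finitely generated, but this is not needed.)

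The one genuinely load-bearing step is the normalization: it is precisely what fails when $\abs m\neq\abs n$, since there $\abs B_p=\abs N_p+\Sigma_\gamma(\abs n_p-\abs m_p)$ can differ from $\abs N_p$, so $\la b^N\ra$ is in general not normalized by $\BSo(m,n)$ and $\la\Lambda,b^N\ra$ need not meet $\la b\ra$ in $\la b^N\ra$. Everything else is routine bookkeeping with $p$-adic valuations.
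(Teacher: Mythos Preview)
Your proof is correct and follows essentially the same approach as the paper's: both exploit that when $\abs m=\abs n$ the subgroup $\la b^N\ra$ is normal for every multiple $N$ of $\abs m$, form the approximants $\Lambda_j=\la\Lambda,b^{N_j}\ra=\la b^{N_j}\ra\rtimes\Lambda$, and then transport the convergence $\la b^{N_j}\ra\rtimes\Lambda\to\{\id\}\rtimes\Lambda$ through the isomorphism with $\Lambda_1$, exactly as in the end of the proof of Theorem~\ref{thm: closure of phen q}. The only cosmetic differences are that the paper takes $N_j=jn$ and justifies normality directly from the relation $tb^nt^{-1}=b^{\pm n}$, whereas you take $N_j=j!\abs m$ and invoke Lemma~\ref{lem: commutation}; neither change affects the argument.
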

\begin{proof}
	Let us fix $\Lambda\in\PHE\inv(\infty)$.
	Note that $\la b^n\ra$ is normalized by $\Lambda$ thanks to the relation $t b^n t\inv = b^{\pm n}$.
	We now proceed as in the second part of the proof of Theorem \ref{thm: closure of phen q}: the group $\la\!\Lambda, b^{jn}\ra$ has finite phenotype, it is isomorphic to $\Lambda\ltimes\la b^{jn}\ra$ and the sequence of subgroups $(\la\!\Lambda,b^{jn}\ra)_{j\geq 1}$ converges to $\Lambda$.
\end{proof}

The situation is completely different in the case $\abs m\neq \abs n$.

\begin{proposition}\label{prop: something is not limit}
	Let $m,n$ be integers such that $\abs m\neq \abs n$ and ${\abs m},\abs n\geq 2$. 
	Then 
	\[
	\PHE^{-1}(\infty) \not\subseteq \overline{\bigcup_{q \text{ finite}}\PHE\inv(q)}.
	\]
	In other words, there are subgroups with infinite phenotype that are not limits of subgroups with finite (variable) phenotypes.	
\end{proposition}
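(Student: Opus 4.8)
It is convenient to first reformulate the statement. Since $\Sub(\BSo(m,n)) = \PHE^{-1}(\infty)\sqcup\bigsqcup_{q\text{ finite}}\PHE^{-1}(q)$ and $\PHE^{-1}(\infty)$ is closed (Proposition \ref{prop: phenotype partition}), we have $\overline{\bigcup_{q\text{ finite}}\PHE^{-1}(q)} = \Sub(\BSo(m,n))\setminus\mathrm{int}\big(\PHE^{-1}(\infty)\big)$, so the desired non-inclusion is equivalent to $\mathrm{int}\big(\PHE^{-1}(\infty)\big)\neq\emptyset$. Recall also (Remark \ref{Rem: Phenotypes des b-sous-groupes}) that $\PHE(\Lambda)=\infty$ if and only if $\Lambda\cap\la b\ra=\{\id\}$. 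Hence it suffices to exhibit a nonempty clopen set $\Vc(I,O)$, with $I,O\subseteq\BSo(m,n)$ finite, $\id\notin O$, all of whose members $\Lambda$ satisfy $\Lambda\cap\la b\ra=\{\id\}$; we take $O$ to consist of powers of $b$.

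The key input is the \emph{rigidity of $t$ at a fixed point}. Set $I:=\{t\}\cup\{\,b^{d}tb^{-d}:d\mid\gcd(m,n)\,\}$ and $O:=\{\,b^{j}:1\le j\le|m-n|\,\}$, and put $\Vc:=\Vc(I,O)$. Suppose $\Lambda\in\Vc$ has finite phenotype, say $\Lambda\cap\la b\ra=\la b^{N}\ra$ with $1\le N<\infty$, and consider the pointed action $\Lambda\bs\BSo(m,n)\reacts\BSo(m,n)$ with basepoint $v_0$: the $b$-orbit of $v_0$ has $N$ elements and $t$ fixes $v_0$. From $tb^m=b^nt$ we get $v_0b^{kn}\cdot t=v_0b^{km}$ for all $k$, and comparing this with the fact that $t$ is a bijection forces $\gcd(N,m)=\gcd(N,n)=:d$, together with the formula just displayed for the action of $t$ on the $\beta^n$-orbit of $v_0$. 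Since $d\mid\gcd(m,n)$ we have $b^{d}tb^{-d}\in I\subseteq\Lambda$, i.e.\ $t$ also fixes $v_0b^{d}$; as $d\equiv0\pmod{\gcd(N,n)}$ the point $v_0b^{d}$ lies in the $\beta^n$-orbit of $v_0$, so the relation at $v_0b^{d}$ provides a second description of $t$ on that orbit. Reconciling the two (a short prime-by-prime computation, in the spirit of Lemma \ref{lem: commutation}) forces $N\mid m-n$; hence $N\le|m-n|$ and $b^{N}\in\Lambda\cap O$, a contradiction. Therefore $\Vc\subseteq\PHE^{-1}(\infty)$.

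To see that $\Vc\neq\emptyset$, one realizes the prescribed local picture inside an honest transitive action of infinite phenotype. Start from a pre-action on a single $\beta$-orbit $\cong\Z$, so that the unique Bass–Serre vertex $v_0$ has label $\infty$, with $t$-loops attached at $v_0$ and at $v_0b^{d}$ for each $d\mid\gcd(m,n)$. When $\gcd(m,n)<\min(|m|,|n|)$ these loops occupy pairwise distinct $\beta^n$- and $\beta^m$-orbits, so the result is a genuine $(m,n)$-graph; it is not saturated, since otherwise its finite Bass–Serre graph would have infinite phenotype, contradicting $|m|\neq|n|$ (Lemma \ref{lem: finite BS with inf phen}). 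Saturating it via Lemma \ref{lem: saturation lemma} and realizing it as an action through Proposition \ref{prop: realization of BS graph}, extending the original pre-action by Proposition \ref{Extending pre-action onto a (m,n)-graph}, yields an action $\alpha$ of infinite phenotype with $I\subseteq\Stab_\alpha(v_0)$. As $\Stab_\alpha(v_0)\cap\la b\ra=\{\id\}$ and $O\subseteq\la b\ra\setminus\{\id\}$, we get $\Stab_\alpha(v_0)\in\Vc$, so $\Vc$ is nonempty.

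This completes the argument when $\gcd(m,n)<\min(|m|,|n|)$, that is, when $\min(|m|,|n|)\nmid\max(|m|,|n|)$. The main obstacle is the remaining case $\gcd(m,n)=\min(|m|,|n|)$: there the value $d=\min(|m|,|n|)$ can arise as $\gcd(N,m)=\gcd(N,n)$, yet one \emph{cannot} impose a $t$-loop at $v_0b^{d}$ while keeping the phenotype infinite (together with $t\in\Lambda$ that requirement already forces a nonzero power of $b$ into $\Lambda$), so the naive bouquet of $t$-loops simultaneously over- and under-constrains. I expect this case to require recording a slightly larger piece of the Schreier graph near $v_0$ than a bouquet of $t$-loops — or replacing $t$ by a suitable power $t^{r}$ — in order to pin down $N$ without making $\Vc(I,O)$ empty; carrying this out, and making precise the prime-by-prime reconciliation in the rigidity step, is where the real work lies.
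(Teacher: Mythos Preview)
Your argument is genuinely incomplete, and you correctly identify where: when $\gcd(m,n)=\min(|m|,|n|)$ your clopen set $\Vc(I,O)$ is \emph{empty}. Indeed, with an infinite $b$-orbit and $t\in\Lambda$, the relation forces $v_0b^{\pm n}\cdot t=v_0b^{\pm m}$; requiring also $b^{|n|}tb^{-|n|}\in\Lambda$ (which is in your $I$ whenever $|n|\mid\gcd(m,n)$) then gives $v_0b^{\pm m}=v_0b^{\pm n}$, hence $|m|=|n|$. So in this range no witness exists, and your suggestion of ``recording a larger piece of the Schreier graph'' will not help as long as you insist that $O$ consist only of powers of $b$: finite-index subgroups of $\BSo(m,n)$ can miss any prescribed finite list of nontrivial powers of $b$, yet they have finite phenotype and (by Lemma~\ref{lemma: k loops finite index}) account for all finite-phenotype subgroups containing your $I$.

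The paper's proof is conceptually different and works uniformly. It takes $\Lambda_0=\la t,\,btb^{-1},\ldots,b^{k-1}tb^{-(k-1)}\ra$ with $k=\gcd(m,n)$, observes (Lemma~\ref{lemma: k loops finite index}) that any finite-phenotype subgroup containing these generators has finite index, and then invokes \emph{non-residual-finiteness}: since $|m|\neq|n|$, there is a nontrivial normal subgroup $N$ contained in every finite-index subgroup, so any limit of finite-phenotype subgroups near $\Lambda_0$ would contain $N$. But $\Lambda_0$ is finitely generated of infinite phenotype with infinite Bass--Serre graph, and Proposition~\ref{fg infinite phen subgps are not confined} shows such a subgroup has conjugates tending to $\{\id\}$, hence contains no nontrivial normal subgroup --- contradiction. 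In effect the paper's ``$O$'' is a single element of the finite residual of $\BSo(m,n)$, not a power of $b$.

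Where it does work (i.e.\ when $\min(|m|,|n|)\nmid\max(|m|,|n|)$) your approach is a nice, more elementary alternative that avoids Meskin's theorem and gives a completely explicit open neighbourhood. Your rigidity step is also correct, and in fact simpler than you suggest: from $jn\equiv d\pmod N$ one gets $j\cdot(n/d)\equiv 1\pmod{N/d}$, hence $\gcd(j,N/d)=1$; then $j(m-n)\equiv 0\pmod N$ yields $N\mid m-n$ directly, no prime-by-prime analysis needed. But to cover the divisibility case you will need some global input on $\BSo(m,n)$ --- essentially the failure of residual finiteness --- rather than a purely local refinement of $I$ and $O$.
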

Let us recall from Corollary \ref{cor: m neq n infinite phenotype perfect} that $\PHE^{-1}(\infty) = \PK_\infty(\BSo(m,n))$ whenever $\abs m\neq \abs n$.
Hence, the subgroups given by the proposition lie in fact in $\PK_\infty(\BSo(m,n))$.

In the proof of Proposition \ref{prop: something is not limit}, we will need a lemma and a proposition.

\begin{lemma}\label{lemma: k loops finite index}
	Let $m,n$ be integers such that $\abs m\neq \abs n$ and $\abs m$, $\abs n\geq 2$. Let $k\coloneqq \gcd(m,n)$. 
	Let $\Lambda\leq \BSo(m,n)$ be a subgroup containing the following elements
	\[ 
	t,b tb\inv,\ldots,b^{k-1}tb^{-(k-1)}.
	\]
	If $\Lambda$ has finite phenotype, then $\Lambda$ has finite index in $\BSo(m,n)$.
\end{lemma}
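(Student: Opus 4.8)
The plan is to use the hypothesis to show that $\Lambda$ acts with a single vertex orbit and a single edge orbit on the Bass–Serre tree $\Tree$, so that the Bass–Serre graph $\BSe(\Lambda\bs\BSo(m,n)\reacts\BSo(m,n))$ has exactly one vertex, and then use finiteness of the phenotype to conclude that this Bass–Serre graph is finite, whence $\Lambda$ has finite index by Remark~\ref{rem: finite BS finite phen}. Set $\Gamma\coloneqq\BSo(m,n)$ and $k\coloneqq\gcd(m,n)$, and work in the Schreier graph $\Schreier(\Lambda,S)$ (or its Bass–Serre graph), with basepoint $v_0=\Lambda\in\Lambda\bs\Gamma$. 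The element $t\in\Lambda$ means $v_0 t = v_0$, i.e.\ $v_0$ has a $t$-loop at the Schreier level. The elements $b^jtb^{-j}\in\Lambda$ for $0\le j\le k-1$ mean $v_0(b^jtb^{-j})=v_0$, i.e.\ $v_0 b^j$ is fixed by $t$ modulo $b$-translation back; more precisely, reading in the Schreier graph, the vertex $v_0b^j$ satisfies $(v_0b^j)t=v_0b^j$ after conjugating, so each of the $\gcd(L(v_0),n)$ many $b^n$-orbits inside the $b$-orbit of $v_0$ carries a $t$-edge returning into the same $b$-orbit.

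The first key step is to translate this into the statement that, in the Bass–Serre graph $\BSe(\Lambda\bs\Gamma\reacts\Gamma)$, the vertex $\bar v_0$ corresponding to $v_0\la b\ra$ has all of its $\degout(\bar v_0)=\gcd(L(\bar v_0),n)$ outgoing $t$-edges looping back to $\bar v_0$ itself. Here one must check that the $k=\gcd(m,n)$ elements $t, btb\inv,\dots,b^{k-1}tb^{-(k-1)}$ hit all $\gcd(L(v_0),n)$ distinct $b^n$-orbits in $v_0\la b\ra$: since $\gcd(L(v_0),n)$ divides $\gcd$-data controlled by $n$ and $L(v_0)$ has phenotype $q$ with $\gcd(q,m)=\gcd(q,n)$, one sees $\gcd(L(v_0),n)$ divides $k$ in the relevant sense (the cosets $b^j\la b^n\ra$ for $0\le j<k$ already exhaust the $b^n$-orbits whenever $\gcd(L(v_0),n)\le k$, which holds because every prime power in $\gcd(L(v_0),n)$ appears in $n$ hence in $k$ up to the cap at $\min(\abs m_p,\abs n_p)$). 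Because the Bass–Serre graph is saturated, each such outgoing edge $e$ has $\source(e)=\target(e)=\bar v_0$, so the connected component of $\bar v_0$ in $\BSe(\Lambda\bs\Gamma\reacts\Gamma)$ consists of the single vertex $\bar v_0$ with $\gcd(L(\bar v_0),n)$ loops (automatically also accounting for all incoming edges via the involution $e\mapsto\bar e$). Since the Bass–Serre graph is connected, it \emph{is} this one-vertex graph, hence finite.

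The second key step is then immediate: the Bass–Serre graph $\BSe(\Lambda\bs\Gamma\reacts\Gamma)$ is finite, and by hypothesis $\Lambda$ has finite phenotype, so by Remark~\ref{rem: finite BS finite phen} the action $\Lambda\bs\Gamma\reacts\Gamma$ is an action on a finite set, i.e.\ $\Lambda$ has finite index in $\Gamma$. I expect the main obstacle to be the careful verification that the $k$ prescribed elements really do produce $t$-loops on \emph{all} the $b^n$-orbits of $v_0\la b\ra$ — that is, the arithmetic bookkeeping showing $\gcd(L(v_0),n)\mid k$ in the precise sense needed, using that $\PHE(\Lambda)=q$ forces the $p$-adic valuations of $L(v_0)$ that exceed $\min(\abs m_p,\abs n_p)$ to lie in $\primes_{m,n}(q)$ (so those primes contribute equally to $\gcd(L(v_0),n)$ and to $k$), while the remaining primes are capped by $\min(\abs m_p,\abs n_p)\le\abs n_p$ so their contribution to $\gcd(L(v_0),n)$ divides their contribution to $k=\gcd(m,n)$. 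Once this combinatorial claim is in hand, the rest is formal.
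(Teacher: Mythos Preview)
Your overall plan---show that the Bass--Serre graph reduces to a single vertex with loops, then invoke Remark~\ref{rem: finite BS finite phen}---is exactly the paper's approach. The gap is in the arithmetic step where you claim that the finite phenotype hypothesis alone forces $\gcd(L(v_0),n)\mid k$. This is false. Take $(m,n)=(4,6)$, so $k=\gcd(4,6)=2$, and consider a vertex label $L(v_0)=3$: this has $(4,6)$-phenotype $1$ (since $3\notin\primes_{4,6}$), yet $\gcd(L(v_0),n)=\gcd(3,6)=3$, which does not divide $k=2$. Your claim that ``$\PHE(\Lambda)=q$ forces the $p$-adic valuations of $L(v_0)$ that exceed $\min(\abs m_p,\abs n_p)$ to lie in $\primes_{m,n}(q)$'' confuses finite phenotype with membership in $\maxclo_q$; the phenotype puts no constraint on $\abs{L(v_0)}_p$ for primes $p$ with $\abs m_p\neq\abs n_p$.

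What rescues the argument is the one hypothesis you mention but do not exploit: $t\in\Lambda$ gives a \emph{loop} at $\bar v_0$ in the Bass--Serre graph, and the transfer equation~\eqref{eq:transfert} applied to that loop (where $\source(e)=\target(e)=\bar v_0$) yields $\gcd(L(v_0),m)=\gcd(L(v_0),n)$. Call this common value $d$; then $d=\gcd(L(v_0),m,n)$ divides $k=\gcd(m,n)$, and now $b^0,\ldots,b^{d-1}$ represent all $d$ outgoing edges, so the given elements $b^jtb^{-j}$ for $0\le j\le d-1\le k-1$ make every outgoing edge a loop. This same equality $d=\degin(\bar v_0)=\degout(\bar v_0)$ is also what forces all \emph{incoming} edges to be loops (your appeal to the involution $e\mapsto\bar e$ is not enough: the opposite of an outgoing positive edge is a negative edge, not an incoming positive edge). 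Once you insert the loop equation at the start, the rest of your sketch goes through.
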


\begin{proof}
	Let $\alpha$ be the action $\Lambda\backslash \BSo(m,n) \curvearrowleft \BSo(m,n)$.
	Since the phenotype is finite, it is sufficient to show that the Bass-Serre graph $\BSe(\alpha)$ is finite (see Remark \ref{rem: finite BS finite phen}).
	
	Since $\Lambda$ contains $t$, there is a loop in $\BSe(\alpha)$ at the vertex $v\coloneqq \Lambda\la b\ra$.
	In particular, Equation \eqref{eq:transfert} gives
	\( \frac{L(v)}{\gcd(L(v),m)}=\frac{L(v)}{\gcd(L(v),n)} \).
	As $\Lambda$ has finite phenotype, $L(v)$ is finite, so $\gcd(L(v),m)=\gcd(L(v),n)$ .
	Moreover, as $\BSe(\alpha)$ is a saturated $(m,n)$-graph, we obtain
	\[
	\degin(v) = \gcd(L(v),m)=\gcd(L(v),n) = \degout(v) .
	\]
	This number, that we will denote $d$, is the greatest common divisor of $m$, $n$ and $L(v)$.
	Hence $d$ divides $k=\gcd(m,n)$.
	
	The $d$ outgoing edges at $v$ are exactly  $\Lambda  \la b^n\ra, \Lambda b \la b^n \ra,\dots, \Lambda b^{d-1}\la b^n\ra$.
	As $d\leq k$, the subgroup $\Lambda$ contains $t, btb\inv,\dots,b^{d-1}tb^{-(d-1)}$. Since $\Lambda b^{j}t=(\Lambda b^{j}tb^{-j}) b^{j}=\Lambda b^{j}$, the element
	$t$ fixes all the points $\Lambda, \Lambda b, \ldots, \Lambda b^{d-1} \in \Lambda\backslash \BSo(m,n)$.
	The terminal vertex of the edge $\Lambda b^j \la b^n \ra$ is precisely the vertex $\Lambda b^j t \la b \ra = \Lambda b^j \la b \ra = v$ (see Definition \ref{def: Bass-Serre graph of pre-action}), so all outgoing edges at $v$ are loops.
	
	Since the outgoing degree at $v$ is equal to the incoming degree, all incoming edges at $v$ are loops as well.
	Therefore $\BSe(\alpha)$ consists only of the vertex $v$ and $d$ loops. It is thus finite as wanted.
\end{proof}

\begin{proposition}
	\label{fg infinite phen subgps are not confined}
	Let $m,n$ be integers with ${\abs{m}},\abs n\geq 2$.
	Let $\Lambda$ be a finitely generated subgroup of infinite phenotype and infinite Bass-Serre graph.
	Then there is a sequence of conjugates of $\Lambda$ which converges to $\{\id\}$.
	In particular, $\Lambda$ does not contain any non-trivial normal subgroup of $\BSo(m,n)$.
\end{proposition}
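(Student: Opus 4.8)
The plan is to push basepoints deep into the ``hanging trees'' of the quotient graph $\Lambda\backslash\Tree$, where the $\Lambda$-action on the Bass--Serre tree $\Tree$ acquires arbitrarily large injectivity radius, and then to read the required conjugates off the corresponding pointed Schreier graphs.

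First I would observe that $\PHE(\Lambda)=\infty$ forces, via Remark~\ref{Rem: Phenotypes des b-sous-groupes}, the equality $\Lambda\cap\la b\ra=\{\id\}$; hence by Remark~\ref{rem: gpe fdmt si inter <b> trivial} the group $\Lambda$ acts freely on $\Tree$, is free, and is isomorphic to $\pi_1(\Lambda\backslash\Tree)$, the latter graph being the underlying graph of $\BSe(\Lambda)$. If $\Lambda=\{\id\}$ the statement is trivial, so assume $\Lambda\neq\{\id\}$. Since $\pi_1(\Lambda\backslash\Tree)\cong\Lambda$ is finitely generated free, elementary graph theory lets us write $\Lambda\backslash\Tree$ as a finite connected ``core'' $\mathcal C$ with a family of trees attached along vertices of $\mathcal C$; as $\BSe(\Lambda)$ is infinite, at least one of these trees is infinite. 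Consequently, for every $R\geq 1$ there is a vertex $\bar u_R$ of $\Lambda\backslash\Tree$ with $d(\bar u_R,\mathcal C)\geq R$.

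Next I would establish the geometric heart of the argument: if $u_R\in V(\Tree)$ is a lift of $\bar u_R$, then $d_\Tree(u_R,\lambda u_R)\geq 2R$ for every $\id\neq\lambda\in\Lambda$. Indeed, because $\Lambda$ acts freely, the $\Tree$-geodesic from $u_R$ to $\lambda u_R$ projects to a \emph{reduced}, non-trivial closed path of the same length based at $\bar u_R$; being reduced and non-trivial it cannot stay inside the tree hanging at $\bar u_R$, so it reaches $\mathcal C$ and returns, which already costs at least $2\,d(\bar u_R,\mathcal C)\geq 2R$ edges. Transporting this inequality along the $1$-Lipschitz, equivariant projection $\Cayley(\BSo(m,n),S)\to\Tree$, $g\mapsto g\la b\ra$, and choosing $g_R\in\BSo(m,n)$ with $g_R\la b\ra=u_R$, we get $d_{\Cayley}(g_R,\lambda g_R)\geq 2R$ for all $\id\neq\lambda\in\Lambda$. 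Hence the covering map $\Cayley(\BSo(m,n),S)\to\Lambda\backslash\Cayley(\BSo(m,n),S)=\Schreier(\Lambda,S)$ is injective on the ball of radius $R-1$ around $g_R$, so it restricts there to an isomorphism of pointed labeled oriented graphs onto the ball of radius $R-1$ around $\Lambda g_R$; translating on the left by $g_R\inv$ (vertex-transitivity of $\Cayley$) yields $(\Schreier(\Lambda,S),\Lambda g_R)\simeq_{R-1}(\Cayley(\BSo(m,n),S),\id)=(\Schreier(\{\id\},S),\id)$.

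Finally, since $\Stab(\Lambda g_R)=g_R\inv\Lambda g_R$, the neighbourhood basis \eqref{eq: nbhd basis in terms of Schreier graph} shows that the conjugates $g_R\inv\Lambda g_R$ converge to $\{\id\}$ in $\Sub(\BSo(m,n))$ as $R\to\infty$; and if $N\trianglelefteq\BSo(m,n)$ satisfies $N\leq\Lambda$, then $N=g_R\inv Ng_R\leq g_R\inv\Lambda g_R$ for all $R$, so every $\eta\in N$ lies in all terms of this sequence and therefore equals $\id$. The step I expect to be the main obstacle is the construction of basepoints with injectivity radius going to infinity: it rests on the fact that finite generation of $\Lambda$ makes the core $\mathcal C$ finite, so that ``$\BSe(\Lambda)$ infinite'' genuinely means ``arbitrarily deep hanging trees''; a lesser point to handle carefully is that the ball isomorphism respects labels and orientations, which is automatic because the covering $\Cayley(\BSo(m,n),S)\to\Schreier(\Lambda,S)$ does.
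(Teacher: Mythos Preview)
Your proof is correct and follows essentially the same approach as the paper: both exploit that the free $\Lambda$-action on $\Tree$ has a quotient consisting of a finite core with hanging trees, push the basepoint deep into one such tree, and observe that every nontrivial element of the conjugated subgroup must be long. The only cosmetic difference is that the paper measures ``long'' directly via the $t$-length (equivalently, tree distance) and concludes by saying elements avoid any fixed finite set, while you transfer the tree distance to the Cayley graph via the $1$-Lipschitz projection and phrase the conclusion in terms of the injectivity radius of the Schreier covering and the neighbourhood basis~\eqref{eq: nbhd basis in terms of Schreier graph}; these are the same argument.
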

\begin{proof}
	First recall that $\Lambda$ is free. Indeed, having infinite phenotype, it acts freely on the Bass-Serre tree $\Tree$ of $\BSo(m,n)$. 
	Taking the class $\la b\ra$ as a base point in  $\Tree$, the subgroup $\Lambda$ is the fundamental group of the quotient graph $\Lambda\bs \Tree$ based at $\Lambda\la b\ra$.
	This quotient graph is equal to the Bass-Serre graph of $\Lambda$, see Section \ref{sect: Bass-Serre graphs and Bass-Serre theory}, so it is infinite.
	Since moreover 
	$\Lambda$ is finitely generated, it consists of a finite graph to which are attached finitely many infinite trees. 
	Moving the basepoint along one of those infinite trees toward infinity amounts to conjugating $\Lambda$ by a certain sequence of elements $\gamma_i$ of $\BSo(m,n)$ for which we claim that $\gamma_i \Lambda \gamma_i^{-1}\to \{\id\}$. 
	Indeed, each non-trivial element of $\gamma_i \Lambda \gamma_i^{-1}$ is represented by a long path in the tree, followed by a closed path in the finite graph and the long path back to the new basepoint.
	All such elements have a uniformly large $t$-length which tends to $+\infty$ with $i$: their $t$-length is bounded below by twice the $t$-length of $\gamma_i$ minus the diameter of the finite graph. In particular, for any finite set
	$F\subset \Gamma\smallsetminus \{\id\}$ and large enough $i$, all the elements of $\gamma_i \Lambda\gamma_i\inv$ have $t$-length larger than all those of $F$; so $\gamma_i \Lambda \gamma_i^{-1}\cap F=\emptyset$. 
	This proves that $\gamma_i \Lambda \gamma_i^{-1}\to \{\id\}$ as wanted.
\end{proof}

\begin{proof}[Proof of Proposition \ref{prop: something is not limit}]
	Consider the group $\Lambda \coloneqq \la t,btb^{-1},\ldots,b^{k-1}tb^{-(k-1)}\ra$. 
	Observe that by Britton's Lemma (see e.g.~\cite[Chapter IV.2]{lyndonCombinatorialGroupTheory2001}), it is a free group freely generated by $t,btb^{-1},\ldots,b^{k-1}tb^{-(k-1)}$.
	Every non-trivial element of $\Lambda$ contains at least one $t^{\pm1}$ in its normal form, in particular 
	$\Lambda\cap \la b\ra=\{\id\}$: the phenotype of $\Lambda$ is infinite.
	We claim that \[\Lambda\notin \overline{\bigcup_{q \text{ finite}}\PHE\inv(q)}.\]
	
	Suppose that $(\Lambda_i)_{i\geq 0}$ is a sequence of subgroups of finite (variable) phenotypes converging to $\Lambda$. 
	For $i$ large enough, we have $t,btb^{-1},\ldots,$ $b^{k-1}tb^{-(k-1)}\in\Lambda_i$, and thus the subgroup $\Lambda_i$ has finite index by Lemma \ref{lemma: k loops finite index}.
	However, recall that since $\abs m\neq\abs n$, the group $\BSo(m,n)$ is not residually finite \cite{meskinNonresiduallyFiniteOnerelator1972}. Therefore there is a non-trivial normal subgroup $N\trianglelefteq\BSo(m,n)$ contained in every finite index subgroup, and we have $N\leq \Lambda$ since $\Lambda_i \to \Lambda$. 
	This is impossible by Proposition \ref{fg infinite phen subgps are not confined}.
\end{proof}

\begin{corollary}
	\label{cor:accum finite phen in infinite phen has empty interior}
	Let $m,n$ be integers such that $\abs m\neq \abs n$ and ${\abs m},\abs n\geq 2$. Then \[\overline{\bigcup_{q \text{ finite}}\PHE\inv(q)}\cap \PHE^{-1}(\infty)\] has empty interior in $\PHE^{-1}(\infty)$.
\end{corollary}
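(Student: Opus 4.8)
The statement to prove is: for $\abs m\neq\abs n$ with $\abs m,\abs n\geq 2$, the set $\overline{\bigcup_{q\text{ finite}}\PHE\inv(q)}\cap\PHE^{-1}(\infty)$ has empty interior in $\PHE^{-1}(\infty)$. Recall from Corollary~\ref{cor: m neq n infinite phenotype perfect} that in this regime $\PHE^{-1}(\infty)=\PK_\infty(\BSo(m,n))$, and that by Theorem~\ref{thm: gdelta dense orbits} (via Corollary~\ref{cor: dense conj class}) the conjugation action of $\BSo(m,n)$ on $\PK_\infty(\BSo(m,n))$ is topologically transitive, with a dense $G_\delta$ of points having dense orbit. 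The key observation is that the set $Z\coloneqq\overline{\bigcup_{q\text{ finite}}\PHE\inv(q)}\cap\PHE^{-1}(\infty)$ is closed in $\PHE^{-1}(\infty)$ and $\BSo(m,n)$-invariant: it is an intersection of two closed invariant sets (the closure of an invariant set is invariant, and $\PHE^{-1}(\infty)$ is closed and invariant by Proposition~\ref{prop: phenotype partition}). A closed invariant proper subset of a space carrying a topologically transitive action automatically has empty interior — indeed, if it contained a nonempty open set $U$, then by topological transitivity some orbit would meet both $U$ and the (nonempty, by Proposition~\ref{prop: something is not limit}) open complement $\PHE^{-1}(\infty)\setminus Z$, contradicting invariance of $Z$.

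So the proof reduces to three ingredients, all of which are already available. First, $Z$ is closed in $\PHE^{-1}(\infty)$: it is the intersection of the closed set $\overline{\bigcup_q\PHE^{-1}(q)}$ with $\PHE^{-1}(\infty)$. Second, $Z$ is invariant under conjugation: each $\PHE^{-1}(q)$ is invariant (the phenotype is a conjugacy invariant, as noted after Theorem~\ref{thintro: char phenotype}), hence so is their union, hence so is its closure, and $\PHE^{-1}(\infty)$ is invariant. Third, $Z\neq\PHE^{-1}(\infty)$: this is exactly the content of Proposition~\ref{prop: something is not limit}, which exhibits a subgroup of infinite phenotype not lying in $\overline{\bigcup_q\PHE^{-1}(q)}$. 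Combining these with topological transitivity of the action on $\PHE^{-1}(\infty)=\PK_\infty(\BSo(m,n))$ finishes the argument.

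There is no real obstacle here — this corollary is a soft consequence packaging together Proposition~\ref{prop: something is not limit} (the hard input, whose proof rests on the non-residual-finiteness of $\BSo(m,n)$ when $\abs m\neq\abs n$, together with Lemma~\ref{lemma: k loops finite index} and Proposition~\ref{fg infinite phen subgps are not confined}) and the topological transitivity of Theorem~\ref{thm: gdelta dense orbits}. The only point requiring a moment of care is to phrase the "closed invariant proper subset of a topologically transitive system has empty interior" step correctly: one uses that the complement $V\coloneqq\PHE^{-1}(\infty)\setminus Z$ is nonempty open in $\PHE^{-1}(\infty)$, and if $U\subseteq Z$ were nonempty open, topological transitivity would produce $\Lambda\in\PHE^{-1}(\infty)$ whose orbit meets both $U\subseteq Z$ and $V=\PHE^{-1}(\infty)\setminus Z$ — but $Z$ being invariant, the orbit of any point of $U$ stays in $Z$, a contradiction.

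\begin{proof}[Proof of Corollary~\ref{cor:accum finite phen in infinite phen has empty interior}]
	Write $\Gamma\coloneqq\BSo(m,n)$ and set
	\[
	Z\coloneqq \overline{\bigcup_{q\text{ finite}}\PHE^{-1}(q)}\cap\PHE^{-1}(\infty).
	\]
	By Corollary~\ref{cor: m neq n infinite phenotype perfect}, since $\abs m\neq\abs n$ we have $\PHE^{-1}(\infty)=\PK_\infty(\Gamma)$. By Theorem~\ref{thm: gdelta dense orbits}, the action of $\Gamma$ by conjugation on $\PK_\infty(\Gamma)=\PHE^{-1}(\infty)$ is topologically transitive.

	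The set $Z$ is closed in $\PHE^{-1}(\infty)$, being the intersection of $\PHE^{-1}(\infty)$ with the closed set $\overline{\bigcup_{q\text{ finite}}\PHE^{-1}(q)}$. It is also $\Gamma$-invariant: each $\PHE^{-1}(q)$ is invariant because the phenotype of a subgroup is a conjugacy invariant, hence so is $\bigcup_{q}\PHE^{-1}(q)$, hence so is its closure; and $\PHE^{-1}(\infty)$ is invariant by Proposition~\ref{prop: phenotype partition}. Finally $Z\neq\PHE^{-1}(\infty)$ by Proposition~\ref{prop: something is not limit}.

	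Suppose, for contradiction, that $Z$ had nonempty interior in $\PHE^{-1}(\infty)$, say it contains a nonempty relatively open set $U$. Let $V\coloneqq\PHE^{-1}(\infty)\setminus Z$, which is nonempty (by Proposition~\ref{prop: something is not limit}) and relatively open in $\PHE^{-1}(\infty)$. By topological transitivity of the $\Gamma$-action on $\PHE^{-1}(\infty)$, there is $\Lambda\in\PHE^{-1}(\infty)$ whose $\Gamma$-orbit meets both $U$ and $V$. Pick $\gamma_1,\gamma_2\in\Gamma$ with $\gamma_1\Lambda\gamma_1^{-1}\in U\subseteq Z$ and $\gamma_2\Lambda\gamma_2^{-1}\in V=\PHE^{-1}(\infty)\setminus Z$. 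But $Z$ is $\Gamma$-invariant, so $\gamma_1\Lambda\gamma_1^{-1}\in Z$ forces $\gamma_2\Lambda\gamma_2^{-1}=(\gamma_2\gamma_1^{-1})(\gamma_1\Lambda\gamma_1^{-1})(\gamma_2\gamma_1^{-1})^{-1}\in Z$, contradicting $\gamma_2\Lambda\gamma_2^{-1}\in V$. Hence $Z$ has empty interior in $\PHE^{-1}(\infty)$.
\end{proof}
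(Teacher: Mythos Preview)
Your proof is correct and follows essentially the same approach as the paper's: both identify $\PHE^{-1}(\infty)=\PK_\infty(\BSo(m,n))$, invoke Proposition~\ref{prop: something is not limit} to get that $Z$ is a proper closed invariant subset, and then use topological transitivity of the conjugation action on $\PK_\infty$ to conclude that $Z$ has empty interior. The only cosmetic difference is that the paper phrases the last step via Corollary~\ref{cor: dense conj class} (picking a dense-orbit point in the nonempty open complement of $Z$), whereas you argue directly from the definition of topological transitivity; these are equivalent.
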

\begin{proof}
	Recall again that $\PHE^{-1}(\infty) = \PK_\infty(\BSo(m,n))$, see Corollary \ref{cor: m neq n infinite phenotype perfect}.
	In this space, the subset $\PK_\infty(\BSo(m,n))\smallsetminus \overline{\cup_{q \text{ finite}}\PHE\inv(q)}$ is open and Proposition \ref{prop: something is not limit} implies that it is non-empty.
	By Corollary \ref{cor: dense conj class}, this open subset contains a subgroup $\Lambda$ whose orbit is dense in $\PK_\infty(\BSo(m,n))$.
	Therefore $\overline{\cup_{q \text{ finite}}\PHE\inv(q)}$ has empty interior in $\PK_\infty(\BSo(m,n))$.
\end{proof}

\begin{proposition}\label{prop: varying phenotypes vs fixed easy}
	Let $m,n$ be integers such that ${\abs m},\abs n\geq 2$. For any finite phenotype $q_0$, the following inclusion is strict:
	\[\overline{\PHE\inv(q_0)}\cap\PHE\inv(\infty)\subsetneq \overline{\bigcup_{q \text{ finite}}\PHE\inv(q)}\cap\PHE\inv(\infty) .\] 
\end{proposition}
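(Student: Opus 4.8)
The plan is to split into the two cases $\abs m=\abs n$ and $\abs m\neq\abs n$; in both, the inclusion $\supseteq$ is immediate since $\PHE^{-1}(q_0)\subseteq\bigcup_{q\text{ finite}}\PHE^{-1}(q)$, so only strictness needs an argument. If $\abs m=\abs n$, I would note that $\PHE^{-1}(q_0)$ is already closed by Proposition~\ref{prop: phenotype partition}, so the left-hand side is $\PHE^{-1}(q_0)\cap\PHE^{-1}(\infty)=\emptyset$, whereas by Proposition~\ref{prop: limit finite phen m=n} the right-hand side equals the nonempty set $\PHE^{-1}(\infty)$; this settles that case.

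For $\abs m\neq\abs n$, recall from Theorem~\ref{thm: closure of phen q} that the left-hand side is exactly $\{\Lambda\in\PHE^{-1}(\infty)\colon\Lambda\leq\la\!\la b\ra\!\ra\}$, so it suffices to produce an infinite-phenotype subgroup that contains an element of nonzero $t$-height and is still a limit of finite-phenotype subgroups. I would take $\Lambda_j:=\la t,b^{k_j}\ra$, where $k_j:=p_1\cdots p_j$ and $p_1<p_2<\cdots$ enumerate the (infinitely many) primes not dividing $mn$; since $k_j\mid k_{j+1}$ the sequence $(\Lambda_j)$ is decreasing and hence converges in $\Sub(\BSo(m,n))$ to $H:=\bigcap_j\Lambda_j$. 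Each $\Lambda_j$ has finite phenotype because it contains $b^{k_j}\neq\id$, so $\Lambda_j\in\bigcup_{q\text{ finite}}\PHE^{-1}(q)$; and $t\in H$ with $\Sigma_t=1\neq 0$ (see Notation~\ref{notation: length and height}) shows $H\not\leq\la\!\la b\ra\!\ra$, so $H$ will lie in the right-hand side but not the left-hand side — once we know $\PHE(H)=\infty$.

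The only real work is showing that $H$ has infinite phenotype, i.e.\ $H\cap\la b\ra=\{\id\}$; by Remark~\ref{Rem: Phenotypes des b-sous-groupes} this amounts to $\Lambda_j\cap\la b\ra=\la b^{k_j}\ra$ for every $j$, and this is exactly where the coprimality $\gcd(k_j,mn)=1$ is used. To get it I would pass to the quotient $\BSo(m,n)/\la\!\la b^{k_j}\ra\!\ra$, which by Proposition~\ref{prop: quotients by b to the k} (the case $\gcd(k_j,m)=\gcd(k_j,n)=1$, as in the proof of Theorem~\ref{thm:maximalinvariantclosed}\ref{item: gcp m n 1 maxclo small}) is the semidirect product $\la\bar b\ra\rtimes\la\bar t\ra$ with $\la\bar b\ra\cong\Z/k_j\Z$ and $\la\bar t\ra\cong\Z$: the image of $\Lambda_j$ is $\la\bar t\ra$, which meets $\la\bar b\ra$ trivially, so $\Lambda_j\cap\la b\ra$ maps to $\{\id\}$ under $\la b\ra\to\la\bar b\ra\cong\la b\ra/\la b^{k_j}\ra$, giving $\Lambda_j\cap\la b\ra\subseteq\la b^{k_j}\ra$ (the reverse inclusion being clear). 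Then $H\cap\la b\ra\subseteq\bigcap_j\la b^{k_j}\ra=\{\id\}$ because $k_j\to\infty$, so $\PHE(H)=\infty$ by Remark~\ref{Rem: Phenotypes des b-sous-groupes}, and $H$ witnesses the strict inclusion. (Alternatively, Britton's Lemma in the HNN decomposition $\BSo(m,n)=\la b\ra\ast_{\la b^m\ra=\la b^n\ra}$ yields $\Lambda_j\cap\la b\ra=\la b^{k_j}\ra$ just as directly, using that $b^{k_j}$ lies in neither associated subgroup.) The main conceptual obstacle is thus simply spotting the right family $\Lambda_j$; all the remaining ingredients are Theorem~\ref{thm: closure of phen q}, Remark~\ref{Rem: Phenotypes des b-sous-groupes} and the elementary fact that a decreasing sequence of subgroups converges to its intersection.
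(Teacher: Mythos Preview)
Your proof is correct and follows essentially the same idea as the paper's: exhibit finite-phenotype subgroups of the form $\la t,b^k\ra$ with $k$ coprime to $mn$, and take a limit to obtain an infinite-phenotype subgroup containing $t\notin\la\!\la b\ra\!\ra$. The paper simply uses $k=p$ prime, notes that $\Lambda_p=\la t,b^p\ra$ has phenotype $p$ and index $p$ (its Bass-Serre graph is a single loop since $\gcd(p,m)=\gcd(p,n)=1$), and extracts an accumulation point by compactness; you instead take $k_j=p_1\cdots p_j$ to get a genuinely decreasing sequence with explicit intersection, which costs you the extra (but correct) verification that $\Lambda_j\cap\la b\ra=\la b^{k_j}\ra$ via the semidirect-product quotient.
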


Observe that Proposition~\ref{prop: varying phenotypes vs fixed easy} is trivially true if $\abs m=\abs n$. Indeed, Proposition \ref{prop: limit finite phen m=n} implies that the right hand side is equal to $\PHE^{-1}(\infty)$.
Since Proposition \ref{prop: phenotype partition} yields that $\PHE^{-1}(q_0)$ is closed, the left hand side is empty.

\begin{proof}[Proof of Proposition \ref{prop: varying phenotypes vs fixed easy}]
	For a prime $p$ which divides neither $m$ nor $n$,  define $\Lambda_p\coloneqq \la b^p,t\ra$. 
	Then $\Lambda_p$ clearly has phenotype $p$ (and index $p$ in $\BSo(m,n)$). Let $\Lambda$ be an accumulation point of the sequence $(\Lambda_p)$, then by construction $\Lambda$ has infinite phenotype, so it is in the set $\overline{\bigcup_{q \text{ finite}}\PHE\inv(q)}\cap\PHE\inv(\infty)$.
	However, it contains $t\not\in\la\!\la b\ra\!\ra$ so it is not in $\overline{\PHE\inv(q_0)}$ by Theorem \ref{thm: closure of phen q}.
\end{proof}

\begin{corollary}
	Let $m,n$ be integers such that ${\abs m},\abs n\geq 2$.
	The following inclusion is strict:
	\[\bigcup_{q \text{ finite}}\overline{\PHE\inv(q)}\cap\PHE\inv(\infty)\subsetneq \overline{\bigcup_{q \text{ finite}}\PHE\inv(q)}\cap\PHE\inv(\infty) .\] 
\end{corollary}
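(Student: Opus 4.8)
The plan is to split into the two cases $\abs m = \abs n$ and $\abs m \neq \abs n$, and in each of them reduce the statement to results already established, so that the only genuine work is a bit of bookkeeping.

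First I would treat the case $\abs m = \abs n$. Here Proposition \ref{prop: phenotype partition} tells us that each $\PHE^{-1}(q)$ with $q$ finite is closed, so $\overline{\PHE^{-1}(q)} = \PHE^{-1}(q)$ is disjoint from $\PHE^{-1}(\infty)$; consequently the left-hand side $\bigcup_{q\text{ finite}}\overline{\PHE^{-1}(q)}\cap\PHE^{-1}(\infty)$ is empty. On the other hand, Proposition \ref{prop: limit finite phen m=n} gives $\PHE^{-1}(\infty)\subseteq \overline{\bigcup_{q\text{ finite}}\PHE^{-1}(q)}$, so the right-hand side equals $\PHE^{-1}(\infty)$, which is non-empty since it contains the trivial subgroup (whose phenotype is infinite by Remark \ref{Rem: Phenotypes des b-sous-groupes}). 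Hence the inclusion is strict in this case.

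Next I would treat the case $\abs m \neq \abs n$. The key observation is that Theorem \ref{thm: closure of phen q} shows $\overline{\PHE^{-1}(q)}\cap\PHE^{-1}(\infty)$ does not depend on the finite phenotype $q$: it is always equal to $\{\Lambda\in\PHE^{-1}(\infty)\colon \Lambda\leq \la\!\la b\ra\!\ra\}$. Therefore the left-hand side $\bigcup_{q\text{ finite}}\overline{\PHE^{-1}(q)}\cap\PHE^{-1}(\infty)$ collapses to $\overline{\PHE^{-1}(q_0)}\cap\PHE^{-1}(\infty)$ for any single finite phenotype $q_0\in\QQ_{m,n}$ (such a $q_0$ exists, e.g.\ $q_0 = 1$, since $\Phe_{m,n}(q)=q$ for $q$ coprime with $m$ and $n$). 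The strict inclusion then follows immediately from Proposition \ref{prop: varying phenotypes vs fixed easy}.

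There is thus no real obstacle: the corollary is a direct packaging of Theorem \ref{thm: closure of phen q}, Proposition \ref{prop: varying phenotypes vs fixed easy}, Proposition \ref{prop: limit finite phen m=n} and Proposition \ref{prop: phenotype partition}. The only step deserving a line of care is the observation that makes the $q$-indexed union on the left collapse to a single set, which is exactly where the $q$-independence in Theorem \ref{thm: closure of phen q} is used.
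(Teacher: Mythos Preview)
Your proof is correct and follows essentially the same approach as the paper: split into the cases $\abs m = \abs n$ and $\abs m \neq \abs n$, use Proposition~\ref{prop: phenotype partition} and Proposition~\ref{prop: limit finite phen m=n} in the first case, and in the second case use Theorem~\ref{thm: closure of phen q} to collapse the union and then invoke Proposition~\ref{prop: varying phenotypes vs fixed easy}. The paper's version is more terse (it omits the explicit verification that the right-hand side is non-empty when $\abs m = \abs n$), but the logic is identical.
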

\begin{proof}
	If $\abs m=\abs n$, then as already remarked the left hand side is empty. 
	
	If $\abs m\neq \abs n$, recall from Theorem \ref{thm: closure of phen q} that $\overline{\PHE\inv(q)}\cap\PHE\inv(\infty)$
	is independent of $q$. The corollary thus follows from Proposition~\ref{prop: varying phenotypes vs fixed easy}.
\end{proof}

We can also give a statement analogous to Proposition \ref{prop: varying phenotypes vs fixed easy} in the perfect kernel, which is less easy to obtain.

\begin{theorem}\label{thm: varying phenotypes vs fixed}
	Let $m,n$ be integers such that ${\abs m},\abs n\geq 2$. For any finite phenotype $q_0$, the following inclusion is strict:
	\[\overline{\PK_{q_0}(\BSo(m,n))}\cap\PK_\infty(\BSo(m,n))\subsetneq \overline{\bigcup_{q \text{ finite}}\PK_q(\BSo(m,n))}\cap\PK_\infty (\BSo(m,n)).\] 
\end{theorem}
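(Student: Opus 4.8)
The idea is to produce a subgroup $\Lambda$ that is in the perfect kernel, has infinite phenotype, is a limit of subgroups of varying finite phenotype each lying in the perfect kernel, but is \emph{not} a limit of subgroups of the fixed phenotype $q_0$ living in the perfect kernel. By Theorem~\ref{thm: closure of phen q} (which computes $\overline{\PHE^{-1}(q_0)}\cap\PHE^{-1}(\infty)$ on the nose as the subgroups contained in $\la\!\la b\ra\!\ra$), it suffices to exhibit such a $\Lambda$ which contains an element $\gamma\notin\la\!\la b\ra\!\ra$; since then $\Lambda\notin\overline{\PHE^{-1}(q_0)}$, a fortiori $\Lambda\notin\overline{\PK_{q_0}(\BSo(m,n))}$. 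So the whole content is: find $\Lambda\in\PK_\infty(\BSo(m,n))$ with $\Lambda\not\leq\la\!\la b\ra\!\ra$, realized as a limit of a sequence $(\Lambda_i)$ with $\Lambda_i\in\PK_{q_i}(\BSo(m,n))$, $q_i$ finite.

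**Main construction.** I would build $\Lambda$ and the approximating sequence at the level of $(m,n)$-graphs, using the merging/saturation machinery. Start from a transitive pre-action $\alpha_0$ whose Bass-Serre graph is a finite, non-saturated $(m,n)$-graph $\Gc_0$ of \emph{infinite} phenotype with a vertex carrying a $t$-loop (so that the resulting subgroup will contain a conjugate of $t$, hence an element outside $\la\!\la b\ra\!\ra$); such a $\Gc_0$ exists because $\abs m,\abs n\geq 2$ — e.g. take a single vertex labeled $\infty$ with one $t$-loop, non-saturated. Then, for each $i$, pick a finite phenotype $q_i\to\infty$ (e.g. $q_i$ a prime coprime to $mn$ growing with $i$) and a finite non-saturated $(m,n)$-graph $\Hc_i$ of phenotype $q_i$ that also contains a ball of radius $\geq i$ coinciding with a fixed "skeleton" containing $\Gc_0$, arranged so that the $\BSe$'s agree on larger and larger balls around a fixed basepoint. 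Apply the merging machine (Theorem~\ref{thm: merging of pre-actions}) and saturation (Lemma~\ref{lem: saturation lemma}) to upgrade each $\Hc_i$ into a genuine transitive action $\alpha_i$ with infinite Bass-Serre graph — so that $\Lambda_i\coloneqq\Stab_{\alpha_i}(v)\in\PK_{q_i}(\BSo(m,n))$ by Theorem~\ref{thm: perfect kernel of Baumslag-Solitar groups} — and similarly build a limiting action $\alpha$ with $\Lambda=\Stab_\alpha(v)$, infinite phenotype, infinite Bass-Serre graph, hence in $\PK_\infty$. The balls of radius $i$ around $v$ in $\Schreier(\alpha_i)$ and $\Schreier(\alpha)$ agree by construction, so $\Lambda_i\to\Lambda$ in $\Sub(\BSo(m,n))$, using the neighborhood basis \eqref{eq: nbhd basis in terms of Schreier graph}.

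**The delicate point.** The non-trivial constraint is compatibility of phenotypes: to merge $\Gc_0$ (phenotype $\infty$) into the same action as the finite-phenotype data I cannot apply Theorem~\ref{thm: merging of (m,n)-graphs} directly, since that needs \emph{equal} phenotype. The correct route is: for each fixed $i$, I do \emph{not} merge an infinite-phenotype piece with a finite-phenotype piece; rather, $\Lambda_i$ itself must have phenotype $q_i$, so every vertex label in $\BSe(\alpha_i)$ is finite of phenotype $q_i$. What I keep fixed as $i$ grows is only the \emph{underlying oriented graph} of a large ball, \emph{not} the labels — and I let the labels on that ball be some large finite number of phenotype $q_i$, which converge (as $i\to\infty$) to $\infty$ in the label of each fixed vertex. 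Thus the Schreier balls stabilize in shape, the local $b$-orbit sizes grow to infinity, and the limit action $\alpha$ has the prescribed infinite-phenotype local picture with the $t$-loop preserved. Verifying that one can consistently choose finite labels of phenotype $q_i$ on a prescribed finite graph-with-$t$-loop, compatible with \eqref{eq:transfert} and \eqref{eq: deg bound by gcd L(v), n m}, is exactly where the arithmetic of the phenotype (Definition~\ref{df: phenotype natural number}, Lemma~\ref{properties of phenotypes}) is used: a $t$-loop at $v$ forces $\gcd(L(v),m)=\gcd(L(v),n)$, which is automatic once $L(v)$ is a multiple of $q_i\cdot\prod_{p:\abs m_p\neq\abs n_p}p^{\max(\abs m_p,\abs n_p)}$, and such multiples of phenotype $q_i$ exist in abundance. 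Everything else — saturating to get actions, extracting the convergent subsequence by compactness, checking $\Lambda_i,\Lambda$ all have infinite Bass-Serre graph (arrange the ball skeletons to grow, or graft an infinite ray via Lemma~\ref{lem: saturation lemma}) — is routine assembly using results already in the paper. The hardest part is orchestrating these choices so that the balls genuinely stabilize \emph{as labeled graphs truncated appropriately}, i.e. bookkeeping the interface between "fixed shape" and "labels tending to $\infty$".
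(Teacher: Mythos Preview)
Your overall strategy matches the paper's: produce $\Lambda\in\PK_\infty(\BSo(m,n))$ containing some fixed $\gamma\notin\la\!\la b\ra\!\ra$, realized as a limit of $\Lambda_i\in\PK_{q_i}(\BSo(m,n))$ with varying finite $q_i$; then Theorem~\ref{thm: closure of phen q} excludes $\Lambda$ from $\overline{\PHE^{-1}(q_0)}\supseteq\overline{\PK_{q_0}(\BSo(m,n))}$. However, your specific construction breaks down.

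First, your arithmetic for a $t$-loop is backwards. A loop at $v$ forces $\gcd(L(v),m)=\gcd(L(v),n)$; at a prime $p$ with $\abs{m}_p\neq\abs{n}_p$ this means $\abs{L(v)}_p\leq\min(\abs{m}_p,\abs{n}_p)$, \emph{not} that $L(v)$ be divisible by $p^{\max(\abs m_p,\abs n_p)}$ as you claim. More seriously, once you impose the correct condition, the approach collapses whenever $\gcd(m,n)=1$: then any finite $L(v)$ admitting a loop must be coprime to $mn$, so $\gcd(L(v),m)=\gcd(L(v),n)=1$, and the single loop already saturates $v$. The connected Bass-Serre graph of $\alpha_i$ is then just this one vertex with its loop, hence finite, so $\Lambda_i$ has finite index and lies \emph{outside} the perfect kernel. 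You end up reproving only Proposition~\ref{prop: varying phenotypes vs fixed easy}, not the theorem.

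The paper circumvents this by using a length-$3$ circuit (a triangle) in the Bass-Serre graph instead of a loop, with vertex labels $pn,\,p,\,pm$ for a prime $p$ coprime to $mn$. This triangle is never saturated (the vertices labeled $pn$ and $pm$ have incoming resp.\ outgoing degree $0$), so forest-saturation (Lemma~\ref{lem: saturation lemma}) yields an infinite Bass-Serre graph and hence $\Lambda_p\in\PK_p(\BSo(m,n))$. The pre-action on the triangle is arranged so that the \emph{fixed} element $t^2bt^{-1}b$ lies in every $\Lambda_p$; since it has $t$-height $1$ it survives to any accumulation point $\Lambda$ and witnesses $\Lambda\not\leq\la\!\la b\ra\!\ra$.
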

\begin{proof}
	For a fixed prime $p$ which divides neither $m$ nor $n$, let us define a pre-action $(\beta_p,\tau_p)$ as follows. Consider three $\beta_p$-cycles say $o_1$, $o_2$ and $o_3$,
	of cardinals $pn$, $p$ and $pm$ respectively. Then fix basepoints $y_i\in o_i$ for $i=1,2,3$.
	Remark that $o_1$ splits into $\abs n\geq 2$ $\beta_p^n$-orbits of size $p$ and that $o_3$ splits into $\abs m\geq 2$ $\beta_p^m$-orbits of size $p$. Therefore we can define $\tau_p$ by setting
	\[
	y_1\beta_p^{jn}\tau_p \coloneqq y_2\beta_p^{jm}, \quad y_2\beta_p^{jn}\tau_p \coloneqq y_3\beta_p^{jm} \quad \text{and} \quad y_1\beta_p^{-1+jn}\tau_p \coloneqq y_3\beta_p^{1+jm}.
	\]
	Clearly the phenotype of such a pre-action is $p$ and the associated Bass-Serre graph $\Gc_{0,p}\coloneqq \BSe(\beta_p,\tau_p)$ is a triangle. Set $x_p\coloneqq y_1$ and note that for every $p$, we have \[x_p\tau_p\tau_p\beta_p\tau_p^{-1}\beta_p=x_p.\]
	
	By Lemma \ref{lem: saturation lemma}, we can then extend $\Gc_{0,p}$ to a saturated $(m,n)$-graph $\Gc_p$, see Figure \ref{fig: triangle}, and by Proposition \ref{Extending pre-action onto a (m,n)-graph} we can extend the pre-action $(\beta_p,\tau_p)$ to an action $\alpha_p$ whose Bass-Serre graph is $\Gc_p$.
	
	\begin{figure}[H]
		\centering
		\begin{tikzpicture}[scale=0.9]
			\tikzset{vertex/.style = {shape=circle,draw,minimum size=3em}}
			\tikzset{edge/.style = {->,> = latex'}}
			
			\node[vertex, label={$v_1$}] (a) at  (-2.5,0) {$3\cdot p$};
			\node[vertex, label={$v_3$}] (b) at  (2.5,0) {$2\cdot p$};
			\node[vertex, label={$v_2$}] (c) at  (0,2.5) { $p$};	
			\draw[edge] (a) to node[above] {$e_3$} (b);
			\draw[edge] (a) to node[above] {$e_1$} (c);
			\draw[edge] (c) to node[above] {$e_2$} (b);
			
			\node[vertex] (b1) at (6,0) {$4\cdot p$};
			\node[vertex] (a1) at (-6,0) {$9\cdot p$};
			\node[vertex] (a2) at (-5,2.5) {$2\cdot p$};
			\draw[edge] (b) to node {} (b1); 
			\draw[edge] (a1) to node {} (a);
			\draw[edge] (a) to node {} (a2);
			
			\node (b11) at (7.5,0) {};
			\node (b12) at (6,1.5) {};
			\node (a21) at (-7,3) {};
			\node (a22) at (-5,4) {};
			\node (a11) at (-7.5,-1) {};
			\node (a12) at (-7.5,1) {};
			\node (a13) at (-7.5,0) {};
			\draw[edge, dotted] (b1) to node {} (b11);
			\draw[edge, dotted] (b12) to node {} (b1);
			\draw[edge, dotted] (a1) to node {} (a11);
			\draw[edge, dotted] (a1) to node {} (a12);        
			\draw[edge, dotted] (a13) to node {} (a1);
			\draw[edge, dotted] (a2) to node {} (a21);
			\draw[edge, dotted] (a22) to node {} (a2);
		\end{tikzpicture}
		\caption{A (2,3)-graph $\Gc_p$, where $m=2$ and $n=3$.}
		\label{fig: triangle}
	\end{figure}
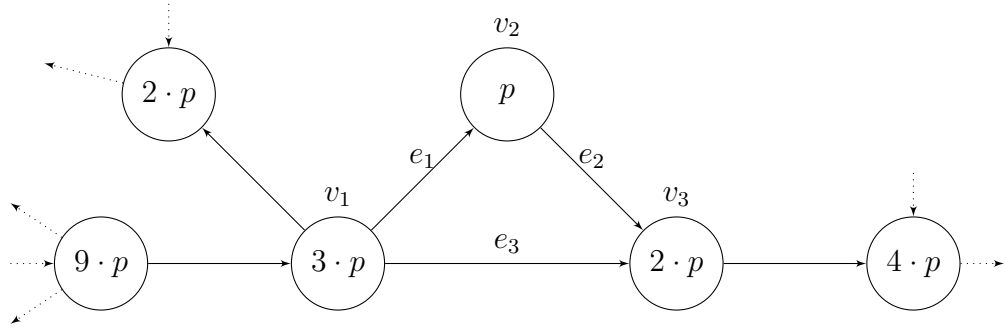    
	
	Define $\Lambda_p$ to be the stabilizer of the action $\alpha_p$ at $x_p$ and remark that $t^2bt^{-1}b\in \Lambda_p$.
	Moreover by construction $\PHE(\Lambda_p)=p$. 
	
	By compactness, we find an accumulation point $\Lambda$ of the sequence $(\Lambda_p)_{p}$. Since $\PHE(\Lambda_p)=p$, the subgroup $\Lambda$ has infinite phenotype. Since $t^2bt^{-1}b\in\Lambda_p$ for every $p$, we have that $t^2bt^{-1}b\in\Lambda$. Moreover $t^2bt^{-1}b\notin \la\!\la b\ra\!\ra$ so $\Lambda\not\in \overline{\PHE\inv(q_0)}$ by Theorem \ref{thm: closure of phen q}. Therefore  the proof is completed. 
\end{proof}

\bibliographystyle{alphaurl}
\bibliography{Biblio}

\bigskip
{\footnotesize
	\noindent
	{A.C., \textsc{Institut für Algebra und Geometrie, Karlsruhe Institute of Technology, 76128 Karlsruhe, Germany}}\par\nopagebreak \texttt{alessandro.carderi@kit.edu}
	
	\medskip 
	
	\noindent
	{D.G., \textsc{CNRS, ENS-Lyon, 
			Unité de Mathématiques Pures et Appliquées,  69007 Lyon, France}}
	\par\nopagebreak \texttt{damien.gaboriau@ens-lyon.fr}
	
	\medskip
	
	\noindent
	{F.L.M., \textsc{Université Paris Cité, Sorbonne Université, CNRS,
			Institut de Mathé\-matiques de Jussieu-Paris Rive Gauche,
			F-75013 Paris, France}}
	\par\nopagebreak \texttt{francois.le-maitre@imj-prg.fr}
	
	\medskip
	
	\noindent
	{Y.S., \textsc{Université Clermont Auvergne, CNRS, LMBP, F-63000 Clermont–Ferrand, France}}
	\par\nopagebreak \texttt{yves.stalder@uca.fr}
}

\end{document}